\documentclass[12pt,reqno]{amsart}

\usepackage{amsmath,amsthm}
\usepackage[margin=3cm]{geometry}
\usepackage{amssymb,mathtools}
\usepackage{latexsym}
\usepackage{hyperref}
\usepackage[all,cmtip,arrow,2cell]{xy}
\usepackage{enumitem}
\usepackage{eucal}

\numberwithin{equation}{section}

\usepackage{todonotes}




\usepackage{tikz-cd}
\usepackage{tikz-3dplot}
\usepackage{pgfplots}
\pgfplotsset{compat=1.16}
\usetikzlibrary{intersections}
\usetikzlibrary{positioning}
\usepgfplotslibrary{fillbetween}
\usetikzlibrary{arrows}
\usepackage{subcaption}
\usepackage{cite}

\theoremstyle{plain}
\newtheorem{Theorem}{Theorem}[section]
\newtheorem*{Theorem*}{Theorem}
\newtheorem{Proposition}[Theorem]{Proposition}
\newtheorem{Lemma}[Theorem]{Lemma}
\newtheorem{Corollary}[Theorem]{Corollary}

\theoremstyle{definition}
\newtheorem{Remark}[Theorem]{Remark}
\newtheorem{Definition}[Theorem]{Definition}
\newtheorem*{Definition*}{Definition}

\newtheorem{Questions}[Theorem]{Questions}
\newtheorem{Example}[Theorem]{Example}

\newtheorem{Terminology}[Theorem]{Terminology}

\newtheorem{Notation}[Theorem]{Notation}
\newtheorem{Caution}[Theorem]{Caution}

\newtheorem*{Acknowledgement*}{Acknowledgement}
\newtheorem*{Outline*}{Outline of the paper}
\newtheorem*{Conventions*}{Conventions and Notation}



\theoremstyle{remark}

\DeclareMathOperator{\Diff}{Diff}
\DeclareMathOperator{\Bis}{Bis}

\DeclareMathOperator{\Der}{Der}

\DeclareMathOperator{\Hom}{Hom}

\DeclareMathOperator{\End}{End}
\DeclareMathOperator{\Aut}{Aut}

\DeclareMathOperator{\Empty}{{\_\!\_\,}}
\DeclareMathOperator{\Comma}{\downarrow}

\DeclareMathOperator*{\Lim}{lim}

\DeclareMathOperator{\Ran}{Ran}

\DeclareMathOperator{\intDflg}{\underline{\mathcal{D}\mathrm{fl}}g}

\DeclareMathOperator{\intAut}{\underline{\mathrm{Aut}}}

\DeclareMathOperator{\Sym}{Sym}
\DeclareMathOperator{\Spec}{Spec}

\newcommand{\bbA}{\mathbb{A}}

\newcommand{\bbR}{\mathbb{R}}
\newcommand{\bbZ}{\mathbb{Z}}
\newcommand{\sDelta}{\boldsymbol{\Delta}}

\newcommand{\calA}{\mathcal{A}}
\newcommand{\calB}{\mathcal{B}}
\newcommand{\calC}{\mathcal{C}}

\newcommand{\calW}{\mathcal{W}} 
\newcommand{\calX}{\mathcal{X}}

\newcommand{\catA}{{\mathcal{A}}}
\newcommand{\catB}{{\mathcal{B}}}
\newcommand{\catC}{{\mathcal{C}}}
\newcommand{\catD}{{\mathcal{D}}}

\newcommand{\catI}{{\mathcal{I}}}

\newcommand{\frakg}{\mathfrak{g}}
\newcommand{\op}{{\mathrm{op}}}

\newcommand{\id}{\mathrm{id}}
\newcommand{\Id}{\mathrm{1}}
\newcommand{\pr}{\mathrm{pr}}

\newcommand{\Ad}{\mathrm{Ad}}

\newcommand{\Set}{{\mathcal{S}\mathrm{et}}}

\newcommand{\Mfld}{{\mathcal{M}\mathrm{fld}}}

\newcommand{\Eucl}{{\mathcal{E}\mathrm{ucl}}}

\newcommand{\Mod}{{\mathcal{M}\mathrm{od}}}

\newcommand{\Aff}{{\mathcal{A}\mathrm{ff}}}

\newcommand{\CAlg}{{\mathrm{C}\mathcal{A}\mathrm{lg}}}

\newcommand{\BunG}{\mathcal{B}\mathrm{un}_G}
\newcommand{\BunGdiff}{\mathcal{B}\mathrm{un}_G^\mathrm{diff}}

\newcommand{\VTan}[1]{{V^{[#1]}\!}}
\newcommand{\Zero}[1]{0^{[#1]}}
\newcommand{\ZeroDiag}[2]{0_{#1,#2}}

\newcommand{\Val}{\eta}
\newcommand{\Rmult}{\hat{m}}
\newcommand{\Radd}{\hat{+}}
\newcommand{\Runit}{\hat{1}}
\newcommand{\Rzero}{\hat{0}}

\newcommand{\eFun}[1]{{#1}}

\newcommand{\Tpart}[1]{T_{(#1)}}

\renewcommand{\phi}{\varphi}
\renewcommand{\epsilon}{\varepsilon}

\hyphenation{group-oid group-oids quasi-hori-zon-tal e-di-tion}
\hyphenation{pre-sheaves dif-fe-o-lo-gic-al}


\tikzset{bolli/.style={
    very thick,
    fill=lightgray,
    scale=1.3}}

\tikzset{point/.style={
    shape=circle,
    draw,
    fill=black,
    inner sep=1.5pt]}}

\tikzset{whitepoint/.style={
    shape=circle,
    draw,
    fill=white,
    inner sep=1.5pt]}}

\begin{document}

\title[Differentiable groupoid objects]{Differentiable groupoid objects and their\\ abstract Lie algebroids}

\author[L.~Aintablian]{Lory Aintablian}
\address{Max-Planck-Institut f\"ur Mathematik, Vivatsgasse 7, 53111 Bonn, Germany}
\email{lory.aintablian@gmail.com}

\author[C.~Blohmann]{Christian Blohmann}
\address{Max-Planck-Institut f\"ur Mathematik, Vivatsgasse 7, 53111 Bonn, Germany}
\email{blohmann@mpim-bonn.mpg.de}

\subjclass[2020]{18D40 (18F40, 58H05)}


\keywords{Groupoid, tangent category, Lie algebroid, differentiation, diffeology}

\begin{abstract}
The infinitesimal counterpart of a Lie groupoid is its Lie algebroid. As a vector bundle, it is given by the source vertical tangent bundle restricted to the identity bisection. Its sections can be identified with the invariant vector fields on the groupoid, which are closed under the Lie bracket. We generalize this differentiation procedure to groupoid objects in any category with an abstract tangent structure in the sense of Rosick\'{y} and a scalar multiplication by a ring object that plays the role of the real numbers. We identify the categorical conditions that the groupoid object must satisfy to admit a natural notion of invariant vector fields. Then we show that invariant vector fields are closed under the Lie bracket defined by Rosick\'{y} and satisfy the Leibniz rule with respect to ring-valued morphisms on the base of the groupoid. The result is what we define axiomatically as an abstract Lie algebroid, by generalizing the underlying vector bundle to a module object in the slice category over its base. Examples include diffeomorphism groups, bisection groups of Lie groupoids, the diffeological symmetry groupoids of general relativity (Blohmann/Fernandes/Weinstein), symmetry groupoids in Lagrangian Field Theory, holonomy groupoids of singular foliations, elastic diffeological groupoids, groupoid objects in differentiable stacks, and affine groupoid schemes.
\end{abstract} 
\maketitle


\tableofcontents




\section{Introduction}

\subsection{Goal of the paper}

A Lie group is a group object in the category $\Mfld$ of smooth finite-dimensional manifolds. To every Lie group, we can associate its Lie algebra, given by the right-invariant vector fields. In this sense, every group object in $\Mfld$ is differentiable. This is not true for every group\emph{oid} object in $\Mfld$. To construct its Lie algebroid, we have to assume that the source (and therefore target) map is a submersion. The groupoid objects that are differentiable in this sense are the Lie groupoids.

The category of smooth manifolds is inconvenient. Other than having coproducts and finite products, it does not possess many good categorical properties, which puts it at odds with many modern developments of mathematics that are structured and guided by category theoretic concepts. This has motivated the development of a plethora of convenient settings for differential geometry, which can be vaguely defined as categories that contain smooth manifolds as a full subcategory, but have better properties, such as having all small limits, colimits, exponential objects, etc. The price to be paid for this convenience is that such a category is usually too large as to allow for strong geometric results that hold for all its objects. The task is then to identify the structure of a category and the properties of its objects that are needed to generalize a certain differential geometric construction, in our case: the differentiation of Lie groupoids. This can be done for a particular category, e.g.~the category of diffeological spaces, Chen spaces, Fr\"ohlicher spaces, differentiable stacks, polyfolds, bornological spaces, etc. \cite{iglesias2013, BehrendXu:2011, Stacey2011}. Better still, we can try to identify the structure that any category must have and the properties that any object must have for the construction to work. In this paper, we ask and answer the following:

\begin{Questions}
\label{questions:Main}
Given a groupoid object $G$ in a category $\calC$, what are the structures of $\calC$ and the properties of $G$ needed for its differentiation? What are the infinitesimal objects in $\calC$ that generalize Lie algebroids? 
\end{Questions}

\subsection{Lie groupoids and Lie algebroids}

A \textbf{groupoid} is a small category in which every morphism is invertible. Explicitly, a groupoid consists of a set of objects $G_0$ and a set of arrows $G_1$ together with the source $s: G_1 \to G_0$, target $t: G_1 \to G_0$, multiplication $m: G_1 \times_{G_0}^{s,t} G_1 \to G_1$, unit $\Id: G_0 \to G_1$, and inverse $i: G_1 \to G_1$, satisfying the usual axioms of a category. 

A \textbf{Lie groupoid} is a groupoid internal to the category of smooth manifolds such that the source and target maps are submersions. Lie groupoids are many-unit generalizations of Lie groups. They describe local and global symmetries of geometric structures in more general cases \cite{weinstein1996}. A large class of geometric structures can be described via Lie groupoids, such as representations, equivalence relations, group actions, foliations, orbifolds, and differentiable stacks \cite{mackenzie2005, models, moerdijk2003}. 

The theory of Lie groupoids comes together with its infinitesimal counterpart: Lie algebroids. Being a generalization of the notion of Lie algebras, a \textbf{Lie algebroid} is a smooth vector bundle $A \to M$ together with a vector bundle map $\rho: A \to TM$, called the \textbf{anchor}, and a Lie bracket $[\Empty , \Empty]$ on its space of smooth sections $\Gamma(M,A)$, such that the Leibniz rule
\begin{equation*}
[a,fb]=f[a,b]+\big((\rho \circ a) \cdot f\big)b
\end{equation*}
holds for all sections $a, b \in \Gamma(M,A)$ and smooth real-valued functions $f \in C^{\infty}(M)$.

\subsection{Differentiation of Lie groupoids}

Lie groupoids can be differentiated to their Lie algebroids. To see how their differentiation can be generalized, we have to analyze the procedure from a categorical point of view. Let $G$ be a Lie groupoid. The assumption that the source map $s$ is a submersion is sufficient for the existence of the domain of the groupoid multiplication
\begin{equation*}
m: G_1 \times_{G_0}^{s,t} G_1 \longrightarrow G_1
\,,
\end{equation*}
as well as for the existence of the iterated $k$-fold pullback 
\begin{equation*}
\underbrace{G_1 \times_{G_0} \ldots \times_{G_0} G_1}_{\text{$k$ factors}}
\end{equation*}
for all $k \geq 1$. While the assumption that the source is a submersion is sufficient for the existence of these pullbacks, it is not necessary.

In the first step of the differentiation of $G$, we take the derivative of the multiplication by applying the tangent functor,
\begin{equation*}
Tm: T( G_1 \times_{G_0} G_1) \longrightarrow TG_1
\,.
\end{equation*}
To obtain the multiplication of the tangent groupoid, we need an isomorphism
\begin{equation*}
  \nu: T( G_1 \times_{G_0} G_1)
  \xrightarrow{~\cong~}
  TG_1 \times_{TG_0} TG_1
  \,,
\end{equation*}
that is, we need the pullback of the codomain to exist and the natural map $\nu$ to have an inverse. This is the step where the assumption that $s$ (and therefore $t$) is a submersion is needed. The multiplication of the tangent groupoid is then defined by the composition
\begin{equation*}
m_{TG}: TG_1 \times_{TG_0} TG_1 \xrightarrow[\cong]{~\nu^{-1}~} T( G_1 \times_{G_0} G_1) \xrightarrow{~Tm~} TG_1 \,.
\end{equation*}

In the second step, we restrict the tangent multiplication to a right $G$-action on the $s$-vertical tangent bundle,
\begin{equation*}
  VG_1 := TG_1 \times_{TG_0}^{Ts, 0_{G_0}} G_0
  \,,
\end{equation*}
where $0_{G_0}: G_0 \to TG_0$ is the zero section. Since $s$ is a submersion, so is $Ts$, which ensures the existence of the pullback. The right $G$-action
\begin{equation*}
  VG_1 \times_{G_0}^{\pr_2, t} G_1
  \longrightarrow
  VG_1
\end{equation*}
is the restriction of $m_{TG}$ to the $s$-vertical tangent bundle. Since $t$ is a submersion, the pullback of the domain exists.

In the third step, we restrict the vertical tangent bundle $VG_1 \to G_1$ to the identity bisection of the groupoid,
\begin{equation*}
  A := G_0 \times_{G_1} VG_1
  \,.
\end{equation*}
Since the tangent bundle is a vector bundle, so is the vertical tangent bundle. This implies that $VG_1 \to G_1$ is a submersion, so that the pullback always exists. 

The Lie algebroid is constructed as the vector bundle $A \to G_0$ with anchor the restriction of $Tt$ to $A$. The sections of $A$ can be shown to be in bijection with the right-invariant vector fields on $G_1$, that is, the right $G$-equivariant sections of the vertical tangent bundle. The Lie bracket of the sections of $A$ is the Lie bracket of vector fields.

The upshot is the following: For differentiation of a Lie groupoid to its Lie algebroid, a number of pullbacks must exist and the tangent functor must commute with the nerve of the Lie groupoid. In the category of smooth manifolds, this is the case if the source map is a submersion.

\subsection{The Lie bracket of vector fields}
\label{sec:IntroBracket}

Any category $\catC$ in which we want to differentiate a groupoid must have a tangent functor $T$ and some additional structure that allows us to define a Lie bracket of vector fields. This construction is particularly difficult to generalize, because the map $X \mapsto \Gamma(X, TX)$ that sends a manifold to its space of vector fields is not even a functor.

Fortunately, this issue has been resolved by Rosick\'{y} in his seminal paper \cite{Rosicky:1984}. He introduces the concept of abstract tangent functor as an endofunctor $T: \calC \to \calC$ together with the natural transformations of the bundle projection $\pi: T \to \Id$, zero section $0: \Id \to T$, fiberwise addition of the tangent bundle $+: T \times_\Id T \to T$, the exchange of the order of differentiation $\tau: T^2 \to T^2$, and the vertical lift $\lambda: T \to T^2$, satisfying a number of natural axioms (Definition~\ref{def:TangentStructure}). A category with an abstract tangent functor is called a \textbf{tangent category} by Cockett and Cruttwell \cite{CockettCruttwell:2014} who have fully developed the concept into an active field of research (see Section~\ref{sec:IntroPrevious}).

Rosick\'y's axioms capture the minimal categorical structure  needed to define the Lie bracket of vector fields on an object of $\catC$. In order to understand the motivation and ingenuity of Rosick\'{y}'s definition, we consider the coordinate formula for the Lie bracket of two vector fields
\begin{equation}
\label{eq:LieBrackLoc}
\begin{split}
  [v,w]
  &=
  \Bigl[
  v^i \frac{\partial}{\partial x^i},
  w^j \frac{\partial}{\partial x^j}
  \Bigr]
  = \Bigl( v^j \frac{\partial w^i}{\partial x^j}
  - w^j \frac{\partial v^i}{\partial x^j} 
    \Bigr) \frac{\partial}{\partial x^i}
  \\
  &=
  [v,w]^i \frac{\partial}{\partial x^i}
  \,.    
\end{split}
\end{equation}
If we include the basepoint, the vector field $w: X \to TX$ maps $x=(x^i)$ to $\big(x^i, w^i(x)\big)$. Its tangent map $Tw: TX \to T^2 X$ is given in coordinates by
\begin{equation*}
\begin{split}
  (Tw)(x^i, u^i) 
  &= \Bigl(x^i, w^i, 
  \frac{\partial x^i}{\partial x^j} u^j, 
  \frac{\partial w^i}{\partial x^j} u^j
    \Bigr)
  \\
  &= \Bigl(x^i, w^i, u^i, u^j
  \frac{\partial w^i}{\partial x^j}
    \Bigr)
  \,.
\end{split}
\end{equation*}
We see that the two terms of the Lie bracket are part of
\begin{equation*}
\begin{aligned}
Tw \circ v 
&= \Bigl(x^i, w^i, v^i, v^j \frac{\partial w^i}{\partial x^j} \Bigr)
\\
Tv \circ w 
&= \Bigl(x^i, v^i, w^i, w^j \frac{\partial v^i}{\partial x^j} \Bigr) \,.
\end{aligned}
\end{equation*}
We cannot subtract the two terms yet, since the basepoints $(x^i, w^i)$ and $(x^i, v^i)$ are different. We first have to apply the symmetric structure
\begin{equation*}
\begin{aligned}
  \tau_X: T^2 X 
  &\longrightarrow T^2 X
  \\
  (x^i, u^i_1, u^i_2, u^i_{12})
  &\longmapsto
  (x^i, u^i_2, u^i_1, u^i_{12})
\end{aligned}
\end{equation*}
to the second term, which yields
\begin{equation*}
\tau_X \circ Tv \circ w = \Bigl(x^i, w^i, v^i, w^j \frac{\partial v^i}{\partial x^j} \Bigr) 
\,.
\end{equation*}
Now we can apply the fiberwise subtraction $-_{TX}:  T^2 X \times_{TX} T^2 X \to T^2 X$ and obtain
\begin{equation*}
\begin{split}
  \delta(v,w)
  :={}&
  -_{TX} \circ \, (Tw \circ v , \tau_X \circ Tv \circ w \bigr)
\\
  ={}& 
  \Bigl(x^i, \, w^i, \, 0, \, v^j \frac{\partial w^i}{\partial x^j} - w^j \frac{\partial v^i}{\partial x^j} \Bigr)
\,.    
\end{split}
\end{equation*}
The map $\delta(v,w): X \to T^2 X$ lies in the image of the vertical lift map
\begin{equation*}
\begin{aligned}
  \lambda_{2,X}: TX \times_X TX 
  &\longrightarrow T^2 X
  \\
  \bigl( (x^i, w^i), (x^i, u^i) \bigr)
  &\longmapsto
  (x^i, w^i, 0, u^i)
  \,.
\end{aligned}
\end{equation*}
In other words, it takes values in the kernel of $T\pi_X$. By projecting to the second factor we obtain the Lie bracket expression in~\eqref{eq:LieBrackLoc}.
We conclude that 
\begin{equation}
\label{eq:BracketCatDef}
\begin{split}
  \delta(v,w) 
  &= \lambda_{2,X} 
  \bigl( (x^i, w^i), (x^i, [v,w]^i) \bigr)
  \\
  &=
  \lambda_{2,X} \circ \big( w, [v,w] \big)
  \,.    
\end{split}
\end{equation}
The upshot is that~\eqref{eq:BracketCatDef} provides a construction of the Lie bracket in terms of the tangent functor and the natural transformations of a tangent category. This will be elaborated in Section~\ref{sec:LieBracket}.

\subsection{Bundles}

In $\Mfld$, a \emph{fiber} bundle is a smooth surjective map $p:A \to X$ which is locally the projection of a product manifold to one of its factors. This concept of bundles with local trivializations requires a generalization of open covers, which is provided by the notion of Grothendieck pretopology. However, already for our guiding example of diffeological spaces requiring local trivializations is too strong. Instead, we start with the usual categorical notion of bundle as a morphism without further conditions, in other words, an object in the overcategory $\calC \Comma X$. Then we add, step by step, the algebraic structure we need, using the following unified terminology:

\begin{Terminology}
\label{term:WibbleBundle}
Let ``Wibble'' be an algebraic theory modelled by a small category $\calW$. Let $X$ be an object in a category $\calC$ such that the overcategory $\calC \Comma X$ has all finite products (i.e.~pullbacks over $X$). An object in $\calC\Comma X$ will be called a \textbf{bundle over $X$}. A Wibble object in $\calC\Comma X$, that is, a product preserving functor $\calW \to \calC\Comma X$, will be called a \textbf{bundle of Wibbles over $X$}.
\end{Terminology}

In this paper, ``Wibble'' will be one of: group, abelian group, and $R$-module (for $R \in \catC$ a ring object). Let $W \to X$ be a bundle of Wibbles. If $\catC$ has finite products, then the pullback $W_x := * \times_X W$ over a point $x: * \to X$ is a Wibble object in $\calC$. Here, $*$ is the terminal object of $\catC$. In other words, every fiber of a bundle of Wibbles is a Wibble, which justifies the terminology. We emphasize once more, that the notion of bundle of Wibbles does not make any assumptions on local trivializations, whatsoever. 

In Sections~\ref{sec:BunAbelGp} and \ref{sec:BundleRMod}, we will elaborate Terminology~\ref{term:WibbleBundle} for the case of bundles of abelian groups and bundles of $R$-modules, and spell out some of their properties. In the axioms of a tangent structure, the tangent bundle $\pi_X: TX \to X$ is required to be a bundle of abelian groups for any object $X \in \catC$. 

\subsection{Submersions and differentiability}

As we mentioned above, submersions are a class of morphisms in $\Mfld$ which play an important role in the definition of a Lie groupoid and in the differentiation to its Lie algebroid. Let us take a moment to delve deeper into their properties. Recall that a smooth map $f: M \to N$ of manifolds is called a \textbf{submersion} if:
\begin{itemize}
\item[(i)]
The tangent map $Tf|_x : T_x M \to T_{f(x)} N$ is surjective for every point $x \in M$.
\end{itemize}
Submersions have a number of good properties, such as:
\begin{itemize}
\item[(ii)] Every point $x \in M$ lies in the image of a smooth local section of $f$.

\item[(iii)] For every smooth map $M' \to N$, the pullback $M \times_N M'$ exists.

\item[(iv)] Every pullback via $f$ commutes with the tangent functor, that is, the natural map $T(M \times_N M') \to TM \times_{TN} TM'$ is an isomorphism.

\item[(v)] The tangent map $Tf: TM \to TN$ is a submersion.
\end{itemize}

Properties (i), (ii), and (iv) are equivalent. It is tempting to state the conditions for a groupoid object in a tangent category to be \emph{differentiable} by requiring the source map to have the properties of a surjective submersion. However, Properties~(ii)--(v) make use of the fact that a submersion is locally the projection to one of the factors of a product manifold. This is proved with the implicit function theorem, a genuinely analytic method that cannot be easily generalized to convenient settings. In a general tangent category none of the Properties (i)--(iv) implies any other of the properties. So which properties should be the basis of our generalization? 

For the existence of the pullbacks of the nerve of a groupoid we need Property~(iii), for the first step of the differentiation procedure we need Property~(iv), for the second step (v) and (iii). So we would need morphisms that have the Properties~(iii), (iv), and (v). We are not aware of any interesting tangent category other than the category of smooth manifolds, in which there is a known class of such morphisms.

Fortunately, the properties of a submersion are unnecessarily strong. We do not need the pullbacks via submersions to exist universally over \emph{all} other morphisms, but only the pullbacks that appear in our construction. Neither do we need the tangent functor to preserve all pullbacks, but only those of the nerve of the groupoid. For all these reasons, we will not attempt to define a notion of generalized submersions. Instead, we promote in the Definition~\ref{def:DiffGroupoid} of differentiable groupoid objects only those properties to axioms that are actually needed in the differentiation process.


\subsection{Main results}

The main result of this paper is to give satisfactory answers to Questions~\ref{questions:Main}. The first part of the answers is to identify the categorical structures needed in the differentiation process. As already explained, the ambient category $\calC$ has to be equipped with an abstract tangent structure (Definition~\ref{def:TangentStructure}), so that there is a Lie bracket of vector fields. We also have to assume that the tangent functor preserves finite products. Tangent categories with this property are called \textbf{cartesian} (Definition~\ref{def:CartTan}). Finally, we need a generalization of the scalar multiplication of tangent vectors by real numbers. For this, the tangent bundles $TX \to X$ have to be equipped with the fiberwise structure of an $R$-module for a ring object $R$ in $\calC$, satisfying some compatibility with the tangent structure. Such a module structure is called a \textbf{scalar $R$-multiplication} (Definition~\ref{def:RScalar}). All these categorical structures have been worked out in the companion paper \cite{AintablianBlohmann:cartan}. 

The second part of the answers to Questions~\ref{questions:Main} is to identify the properties of a groupoid object in $\catC$ needed for its differentiation. This leads to the first main concept of this paper:

\begin{Definition*}[Definition~\ref{def:DiffGroupoid}]
A groupoid object $G$ in a tangent category $\catC$ will be called \textbf{differentiable} if the pullbacks in the diagrams
\begin{equation}
\begin{tikzcd}[column sep=small]
T^n G_1 \times_{T^n G_0} G_k
\ar[d] \ar[r]
\arrow[dr, phantom, "\lrcorner", very near start] 
&
G_k
\ar[d, "\Zero{n}_{G_k}"]
\\
T^n G_1 \times_{T^n G_0} T^n G_k
\ar[d] \ar[r]
\arrow[dr, phantom, "\lrcorner", very near start] 
&
T^n G_k
\ar[d, "T^n t_k"]
\\
T^n G_1
\ar[r, "T^n s"']
&
T^n G_0
\end{tikzcd}
\quad
\begin{tikzcd}[column sep=small]
T_m G_1 \times_{T_m G_0} G_k
\ar[d] \ar[r]
\arrow[dr, phantom, "\lrcorner", very near start] 
&
G_k
\ar[d, "\ZeroDiag{m}{G_0} \circ \, t_k"]
\\
T_m G_1
\ar[r, "T_m s"']
&
T_m G_0    
\end{tikzcd}
\end{equation}
\begin{equation}
\begin{tikzcd}[column sep=small]
G_0 \times_{G_1} TG_1 \times_{TG_0} G_0
\ar[d] \ar[r]
\arrow[dr, phantom, "\lrcorner", very near start] 
&
TG_1 \times_{TG_0} G_0
\ar[d, "\pi_{G_1} \circ \, \pr_1"]
\\
G_0
\ar[r, "1"']
&
G_1    
\end{tikzcd}
\end{equation}
exist, and if the natural morphism
\begin{equation}
  T^n(G_1 \times_{G_0}^{s,t_k} G_k) 
  \longrightarrow T^n G_1 \times_{T^n G_0}^{T^ns,T^nt_k} T^n G_k
\end{equation}
is an isomorphism for all $n \geq 1$, $m \geq 2$ and $k \geq 0$.
\end{Definition*}

Here, $\Zero{n}_{G_0} = (T^{n-1}0 \, \circ \, \ldots \, \circ \, T0 \, \circ \, 0)_{G_0}$ is the iterated zero section, $\ZeroDiag{m}{G_0} := (0_{G_0}, \ldots, 0_{G_0})$ the diagonal zero section, and $t_k: G_k \to G_0$ the morphism to the target of the rightmost arrow of the nerve of the groupoid.

The notion of differentiability generalizes to any bundle $E \to G_0$ with a right $G$-action (Definition~\ref{def:DiffBunG}). On differentiable groupoid bundles there is a natural notion of invariant vector fields (Definition~\ref{def:InvVecField}). We show in Theorem~\ref{thm:InvariantBracket} that the set $\calX(E)^G$ of invariant vector fields on $E$ is a Lie subalgebra of $\mathfrak{X}(E)=\Gamma(E,TE)$. The second main concept of this paper is:

\begin{Definition*}[Definition~\ref{def:AbstractLieAlgd}]
Let $\calC$ be a cartesian tangent category with scalar $R$-multiplication. An \textbf{abstract Lie algebroid} in $\calC$ consists of a bundle of $R$-modules $A \to X$, a morphism $\rho: A \to TX$ of bundles of $R$-modules, called the \textbf{anchor}, and a Lie bracket on the abelian group $\Gamma(X,A)$, such that
\begin{align*}
  [a, fb] &= f[a,b] + \bigl((\rho \circ a) \cdot f \bigr) b
  \\
  \rho \circ [a,b] &= [\rho \circ a , \rho \circ b]
\end{align*}
for all sections $a$, $b$ of $A$ and all morphisms $f: X \to R$ in $\catC$.
\end{Definition*}

The main generalization is that $A \to G_0$ is no longer required to be a vector bundle, but only a bundle of $R$-modules (Terminology~\ref{term:WibbleBundle}), so that the sheaf of sections of $A$ is generally not a locally free $\calC(G_0, R)$-module. Therefore, the Leibniz rule does not imply that the composition with the anchor is a homomorphism of Lie algebras, which has to be required separately. Our main results can be summarized as follows:

\begin{Theorem*}[Theorem~\ref{thm:SecAInvVec}, Theorem~\ref{thm:LieAlgdOfGroupoid}]
Let $G$ be a differentiable groupoid object in a cartesian tangent category $\calC$ with scalar $R$-multiplication. Then there is an abstract Lie algebroid $A \to G_0$ with the Lie bracket of invariant vector fields on $\Gamma(G_0, A) \cong \calX(G_1)^G$.
\end{Theorem*}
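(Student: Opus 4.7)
The plan is to construct $A$ explicitly as the iterated pullback $A := G_0 \times_{G_1} (TG_1 \times_{TG_0} G_0)$, where the inner pullback is the $s$-vertical tangent bundle $VG_1$ and the outer pullback restricts it to the identity bisection. Both pullbacks exist by differentiability (the first displayed diagram of Definition~\ref{def:DiffGroupoid} with $n=1$, $k=0$, and the third diagram). Since $TG_1 \to G_1$ is a bundle of $R$-modules and these pullbacks are compatible with the fiberwise module operations, $A \to G_0$ inherits a bundle-of-$R$-modules structure. The anchor $\rho : A \to TG_0$ is defined as the composition $A \hookrightarrow TG_1 \xrightarrow{Tt} TG_0$, a morphism of bundles of $R$-modules by naturality. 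For the bijection $\Gamma(G_0, A) \cong \calX(G_1)^G$ of Theorem~\ref{thm:SecAInvVec}, observe that the tangent groupoid multiplication $m_{TG} = Tm \circ \nu^{-1}$ is well-defined (as $\nu$ is invertible by differentiability) and restricts to a right $G$-action on $VG_1$; a section $a \in \Gamma(G_0, A)$ extends to an invariant vector field $\tilde a \in \calX(G_1)^G$ by translating along this action, while restriction along $\Id$ recovers $a$. By Theorem~\ref{thm:InvariantBracket}, the Lie bracket of vector fields on $G_1$ restricts to $\calX(G_1)^G$ and can therefore be transported to a Lie bracket on $\Gamma(G_0, A)$.

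The Leibniz rule on $\Gamma(G_0, A)$ follows from its $G_1$-level counterpart, established in \cite{AintablianBlohmann:cartan} for any cartesian tangent category with scalar $R$-multiplication. The key compatibility is that the $R$-module structure on $A$ corresponds, under the bijection above, to scalar multiplication of invariant vector fields by pullbacks along $t$: if $a$ corresponds to $\tilde a$ and $f : G_0 \to R$, then $fa$ corresponds to $(t^*f)\,\tilde a$ with $t^*f := f \circ t$. Applied to $a, b$ and $f$, the $G_1$-level identity reads $[\tilde a, (t^*f) \tilde b] = (t^*f) [\tilde a, \tilde b] + (\tilde a \cdot t^*f)\, \tilde b$. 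Combined with $\tilde a \cdot t^*f = t^*\bigl((\rho \circ a) \cdot f \bigr)$, which in turn follows from the identity $Tt \circ \tilde a = (\rho \circ a) \circ t$ (an expression of $t$-invariance of right multiplication), restriction along $\Id$ yields
\begin{equation*}
[a, fb] = f[a, b] + \bigl((\rho \circ a) \cdot f\bigr)\, b.
\end{equation*}

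The final ingredient is to show that the anchor is a Lie algebra homomorphism, $\rho \circ [a, b] = [\rho \circ a, \rho \circ b]$. Because $A$ is only a bundle of $R$-modules, its sections need not locally span all of $A$ in any useful sense, so this identity cannot be deduced from the Leibniz rule by the classical freeness argument and must be established directly. My approach is to chase the Rosick\'y construction $\delta$ from~\eqref{eq:BracketCatDef} through the tangent map $Tt$: naturality of $T$, $\tau$, $\lambda$ and of the fiberwise subtraction under $Tt$, combined with $Tt \circ \tilde a = (\rho \circ a) \circ t$ and its derived version $T^2 t \circ T\tilde a = T(\rho \circ a) \circ Tt$, should yield $T^2 t \circ \delta(\tilde a, \tilde b) = \delta(\rho \circ a, \rho \circ b) \circ t$. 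Pulling back along $\Id$ and using that $\lambda_{2, G_0}$ is a monomorphism (so that projection to its second component on its image is well-defined) then extracts the required bracket compatibility. I expect this last step to be the main technical obstacle, as it requires propagating the naturality of the entire tangent structure coherently along $Tt$ and interpreting the result intrinsically on the base $G_0$.
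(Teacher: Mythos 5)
Your proposal follows essentially the same route as the paper: the same bundle $A = G_0 \times_{G_1} VG_1$, the same anchor given by $Tt$ restricted to $A$, the same extension-along-the-$G$-action / restriction-to-the-identity-bisection isomorphism $\Gamma(G_0,A) \cong \calX(G_1)^G$, the bracket transported via Theorem~\ref{thm:InvariantBracket}, and the Leibniz rule deduced from Proposition~\ref{prop:LeibnizRule} together with the $t$-relatedness identities $Tt \circ \tilde a = (\rho \circ a) \circ t$ and $\tilde a \cdot t^*\!f = t^*\bigl((\rho \circ a)\cdot f\bigr)$. The only divergence is the final step: what you flag as ``the main technical obstacle''---chasing $\delta$ through $Tt$ to obtain $Tt \circ [\tilde a, \tilde b] = [\rho \circ a, \rho \circ b] \circ t$---is precisely the statement that the Lie bracket preserves $t$-related vector fields, which the paper simply invokes as Proposition~\ref{prop:ProjectVectorFields} (Proposition~2.9 of \cite{CockettCruttwellLemay:2021}), so your hand computation would just reprove a known result and presents no genuine obstacle.
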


\subsection{Applications}

Broad categorical generalizations of geometric concepts like Lie groupoids and Lie algebroids risk being solutions in search of a problem. That is, while the generalization of a concept may be of interest in its own right, its usefulness in the originating area, differential geometry in our case, may be limited. The concept of differentiable groupoid objects and their abstract Lie algebroids, however, was motivated by and provides answers to existing research questions.

In \cite{BlohmannFernandesWeinstein:2013} the symmetry structure of the initial value problem of general relativity was studied.  
The main result was the construction of a diffeological groupoid whose diffeological Lie algebroid has the same bracket as the somewhat mysterious Poisson bracket of the Gau\ss-Codazzi constraints for Ricci flat metrics. Without a theory of diffeological groupoids available, the differentiation of the diffeological groupoid of \cite{BlohmannFernandesWeinstein:2013} was carried out in an ad hoc manner. Our results can be viewed as a rigorous justification of this construction. In this sense, this application of our main theorem existed before the actual theorem. The groupoid in the general relativity example generalizes to the diffeological groupoids that arise from the gauge and diffeomorphism symmetries of classical field theory.

The construction of the groupoid symmetry of general relativity from \cite{BlohmannFernandesWeinstein:2013} is a particular case of a more general construction: reductions of action Lie groupoids by (not necessarily normal) subgroups. In \cite[Section~8.5]{BlohmannWeinstein:2024}, the Lie algebroid of reduced groupoids was computed in the setting of smooth finite-dimensional manifolds. With our results, this reduction can now be carried out for diffeomorphism groups that act on sections of natural bundles.

Diffeological groupoids arise in a number of other areas, such as diffeological integration of Lie algebroids \cite{Villatoro}, the holonomy groupoid of a singular foliation \cite{AndroulidakisSkandalis:2009}, and more generally, of a singular subalgebroid \cite{Zambon:2022}. In \cite{AndroulidakisZambon:2023}, Androulidakis and Zambon explain a differentiation procedure from holonomy-like diffeological groupoids to singular subalgebroids, where the original goal of the authors is to establish an integration method for singular subalgebroids via diffeological groupoids.

One of the original motivations of this project was the application of diffeological groupoids to geometric deformation theory. The idea is that many geometric structures, such as Riemannian metrics or complex structures, are equipped with a natural diffeology of smooth homotopies, so that their moduli space of structures modulo isomorphism are presented by diffeological groupoids. (Diffeological stacks are a $(2,1)$-category, so that higher categorical considerations come into play.) In this framework, deformations can be conceptualized by smooth paths in a diffeological stack, higher deformations by paths of paths, etc. This suggests that the natural object to consider is the path $\infty$-groupoid of the diffeological moduli stack. We posit that the infinitesimal object of these groupoids is closely related to the differential graded Lie algebras or the $L_\infty$-algebras describing the deformation theory of these moduli.

An outlook on the applications and classes of examples of our construction is presented in Section~\ref{sec:Examples}.

\subsection{Relation to previous work}
\label{sec:IntroPrevious}

After their introduction by Rosick\'y in 1984 \cite{Rosicky:1984}, tangent categories had been dormant for almost 30 years. It was Cockett and Cruttwell with their collaborators and students who fully realized their potential and developed the subject into an active area of research. In fact, we first learned about tangent categories from \cite{CockettCruttwell:2014} and are indebted to the work of their school. 

The present paper is guided by specific geometric applications (Section~\ref{sec:Examples}), in particular to diffeological groupoids. These require a number of assumptions that are both stronger than weaker than those of Cockett and Cruttwell, which means that there is generally no simple logical dependence between our results and theirs.

In Rosick\'y's original definition~\cite{Rosicky:1984}, the fibers of the tangent bundles are assumed to be abelian groups, which is needed for the definition of the Lie bracket of vector fields. In \cite{CockettCruttwell:2014}, this was relaxed to commutative monoids, and Rosick\'y's concept renamed to tangent structures ``with negatives''. Since the Lie bracket is an indispensable structure for our results and geometric applications, all our tangent categories are assumed to have negatives.

We assume that our tangent functors preserve finite products, that is, the natural morphism $\chi_{X,Y}: T(X \times Y) \to TX \times TY$ is an isomorphism. In the companion paper~\cite{AintablianBlohmann:cartan}, we will show for tangent categories with negatives that this condition implies that $\chi$ is a strong morphism of tangent categories. This means that the tangent category is cartesian in the sense of \cite[Definition~2.8]{CockettCruttwell:2014}.

The genuinely new structure we require of our tangent categories is a scalar multiplication by a ring object $R$ with good infinitesimal properties (Definition~\ref{def:RScalar}). The scalar multiplication equips the tangent bundles with a fiberwise structure of an $R$-module. Moreover, we require the tangent fiber of $R$ at zero to be isomorphic to $R$. This can be shown to imply that we have a trivialization $TR \cong R \times R$ as bundle of $R$-modules \cite{AintablianBlohmann:cartan}. It follows by recent work of Michael Ching \cite{Ching:2024}, that $R$ is a differential object in the sense of \cite[Definition~4.8]{CockettCruttwell:2014} and, by~Definition~\ref{def:TangString}, a differential ring. However $R$ is not an differential exponential rig in the sense of \cite[Definition~5.24]{CockettCruttwellLemay:2021} in any of our application. Already for infinite dimensional Lie groups, it follows by the no-go theorem of Omori that there is no exponential map \cite{Omori:1981}.

In addition, we need to require three compatibility conditions of the scalar multiplication with the tangent functor (Definition~\ref{def:RScalar}), so that vector fields act as derivations on $R$-valued functions (Proposition~\ref{prop:VecActFunc}) and the Lie bracket satisfies the Leibniz rule (Proposition~\ref{prop:LeibnizRule}). While Definition~\ref{def:RScalar} implies that the scalar multiplication is a morphism of differential bundles in the sense of in the sense of \cite[Definition~2.3]{CockettCruttwell:2017}, the converse is not true. The only condition for the tangent map of a morphism of differential bundles is its compatibility with the vertical lifts. This is not sufficient to deduce the second and third commutative diagram of Definition~\ref{def:RScalar}. In fact, Definition~\ref{def:RScalar} involves the symmetric structure on $T^2$, which is not an ingredient of differential bundles.

In~\cite{BurkeMacAdam:2019}, Burke and MacAdam introduce the notion of ``involution algebroids'' as generalizations of Lie algebroids in tangent categories. The underlying bundles of the involution algebroids are assumed to be differential bundles. In \cite[Theorem~1]{MacAdam:2021}, it was shown that a differential bundle in smooth manifolds is a vector bundle. By contrast, we assume that the bundle of an abstract Lie algebroid (Definition~\ref{def:AbstractLieAlgd}) is a bundle of $R$-modules. It is then a rather easy observation that, in the tangent category of smooth manifolds, a bundle of $\bbR$-modules is a vector bundle (Example~\ref{ex:VecBund}). This implies that an abstract Lie algebroid in smooth manifolds is a Lie algebroid in the usual sense (Proposition~\ref{prop:LieAlgdDiff}). We conclude that the fairly strong condition that the bundles be differential is not necessary to recover the usual Lie theory of Lie groupoids and their Lie algebroids.

In the case of diffeological spaces, our main application, bundles will be generally not differential. One of the reasons is that for a diffeological vector spaces, $T_0 A \cong A$ is a rather special property. For example, for locally convex topological vector spaces this is true if and only they are convenient in the sense of Kriegl and Michor \cite{KrieglMichor:1997}. Moreover, the groupoids we consider are not manifolds in a diffeological sense. In fact, even the diffeomorphism group of a manifold is not a diffeological manifold modelled on its tangent space.


\subsection{Outline of the paper}

In Section~\ref{sec:AbstractTangent}, we review Rosick{\'y}'s abstract tangent structures \cite{Rosicky:1984}. We study bundles whose fibers have the structure of an abelian group or an $R$-module, where $R$ is a ring object in the ambient category. Extending the ideas in \cite{blohmann2024}, we introduce cartesian tangent structures with scalar $R$-multiplication. A fundamental part of this section is devoted to the construction of the Lie bracket of vector fields in tangent categories, where we also state the Leibniz rule. Lastly, we describe the namesake example of tangent categories: the category of Euclidean spaces. 

In Section~\ref{sec:DiffGrpds}, we introduce one of the main concepts in this paper: differentiable groupoid objects. We first spell out what groupoid objects are, and then identify the conditions needed for their differentiability. More generally, in Section~\ref{sec:Gbun} we introduce the category of differentiable groupoid bundles and equivariant bundle morphisms. This paves the way for the definition of higher vertical tangent bundles and fiber products of vertical tangent bundles.

The goal of Section~\ref{sec:HigherVertTanFun} is to show that the assignments to a differentiable $G$-bundle its higher vertical tangent bundle and the fiber product of its vertical tangent bundle are functorial. Using an elegant technical lemma, we show that the tangent structure of the ambient category and the module structure admit a source-vertical restriction. In Section~\ref{sec:LieBracketInv} we introduce invariant vector fields on differentiable groupoid bundles. One of the main results is that the invariant vector fields are closed under the Lie bracket of vector fields. 

Section~\ref{sec:AbsractLieAlgd0} is the core of the paper. We establish a differentiation procedure of groupoid objects to their infinitesimal counterparts in a cartesian tangent category with scalar $R$-multiplication. On the infinitesimal level, we define abstract Lie algebroids in tangent categories. The main theorem states that our differentiation procedure applied to a differentiable groupoid object yields an abstract Lie algebroid, where the Lie bracket is obtained by the bracket of invariant vector fields.

The paper concludes in Section~\ref{sec:Examples} with classes of examples and possible applications. Here we also discuss the relation to previous work.

Lastly, in Appendix~\ref{app:Cat} we provide, for the convenience of the reader, a brief exposition of useful categorical machinery needed for the paper; and we fix some notation.

\subsection{Conventions and notation}

Categories will be denoted by the curly letters $\catC$ and $\catD$. We will denote the functor category with objects functors $F:\catC \to \catD$ and morphisms natural transformations by the exponential notation $\catD^{\catC}$. Given objects $A,B$ in a category $\catC$, the morphisms from $A$ to $B$ will be denoted by $\catC(A,B)$. The categories we consider in this paper are all locally small, that is, $\calC(A,B)$ is a set. We will explicitly state when a category is also small (for example groupoids), that is, the objects form a set. A \textbf{point} of an object $X$ of a category $\catC$ with terminal object $*$ is a morphism $x:* \to X$. The reader may refer to Appendix~\ref{app:Cat} for the parenthesis notation of morphisms and their fiber products, which are extensively used in this paper.

\subsection*{Acknowledgements}

For comments and fruitful discussions, we would like to thank David Aretz, Michael Ching, Geoffrey Cruttwell, Madeleine Jotz, Benjamin MacAdam, David Mi\-ya\-mo\-to, Leonid Ryvkin, and Tashi Walde. L.A.~was funded by the Hausdorff Center for Mathematics in Bonn.
\section{Abstract tangent structures}
\label{sec:AbstractTangent}

The goal of this section is to provide the reader with the necessary background on tangent structures needed for our purposes. In Section~\ref{sec:BunWib}, we give a categorical approach to bundles, whose fibers have the structure of an algebraic theory, yet there is no assumption on local triviality. 
We then proceed with symmetric structures on endofunctors in Section~\ref{sec:Symm}. The aim of Sections~\ref{sec:RosickysAxioms} and~\ref{sec:CatTanScal} is to discuss in detail abstract tangent functors in the sense of Rosick\'y and to extend the framework to cartesian tangent structures and scalar multiplications. 

Furthermore, in Section~\ref{sec:LieBracket} we explain the main construction of the Lie bracket of vector fields in a tangent category, using the ideas of Rosick\'y. We then state the Leibniz rule and several naturality results for the Lie bracket in Section~\ref{sec:Leibrule}. Lastly, the goal of Section~\ref{sec:TangentEuclidean} is to provide an intuition of tangent structures by considering the namesake example of a tangent category: the category of Euclidean spaces.

\subsection{Bundles with algebraic structure}
\label{sec:BunWib}

In this section, we elaborate Terminology~\ref{term:WibbleBundle} for the case of abelian groups and $R$-modules.
Let $\catC$ be a category with a terminal object $*$. Assume that the overcategory $\catC \Comma X$ has all finite products for all $X \in \catC$. In particular, this implies that $\catC \Comma * \cong \catC$ has all finite products. The proofs of the statements in this section can be found in \cite{Aintablian:2024} and \cite{AintablianBlohmann:cartan}.

\subsubsection{Bundles of abelian groups}
\label{sec:BunAbelGp}

Let $p: A \to X$ be a bundle of abelian groups, that is, an abelian group object in the overcategory $\catC \downarrow X$. (This is sometimes called a ``Beck module''.) The terminal object in $\calC\Comma X$ is given by $\id_X: X \to X$. Spelled out, the group structure consists of the morphisms \begin{equation*}
\begin{tikzcd}[column sep={tiny}]
A \times_X A \ar[rr, "+"] \ar[dr, "p \, \circ \, \pr_1 = p \, \circ \, \pr_2"'] & & A \ar[dl, "p"] \\
& X &
\end{tikzcd}
\qquad
\begin{tikzcd}[column sep={tiny}]
X \ar[rr, "0"] \ar[dr, "\id_X"'] & & A \ar[dl, "p"] \\
& X &
\end{tikzcd}
\qquad
\begin{tikzcd}[column sep={tiny}]
A \ar[rr, "\iota"] \ar[dr, "p"'] & & A \ar[dl, "p"] \\
& X &
\end{tikzcd}
\end{equation*}
of the addition, the zero, and the inverse, satisfying the usual conditions of associativity, unitality, invertibility, and commutativity of an abelian group. The subtraction is given by the commutative triangle 
\begin{equation}
\label{diag:difference}
\begin{tikzcd}
A \times_X A \ar[rr, "-"] \ar[dr, "{\id_A \times_X\, \iota}"'] &[-2em] &[-2em] A \\
& A \times_X A \ar[ur, "+"'] &
\end{tikzcd}
\end{equation}
Even though our notion of bundles does not assume any local triviality, the bundles behave in the usual way under base changes.

\begin{Definition}
Let $p:A \to X$ and $p': A' \to X'$ be bundles of abelian groups with additions $+$ and $+'$ respectively. A \textbf{morphism of bundles} is a commutative diagram
\begin{equation}
\label{eq:MorphBund1}
\begin{tikzcd}
A \ar[r, "\phi"] \ar[d, "p"'] & A' \ar[d, "p'"]
\\
X \ar[r, "f"'] & X' 
\end{tikzcd}    
\end{equation}
in the category $\catC$. It is called a \textbf{morphism of bundles of abelian groups} if furthermore the diagram
\begin{equation}
\label{eq:MorphBund3}
\begin{tikzcd}
A \times_X A \ar[r, "\phi \times_f \phi"] 
\ar[d, "+"'] & A' \times_{X'} A' \ar[d, "+'"]
\\
A \ar[r, "\phi"'] & A'
\end{tikzcd}    
\end{equation}
commutes.     
\end{Definition}

As for ordinary groups, it is implied that the zeros are intertwined, that is, $\phi\, \circ\, 0 = 0' \circ f$, where $0': X' \to A'$ is the zero of $p': A' \to X'$. Composing this equation with $p'$ on the left, we obtain $p' \circ \phi \circ 0 = f$, which shows that $f$ is uniquely determined by $\phi$. Therefore, we can denote a morphism~\eqref{eq:MorphBund1} of bundles of abelian groups by $\phi$.

\begin{Proposition}
\label{prop:PullbackBundle}
Let $A \to X$ be a bundle of abelian groups and $Y \to X$ a morphism in $\catC$. Then the pullback $\pr_1: Y \times_X A \to Y$ is a bundle of abelian groups and $\pr_2: Y \times_X A \to A$ is a morphism of bundles of abelian groups.
\end{Proposition}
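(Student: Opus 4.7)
The plan is to recognize this as an instance of a general principle: the pullback functor $f^* : \catC \Comma X \to \catC \Comma Y$ induced by $f: Y \to X$ preserves finite products, and any product-preserving functor carries abelian group objects to abelian group objects. Under the standing hypothesis that every overcategory has finite products, the product of $A \to X$ with $Y \to X$ in $\catC \Comma X$ is precisely the fiber product $Y \times_X A \to X$, and post-composition with the projection to $Y$ yields the candidate bundle $\pr_1 : Y \times_X A \to Y$.

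The core step is to verify the natural isomorphism
\[
(Y \times_X A) \times_Y (Y \times_X A') \;\cong\; Y \times_X (A \times_X A')
\]
for any pair of bundles $A, A' \to X$, together with the trivial identification of terminal objects $f^*(\id_X) \cong \id_Y$. I would prove the isomorphism by showing that both sides represent the same functor on test objects $Z$, namely triples of morphisms $Z \to Y$, $Z \to A$, $Z \to A'$ whose composites to $X$ all agree; the right-hand side exists in $\catC$ because $A \times_X A'$ is the product of $A$ and $A'$ in $\catC \Comma X$, which can then be paired with $Y \to X$.

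Using product preservation, we transport the abelian group structure of $A \to X$ along $f^*$. Concretely, the addition on $Y \times_X A$ is the composite
\[
(Y \times_X A) \times_Y (Y \times_X A) \xrightarrow{\,\cong\,} Y \times_X (A \times_X A) \xrightarrow{\,\id_Y \times_X +\,} Y \times_X A,
\]
the zero section is the image of $0: X \to A$ under $f^*$, that is, the morphism $Y \to Y \times_X A$ given by $(\id_Y, 0 \circ f)$, and the fiberwise inverse is $\id_Y \times_X \iota$. Each axiom of an abelian group object, namely associativity, commutativity, unitality, and invertibility, is a commutative diagram built from these pulled-back structure maps, and applying the functor $f^*$ together with the product-preservation isomorphism carries the corresponding diagram for $A$ to the required diagram for $Y \times_X A$.

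For the second assertion, the pair $(\pr_1, \pr_2)$ is by definition the universal pullback square, so $\pr_2$ covers $f$ and is a morphism of bundles as in Diagram~\eqref{eq:MorphBund1}. Its compatibility with addition, Diagram~\eqref{eq:MorphBund3}, is immediate from the construction above, since $\pr_2 \circ (\id_Y \times_X +) = + \circ (\pr_2 \times_f \pr_2)$. I do not foresee any substantive obstacle beyond the diagrammatic bookkeeping; the only non-formal content is the product-preservation isomorphism, which is a routine verification via universal properties.
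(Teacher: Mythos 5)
Your proposal is correct and uses exactly the argument the paper's framework intends: since the base-change functor $f^*\colon \catC\Comma X \to \catC\Comma Y$ preserves finite products (the isomorphism $(Y\times_X A)\times_Y(Y\times_X A')\cong Y\times_X(A\times_X A')$ plus $f^*(\id_X)\cong\id_Y$), it transports the abelian group object structure, which is precisely the point of view of Terminology~\ref{term:WibbleBundle}; the deferred proofs in the cited references proceed along the same lines. The only bookkeeping caveat is that your final identity $\pr_2\circ(\id_Y\times_X +)=+\circ(\pr_2\times_f\pr_2)$ should be read modulo the canonical isomorphism identifying the two domains, since $\id_Y\times_X +$ is defined on $Y\times_X(A\times_X A)$ while $\pr_2\times_f\pr_2$ is defined on $(Y\times_X A)\times_Y(Y\times_X A)$.
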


\begin{Example}
Let $x:* \to X$ be a point in $X$. Then, the fiber $A_x = * \times_X A$ is an abelian group object in $\catC \cong \catC \Comma *$.    
\end{Example}

\begin{Definition}
\label{def:ker}
Let $A \to X$ and $A' \to X'$ be bundles of abelian groups. The \textbf{kernel} of a morphism $\phi:A \to A'$ of bundles of abelian groups, if it exists, is the pullback 
\begin{equation*}
\begin{tikzcd}
\mathllap{\ker\phi := {} } 
X' \times_{A'} 
A
\ar[d, "p_{\ker \phi}"'] \ar[r, "i_{\ker \phi}"] 
&
A
\ar[d, "\phi"]
\\
X' \ar[r, "0'"'] & 
A' 
\end{tikzcd}
\end{equation*}
where $0'$ is the zero of $A' \to X'$.
\end{Definition}

\begin{Proposition}
\label{prop:KerOfMorph}
Let $\phi: A \to A'$ be a morphism of the bundles of abelian groups $A \to X$ and $A' \to X'$. Then: 
\begin{itemize}

\item[(i)] The composition $\ker\phi \xrightarrow{i_{\ker \phi}} A \longrightarrow X$ equips the kernel of $\phi$ with the structure of a bundle of abelian groups over $X$.

\item[(ii)] The morphism $i_{\ker \phi}: \ker\phi \to A$ 
is a regular monomorphism of bundles of abelian groups over $X$.

\end{itemize}
\end{Proposition}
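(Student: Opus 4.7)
The plan is to transfer the abelian group structure from $A$ to $\ker\phi$ via the universal property of the defining pullback, and then to realize $i_{\ker\phi}$ as the equalizer of two parallel morphisms of bundles of abelian groups over $X$. For part (i), I would take $p \circ i_{\ker\phi}: \ker\phi \to X$ as the bundle projection over $X$, noting from the pullback square that $p_{\ker\phi} = f \circ p \circ i_{\ker\phi}$. The zero section $X \to \ker\phi$ is induced by the pair $(f, 0)$, which factors through the pullback because $\phi \circ 0 = 0' \circ f$ by the assumption that $\phi$ is a morphism of bundles of abelian groups. The addition $\ker\phi \times_X \ker\phi \to \ker\phi$ is induced by the pair $\bigl(p_{\ker\phi} \circ \pr_1,\, +\, \circ\, (i_{\ker\phi} \times_X i_{\ker\phi})\bigr)$; here the required compatibility with $\phi$ and $0'$ reduces, using diagram~\eqref{eq:MorphBund3} and the identity $\phi \circ i_{\ker\phi} = 0' \circ p_{\ker\phi}$, to the standard abelian group identity $+' \circ (0' \times_{X'} 0') = 0'$ in $A' \to X'$. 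The inverse is obtained analogously from $\iota$, using that $\phi$ intertwines inverses. All abelian group axioms then transfer from $A$ to $\ker\phi$ because $i_{\ker\phi}$ is a monomorphism, being the pullback of the split monomorphism $0': X' \to A'$.

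For part (ii), I would factor $\phi$ through a base change. By Proposition~\ref{prop:PullbackBundle}, the pullback bundle $f^* A' := X \times_{X'} A'$ is a bundle of abelian groups over $X$, and the induced morphism $\tilde\phi := (p, \phi): A \to f^* A'$ is a morphism of bundles of abelian groups over $X$. A direct comparison of universal properties shows that the equalizer, in the category of bundles of abelian groups over $X$, of $\tilde\phi$ and the zero morphism $0_{f^* A'} \circ p: A \to f^* A'$ coincides with the pullback defining $\ker\phi$: both characterize the maximal subobject of $A$ on which $\phi$ equals $0' \circ f \circ p$. Since equalizers are regular monomorphisms, it follows that $i_{\ker\phi}$ is a regular monomorphism in the category of bundles of abelian groups over $X$.

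The main obstacle is not conceptual but notational: one must track which projections enter the universal property of the defining pullback and verify that the induced base morphisms agree with the required projections to $X$ and $X'$. Once the identifications $p_{\ker\phi} = f \circ p \circ i_{\ker\phi}$ and $\tilde\phi = (p, \phi)$ are in hand, both parts follow mechanically from the universal properties of pullbacks and the abelian group axioms of $A$ and $A'$.
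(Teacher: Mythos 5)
Your proof is correct, and it follows the route the paper indicates: the paper defers the details to its companion references, but its later remarks (e.g.\ the proof of Corollary~\ref{cor:newproj}) make clear that the group structure on the kernel is obtained exactly as in your part (i), by restricting $+$, $0$ (and $\iota$) along the defining pullback and using that $i_{\ker\phi}$, being the pullback of the split monomorphism $0'$, is monic, so that the group axioms transfer. Your part (ii) — factoring $\phi$ through the base change $f^*A'$ of Proposition~\ref{prop:PullbackBundle} and exhibiting $i_{\ker\phi}$ as the equalizer of $\tilde\phi=(p,\phi)$ and the zero morphism $0_{f^*A'}\circ p$ — is a sound way to get regular monicity in the category of bundles of abelian groups over $X$; the identification of the equalizer with the pullback uses that $p'\circ 0'=\id_{X'}$ forces the $X'$-component of any test map to be $f\circ p\circ g$, which your ``comparison of universal properties'' covers. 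The only steps left implicit are routine: that $\phi$ intertwines inverses (as for ordinary groups, this follows from compatibility with $+$), and that the comparison morphism into $\ker\phi$ induced for a group-bundle test object is itself additive, which again follows by cancelling the monomorphism $i_{\ker\phi}$.
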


\begin{Notation}
\label{not:SectionsofBundle}
The set of sections of a bundle $p: A \to X$ will be denoted by 
\begin{equation*}
\Gamma(X,A) := \{a:X \to A \ | \ p \circ a = \id_X \} \,.
\end{equation*}
\end{Notation}

The functor of sections
\begin{equation}
\label{eq:SectionFunctor}
\begin{aligned}
  \Gamma: \catC \Comma X 
  &\longrightarrow \Set
  \\
  (A \to X) 
  &\longmapsto \Gamma(X,A)    
\end{aligned}
\end{equation}
preserves finite products. As is the case for any functor that preserves products, it maps (abelian) group objects in $\catC \Comma X$ to (abelian) groups. In other words, the set of sections of a bundle of abelian groups has the structure of an abelian group. The sum of two sections $a,b:X \to A$ of a bundle of abelian groups $A \to X$ is given by
\begin{equation}
\label{eq:SectionFunctorPlus}
a+b := + \circ (a,b) \,.
\end{equation}  

Furthermore, \eqref{eq:SectionFunctor} takes morphisms of group objects in $\calC \Comma X$ to group homomorphisms. That is, if $\phi:A \to A'$ is a morphism of bundles of abelian groups over $X$, then the map
\begin{equation}
\label{eq:MapSections}
\begin{aligned}
\phi_*: \Gamma(X,A) &\longrightarrow \Gamma(X,A') \\ 
a &\longmapsto \phi \circ a
\end{aligned}
\end{equation}
is a group homomorphism. If $\phi$ is a monomorphism, then \eqref{eq:MapSections} is injective.

\subsubsection{Bundles of \texorpdfstring{$R$}{R}-modules}
\label{sec:BundleRMod}

Let $R$ be a commutative ring object in the category $\calC$ with addition $\Radd$, zero $\Rzero$, multiplication $\Rmult$, and unit $\Runit$. Let $X \in \catC$ be an object. Since the functor 
\begin{equation*}
\begin{aligned}
  \catC 
  &\longrightarrow
  \catC \Comma X
  \\
  C 
  &\longmapsto 
  (X \times C \xrightarrow{~\pr_1~} X)
\end{aligned}
\end{equation*}
preserves finite products, the ring structure on $R$ induces a ring structure on $\pr_1: X \times R \to X$ in $\catC \Comma X$. An $(X \times R \to X)$-module object in $\catC \Comma X$ will be called, for short, a \textbf{bundle of $R$-modules} over $X$ (Terminology~\ref{term:WibbleBundle}). Explicitly, it is a bundle of abelian groups $p:A \to X$ together with a morphism 
\begin{equation}
\label{diag:ModStr1}
\begin{tikzcd}[column sep={tiny}]
R \times A 
\ar[rr, "\kappa"] 
\ar[dr, "p\, \circ\, \pr_2"'] &&
A \ar[dl, "p"]
\\
& X &
\end{tikzcd}
\end{equation}
satisfying the usual conditions of a left linear action, which are given by a number of commutative diagrams (see Appendix~\ref{sec:GrpRngMod}).

\begin{Example}
\label{ex:VecBund}
Let $p: A \to M$ be a bundle of $\bbR$-modules in $\Mfld$. The zero section satisfies $p \circ 0 = \id_M$, which implies that $Tp \circ T0 = \id_{TM}$, so the tangent map of $p$ is surjective at $0_m$ at any point $m \in M$. In other words, $p$ is a submersion at $0_m$. By the standard form of submersions, we can choose a neighborhood of $0_m$ that is diffeomorphic to $U \times V$, for an coordinate neighborhood $U$ of $m$ and $V \subset \bbR^q$ open, such that the restriction of $p$ is diffeomorphic to the projection $\pr_1: U \times V \to U$. It follows that there are smooth local sections $e_1, \ldots, e_q \in \Gamma\bigl(U, p^{-1}(U) \bigr)$ that are linearly independent at every point $m \in V$. This shows that every $p$-fiber over $U$ is isomorphic to $\bbR^q$ and the smooth map
\begin{equation*}
\begin{aligned}
  U \times \bbR^q &\longrightarrow p^{-1}(U) = A\bigr|_U
  \\
  ( u, c_1, \ldots, c_q )
  &\longmapsto c_1 e_1(u) + \ldots + c_q e_q(u)
\end{aligned}
\end{equation*}
is a isomorphism of bundles of $\bbR$-modules, that is, a local trivialization. We conclude that $A \to M$ is a vector bundle in the usual sense. The upshot is that bundles of $\bbR$-modules in $\Mfld$ are the same as vector bundles.\footnote{We thank the participants of the Bonn-Cologne-G\"ottingen-W\"urzburg PhD retreat 2025 in Burbach for figuring this out with us over a beer.}
\end{Example}

\begin{Definition}
Let $p:A \to X$ and $p': A' \to X'$ be bundles of $R$-modules with module structures $\kappa$ and $\kappa'$ respectively. A \textbf{morphism of bundles of $R$-modules} is a morphism $\phi: A \to A'$
of bundles of abelian groups such that the diagram
\begin{equation}
\label{eq:MorphBundR2}
\begin{tikzcd}[column sep=large]
R \times A \ar[r, "\id_R \times \phi"] 
\ar[d, "\kappa"'] & R \times A' \ar[d, "\kappa'"]
\\
A \ar[r, "\phi"'] & A'
\end{tikzcd}    
\end{equation}
commutes.     
\end{Definition}

\begin{Proposition}
\label{prop:PullbackBundleR}
Let $A \to X$ be a bundle of $R$-modules and $Y \to X$ a morphism in $\catC$. Then, the pullback $\pr_1: Y \times_X A \to Y$ is a bundle of $R$-modules and $\pr_2: Y \times_X A \to A$ is a morphism of bundles of $R$-modules.
\end{Proposition}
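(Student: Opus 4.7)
The plan is to lift the $R$-module structure from $A$ to the pullback via the universal property, paralleling Proposition~\ref{prop:PullbackBundle}. The conceptual reason is that the base-change functor $f^* := Y \times_X (\Empty) : \catC \Comma X \to \catC \Comma Y$ associated to the structural morphism $f: Y \to X$ preserves pullbacks, being a right adjoint to post-composition with $f$. Hence $f^*$ sends any algebraic structure defined by finite limit diagrams in $\catC \Comma X$ to the same kind of structure in $\catC \Comma Y$. Applied to the $R$-module structure on $(p: A \to X)$ over $(X \times R \to X)$, this yields a module structure on $(\pr_1: Y \times_X A \to Y)$ over $f^*(X \times R \to X) \cong (\pr_1: Y \times R \to Y)$, and makes $\pr_2$ (a component of the counit of the adjunction) a morphism of bundles of $R$-modules.

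To spell this out concretely, I would first invoke Proposition~\ref{prop:PullbackBundle} to endow $\pr_1: Y \times_X A \to Y$ with its abelian group bundle structure and to record that $\pr_2$ is a morphism of bundles of abelian groups. I would then define the scalar multiplication $\tilde{\kappa}: R \times (Y \times_X A) \to Y \times_X A$ by the universal property of the pullback, as the unique morphism characterized by
\begin{equation*}
  \pr_1 \circ \tilde{\kappa} = \pr_1 \circ \pr_2
  \,,\qquad
  \pr_2 \circ \tilde{\kappa} = \kappa \circ (\id_R \times \pr_2)
  \,,
\end{equation*}
where on the left-hand side of the first equation $\pr_2$ denotes the projection $R \times (Y \times_X A) \to Y \times_X A$. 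The compatibility $f \circ \pr_1 \circ \pr_2 = p \circ \kappa \circ (\id_R \times \pr_2)$ needed to apply the universal property follows from the pullback relation $f \circ \pr_1 = p \circ \pr_2$ together with $p \circ \kappa = p \circ \pr_2$, which is built into the module diagram~\eqref{diag:ModStr1}.

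Finally, I would verify the module axioms for $\tilde{\kappa}$: associativity with respect to $\Rmult$, compatibility with $\Runit$, and the two distributivity laws over $\Radd$ and over the fiberwise addition of $Y \times_X A$. Each axiom is an equality of morphisms into $Y \times_X A$, so by the uniqueness clause of the universal property it suffices to check it after post-composition with $\pr_1$ and with $\pr_2$. The $\pr_1$-component is trivial since both sides collapse to the $Y$-projection, while the $\pr_2$-component reduces, via the defining equations of $\tilde{\kappa}$ and of the pulled-back fiberwise operations from Proposition~\ref{prop:PullbackBundle}, to the corresponding axiom for $(A, \kappa)$. The identity $\pr_2 \circ \tilde{\kappa} = \kappa \circ (\id_R \times \pr_2)$ is built into the construction and is exactly the commutativity of~\eqref{eq:MorphBundR2}, so $\pr_2$ is a morphism of bundles of $R$-modules. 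No real obstacle arises; the only thing that demands care is the bookkeeping of the various projections in the applications of the universal property.
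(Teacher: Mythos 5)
Your proof is correct, and it is the natural argument: base change along $Y \to X$ is right adjoint to postcomposition, hence preserves the finite products of $\catC \Comma X$, and your explicit $\tilde{\kappa}$ defined by the universal property (with the compatibility $f \circ \pr_1 \circ \pr_2 = p \circ \kappa \circ (\id_R \times \pr_2)$ coming from~\eqref{diag:ModStr1}, and the module axioms checked against the jointly monic projections) is precisely the expected construction, with~\eqref{eq:MorphBundR2} for $\pr_2$ built in by definition. The paper itself defers this proof to \cite{Aintablian:2024} and \cite{AintablianBlohmann:cartan}, but your argument is the evident $R$-module refinement of Proposition~\ref{prop:PullbackBundle} and needs no correction.
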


\begin{Proposition}
\label{prop:KerOfMorphR}
Let $\phi: A \to A'$ be a morphism of the bundles of $R$-modules $A \to X$ and $A' \to X'$. Then:
\begin{itemize}

\item[(i)] The composition $\ker\phi \xrightarrow{i_{\ker \phi}} A \longrightarrow X$ equips the kernel of $\phi$ with the structure of a bundle of $R$-modules over $X$.

\item[(ii)] The morphism $i_{\ker \phi}: \ker\phi \to A$ 
is a regular monomorphism of bundles of $R$-modules over $X$.

\end{itemize}
\end{Proposition}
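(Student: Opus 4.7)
The proof parallels that of Proposition~\ref{prop:KerOfMorph}. By that proposition, $\ker\phi$ is already a bundle of abelian groups over $X$ and $i_{\ker\phi}$ is a regular monomorphism of bundles of abelian groups over $X$; the plan is to endow $\ker\phi$ with a scalar $R$-multiplication compatible with this structure, and then to promote the regularity statement to the category of bundles of $R$-modules.

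For part (i), I construct $\kappa_{\ker\phi}: R \times \ker\phi \to \ker\phi$ using the universal property of the pullback $\ker\phi = X' \times_{A'} A$. Write $p_{X'}: \ker\phi \to X'$ for the projection to the $X'$-factor. The two candidate legs into the pullback are
\begin{equation*}
  \kappa \circ (\id_R \times i_{\ker\phi}): R \times \ker\phi \longrightarrow A
  \,,
  \qquad
  p_{X'} \circ \pr_2: R \times \ker\phi \longrightarrow X'
  \,.
\end{equation*}
Their compatibility condition
\begin{equation*}
  \phi \circ \kappa \circ (\id_R \times i_{\ker\phi}) = 0' \circ p_{X'} \circ \pr_2
\end{equation*}
follows by chaining the $R$-linearity of $\phi$ (diagram~\eqref{eq:MorphBundR2}), the defining property $\phi \circ i_{\ker\phi} = 0' \circ p_{X'}$ of the pullback, and the module axiom $\kappa' \circ (\id_R \times 0') = 0' \circ \pr_{X'}$ for $A'$. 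This yields the desired morphism $\kappa_{\ker\phi}$; by construction one has $i_{\ker\phi} \circ \kappa_{\ker\phi} = \kappa \circ (\id_R \times i_{\ker\phi})$, which is precisely the square~\eqref{eq:MorphBundR2} for $i_{\ker\phi}$, so that $i_{\ker\phi}$ is a morphism of bundles of $R$-modules. The remaining module axioms for $\kappa_{\ker\phi}$ and its compatibility with the projection $\ker\phi \to X$ follow by post-composing the relevant diagrams with the monomorphism $i_{\ker\phi}$ and reducing to the corresponding identities on $A$.

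For part (ii), I promote $i_{\ker\phi}$ to a regular monomorphism in the category of bundles of $R$-modules over $X$ by exhibiting it as an equalizer there. By Proposition~\ref{prop:PullbackBundleR}, the base-change $X \times_{X'} A'$ is a bundle of $R$-modules over $X$, and both the canonical map $A \to X \times_{X'} A'$ induced by $(p, \phi)$ and the zero morphism $A \to X \times_{X'} A'$ are morphisms in this category. Their equalizer in $\catC$ is isomorphic to $\ker\phi$ by a standard pullback composition, and the $R$-module structure constructed in part (i) is forced to be the unique one making $i_{\ker\phi}$ into this equalizer in bundles of $R$-modules. The bulk of the work lies in verifying the pullback compatibility condition in part (i); the rest consists of routine applications of the monomorphism property of $i_{\ker\phi}$.
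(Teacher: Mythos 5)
Your proof is correct and takes essentially the same route the paper relies on: the paper defers this proof to its companion references, but its later remark following Proposition~\ref{prop:ModuleStrRest} identifies the $R$-module structure on $\ker\phi$ as precisely the restriction of $\kappa$ obtained from the universal property of the defining pullback, which is exactly your construction of $\kappa_{\ker\phi}$, with the remaining axioms checked through the monomorphism $i_{\ker\phi}$. Your equalizer presentation in (ii) likewise mirrors the abelian-group case of Proposition~\ref{prop:KerOfMorph}, using Proposition~\ref{prop:PullbackBundleR} for the base-changed bundle, so no gaps.
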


\begin{Remark}
\label{rmk:RingFunctions}
The ring object $\pr_1: X \times R \to X$ in $\calC \Comma X$ is mapped by the functor of sections \eqref{eq:SectionFunctor} to a ring, which is isomorphic by
\begin{equation}
\label{eq:SectionsTrivial}
\begin{aligned}
  (\pr_2)_* : 
  \Gamma(X, X \times R) &\xrightarrow{~\cong~}
  \calC(X, R) \\
  a &\longmapsto \pr_2 \circ a
\end{aligned}
\end{equation}
to the ring of $R$-valued morphisms with addition and multiplication
\begin{equation*}
  f + g := \Radd \circ (f,g)
  \,,\quad
  fg := \Rmult \circ (f,g)
  \,,
\end{equation*}
for all $f, g \in \calC(X,R)$, with zero $X \to * \stackrel{\Rzero}{\to} R$, and unit $X \to * \stackrel{\Runit}{\to} R$.
\end{Remark}

Now, let $A \to X$ be a bundle of $R$-modules with module structure $\kappa: R \times A \to A$. Using the isomorphism \eqref{eq:SectionsTrivial}, we see that $\Gamma(X,A)$ has a $\catC(X,R)$-module structure given by 
\begin{equation}
\label{eq:SectionsModuleStr}
  fa := \kappa \circ (f,a) \,,
\end{equation}
for all $f \in \calC(X,R)$ and $a \in \Gamma(X,A)$. Moreover, if $\phi:A \to A'$ is a morphism of bundles of $R$-modules over $X$, then the map \eqref{eq:MapSections} is a morphism of $\calC(X,R)$-modules.

\subsection{Symmetric structures}
\label{sec:Symm}

\subsubsection{Compositions of natural transformations}

We will follow \cite{Rosicky:1984} for the notation of the compositions of functors and natural transformations. The composition of functors $G: \calA \to \calB$ and $F:\calB \to \calC$ will be denoted by juxtaposition $FG: \calA \to \calC$. Therefore, the horizontal composition of natural transformations $\alpha: F \to F'$ and $\beta: G \to G'$ will also be denoted by juxtaposition $\alpha\beta: FG \to F'G'$. It is the natural transformation with components $(\alpha \beta)_A$ given by the diagonal of the following commutative diagram
\begin{equation*}
\begin{tikzcd}[column sep=4em,row sep=3em]
FGA \arrow[r, "F(\beta_A)"] \arrow[d, "\alpha_{GA}"'] \arrow[rd, "(\alpha \beta)_A"] & FG'A \arrow[d, "\alpha_{G'A}"] \\
F'GA \arrow[r, "F'(\beta_A)"'] & F'G'A
\end{tikzcd} 
\end{equation*}
for all $A \in \catA$. We will use the notation $F$ for the identity natural transformation $\id_F:F \to F$. Hence, we have the identities
\begin{align*}
  (F\beta)_A &= F(\beta_A)
  \\
  (\alpha G)_A &= \alpha_{GA}
  \,.
\end{align*}
The vertical composition of $\alpha$ with a natural transformation $\alpha': F' \to F''$ will be denoted by $\alpha' \circ \alpha: F \to F''$. Its components are given by $(\alpha' \circ \alpha)_B = \alpha'_B \circ \alpha_B$ for all $B \in \catB$. The strict monoidal category of endofunctors on $\catC$ will be denoted by $\End(\calC)$ and the identity endofunctor by $\Id: \catC \to \catC$. Proceeding iteratively, the $n$-fold composition of an endofunctor $F: \catC \to \catC$ with itself is also denoted by juxtaposition
\begin{equation*}
F^n := \underbrace{F \ldots F}_{n \text{ factors}}
\end{equation*}
for all $n \geq 1$. 

\subsubsection{Symmetric structure on an endofunctor}

The following notion is implicit in Ro-sick\'y's definition:



\begin{Definition}
\label{def:SymStruc}
Let $F: \calC \to \calC$ be a functor and $\tau: F^2 \to F^2$ a natural transformation. Let $\tau_{12} := \tau\,F$ and $\tau_{23}:= F\,\tau$ be the two trivial extensions of $\tau$ to natural transformations $F^3 \to F^3$. We call $\tau$ a \textbf{braiding on $F$} if it satisfies the braid relation $\tau_{12} \circ \tau_{23} \circ \tau_{12} = \tau_{23} \circ \tau_{12} \circ \tau_{23}$. A braiding $\tau$ is called a \textbf{symmetric structure on $F$} if it satisfies $\tau\circ \tau = F^2$.
\end{Definition}

A symmetric structure on $F$ defines an action $S_n \to \Aut(F^n)$ of the symmetric group $S_n$ on $F^n$. In linear categories, in particular in vector spaces, the braid relation is also referred to as Yang-Baxter equation.

\begin{Definition}
\label{def:PreserveFibProd}
An endofunctor $F: \calC \to \calC$ \textbf{preserves the fiber products} of a bundle $p:A \to X$, if for all $k \geq 1$ the natural morphism of bundles over $FX$,
\begin{equation}
\label{eq:MorphBundles4}
  \nu_{k,X}:
  F(A \times_X^{p,p} \ldots \times_X^{p,p} A) \longrightarrow
  FA \times_{FX}^{Fp, Fp} \ldots \times_{FX}^{Fp, Fp} FA
  \,,
\end{equation}
where both sides have the same number $k$ of factors, is an isomorphism.
\end{Definition}


\subsection{Rosick\'{y}'s axioms}
\label{sec:RosickysAxioms}

In~\cite{Rosicky:1984}, Rosick\'{y} introduced the notion of \textit{abstract tangent functors}, which captures the natural categorical structure of the tangent functor of manifolds that is needed to define the Lie bracket of vector fields. 

\begin{Definition}[Section~2 in\cite{Rosicky:1984}, Definition~2.3 in  \cite{CockettCruttwell:2014}]
\label{def:TangentStructure}
A \textbf{tangent structure} on a category $\calC$ consists of a functor $T: \calC \to \calC$, called \textbf{abstract tangent functor}, together with natural transformations $\pi: T \to \Id$, $0: \Id \to T$, $+: T_2 \to T$, $\lambda: T \to T^2$, and $\tau: T^2 \to T^2$, such that the following axioms hold:
\begin{itemize}

\item[(T1)] \textbf{Fiber products:} 
The pullbacks
\begin{equation}
\label{eq:TanFun1}
  T_k := \underbrace{T \times_\Id T \times_\Id \ldots \times_\Id T}_{k \text{ factors}}
\end{equation}
over $T \stackrel{\pi}{\to} \Id$ exist for all $k \geq 1$, are pointwise, and preserved by $T$ (Definition~\ref{def:PreserveFibProd}).

\item[(T2)] \textbf{Bundle of abelian groups:}
$T \stackrel{\pi}{\to} \Id$ with neutral element $0$ and addition $+$ is a bundle of abelian groups over $1$ (Terminology~\ref{term:WibbleBundle}).

\item[(T3)] \textbf{Symmetric structure:}
$\tau: T^2 \to T^2$ is a symmetric structure on $T$ (Definition~\ref{def:SymStruc}). Moreover, $\tau$ is a morphism of bundles of abelian groups. That is, the diagrams
\begin{equation}
\label{eq:TanFun2}
\begin{tikzcd}[column sep=tiny]
T^2 \ar[rr, "\tau"] \ar[dr, "T \pi"'] && T^2 \ar[dl, "\pi T"]
\\
& T &
\end{tikzcd}    
\end{equation}
and
\begin{equation}
\label{eq:TanFun2b}
\begin{tikzcd}[column sep=large]
T^2 \times_T^{T\pi, T\pi} T^2
\ar[r, "\tau \times_T \tau"] 
\ar[d, "\nu_2^{-1}"']
&
T_2 T
\ar[dd, "+T"] 
\\
T T_2  \ar[d, "T+"']
&
\\
T^2 \ar[r, "\tau"'] 
&
T^2
\end{tikzcd}     
\end{equation}
commute, where $\nu_2$ is the morphism~\eqref{eq:MorphBundles4} for $A=TX \xrightarrow{\pi_X} X$, $F=T$, and $k=2$.


\item[(T4)] \textbf{Vertical lift:} 
The diagrams
\begin{equation}
\label{eq:TanFun3}
\begin{tikzcd}
T \ar[r, "\lambda"] \ar[d, "\pi"'] & T^2 \ar[d, "\pi T"]
\\
\Id \ar[r, "0"'] & T
\end{tikzcd}    
\qquad\qquad
\begin{tikzcd}
T \ar[r, "\lambda"] \ar[d, "\lambda"'] & 
T^2 \ar[d, "\lambda T"]
\\
T^2 \ar[r, "T \lambda"'] & T^3
\end{tikzcd}    
\end{equation}
commute. Moreover, the first diagram is a morphism of bundles of abelian groups, that is $(+T) \circ (\lambda \times_0 \lambda) = \lambda \circ +$.

\item[(T5)] \textbf{Compatibility of vertical lift and symmetric structure:}
The diagrams
\begin{equation}
\label{eq:TanFun5}
\begin{tikzcd}[column sep=tiny]
& T \ar[dl, "\lambda"'] \ar[dr, "\lambda"] &
\\
T^2 \ar[rr, "\tau"'] & &
T^2
\end{tikzcd}    
\qquad\qquad
\begin{tikzcd}
T^2 \ar[r, "T\lambda"] \ar[d, "\tau"'] & 
T^3 \ar[r, "\tau T"] &
T^3 \ar[d, "T\tau"]
\\
T^2 \ar[rr, "\lambda T"'] & &
T^3
\end{tikzcd}    
\end{equation}
commute.

\item[(T6)] \textbf{The vertical lift is a kernel:} The diagram
\begin{equation}
\label{eq:TanFun4}
\begin{tikzcd}
T \ar[r, "\lambda"] \ar[d, "\pi"'] & T^2 \ar[d, "{(\pi T, T\pi)}"]
\\
\Id \ar[r, "{(0, 0)}"'] & T_2
\end{tikzcd}    
\end{equation}
is a pointwise pullback.
\end{itemize}
A category together with a tangent structure is called a \textbf{tangent category}.
\end{Definition}

\begin{Terminology}
\label{term:negatives}
In \cite{CockettCruttwell:2014} and subsequent work, Rosick\'{y}'s original condition that $T \to \Id$ be a bundle of abelian groups was relaxed to a bundle of abelian monoids. In that terminology, Rosick\'{y}'s notion is called a ``tangent category with negatives'' or a ``Rosick\'{y} tangent category''. All tangent categories in this paper will be with negatives.
\end{Terminology}

\begin{Notation}
\label{not:Difference}
The subtraction of the bundle of abelian groups $T \to 1$, as defined by Diagram~\eqref{diag:difference}, will be denoted by $-:T_2 \to T$.
\end{Notation}

Using the naturality of $\pi$ and $0$, we get the following identities:
\begin{align}
\pi \circ T \pi &= \pi \circ \pi T \,,
\notag\\
T0 \circ 0 &= 0T \circ 0 \,.
\label{eq:T00T}
\end{align}

\begin{Remark}
\label{rem:differentialbundle}
Recall that the tangent space of a finite-dimensional vector space at a point is canonically isomorphic to the vector space itself \cite[Proposition~3.13]{lee}. Thus, the tangent spaces of the fibers of a smooth vector bundle can be identified with the fibers. This key concept has been generalized in \cite{CockettCruttwell:2017}, where the authors develop the notion of differential bundles and fibrations in tangent categories. A differential bundle is a bundle of abelian groups together with a compatible vertical lift \cite[Definition~2.3]{CockettCruttwell:2017}. The vertical lift in a tangent structure and Axiom~\eqref{eq:TanFun3} turn the tangent bundle $TX \to X$ into a differential bundle in this sense. 
\end{Remark}

The vertical lift can be extended by the additive bundle structure to the morphism
\begin{equation*}
  \lambda_2:
  T_2 \xrightarrow{~T0 \times_0 \lambda~}
  T_2 T
  \xrightarrow{~+T~}
  T^2
  \xrightarrow{~\tau~}
  T^2
  \,.
\end{equation*}
In components, 
\begin{equation}
\label{eq:VertLiftExt}
\begin{split}
  \lambda_{2,X}
  &= \tau_X \circ +_{TX} \circ (T0_X \times_{0_X} \lambda_X)
  \,,
\end{split}
\end{equation}
for all $X \in \catC$.
It was shown in \cite[Lemma~3.10]{CockettCruttwell:2014}, assuming all other axioms of a tangent structure (with negatives), that Axiom~\eqref{eq:TanFun4} is satisfied if and only if
\begin{equation}
\label{diag:VertLift}
\begin{tikzcd}
T_2 \ar[r, "\lambda_2"] \ar[d, "\pi \circ \pr_1"'] & 
T^2 \ar[d, "{T\pi}"]
\\
\Id \ar[r, "0"'] & T
\end{tikzcd}    
\end{equation}
is a pointwise pullback.

\subsection{The Lie bracket of vector fields}
\label{sec:LieBracket}

\begin{Definition}[Section~3 in \cite{Rosicky:1984}]
Let $\calC$ be a tangent category. A \textbf{vector field} on $X \in \calC$ is a section of $\pi_X: TX \to X$.
\end{Definition}

The bracket of two vector fields $v,w: X \to TX$ is defined as follows. The composition of $v$ and $Tw: TX \to T^2 X$ satisfies
\begin{equation}
\label{eq:piTwv}
\begin{split}
  \pi_{TX} \circ Tw \circ v 
  &= w \circ \pi_X \circ v
  = w \circ \id_X
  \\
  &= w
  \,.
\end{split}
\end{equation}
When we exchange $v$ and $w$, we have $\pi_{TX} \circ Tv \circ w = v$. In order to be able to subtract the two terms in the fiber product $T^2 X \times_{TX} T^2 X$, we have to apply the symmetric structure on $T^2$, so that by Axiom~\eqref{eq:TanFun2} we obtain
\begin{equation}
\label{eq:piTauTvw}
\begin{split}
  \pi_{TX} \circ \tau_X \circ Tv \circ w 
  &= T\pi_X \circ Tv \circ w
  = T\id_X \circ w
  = \id_{TX} \circ w
  \\
  &= w
  \,.
\end{split}
\end{equation}
This shows that $Tw \circ v$ and $\tau_X \circ Tv \circ w$ project to the same fiber of $\pi_{TX}: T^2 X \to TX$, and hence there is a unique map
\begin{equation*}
(Tw \circ v, \tau_X \circ Tv \circ w): X \longrightarrow T^2X \times_{TX}^{\pi_{TX}, \pi_{TX}} T^2X \,.
\end{equation*}
We denote the subtraction of the two components by
\begin{equation}
\label{eq:DeltaOrig}
  \delta(v,w)
  := 
  -_{TX} \circ 
  (Tw \circ v, \tau_X \circ Tv \circ w): X \longrightarrow T^2X
  \,,
\end{equation}
where the minus $-_{TX}$ denotes the difference in the bundle of abelian groups $\pi_{TX}: T^2 X \to TX$ (Notation~\ref{not:Difference}).
Similarly, observe that
\begin{equation}
\label{eq:vExchw1}
  T\pi_{X} \circ Tw \circ v 
  = T (\pi_X \circ w) \circ v
  = \id_{TX} \circ v
  = v
  \,,
\end{equation}
and
\begin{equation}
\label{eq:vExchw2}
T\pi_{X} \circ \tau_X \circ Tv \circ w
=
\pi_{TX} \circ Tv \circ w 
=
v
\,.
\end{equation}
It follows from Equations~\eqref{eq:vExchw1} and~\eqref{eq:vExchw2} that the left square in the following diagram
\begin{equation}
\label{diag:kerTpi}
\begin{tikzcd}
X
\ar[r, "{(Tw \, \circ \, v, \, \tau_X \, \circ \, Tv \, \circ w)}"]
\ar[d, "\id_X"']
&[4.5em]
T^2X \times_{TX}^{\pi_{TX}, \pi_{TX}} T^2X
\ar[r, "-_{TX}"]
\ar[d, "{T\pi_{X} \times_{\pi_X} T\pi_X}"]
&
T^2X
\ar[d, "T\pi_X"]
\\
X
\ar[r, "{(v,v)}"']
&
TX \times_X^{\pi_X,\pi_X} TX
\ar[r, "-_X"']
&
TX
\end{tikzcd}
\end{equation}
commutes. The commutativity of the right square follows from the naturality of the subtraction $-$. We conclude that the outer rectangle of Diagram \eqref{diag:kerTpi} commutes. Observe that the composition of the upper horizontal maps is $\delta(v,w)$. The composition of the lower horizontal maps is $0_X$ by the axiom of the inverse of a group structure. Therefore, we have
\begin{equation}
\label{eq:DeltaKerOrig}
T\pi_X \circ \delta(v,w) = 0_X \,.
\end{equation}
In other words, the map $\delta(v,w)$ takes values in the kernel of $T\pi_X$, which, by the pullback diagram~\eqref{diag:VertLift}, is isomorphic to $TX \times_X TX$. By projecting to the second factor we thus obtain the vector field $[v,w]: X \to TX$. It is the unique vector field satisfying
\begin{equation}
\label{eq:deltaBracketRel}
  \delta(v,w) = \lambda_{2,X} \circ \big(w, [v,w] \bigr)
  \,.
\end{equation}
The entire construction can be summarized by the following commutative diagram:
\begin{equation}
\label{eq:BracketDef}
\begin{tikzcd}[row sep=3.5em]
TX
&
X 
\ar[r, "{(v,w)}"] 
\ar[l, "{[v,w]}"']
\ar[dr, "{\exists! \ (Tw \, \circ \, v, \, \tau_X \, \circ \, Tv \, \circ \, w)}", dashed, near end]
\ar[dl, "{\exists!}"', dashed]
\ar[d, "{\delta(v,w)}"]
&
TX \times TX \ar[r, "Tw \times Tv"]
&
T^2 X \times T^2 X
\ar[d, "\id_{T^2X} \times \tau_X"]
\\
TX \times_X TX
\ar[u, "\pr_2"]
\ar[dr, phantom, "\lrcorner", very near start]
\ar[r, "\lambda_{2,X}"] 
\ar[d, "\pi_X \circ \pr_1"']
&
T^2 X 
\ar[d, "{T\pi_X}"]
&
T^2 X \times_{TX} T^2 X
\ar[dr, phantom, "\lrcorner", very near start]
\ar[l, "-_{TX}"']
\ar[r, "i"]
\ar[d]
&
T^2 X \times T^2 X
\ar[d, "\pi_{TX} \times \pi_{TX}"]
\\
X 
\ar[r, "{0_X}"']
&
TX
&
TX
\ar[r, "\Delta_{TX}"']
&
TX \times TX
\end{tikzcd}
\end{equation}
We see that all ingredients of the tangent structure are needed. It was announced in \cite{Rosicky:1984} and proved in \cite{CockettCruttwell:2015} with the input of Rosick\'y that $[v,w]$ satisfies the Jacobi relation.

Recall that $\Gamma(X, TX)$ is equipped with the structure of an abelian group given by the addition
\begin{equation*}
  v + w = +_X \circ (v,w)
\end{equation*}
for all $v, w \in \Gamma(X, TX)$ and the zero section $0_X$ \big(see Equation~\eqref{eq:SectionFunctorPlus}\big). From the associativity of the addition it follows that the subtraction $-_{TX}$ is linear in each argument, so that $\delta(v,w)$ and, therefore, the Lie bracket $[v,w]$ is bilinear. We conclude that $\Gamma(X,TX)$ is a $\bbZ$-Lie algebra.

\subsection{Cartesian tangent categories with scalar multiplication}
\label{sec:CatTanScal}

\begin{Definition}
\label{def:CartTan}
A tangent structure on $\calC$ is called \textbf{cartesian} if the tangent functor preserves finite products, that is, if the natural morphism
\begin{equation}
\label{eq:chiXY}
  \chi_{X,Y}: T(X \times Y) 
  \longrightarrow TX \times TY
\end{equation}
has an inverse for all $X,Y \in \calC$ and if $T* \cong *$ for the terminal object $*$ of $\catC$.
\end{Definition}

Definition~\ref{def:CartTan} is logically weaker than \cite[Definition~2.8]{CockettCruttwell:2014}, which additionally requires $\chi$ to define a strong morphism of tangent categories. We have used the same terminology as Cockett and Cruttwell, because we will show in \cite{AintablianBlohmann:cartan} that the two definitions are equivalent.

In a cartesian tangent category, we can define the \textbf{partial tangent morphisms} of a morphism $f: X \times Y \to Z$ by \cite[Definition~2.9]{CockettCruttwell:2014}
\begin{align*}
  \Tpart{1} f: TX \times Y 
  &\xrightarrow{~\id_{TX} \times 0_Y~}
  TX \times TY \xrightarrow{~\chi_{X,Y}^{-1}~}
  T(X \times Y) \xrightarrow{~Tf~} 
  TZ
  \\
  \Tpart{2} f: X \times TY 
  &\xrightarrow{~0_X \times \id_{TY}~}
  TX \times TY \xrightarrow{~\chi_{X,Y}^{-1}~}
  T(X \times Y)\xrightarrow{~Tf~} 
  TZ
  \,,
\end{align*}
where the index refers to the factor in the product. 

A ring object $R$ in $\calC$ gives rise to an endofunctor $\Id \times R: \calC \to \calC$, $X \mapsto X \times R$, which is equipped with the projection $\pr_1: \Id \times R \to \Id$. The ring structure of $R$ equips $\Id \times R \to \Id$ with the structure of a ring internal to endofunctors over $\Id$. An $(\Id \times R \to \Id)$-module in $\End(\calC) \Comma \Id$ will be called a \textbf{bundle of $R$-modules} over $\Id$ (Terminology~\ref{term:WibbleBundle}, Section~\ref{sec:BundleRMod}). An $(\Id \times R \to \Id)$-module structure on the tangent bundle $T \to \Id$ will be called, for short, an \textbf{$R$-module structure} on the tangent category. Explicitly, it consists of a natural morphism
\begin{equation}
  \kappa_X: R \times TX 
  \longrightarrow TX
\end{equation}
of bundles over $X$ such that the usual diagrams of associativity, unitality, and linearity in $R$ and $TX$ commute (see Appendix~\ref{sec:GrpRngMod}).


\begin{Definition}
\label{def:ModTangentStable}
An $R$-module $A$ in a cartesian tangent category $\calC$ with $R$-module structure will be called \textbf{tangent-stable}, if there is an isomorphism
\begin{equation*}
  T_{\hat{0}} A \cong A
\end{equation*}
of $R$-modules, where $\hat{0}: * \to A$ is the zero of the module, and $T_{\hat{0}} A = * \times_A^{\hat{0}, \, \pi_A} TA$.
\end{Definition}

\begin{Proposition}
\label{prop:TanStabAxA}
An $R$-module $A$ is tangent-stable if and only if its tangent bundle has a trivialization
\begin{equation*}
  TA \cong A \times A
  \,.
\end{equation*}
\end{Proposition}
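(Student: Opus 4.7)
The plan is to prove both implications using the canonical comparison between $TA$ and $A \times A$ built from the $R$-module structure on $A$ together with the identification of the fiber of $TA$ over the zero $\hat 0$.

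For the easy direction $(\Leftarrow)$, suppose $\phi: TA \xrightarrow{\cong} A \times A$ is an isomorphism of bundles of $R$-modules over $A$, where $\pr_1: A \times A \to A$ is viewed as the trivial bundle with fiber $A$. Pulling back along the point $\hat 0: * \to A$ and applying Proposition~\ref{prop:PullbackBundleR}, I obtain an induced isomorphism of $R$-modules
\[
T_{\hat 0} A = * \times_A^{\hat 0, \pi_A} TA \xrightarrow{\cong} * \times_A^{\hat 0, \pr_1} (A \times A) \cong A,
\]
which is exactly tangent stability.

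For the harder direction $(\Rightarrow)$, I would use ``left translation'' on the group object $(A, \hat +)$, interpreted in the tangent-categorical sense via the partial tangent. Assume $\alpha: A \xrightarrow{\cong} T_{\hat 0} A$ is an iso of $R$-modules, and let $\mu := \Tpart{2} \hat + : A \times TA \to TA$. Define
\[
\Phi : A \times A \xrightarrow{\id_A \times \alpha} A \times T_{\hat 0} A \xrightarrow{\id_A \times j} A \times TA \xrightarrow{\mu} TA,
\]
where $j$ is the canonical monomorphism $T_{\hat 0} A \hookrightarrow TA$ out of the defining pullback. The inverse $\Psi : TA \to A \times A$ sends a tangent vector $v$ over its base point $a := \pi_A(v)$ to $\bigl(a,\, \alpha^{-1}(\mu(-a,\, v))\bigr)$, where $-a$ denotes the group inverse of $a$ in $A$; this is well defined because $\mu(-a, v) \in T_{(-a)\hat + a} A = T_{\hat 0} A$.

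Three verifications are required. First, the action axioms $\mu(\hat 0, v) = v$ and $\mu(a_1, \mu(a_2, v)) = \mu(a_1 \hat + a_2, v)$ follow by applying $T$ to the unitality and associativity diagrams of $\hat +$ and unpacking the partial tangent via cartesianness of $T$. Second, $\Phi$ and $\Psi$ are mutually inverse, which reduces via the action axioms to the group-inverse identity $(-a) \hat + a = \hat 0$. Third, $\Phi$ must be shown to be a morphism of bundles of $R$-modules over $A$: one checks that the restriction $\mu(a, -): T_{\hat 0} A \to T_a A$ is an $R$-module isomorphism, using the naturality of the bundle addition $+: T_2 \to T$ and of the scalar multiplication $\kappa: R \times T \to T$ applied to the translation morphism $\hat +_a : A \to A$. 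The main obstacle is this last linearity check: it uses the full strength of the scalar multiplication axioms in Definition~\ref{def:RScalar}, which ensure that tangent maps commute with $\kappa$ on fibers and interact correctly with the additive structure, so that partial tangents of $R$-bilinear morphisms such as $\hat +$ remain $R$-linear in each slot. The remaining manipulations are formal bookkeeping with the pullback defining $T_{\hat 0} A$ and the cartesian isomorphisms $\chi_{X,Y}$.
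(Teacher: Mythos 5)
Your proposal is correct, but note that the paper itself gives no argument for this proposition --- it defers the proof to the companion paper \cite{AintablianBlohmann:cartan} --- so there is no in-text proof to compare against; what you give is the expected ``translation'' argument that underlies the differential-object characterization mentioned in Remark~\ref{rmk:DiffObjTanStab}. Both directions of your argument check out: the $(\Leftarrow)$ direction is exactly a base change along $\hat 0$ via Proposition~\ref{prop:PullbackBundleR}, and in the $(\Rightarrow)$ direction the action axioms for $\mu = \Tpart{2}\hat{+}$, the mutual inversity of $\Phi$ and $\Psi$, and the fiberwise additivity and $R$-linearity of translation all follow as you indicate (your pointwise formulas should of course be read as computations with generalized elements, and the factorization of $\mu\circ(\hat{-}\circ\pi_A,\id_{TA})$ through $T_{\hat 0}A$ uses that the pullback projection $j$ is monic, which holds since $\hat 0\colon * \to A$ is monic). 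One correction of emphasis: the final linearity check does \emph{not} need ``the full strength'' of Definition~\ref{def:RScalar}, and it had better not, since the proposition is stated (via Definition~\ref{def:ModTangentStable}) for a cartesian tangent category equipped merely with an $R$-module structure $\kappa$, not with a scalar multiplication; what you actually use is the naturality of $+$, $0$ and $\kappa$ applied to $\hat{+}$ and to the projections, the fact that $\kappa$ and $+$ preserve the zero section, and the compatibility of $\chi$ with $\pi$, $0$, $+$ and $\kappa$ (the strong cartesianness the paper imports from the companion paper). Phrasing the linearity step this way keeps the hypotheses of your proof aligned with those of the statement; the extra diagrams \eqref{diag:ScalarMult1}--\eqref{diag:ScalarMult3} involving $\lambda$ and $\tau$ play no role here.
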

\begin{proof}
The proof will be given in \cite{AintablianBlohmann:cartan}.
\end{proof}

\begin{Remark}
\label{rmk:DiffObjTanStab}
It follows from Proposition~\ref{prop:TanStabAxA} and \cite[Theorem~6]{Ching:2024} that $A$ is tangent stable if and only if it is a differential object in the sense of \cite[Definition~4.7]{CockettCruttwell:2014}. The condition $T_0 A \cong A$ is logically weaker, easier to check for our examples, and meaningful in more general situations, which is why it is our primary definition.
\end{Remark}

Let $A$ be a tangent-stable $R$-module. The projection onto the first factor of $TA \cong A \times A$ is the bundle projection onto the base. The projection onto the second factor, the fiber of the bundle, will be denoted by 
\begin{equation*}
  \Val_A: TA \longrightarrow A \,.
\end{equation*}

In a tangent category, we have to require an $R$-module structure to be compatible with the rest of the tangent structure in order to obtain the usual relations of a Cartan calculus.

\begin{Definition}
\label{def:TangString}
A commutative ring object in a cartesian tangent category will be called \textbf{tangent-stable} if it is tangent-stable as module over itself.
\end{Definition}

\begin{Definition}
\label{def:RScalar}
Let $R$ be a commutative ring in a cartesian tangent category $\catC$. An $R$-module structure $\kappa_X: R \times TX \to TX$ on the tangent bundle $\pi:T \to \Id$ will be called a \textbf{scalar multiplication} if $R$ is tangent-stable and if the following diagrams commute for all $X \in \calC$:
\begin{equation}
\label{diag:ScalarMult1}
\begin{tikzcd}[column sep={large}]
R \times TX 
\ar[r, "\id_R \times \lambda_X"] 
\ar[d, "\kappa_X"'] &
R \times T^2 X \ar[d, "\kappa_{TX}"]
\\
TX 
\ar[r, "\lambda_X"'] 
&
T^2 X
\end{tikzcd}
\end{equation}
\begin{equation}
\label{diag:ScalarMult2}
\begin{tikzcd}[column sep=6em]
TR \times TX
\ar[r, "\Tpart{1}\kappa_X"]
\ar[d, "{(\pi_R, \Val_R) \, \times \, \id_{TX}}"', "\cong"]
&[4em]
T^2X
\\
R \times R \times TX
\ar[r, "{\big(\kappa_X \circ (\pr_1,\pr_3), \, 
\kappa_X \circ (\pr_2,\pr_3)\big)}"']
&
T_2 X
\ar[u, "\lambda_{2,X}"']
\end{tikzcd}
\end{equation}
\begin{equation}
\label{diag:ScalarMult3}
\begin{tikzcd}[column sep=3em]
R \times T^2 X
\ar[r, "\Tpart{2}\kappa_X"]
\ar[d, "\id_R \times \tau_X"']
&
T^2 X
\\
R \times T^2 X
\ar[r, "\kappa_{TX}"']
&
T^2 X
\ar[u, "\tau_X"']
\end{tikzcd}
\end{equation}

\end{Definition}

\subsection{The Leibniz rule for vector fields and functions}
\label{sec:Leibrule}

The Lie bracket of vector fields on a smooth manifold $M$ satisfies the usual Leibniz rule
\begin{equation*}
  [v,fw] = (v \cdot f)w + f[v,w] 
\end{equation*}
for all smooth maps $f:M \to \bbR$ and vector fields $v,w$ on $M$ \cite[Proposition~8.28~(iv)]{lee}. In this section, we state that this identity holds in the setting of a cartesian tangent category with scalar multiplication over a ring. The detailed proofs of the statements in this sections will be given in \cite{AintablianBlohmann:cartan}.

First, we describe the action of a vector field on a ring-valued morphism on $X$. Let $\catC$ be a tangent category and $R \in \catC$ a ring object with addition $\Radd$, zero $\Rzero$, multiplication $\Rmult$, and unit $\Runit$. Recall from Remark~\ref{rmk:RingFunctions} that the set $\calC(X,R)$ of $R$-valued morphisms is equipped with a ring structure, where the addition and multiplication are given by
\begin{equation*}
  f + g := \Radd \circ (f,g)
  \,,\qquad
  fg := \Rmult \circ (f,g)
\end{equation*}
for all morphisms $f, g \in \calC(X,R)$. The zero is given by $X \to * \stackrel{\Rzero}{\to} R$, and the unit by $X \to * \stackrel{\Runit}{\to} R$.
Assume that $R$ is tangent-stable (Definition~\ref{def:TangString}). Then we can define an action of vector fields on $R$-valued morphisms by
\begin{equation}
\label{eq:VecFieldFunc}
  v \cdot f: X \xrightarrow{~v~} TX \xrightarrow{~Tf~} TR \xrightarrow{~\Val_R~} R 
\end{equation}
for all $v \in \Gamma(X,TX)$ and $f \in \catC(X,R)$.

\begin{Proposition}
\label{prop:VecActFunc}
In a cartesian tangent category $\catC$ with scalar $R$-multiplication, the action~\eqref{eq:VecFieldFunc} is a representation of the Lie algebra of vector fields by derivations on the ring of $R$-valued morphisms. That is,
\begin{equation*}
\begin{split}
  v \cdot (f + g)
  &= v \cdot f + v \cdot g
  \\
  (v+w) \cdot f
  &=
  v \cdot f + w \cdot f
  \\
  [v,w] \cdot f 
  &= v \cdot (w \cdot f) - w \cdot (v \cdot f)
  \\
  v \cdot (fg) 
  &= (v \cdot f) g + f(v \cdot g)
\end{split}    
\end{equation*}
for all $v, w \in \Gamma(X,TX)$ and $f,g \in \calC(X,R)$.
\end{Proposition}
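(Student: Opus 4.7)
The plan is to verify each of the four identities by unpacking the definition $v \cdot f = \Val_R \circ Tf \circ v$ and applying naturality of the tangent structure maps together with the trivialization $TR \cong R \times R$ of bundles of $R$-modules afforded by the tangent-stability of $R$ (Proposition~\ref{prop:TanStabAxA}).

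For the additivity properties (1) and (2), I would first use $f + g = \Radd \circ (f,g)$ to rewrite $T(f+g) = T\Radd \circ \chi_{R,R}^{-1} \circ (Tf, Tg)$. Because $\Radd$ is a morphism of $R$-modules, the trivialization identifies $\Val_R \circ T\Radd \circ \chi_{R,R}^{-1}$ with $\Radd \circ (\Val_R \times \Val_R)$, and precomposition with $v$ gives $v \cdot f + v \cdot g$. For (2), naturality of $+: T_2 \to T$ gives $Tf \circ +_X = +_R \circ (Tf \times_f Tf)$, and the trivialization likewise identifies $\Val_R \circ +_R$ with $\Radd \circ (\Val_R \times \Val_R)$ on the relevant fiber product, yielding $(v+w)\cdot f = v \cdot f + w \cdot f$.

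The representation property (3) is the heart of the argument. Starting from $\delta(v,w) = \lambda_{2,X} \circ (w, [v,w])$, I would apply $T^2 f$ and use naturality of $\lambda_2$ to obtain
\begin{equation*}
  T^2 f \circ \delta(v,w)
  = \lambda_{2,R} \circ (Tf \circ w, Tf \circ [v,w]) \,.
\end{equation*}
Postcomposing both sides with $\Val_R \circ T\Val_R: T^2R \to R$, the computational fact $\Val_R \circ T\Val_R \circ \lambda_{2,R} = \Val_R \circ \pr_2$, read off from the trivialization of $TR$, collapses the right to $\Val_R \circ Tf \circ [v,w] = [v,w]\cdot f$. On the other hand, substituting $\delta(v,w) = -_{TX} \circ (Tw \circ v, \tau_X \circ Tv \circ w)$ and pushing $T^2 f$ through the subtraction and symmetric structure by naturality, I arrive at the expression $-_R \circ (\Val_R \circ T\Val_R \circ T^2 f \circ Tw \circ v,\, \Val_R \circ T\Val_R \circ \tau_R \circ T^2 f \circ Tv \circ w)$. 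The crucial step is then the identity
\begin{equation*}
  \Val_R \circ T\Val_R \circ \tau_R = \Val_R \circ T\Val_R \,,
\end{equation*}
which expresses the symmetry of the second derivative and is precisely what the third diagram of Definition~\ref{def:RScalar} enforces under the trivialization of $TR$. Combined with the fact that $\Val_R \circ T\Val_R$ is a morphism of abelian groups on fibers, so that it intertwines $-_{TR}$ with $-_R$, this identifies the above expression with $v \cdot (w \cdot f) - w \cdot (v \cdot f)$.

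For the Leibniz rule (4), $fg = \Rmult \circ (f,g)$ gives $T(fg) = T\Rmult \circ \chi_{R,R}^{-1} \circ (Tf, Tg)$. Since $\Rmult$ is $R$-bilinear, the trivialization $TR \cong R \times R$ identifies $\Val_R \circ T\Rmult \circ \chi_{R,R}^{-1}: TR \times TR \to R$ with $\Radd \circ \bigl(\Rmult \circ (\Val_R \times \pi_R),\, \Rmult \circ (\pi_R \times \Val_R)\bigr)$; precomposing with $(Tf, Tg) \circ v$ and using $\pi_R \circ Tf \circ v = f$ and $\pi_R \circ Tg \circ v = g$ produces $(v\cdot f)g + f(v\cdot g)$. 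The main obstacle across the proof is establishing the $\tau_R$-invariance identity used in (3); it is precisely the point at which the scalar $R$-multiplication axioms (Definition~\ref{def:RScalar}) become indispensable and the reason why a bare differential bundle structure would not suffice.
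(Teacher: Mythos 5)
Your skeleton is sound and, as far as one can tell, follows the route the paper intends: the paper does not print a proof of this proposition (the details are deferred to the companion paper \cite{AintablianBlohmann:cartan}) and only lists the ingredients --- the definition \eqref{eq:VecFieldFunc} of the action, functoriality of $T$, naturality of the tangent structure, the relation $\delta(v,w)=\lambda_{2,X}\circ(w,[v,w])$, the compatibility of the abelian group structures of $R$ and of $\pi_R\colon TR\to R$ with the fiber projection $\Val_R$, and $\chi_{R,R}$ --- which are exactly the ones you invoke. All four identities you write down are correct, and your identification of $\Val_R\circ T\Val_R\circ\tau_R=\Val_R\circ T\Val_R$ as the crux of the commutator identity is exactly right.

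The gap is that the substance of the argument is precisely the cluster of compatibility identities you dispatch with ``the trivialization identifies\dots'': $\Val_R\circ T\Radd\circ\chi_{R,R}^{-1}=\Radd\circ(\Val_R\times\Val_R)$, $\Val_R\circ T\Val_R\circ\lambda_{2,R}=\Val_R\circ\pr_2$, the intertwining of $-_{TR}$ with the subtraction of $R$, the $\tau_R$-invariance, and --- most seriously --- the product formula for $\Val_R\circ T\Rmult\circ\chi_{R,R}^{-1}$. None of these follows formally from the bare isomorphism $TR\cong R\times R$ of bundles of $R$-modules: in an abstract tangent category, fiberwise linearity of a map $\phi$ does not by itself give $\Val_R\circ T\phi=\phi\circ(\Val_R\times\Val_R)$, and $\Rmult$ is only bilinear, so for the derivation property you must argue via the partial tangents $\Tpart{1}\Rmult$, $\Tpart{2}\Rmult$ and linearity of $\Rmult$ in each slot; this is where the differential-ring structure of $R$ (Definition~\ref{def:TangString}, Remark~\ref{rmk:DiffObjTanStab}) actually enters. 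Relatedly, your attribution of the $\tau_R$-identity to Diagram~\eqref{diag:ScalarMult3} is doubtful: that axiom constrains $\kappa$, not $\Val_R$, and the paper's ingredient list for this proposition pointedly omits the scalar-multiplication axioms (they are reserved for Proposition~\ref{prop:LeibnizRule}); the identity should instead be extracted from the differential-object structure of the tangent-stable ring $R$ (Proposition~\ref{prop:TanStabAxA}, Remark~\ref{rmk:DiffObjTanStab}). So the architecture is right, but the lemmas carrying the actual weight are asserted rather than proved.
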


Some of the key ingredients of the proof of Proposition~\ref{prop:VecActFunc} are the Definition~\eqref{eq:VecFieldFunc} of the action, the functoriality of $T$, the naturality of the tangent structure, Relation~\eqref{eq:deltaBracketRel} between $\lambda_{2,X}$ and $\delta(v,w)$, compatibility relations between the abelian group structures of the ring $R$ and the bundle $\pi_R: TR \to R$, the projection $\eta_R:TR \to R$ onto the fiber, and the natural isomorphism $\chi_{R,R}$ given in~\eqref{eq:chiXY}.

\begin{Remark}
\label{rem:hom}
Proposition~\ref{prop:VecActFunc} shows that there is a homomorphism from the Lie algebra of vector fields on $X$ to the Lie algebra of derivations on the ring $\calC(X,R)$, where the Lie bracket is given by the commutator. However, this homomorphism is generally neither injective nor surjective, so that we cannot identify vector fields on $X$ with derivations on the structure ring of $X$.
\end{Remark}

Assume now that the tangent structure has a scalar $R$-multiplication $\kappa$. In particular, this means that $TX \to X$ is an ($X \times R \to X$)-module in $\calC \Comma X$. Applying the functor of sections, we see that $\Gamma(X, TX)$ is equipped with the structure of a module over $\Gamma(X, X \times R) \cong \calC(X,R)$, given by
\begin{equation*}
  fv := \kappa_X \circ (f,v)
  \,,
\end{equation*}
for all $f \in \catC(X,R)$ and $v \in \Gamma(X,TX)$, as explained in Equation~\eqref{eq:SectionsModuleStr}.

\begin{Proposition}
\label{prop:LeibnizRule}
In a cartesian tangent category $\catC$ with scalar $R$-multiplication, the \textbf{Leibniz rule}
\begin{equation}
\label{eq:LeibnizRule}
  [v,fw] = (v \cdot f)w + f[v,w] 
\end{equation}
holds for all vector fields $v, w \in \Gamma(X,TX)$ and all morphisms $f \in \catC(X,R)$.
\end{Proposition}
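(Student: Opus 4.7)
The plan is to compute $\delta(v, fw)$ directly from Definition~\eqref{eq:DeltaOrig} and match the result with the canonical form $\lambda_{2,X}\circ\bigl(fw,\,(v\cdot f)w+f[v,w]\bigr)$. By the uniqueness of $[v, fw]$ in Equation~\eqref{eq:deltaBracketRel}, this yields the Leibniz rule.

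First I would expand $T(fw)\circ v$ via functoriality of $T$: since $fw = \kappa_X\circ(f,w)$, the cartesian isomorphism $\chi_{R,TX}$ gives $T(fw)\circ v = T\kappa_X\circ \chi^{-1}_{R,TX}\circ(Tf\circ v,\, Tw\circ v)$. A standard decomposition of $T\kappa_X\circ \chi^{-1}_{R,TX}$ into a $+_{TX}$-sum of partial tangent morphisms (valid in any cartesian tangent category) then yields
\[
T(fw)\circ v \;=\; \Tpart{1}\kappa_X(Tf\circ v,\,w)\,+_{TX}\,\Tpart{2}\kappa_X(f,\,Tw\circ v).
\]
Diagram~\eqref{diag:ScalarMult2}, together with the identification $Tf\circ v = (f,\,v\cdot f)$ under the trivialization $TR\cong R\times R$, rewrites the first summand as $\lambda_{2,X}\bigl(fw,\,(v\cdot f)w\bigr)$. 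For the second term of $\delta(v,fw)$, the naturality of $\kappa$ at $v\colon X\to TX$ gives $Tv\circ \kappa_X = \kappa_{TX}\circ(\id_R\times Tv)$, and therefore $\tau_X\circ Tv\circ fw = \tau_X\circ \kappa_{TX}(f,\,Tv\circ w) = \Tpart{2}\kappa_X(f,\,\tau_X\circ Tv\circ w)$ by Diagram~\eqref{diag:ScalarMult3}.

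Subtracting in the abelian-group bundle $\pi_{TX}\colon T^2X\to TX$ over $fw$, and using that $\Tpart{2}\kappa_X(f,-)$ is additive on $\pi_{TX}$-fibers, the two $\Tpart{2}\kappa_X$-terms collapse to $\Tpart{2}\kappa_X\bigl(f,\,\delta(v,w)\bigr) = \Tpart{2}\kappa_X\bigl(f,\,\lambda_{2,X}(w,[v,w])\bigr)$. A short calculation using the explicit form~\eqref{eq:VertLiftExt} of $\lambda_{2,X}$ together with Diagrams~\eqref{diag:ScalarMult1} and~\eqref{diag:ScalarMult3} then establishes the key identity
\[
\Tpart{2}\kappa_X\bigl(f,\,\lambda_{2,X}(w,u)\bigr) \;=\; \lambda_{2,X}(fw,\,fu).
\]
Additivity of $\lambda_{2,X}$ in its second argument, inherited from the bundle-of-abelian-groups structure, combines this with the earlier summand to produce $\delta(v,fw) = \lambda_{2,X}\bigl(fw,\,(v\cdot f)w+f[v,w]\bigr)$, as required.

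The main obstacle I anticipate is the identity $\Tpart{2}\kappa_X(f,\lambda_{2,X}(w,u)) = \lambda_{2,X}(fw,fu)$, which forces $\kappa$ through the involution $\tau$, the zero lift $T0$, and the vertical lift $\lambda$, requiring all three compatibility diagrams of Definition~\ref{def:RScalar} in concert. A secondary point is the justification of the partial-tangent decomposition of $T\kappa_X\circ \chi^{-1}_{R,TX}$ and the fiberwise additivity of $\Tpart{2}\kappa_X(f,-)$; both are standard facts about cartesian tangent categories and are worked out in the companion paper~\cite{AintablianBlohmann:cartan}.
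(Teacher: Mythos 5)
Your proposal is correct and proceeds along essentially the same lines as the paper's own (sketched) argument: the detailed proof of Proposition~\ref{prop:LeibnizRule} is deferred to the companion paper, but the key ingredients listed there — the partial-tangent decomposition of $T(fw)\circ v$, the diagrams of Definition~\ref{def:RScalar}, the linearity of $\Tpart{2}\kappa_X$, of $\kappa_{TX}$ and of $\lambda_{2,X}$ in their second arguments, Relation~\eqref{eq:deltaBracketRel}, and the monicity of $\lambda_{2,X}$ — are exactly the tools you deploy, in the same order. Your ``key identity'' $\Tpart{2}\kappa_X\bigl(f,\lambda_{2,X}(w,u)\bigr)=\lambda_{2,X}(fw,fu)$ does indeed go through, using \eqref{eq:VertLiftExt}, \eqref{diag:ScalarMult1}, \eqref{diag:ScalarMult3}, the naturality of $\kappa$ applied to $0_X\colon X\to TX$, and the linearity of $\kappa_{TX}$ in $T^2X$, so there is no gap.
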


Some of the key ingredients of the proof of Proposition~\ref{prop:LeibnizRule} are the Definition~\eqref{eq:VecFieldFunc} of the action, the Definition~\eqref{eq:DeltaOrig} of $\delta(v,w)$, the axioms of the scalar multiplication (Definition~\ref{def:RScalar}), the naturality of the tangent structure and the scalar multiplication, the expression\footnote{This expression can be found in \cite[Proposition~2.10]{CockettCruttwell:2014}, where the authors label the partial tangent morphisms by the objects themselves.} of $Tf$ in terms of the partial tangent morphisms $\Tpart{1}f$ and $\Tpart{2}f$ (Section~\ref{sec:CatTanScal}), the linearity of $\Tpart{2}f$ in the second argument, the associativity of $+$ and $-$, the linearity of $\kappa_{TX}$ in $T^2X$, the Definition~\eqref{eq:VertLiftExt} of $\lambda_{2,X}$, that it is linear in the second argument, Relation~\eqref{eq:deltaBracketRel} between $\lambda_{2,X}$ and $\delta(v,w)$, and that $\lambda_{2,X}$ is a monomorphism. In other words, all the structures and properties we have are needed for the proof of the Leibniz identity.

As is the case for any pullback, the map
\begin{equation*}
\begin{aligned}
  \calC(*,R) 
  &\longrightarrow \calC(X,R)
  \\
  (* \to R)
  &\longmapsto
  (X \xrightarrow{!_X} * \to R)
\end{aligned}
\end{equation*}
is a ring homomorphism (Lemma~\ref{lem:PullbackRingHom}), where $!_X:X \to *$ is the unique morphism to the terminal object. Its image is the constant $R$-valued morphisms on $X$. By precomposing the $\calC(X,R)$-module structure of $\Gamma(X,TX)$ with this ring homomorphism, we equip $\Gamma(X,TX)$ with the structure of a $\calC(*,R)$-module (Lemma~\ref{lem:PullbackRingHom0}). Spelled out, the module structure is given by
\begin{equation}
\label{eq:RModStructure}
rv := \kappa_X \circ (r \, \circ \, !_X, v) \,,
\end{equation}
for all $r \in \catC(*,R)$ and $v \in \Gamma(X,TX)$.

\begin{Corollary}
\label{cor:LieAlgebraBilinear}
Let $X$ be an object in a cartesian tangent category $\catC$ with scalar $R$-multiplication. Then the Lie bracket on $\Gamma(X,TX)$ is $\catC(*,R)$-bilinear.
\end{Corollary}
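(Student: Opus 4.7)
The plan is to deduce $\catC(*,R)$-bilinearity from the Leibniz rule (Proposition~\ref{prop:LeibnizRule}) by showing that every vector field acts trivially on morphisms of the form $r \circ !_X$ with $r \in \catC(*,R)$. Once this is in hand, the Leibniz rule collapses to linearity in one argument, and skew-symmetry of the bracket gives linearity in the other.

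The heart of the argument is to compute $v \cdot (r \circ !_X)$ using the definition $v \cdot f = \Val_R \circ Tf \circ v$. Functoriality gives $T(r \circ !_X) = Tr \circ T!_X$. Because $\catC$ is cartesian, $T* \cong *$, and so $T!_X \colon TX \to T*$ is the unique morphism to the terminal object; hence $T!_X \circ v = !_X$ after the identification $T* \cong *$. Naturality of the zero section $0 \colon \Id \to T$ at $r \colon * \to R$ gives $Tr \circ 0_* = 0_R \circ r$, and $0_*$ is the identity under $T* \cong *$, so $Tr = 0_R \circ r$. Putting these together,
\begin{equation*}
  v \cdot (r \circ !_X)
  = \Val_R \circ 0_R \circ r \circ !_X
  \,.
\end{equation*}
Since $R$ is tangent-stable, Proposition~\ref{prop:TanStabAxA} supplies a trivialization $TR \cong R \times R$ of bundles of $R$-modules, in which the zero section becomes $(\id_R, \Rzero \circ !_R)$ and $\Val_R$ becomes $\pr_2$. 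Hence $\Val_R \circ 0_R = \Rzero \circ !_R$, and therefore $v \cdot (r \circ !_X) = \Rzero \circ !_X$, which is the zero of the ring $\catC(X,R)$ (Remark~\ref{rmk:RingFunctions}).

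Now I would apply Proposition~\ref{prop:LeibnizRule} with $f = r \circ !_X$. Using the definition~\eqref{eq:RModStructure}, we have $rw = (r \circ !_X) w$ in the $\catC(X,R)$-module $\Gamma(X,TX)$, so
\begin{equation*}
  [v, rw]
  = \bigl(v \cdot (r \circ !_X)\bigr) w + (r \circ !_X)\,[v,w]
  = r[v,w]
  \,.
\end{equation*}
For linearity in the first slot, I would invoke skew-symmetry of the bracket (part of the Lie algebra structure established by Rosick\'y and Cockett--Cruttwell together with Jacobi) to get $[rv, w] = -[w, rv] = -r[w,v] = r[v,w]$. Combined with the $\bbZ$-bilinearity already noted at the end of Section~\ref{sec:LieBracket}, this establishes $\catC(*,R)$-bilinearity.

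The only mildly subtle step is verifying that constant $R$-valued morphisms are annihilated by every vector field; this is where both cartesianness (via $T* \cong *$) and tangent-stability of $R$ (via the trivialization of $TR$ that identifies $\Val_R \circ 0_R$ with $\Rzero \circ !_R$) enter the argument. Everything else is a direct application of the Leibniz rule and skew-symmetry.
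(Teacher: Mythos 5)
Your proof is correct and takes essentially the route the paper intends: Corollary~\ref{cor:LieAlgebraBilinear} is placed as an immediate consequence of the Leibniz rule (Proposition~\ref{prop:LeibnizRule}) together with the module structure~\eqref{eq:RModStructure}, the only substantive point being that every vector field annihilates the constant morphisms $r \circ {!}_X$ --- exactly the computation you carry out using $T* \cong *$, naturality of the zero section, and tangent-stability of $R$ --- with antisymmetry of the bracket handling the first argument. The paper defers the detailed verification to the companion paper, and your write-up supplies that argument correctly (including the small step, noted in the appendix, that acting by the zero element $\Rzero$ gives the zero vector field, so the extra Leibniz term indeed drops out).
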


\subsubsection{Naturality of the Lie bracket}

\begin{Definition}
\label{def:fRel}
Let $\varphi:X \to Y$ be a morphism in a tangent category. A vector field $v$ on $X$ is called \textbf{$\varphi$-related} to a vector field $v'$ on $Y$ if the diagram
\begin{equation}
\label{diag:ProjVect}
\begin{tikzcd}
X \ar[r, "v"] \ar[d, "\varphi"']
&
TX \ar[d, "T\varphi"]
\\
Y \ar[r, "v'"'] 
&
TY
\end{tikzcd}
\end{equation}
commutes. 
\end{Definition}

The terminology of Definition~\ref{def:fRel} is standard in differential geometry. In \cite[Definition~2.8]{CockettCruttwellLemay:2021} a commutative diagram of the form~\ref{diag:ProjVect} is called a vector field morphism.

\begin{Proposition}[Proposition~2.9 in \cite{CockettCruttwellLemay:2021}]
\label{prop:ProjectVectorFields}
Let $\varphi: X \to Y$ be a morphism in a tangent category. If two vector fields $v$ and $w$ on $X$ are $\varphi$-related to vector fields $v'$ and $w'$ on $Y$, respectively, then:
\begin{itemize}

\item[(i)] $v+w$ is $\varphi$-related to $v'+w'$;

\item[(ii)] $[v,w]$ is $\varphi$-related to $[v', w']$.

\end{itemize}
\end{Proposition}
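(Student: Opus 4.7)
The proof falls naturally into two parts, corresponding to the two claims. Throughout, the basic input is that if $v$ is $\varphi$-related to $v'$ then $T\varphi\circ v=v'\circ\varphi$, and hence by functoriality
\[
T^2\varphi \circ Tv \;=\; T(T\varphi\circ v) \;=\; T(v'\circ\varphi) \;=\; Tv'\circ T\varphi.
\]
This identity, together with the naturality of the transformations of the tangent structure, is the engine of the entire argument.

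For part (i), the plan is a direct appeal to the naturality of the fiberwise addition $+: T_2\to T$. Since $(v,w)$ and $(v',w')$ assemble into sections of $T_2X$ and $T_2Y$ respectively, and since $T\varphi$ induces a morphism $T_2X \to T_2Y$ of the bundles of abelian groups (using that $T$ preserves the pullback defining $T_2$, which is part of axiom (T1)), the computation
\[
T\varphi \circ (v+w) \;=\; T\varphi \circ +_X \circ (v,w) \;=\; +_Y \circ (T\varphi \circ v,\, T\varphi\circ w) \;=\; (v'+w')\circ\varphi
\]
is straightforward and uses only naturality of $+$.

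For part (ii), I would argue in two stages. \textbf{Stage one:} show that the auxiliary map $\delta(v,w)$ from~\eqref{eq:DeltaOrig} is ``natural'' under $\varphi$-relatedness, in the sense that
\[
T^2\varphi \circ \delta(v,w) \;=\; \delta(v',w')\circ\varphi.
\]
This reduces to checking naturality of each ingredient: naturality of the fiberwise subtraction $-$ on the bundle of abelian groups $T^2\to T$, naturality of the symmetric structure $\tau$ (so that $T^2\varphi\circ\tau_X=\tau_Y\circ T^2\varphi$), and the identity $T^2\varphi\circ Tv = Tv'\circ T\varphi$ displayed above, applied to both $v$ and $w$. \textbf{Stage two:} use that $[v,w]$ is characterized, via relation~\eqref{eq:deltaBracketRel}, as the unique vector field with $\lambda_{2,X}\circ(w,[v,w])=\delta(v,w)$. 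Since $\lambda_2$ is built out of $\tau$, $+$, $T0$, and $\lambda$ by the explicit formula~\eqref{eq:VertLiftExt}, it is a natural transformation $T_2\to T^2$. Therefore
\[
T^2\varphi \circ \lambda_{2,X}\circ (w,[v,w]) \;=\; \lambda_{2,Y} \circ \bigl(w'\circ\varphi,\; T\varphi\circ[v,w]\bigr),
\]
while the other side equals $\lambda_{2,Y}\circ(w'\circ\varphi,\,[v',w']\circ\varphi)$.

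The last step is where a small obstacle appears: to cancel $\lambda_{2,Y}$ I need it to be a monomorphism. This follows from the pullback square~\eqref{diag:VertLift}: the zero section $0:\Id\to T$ is a split monomorphism (retracted by $\pi$), and monomorphisms are stable under pullback, so $\lambda_{2,Y}:T_2Y\to T^2Y$ is mono. Cancelling $\lambda_{2,Y}$ and projecting to the second component then yields $T\varphi\circ[v,w]=[v',w']\circ\varphi$, which is exactly the $\varphi$-relatedness of the brackets. The main conceptual hurdle is really the bookkeeping in stage one, where one must juggle naturality of $\tau$ with the identity $T^2\varphi\circ Tv=Tv'\circ T\varphi$; everything else is a routine application of naturality and the kernel property of the vertical lift.
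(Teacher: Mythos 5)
Your proof is correct. Note that the paper offers no argument of its own for this statement—it simply cites Proposition~2.9 of \cite{CockettCruttwellLemay:2021}—and your argument is precisely the standard one: naturality of $+$, $\tau$, $-$, and $\lambda_2$ together with $T^2\varphi\circ Tv=Tv'\circ T\varphi$ gives $T^2\varphi\circ\delta(v,w)=\delta(v',w')\circ\varphi$, and the cancellation of $\lambda_{2,Y}$ via its monicity (pullback of the split monomorphism $0$ in \eqref{diag:VertLift}) is the same device the paper itself uses later in the proof of Theorem~\ref{thm:InvariantBracket}.
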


\begin{Definition}
\label{def:fRelFunc}
Let $\varphi:X \to Y$ be a morphism in some category. A morphism $f:X \to R$ is called \textbf{$\varphi$-related} to a morphism $f':Y \to R$ if the diagram
\begin{equation}
\label{diag:ProjFunc}
\begin{tikzcd}
X \ar[r, "f"] \ar[d, "\varphi"']
&
R
\\
Y \ar[ru, "f'"'] 
&
\end{tikzcd}
\end{equation}
commutes. 
\end{Definition}

\begin{Proposition}
\label{prop:ProjectFunctions}
Let $\varphi: X \to Y$ be a morphism in a cartesian tangent category with scalar $R$-multiplication. If $v \in \Gamma(X,TX)$ is $\varphi$-related to $v' \in \Gamma(Y,TY)$ and $f:X \to R$ is $\varphi$-related to $f': Y \to R$, then:
\begin{itemize}

\item[(i)] $fv$ is $\varphi$-related to $f'v'$;

\item[(ii)] $v \cdot f$ is $\varphi$-related to $v' \cdot f'$.

\end{itemize}
\end{Proposition}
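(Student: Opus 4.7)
The plan is to unfold the two definitions and reduce each claim to the naturality of the relevant structure maps, together with the two $\varphi$-relatedness hypotheses $f'\circ\varphi = f$ and $T\varphi\circ v = v'\circ\varphi$.

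For part~(i), I would start from the Formula~\eqref{eq:SectionsModuleStr} for the $\calC(X,R)$-module structure on sections, which reads $fv = \kappa_X\circ(f,v)$ and $f'v' = \kappa_Y\circ(f',v')$. The claim $T\varphi\circ(fv) = (f'v')\circ\varphi$ therefore amounts to
\begin{equation*}
  T\varphi \circ \kappa_X \circ (f,v)
  \;=\;
  \kappa_Y \circ (f',v') \circ \varphi.
\end{equation*}
The right-hand side rewrites as $\kappa_Y\circ(f'\circ\varphi,\, v'\circ\varphi) = \kappa_Y\circ(f,\, T\varphi\circ v)$ by the two hypotheses. The left-hand side reduces to the same expression by naturality of $\kappa\colon (\Id\times R)\times_\Id T \to T$ in its base argument, which gives $T\varphi\circ\kappa_X = \kappa_Y\circ(\id_R\times T\varphi)$.

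For part~(ii), I would use the Definition~\eqref{eq:VecFieldFunc} of the action, $v'\cdot f' = \Val_R\circ Tf'\circ v'$. Precomposing with $\varphi$ and rearranging via the $\varphi$-relatedness of $v$ to $v'$, the functoriality of $T$, and the $\varphi$-relatedness of $f$ to $f'$, I obtain
\begin{equation*}
  (v'\cdot f')\circ\varphi
  = \Val_R\circ Tf'\circ T\varphi\circ v
  = \Val_R\circ T(f'\circ\varphi)\circ v
  = \Val_R\circ Tf\circ v
  = v\cdot f,
\end{equation*}
where the target morphism in Definition~\ref{def:fRelFunc} is the identity on $R$, so the required triangle commutes trivially once the equality of morphisms $X\to R$ is established.

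Neither part presents a genuine obstacle, since both are formal consequences of the structure. The only subtlety worth flagging is in part~(i): one must read $\kappa$ as a natural transformation of endofunctors over $\Id$, so that the naturality square for $\varphi\colon X\to Y$ involves $\id_R\times T\varphi$ on $R\times TX$ rather than anything more elaborate; this is what allows the factor $f\colon X\to R$ to pass unchanged across the naturality square.
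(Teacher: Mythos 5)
Your proof is correct. Note that the paper does not actually spell out a proof of this proposition (the detailed proofs of the statements in Section~\ref{sec:Leibrule} are deferred to the companion paper), and your argument is exactly the expected one: part~(i) is the naturality square $T\varphi\circ\kappa_X=\kappa_Y\circ(\id_R\times T\varphi)$ of the $R$-module structure combined with the two relatedness hypotheses, and part~(ii) follows from the definition \eqref{eq:VecFieldFunc} of the action together with functoriality of $T$; your remark that $\kappa$ must be read as a natural transformation over $\Id$ is precisely the point that makes (i) work.
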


\begin{Corollary}
\label{cor:ProjectFunctions}
Let $\varphi: X \to Y$ be a morphism in a cartesian tangent category with scalar $R$-multiplication. Let $r:* \to R$ be a point. If $v \in \Gamma(X,TX)$ is $\varphi$-related to $v' \in \Gamma(Y,TY)$, then $rv$ is $\varphi$-related to $rv'$.
\end{Corollary}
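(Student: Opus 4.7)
The plan is to reduce the corollary to Proposition~\ref{prop:ProjectFunctions}(i) by recognizing scalar multiplication by a point $r : * \to R$ as an instance of scalar multiplication by a $\calC(X,R)$-valued morphism, namely the constant morphism with value $r$. Since Proposition~\ref{prop:ProjectFunctions}(i) already gives compatibility of the $\calC(-,R)$-module action with $\varphi$-relatedness, it suffices to check the constants are themselves $\varphi$-related.

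First I would recall from Equation~\eqref{eq:RModStructure} that
\begin{equation*}
rv = \kappa_X \circ (r \circ \, !_X, v), \qquad rv' = \kappa_Y \circ (r \circ \, !_Y, v'),
\end{equation*}
where $!_X : X \to *$ and $!_Y : Y \to *$ are the unique morphisms to the terminal object. Defining
\begin{equation*}
f := r \circ \, !_X \in \calC(X,R), \qquad f' := r \circ \, !_Y \in \calC(Y,R),
\end{equation*}
we have the identifications $fv = rv$ and $f'v' = rv'$, where on the left-hand sides we use the $\calC(X,R)$- and $\calC(Y,R)$-module structures on the spaces of vector fields given by Equation~\eqref{eq:SectionsModuleStr}.

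Next I would show that $f$ is $\varphi$-related to $f'$ in the sense of Definition~\ref{def:fRelFunc}. This is immediate from the universal property of the terminal object: both $!_X$ and $!_Y \circ \varphi$ are morphisms $X \to *$, so by uniqueness they coincide, $!_X = \, !_Y \circ \varphi$. Composing with $r$ on the left yields
\begin{equation*}
f = r \circ \, !_X = r \circ \, !_Y \circ \varphi = f' \circ \varphi,
\end{equation*}
which is exactly the defining condition for $f$ to be $\varphi$-related to $f'$.

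Finally, applying Proposition~\ref{prop:ProjectFunctions}(i) to the $\varphi$-related pairs $(v, v')$ and $(f, f')$ gives that $fv$ is $\varphi$-related to $f'v'$, i.e.\ that $rv$ is $\varphi$-related to $rv'$. There is no significant obstacle: the argument is a direct bookkeeping step once the definition of the $\calC(*,R)$-action is unpacked and the terminal-object universal property is used.
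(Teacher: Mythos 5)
Your argument is correct and is exactly the intended derivation: the paper defers the proofs in this subsection to the companion paper, but the corollary is plainly meant to follow from Proposition~\ref{prop:ProjectFunctions}~(i) by writing $rv = (r \circ \, !_X)v$ as in~\eqref{eq:RModStructure} and noting that $r \circ \, !_X = (r \circ \, !_Y) \circ \varphi$ by the universal property of the terminal object. No gaps.
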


\subsection{The tangent structure of Euclidean spaces}
\label{sec:TangentEuclidean}

The namesake example for tangent structures is the tangent functor of open subsets of real vector spaces, which is the local model for the tangent functor of smooth manifolds. Let $\Eucl$ denote the category which has open subsets of $\bbR^n$, $n\geq 0$, as objects and smooth maps as morphisms. $\Eucl$ will be called the category of \textbf{Euclidean spaces}. Let
\begin{equation*}
  \eFun{T}: \Eucl \longrightarrow \Eucl
\end{equation*}
be its tangent functor. On an open subset $U \subset \bbR^n$, the functors that appear in the Definition~\ref{def:TangentStructure} of a tangent category are given explicitly by
\begin{equation*}
\begin{aligned}
  \eFun{T} U &= U \times \bbR^n
  \\
  \eFun{T}^2 U &= U \times \bbR^n \times \bbR^n \times \bbR^n
  \\
  \eFun{T}^k U &= U \times (\bbR^n)^{2^k-1}
  \\
  \eFun{T}_2 U &= U \times \bbR^n \times \bbR^n
  \\
  \eFun{T}_k U &= U \times (\bbR^n)^k
  \,.
\end{aligned}
\end{equation*}
On a smooth map $f: U \to V \subset \bbR^m$, the functors are given by
\begin{align}
  \eFun{T} f: (u,u_1^i) &\longmapsto
  \Bigl( f(u), \frac{\partial f^a}{\partial x^i} u_1^i \Bigr)
  \label{eq:T1f}\\
  \eFun{T}^2 f: (u, u_1^i, u_2^i, u_{12}^i) &\longmapsto
  \Bigl( f(u), 
  \frac{\partial f^a}{\partial x^i} u_1^i , 
  \frac{\partial f^a}{\partial x^i} u_2^i ,
  \frac{\partial f^a}{\partial x^i} u_{12}^i +
  \frac{\partial^2 f^a}{\partial x^i \partial x^j} u_1^i u_2^j 
  \Bigr)
  \label{eq:T2f}\\
  \eFun{T}_2 f: (u, u_1^i, v_1^i) &\longmapsto
  \Bigl( 
  f(u), 
  \frac{\partial f^a}{\partial x^i} u_1^i ,
  \frac{\partial f^a}{\partial x^i} v_1^i 
  \Bigr)
  \,. \notag
\end{align}
The formulas for $\eFun{T}^k$ and $\eFun{T}_k$ are analogous. The natural transformations of the tangent category are given by
\begin{align*}
  \eFun{\pi}_U : (u,u_1) &\longmapsto u
  \\
  \eFun{0}_U : u &\longmapsto (u, 0)
  \\
  \eFun{+}_U : (u, u_1, v_1) &\longmapsto (u, u_1 + v_1)
  \\
  \eFun{\lambda}_U : (u,u_1) &\longmapsto (u,0,0,u_1)
  \\
  \eFun{\tau}_U : (u, u_1, u_2, u_{12} ) &\longmapsto (u, u_2, u_1, u_{12} )
  \,.
\end{align*}
The negative is given by $(u,u_1) \mapsto (u, -u_1)$ and hence, the subtraction, by
\begin{equation*}
  -_U: (u, u_1, v_1) \longmapsto (u, u_1 - v_1) \,.
\end{equation*}
The commutativity of $\eFun{T}_2$ and $\eFun{T}$ is given by the isomorphism
\begin{equation*}
\begin{aligned}
  \eFun{T}(\eFun{T}_2 U) &\xrightarrow{~\cong~} \eFun{T}_2(\eFun{T} U)
  \\
  \bigl( (u, u_1, v_1), (u_2, u_{12}, v_{12}) \bigr)
  &\longmapsto
  \bigl( ( u, u_2), (u_1, u_{12}), (v_1, v_{12}) \bigr) \,.
\end{aligned}
\end{equation*}
The bundle projection extends to $\eFun{T}^2$ as
\begin{align*}
  (\eFun{\pi}\eFun{T})_U = \eFun{\pi}_{TU}: (u, u_1, u_2, u_{12}) &\longmapsto (u, u_1)
  \\
  (\eFun{T}\eFun{\pi})_U = \eFun{T}\eFun{\pi}_U: (u, u_1, u_2, u_{12}) &\longmapsto (u, u_2)
  \,.
\end{align*}
The other natural transformations that appear in Definition~\ref{def:TangentStructure}, $\eFun{+}\eFun{T}$, $\eFun{T}\eFun{+}$, $\eFun{\lambda}\eFun{T}$, $\eFun{T}\eFun{\lambda}$, $\eFun{\tau}\eFun{T}$, and $\eFun{T}\eFun{\tau}$, are obtained in a similar way. The extension~\eqref{eq:VertLiftExt} of the vertical lift is given by
\begin{equation*}
  \eFun{\lambda}_{2,U}: (u, u_1, v_1) \longmapsto (u, u_1, 0,  v_1)
  \,.
\end{equation*}
Moreover, we have the natural fiberwise multiplication by real numbers
\begin{equation*}
\begin{aligned}
  \eFun{\kappa}_U: \bbR \times \eFun{T} U 
  &\longrightarrow \eFun{T} U
  \\
  \bigl(r, (u,u_1) \bigr)
  &\longmapsto (u, r u_1)
  \,.
\end{aligned}
\end{equation*}

\begin{Proposition}
\label{prop:EuclTangent}
The tangent functor $\eFun{T}: \Eucl \to \Eucl$ with the natural transformations $\eFun{\pi}$, $\eFun{0}$, $\eFun{+}$, $\eFun{\tau}$, $\eFun{\lambda}$, and $\eFun{\kappa}$ is a cartesian tangent structure with scalar $\bbR$-multiplication.
\end{Proposition}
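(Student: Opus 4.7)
The plan is to verify directly, using the explicit coordinate formulas already laid out for $\eFun{T}$, $\eFun{\pi}$, $\eFun{0}$, $\eFun{+}$, $\eFun{\lambda}$, $\eFun{\tau}$, and $\eFun{\kappa}$, each of the axioms (T1)--(T6) of Definition~\ref{def:TangentStructure}, then check the cartesian condition of Definition~\ref{def:CartTan}, and finally check the three diagrams of Definition~\ref{def:RScalar} together with tangent-stability of $\bbR$. The only analytic inputs are the chain rule (already encoded in the formula for $\eFun{T}f$) and the equality of mixed partial derivatives (Schwarz's theorem), which is what makes $\eFun{T}^2 f$ well-defined up to $\eFun{\tau}$.

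First I would dispatch (T1) and (T2): the iterated pullbacks $\eFun{T}_k U = U \times (\bbR^n)^k$ are realized explicitly as products in $\Eucl$, their pointwiseness is immediate, and the natural morphism $\eFun{T}\eFun{T}_kU \to \eFun{T}_k(\eFun{T}U)$ is literally the identity on $U \times (\bbR^n)^{2k+1}$. Axiom (T2) is the fiberwise vector space structure on $\bbR^n$. For (T3), the relation $\eFun{\tau}^2 = \id$ and the braid relation reduce to a combinatorial check on the index labels $1, 2, 12$ in $\eFun{T}^2$ and on the seven labels in $\eFun{T}^3U$; compatibility with the bundle-of-abelian-groups structure is visible from the formula since $\eFun{\tau}$ is linear in the fiber components.

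For (T4) and (T5), the trivial triangles collapse on sight, e.g.\ $\eFun{\tau}_U \circ \eFun{\lambda}_U(u,u_1) = \eFun{\tau}_U(u,0,0,u_1) = (u,0,0,u_1) = \eFun{\lambda}_U(u,u_1)$, while the two non-trivial squares reduce to bookkeeping in the seven fiber components of $\eFun{T}^3U$. Axiom (T6) is verified by computing the pullback of $(\eFun{0},\eFun{0})_U$ along $(\eFun{\pi}\eFun{T}, \eFun{T}\eFun{\pi})_U$: the constraint $u_1 = 0 = u_2$ cuts out $U \times \{0\} \times \{0\} \times \bbR^n \subset U \times (\bbR^n)^3$, and the induced map to $\eFun{T}U$ is precisely $\eFun{\lambda}_U$. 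The cartesian property is immediate from $\eFun{T}(U \times V) = (U \times V) \times \bbR^{n+m} \cong \eFun{T}U \times \eFun{T}V$ and $\eFun{T}(\mathrm{pt}) = \mathrm{pt}$.

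For the scalar multiplication, $\bbR$ is tangent-stable because $\eFun{T}_0 \bbR = \{0\} \times \bbR \cong \bbR$ as rings. Diagram~\eqref{diag:ScalarMult1} reads $(r,(u,u_1)) \mapsto (u,0,0,ru_1)$ along both sides. The diagrams~\eqref{diag:ScalarMult2} and~\eqref{diag:ScalarMult3} are the only places where the product rule is genuinely used: applying $\eFun{T}$ to $\eFun{\kappa}_U(r,u,u_1) = (u, ru_1)$ and restricting via $\Tpart{1}$ or $\Tpart{2}$ produces the two Leibniz terms $r_1 u_1$ and $r\, u_{11}$, which repackage through $\eFun{\lambda}_{2,U}$ and $\eFun{\tau}_U$ respectively into the prescribed right-hand sides. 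The main (and only real) obstacle is keeping the indices straight in these last two diagrams; once the coordinate expression of each composite is written out side by side, equality is visible term by term.
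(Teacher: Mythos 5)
Your proof is correct and follows essentially the same route the paper takes: Proposition~\ref{prop:EuclTangent} is stated immediately after the explicit coordinate formulas and the paper leaves precisely the direct, axiom-by-axiom verification that you carry out (T1--T6, the cartesian condition, and the three diagrams of Definition~\ref{def:RScalar}), with the product rule entering only in Diagrams~\eqref{diag:ScalarMult2} and~\eqref{diag:ScalarMult3} as you say. Two harmless nitpicks: the canonical morphism $\eFun{T}(\eFun{T}_k U) \to \eFun{T}_k(\eFun{T}U)$ is a permutation of the fiber coordinates (as the paper's displayed formula for $k=2$ shows), not literally the identity, and tangent-stability of $\bbR$ asks for an isomorphism of $\bbR$-modules rather than of rings --- neither affects the argument.
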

\section{Differentiable groupoids}
\label{sec:DiffGrpds}

The first step towards defining differentiable groupoid objects is to state precisely what we mean by a groupoid internal to a category $\calC$. This is the subject of Section~\ref{sec:GrpdObj}. We then proceed to identify the minimal axioms on a groupoid object needed for its differentiation. In Section~\ref{sec:Differentiable}, we motivate and define the notion of differentiability of groupoid objects in a tangent category $\catC$.

\subsection{Groupoid objects}
\label{sec:GrpdObj}

Let $\sDelta$ be the \textbf{simplex category} which has non-empty finite ordinals $[0]$, $[1]$, $[2]$, etc., as objects, and order-preserving maps as morphisms. A \textbf{simplicial object} in a category $\catC$ is a contravariant functor $X: \sDelta^{\op} \to \catC$. Equivalently, it can be described by a family $\{X_n\}_{n \geq 0}$ of objects in $\catC$ together with morphisms $d_{n,i}:X_n \to X_{n-1}$ and $s_{n,i}:X_n \to X_{n+1}$, called the \textbf{face} and \textbf{degeneracy} morphisms, satisfying the simplicial identities. A simplicial structure can be depicted by
\begin{equation*}
\begin{tikzcd}[column sep=3em,row sep=3em]
X_0 \ar[r]
& X_1 \ar[r, shift left=2] \ar[r, shift right=2] \ar[l, shift left=2] \ar[l, shift right=2]
& X_2 \ar[l] \ar[l, shift right=4] \ar[l, shift left=4] \quad \cdots
\end{tikzcd}
\end{equation*}
The \textbf{standard simplicial $n$-simplex} is defined by $\Delta^n = y[n] = \sDelta\big(\Empty, [n]\big)$, where $y$ is the Yoneda embedding. Its $i^{\text{th}}$ \textbf{horn} is the subsimplicial set $\Lambda^n_i$ which is obtained by removing the $i^{\text{th}}$ face from it as well as its unique non-degenerate $n$-simplex. 

To every set-theoretic groupoid $G$ there is an associated simplicial set $G_{\bullet}$, called its \textbf{nerve}. Its $k$-simplices are given by the pullback
\begin{equation*}
\underbrace{G_1 \times_{G_0} \ldots \times_{G_0} G_1}_{\text{$k$ factors}}
\end{equation*}
of the strings of $k$ composable arrows, its face maps by composition by arrows, and the degeneracy maps by inserting identities. This simplicial set satisfies horn filling conditions in all degrees and unique horn filling conditions for degrees $>1$. In other words, the natural horn projection 
\begin{equation}
\label{eq:HornProj}
p_{n,i}:G_n \longrightarrow G(\Lambda^n_i)     
\end{equation}
is a surjection for all $n \geq 1$ and a bijection for all $n>1$, $0 \leq i \leq n$. In fact, one uniquely recovers a groupoid from a given simplicial set with these horn filling conditions. By the Yoneda lemma, there is a natural bijection $G_n \cong \Set^{\sDelta^{\op}}(\Delta^n,G)$ of sets. In~\eqref{eq:HornProj}, $G(\Lambda^n_i) := \Set^{\sDelta^{\op}}(\Lambda^n_i, G)$ is the set of $(n,i)$-horns of $G$ and $p_{n,i}$ is the natural morphism induced by the monomorphism $\Lambda^n_i \to \Delta^n$. We will use this observation for the definition of groupoid objects in any category.

\begin{Definition}
\label{def:Horn}
Let $X$ be a simplicial object in $\catC$. Then, 
\begin{equation}
\label{eq:Ghorns}
X(\Lambda^n_i) := \big(\Ran_{y^{\op}} X\big) (\Lambda^n_i) \cong \lim_{\Delta^k \to \Lambda^n_i} X_k
\end{equation}
is called the \textbf{object of ${(n,i)}$-horns} in $X$ and the morphism
\begin{equation}
\label{eq:HornProjection}
p_{n,i} := X(\Lambda^n_i \to \Delta^n): X_n \longrightarrow X(\Lambda^n_i)    
\end{equation}
is called the \textbf{${(n,i)}$-horn projection} for $n \geq 1$ and $0 \leq i \leq n$.
\end{Definition}

In Equation~\eqref{eq:Ghorns}, $\Ran_{y^{\op}}$ denotes the right Kan extension along the opposite of the Yoneda embedding. The isomorphism is a result of pointwise Kan extensions \cite[Theorem~X.3.1]{maclane1998}. Before we proceed with the definition, let us point out one subtlety. Without any completeness assumptions on $\catC$, these objects are a priori only viewed as presheaves on $\catC$. In our definition of a groupoid object, we explicitly ask for the existence of these horns.

\begin{Definition}
\label{def:GrpdObjC}
Let $\catC$ be a category. A \textbf{groupoid object} in $\catC$ is a simplicial object $G: \sDelta^\op \to \calC$ such that the horns $G(\Lambda^n_i)$ exist in $\catC$ and the horn projections $p_{n,i}: G_n \to G(\Lambda^n_i)$ are isomorphisms for all $n>1$ and $0 \leq i \leq n$.
\end{Definition}

\begin{Definition}
A \textbf{morphism of groupoid objects} in $\catC$ is a morphism of simplicial objects, i.e.\! a natural transformation of functors.
\end{Definition}

Let us recall the description of a groupoid object $G$ in $\catC$ in terms of its structure morphisms. $G_0$ is the object of points (objects) and $G_1$ the object of arrows. The face morphisms $d_{1,0},d_{1,1}: G_1 \to G_0$ are the source $s \equiv d_{1,1}$ and the target $t \equiv d_{1,0}$. The degeneracy morphism $s_{0,0}: G_0 \to G_1$ is the identity bisection $1 \equiv s_{0,0}$. The objects of $(2,0)$-horns, $(2,1)$-horns and $(2,2)$-horns of $G$ are given respectively by
\begin{equation}
\label{eq:HornsG}
\begin{split}
G(\Lambda^2_0) &\cong G_1 \times_{G_0}^{s,s} G_1
\\
G(\Lambda^2_1) &\cong G_1 \times_{G_0}^{s,t} G_1
\\
G(\Lambda^2_2) &\cong G_1 \times_{G_0}^{t,t} G_1 \,.
\end{split}
\end{equation}
It follows by induction from the isomorphism $G_2 \cong G_1 \times_{G_0}^{s,t} G_1$ that
\begin{equation*}
G_k 
\cong \underbrace{G_1 \times_{G_0}^{s,t} \ldots
\times_{G_0}^{s,t} G_1}_{\text{$k$ factors}}
\end{equation*}
for all $k \geq 2$. The groupoid multiplication is given by the composition
\begin{equation*}
m: G_1 \times_{G_0}^{s,t} G_1 \xrightarrow[~\cong~]{~p_{2,1}^{-1}~} G_2 \xrightarrow{~d_{2,1}~} G_1
\,.
\end{equation*}
The inverse is given by the composition
\begin{equation*}
i: G_1 \xrightarrow{~(\id_{G_1}, 1 \circ s)~} G_1 \times_{G_0}^{s,s} G_1 \xrightarrow[~\cong~]{~p_{2,0}^{-1}~} G_2\xrightarrow{~d_{2,0}~} G_1 \,.
\end{equation*}
By the (unique) horn filling conditions and the simplicial identities, the usual axioms of a groupoid hold. These are depicted by the commutativity of the following diagrams:

\begin{itemize}
\item[(i)] Conditions on $s$ and $t$:
\begin{equation}
\label{diag:SourceTargetCond}
\begin{tikzcd}
G_1 \times^{s,t}_{G_0} G_1 \ar[r, "m"] \ar[d, "\pr_2"'] & G_1 \ar[d, "s"]
\\
G_1 \ar[r, "s"'] & G_0
\end{tikzcd}
\hspace{2em}
\begin{tikzcd}
G_1 \times^{s,t}_{G_0} G_1 \ar[r, "m"] \ar[d, "\pr_1"'] & G_1 \ar[d, "t"]
\\
G_1 \ar[r, "t"'] & G_0
\end{tikzcd}
\hspace{2em}
\end{equation}

\begin{equation}
\label{diag:SourceTargetCond2}
\begin{tikzcd}
G_0 \ar[r, "1"] \ar[dr, "\id_{G_0}"'] & G_1 \ar[d, "s"]
\\
& G_0
\end{tikzcd}
\hspace{1em}
\begin{tikzcd}
G_0 \ar[r, "1"] \ar[dr, "\id_{G_0}"'] & G_1 \ar[d, "t"]
\\
& G_0
\end{tikzcd}
\hspace{1em}
\begin{tikzcd}
G_1 \ar[r, "i"] \ar[dr, "t"'] & G_1 \ar[d, "s"]
\\
& G_0
\end{tikzcd}
\hspace{1em}
\begin{tikzcd}
G_1 \ar[r, "i"] \ar[dr, "s"'] & G_1 \ar[d, "t"]
\\
& G_0
\end{tikzcd}
\end{equation}

\item[(ii)] Associativity:
\begin{equation}
\label{eq:Assocofm}
\begin{tikzcd}[column sep=4em, row sep=large]
G_1 \times^{s,t}_{G_0} G_1 \times^{s,t}_{G_0} G_1 \ar[r, "m \times_{G_0} \id_{G_1}"] \ar[d, "\id_{G_1} \times_{G_0} m"'] & G_1 \times^{s,t}_{G_0} G_1 \ar[d, "m"]
\\
G_1 \times^{s,t}_{G_0} G_1 \ar[r, "m"'] & G_1
\end{tikzcd}    
\end{equation}
    
\item[(iii)] Unitality:
\begin{equation}
\label{eq:Unit1}
\begin{tikzcd}[column sep=4em, row sep=large]
G_1 
\ar[dr, "\id_{G_1}"'] 
\ar[r, "{(1 \circ t, \, \id_{G_1})}"] 
& 
G_1 \times^{s,t}_{G_0} G_1 
\ar[d, "m"'] 
& 
G_1 
\ar[dl, "\id_{G_1}"] 
\ar[l, "{(\id_{G_1}, \, 1 \circ s)}"']
\\
& G_1 &
\end{tikzcd} 
\end{equation}
    
\item[(iv)] Invertibility:
\begin{equation}
\label{eq:Inverofi}
\begin{tikzcd}[column sep={large}]
G_1
\ar[r, "{(\id_{G_1}, \, i)}"] 
\ar[d, "t"'] 
&
G_1 \times_{G_0}^{s,t} G_1 
\ar[d, "m"']
&
G_1
\ar[l, "{(i, \, \id_{G_1})}"'] 
\ar[d, "s"] 
\\
G_0 
\ar[r, "1"'] 
&
G_1
&
G_0
\ar[l, "1"] 
\end{tikzcd}
\end{equation}
\end{itemize}
The commutativity of the second and fourth diagrams in~\eqref{diag:SourceTargetCond2} is implied by the first and third diagrams in~\eqref{diag:SourceTargetCond2} and the invertibility axiom~\eqref{eq:Inverofi}.

\begin{Remark}
\label{rem:HornProjEpi}
Let $G:\sDelta^{\op} \to \catC$ be a simplicial object. For $n=1$, the objects of $(1,0)$-horns and $(1,1)$-horns always exist and are isomorphic to the object of $0$-simplices. That is,
\begin{equation*}
G(\Lambda^1_0) \cong G_0 \cong G(\Lambda^1_1) \,.
\end{equation*}
Now, let $G$ be a groupoid object in $\catC$. Under these isomorphisms, the $(1,0)$-horn projection is given by the source morphism $p_{1,0} \equiv d_{1,1} \equiv s$ and the $(1,1)$-horn projection by the target morphism $p_{1,1} \equiv d_{1,0} \equiv t$. Since $t \circ 1 = \id_{G_0}$ and $s \circ 1 = \id_{G_0}$, we conclude that the horn projections $p_{1,0}$ and $p_{1,1}$ are split epimorphisms, and hence regular epimorphisms.
\end{Remark}

Definition~\ref{def:Horn} can be stated more generally as follows. If $X:\sDelta^{\op} \to \catC$ is a simplicial object and $S:\sDelta^{\op} \to \Set$ is a simplicial set, we define
\begin{equation*}
X(S) := \big(\Ran_{y^{\op}} X\big) (S) \cong \lim_{\Delta^k \to S} X_k \,.
\end{equation*}
Spelling out this limit in terms of equalizers and products using \cite[Theorem~V.2.2]{maclane1998}, we observe that $X(S) \equiv \Hom(S,X)$ is equivalent to Henrique's definition \cite[Equation~(8)]{henriques2008}. If the limit does not exist in $\catC$ we view it as a presheaf on $\catC$ as mentioned earlier.

In \cite{henriques2008}, \cite{Zhu:2009} and \cite{MeyerZhu:2015}, the authors solve the subtlety of the existence of the horns by equipping the category with a \textit{Grothendieck pretopology} with additional good properties. A pretopology defines a class of morphisms called covers that play the role of surjective submersions in the category of smooth manifolds. In that case, the horn $X(\Lambda^n_i)$ exists as an object in $\catC$, using the fact that $\Lambda^n_i$ is a \textit{collapsible} simplicial set \cite[Corollary~2.5]{henriques2008}. This makes it possible to talk about a geometric analog of the horn filling conditions of simplicial sets. Our definition of groupoid objects works in any category without any extra structure of a pretopology.

Furthermore, Lurie defines groupoid objects in $\catC$ using a similar simplicial approach \cite[Section~6.1.2]{lurie2009}. However, $\catC$ is assumed to have finite limits, a condition which we do not assume for reasons explained above. In his definition of groupoid objects in the context of higher categories, the author requires the existence of pullback squares, which encode both the existence of the horns and the horn filling conditions in one step \cite[Definition~6.1.2.7, Proposition~6.1.2.6~(4")]{lurie2009}. For instance, $G_2$ is required to be isomorphic to each of the fiber products in~\eqref{eq:HornsG}. We do not employ this approach since for us it is important to emphasize how these isomorphisms are obtained via the horn projections.

\subsection{Differentiability}

\label{sec:Differentiable}

One of the main goals of this section is to identify the minimal axioms needed on a groupoid object in order to differentiate it to its (abstract) Lie algebroid. Let us first fix some notation. 

\subsubsection{Notation on the iterated and diagonal zero sections}

Let $\catC$ be a tangent category. For an object $X \in \catC$, let
\begin{equation}
\label{eq:HigherZeroSection}
 \Zero{n}_X : X \longrightarrow T^n X
\end{equation}
denote the natural morphism given by iterating the zero section for $n \geq 1$. For example, we have
\begin{equation*}
  \Zero{n}_X
  = T^{n-1}0_X \circ \ldots \circ T0_X \circ 0_X
  \,.
\end{equation*}
Since $T0_X \circ 0_X = 0_{TX} \circ 0_X$ from Equation~\eqref{eq:T00T}, there are many different ways of expressing this morphism. In the trivial case $n=0$, we define $\Zero{0}_X := \id_{X}$.  Furthermore, let
\begin{equation*}
 \ZeroDiag{m}{X} := \underbrace{(0_X, \ldots, 0_X)}_{\text{$m$ factors}} : X \longrightarrow T_m X
\end{equation*}
denote the diagonal zero section for all $m \geq 1$. 

Let $G: \sDelta^\op \to \calC$ be a groupoid object. Denote by
\begin{equation}
\label{eq:ZeroVertexMap}
  t_k := t \circ \pr_1: G_k \longrightarrow G_0
\end{equation}
the morphism that sends the $k$-simplices in $G_k$ to their terminal vertex for $k \geq 1$. In the trivial case $k=0$, we define $t_0 := \id_{G_0}$. Using the naturality of the (iterated and diagonal) zero section, we obtain the relations
\begin{align}
\label{eq:lkzeroRel1}
  T^n t_k \circ \Zero{n}_{G_k} 
  &= \Zero{n}_{G_0} \circ t_k
  \\
\label{eq:lkzeroRel2}
  T_m t_k \circ \ZeroDiag{m}{G_k} 
  &= \ZeroDiag{m}{G_0} \circ t_k
  \,.
\end{align}

\subsubsection{The main definition, its motivation and consequences}

\begin{Definition}
\label{def:DiffGroupoid}
A groupoid object $G$ in a tangent category $\catC$ will be called \textbf{differentiable} if the pullbacks in the diagrams
\begin{equation}
\label{diag:DiffGroupoidG01}
\begin{tikzcd}[column sep=small]
T^n G_1 \times_{T^n G_0} G_k
\ar[d] \ar[r]
\arrow[dr, phantom, "\lrcorner", very near start] 
&
G_k
\ar[d, "\Zero{n}_{G_k}"]
\\
T^n G_1 \times_{T^n G_0} T^n G_k
\ar[d] \ar[r]
\arrow[dr, phantom, "\lrcorner", very near start] 
&
T^n G_k
\ar[d, "T^n t_k"]
\\
T^n G_1
\ar[r, "T^n s"']
&
T^n G_0
\end{tikzcd}
\quad
\begin{tikzcd}[column sep=small]
T_m G_1 \times_{T_m G_0} G_k
\ar[d] \ar[r]
\arrow[dr, phantom, "\lrcorner", very near start] 
&
G_k
\ar[d, "\ZeroDiag{m}{G_0} \circ \, t_k"]
\\
T_m G_1
\ar[r, "T_m s"']
&
T_m G_0    
\end{tikzcd}
\end{equation}
\begin{equation}
\label{diag:DiffGroupoidG02}
\begin{tikzcd}[column sep=small]
G_0 \times_{G_1} TG_1 \times_{TG_0} G_0
\ar[d] \ar[r]
\arrow[dr, phantom, "\lrcorner", very near start] 
&
TG_1 \times_{TG_0} G_0
\ar[d, "\pi_{G_1} \circ \, \pr_1"]
\\
G_0
\ar[r, "1"']
&
G_1    
\end{tikzcd}
\end{equation}
exist, and if the natural morphism
\begin{equation}
\label{eq:TFinPullG}
  T^n(G_1 \times_{G_0}^{s,t_k} G_k) 
  \longrightarrow T^n G_1 \times_{T^n G_0}^{T^ns,T^nt_k} T^n G_k
\end{equation}
is an isomorphism for all $n \geq 1$, $m \geq 2$ and $k \geq 0$.
\end{Definition}

\begin{Example}
\label{ex:LieGrpdDiff}
A groupoid object in the tangent category $\Mfld$ is differentiable if and only if it is a Lie groupoid (see Proposition~\ref{prop:LieGrpdDiff}).
\end{Example}

Let us motivate Definition~\ref{def:DiffGroupoid}. Let $G:\sDelta^\op \to \calC$ be a differentiable groupoid. For $n=1$, the outer pullback on the left of~\eqref{diag:DiffGroupoidG01} is the pullback of $Ts$ along the zero section~\eqref{eq:lkzeroRel1}. The existence of this pullback makes it possible to restrict the tangent bundle $TG_1$ \textit{vertically} with respect to the source fibers. Similarly, for $n \geq2$ we get higher vertical restrictions of the tangent bundle. 

For $m \geq 2$, the pullback on the right of~\eqref{diag:DiffGroupoidG01} is the pullback of $T_ms$ along the diagonal zero section~\eqref{eq:lkzeroRel2}. The existence of this pullback enables us to talk about fiber products of the vertical tangent bundle. This will be necessary when we vertically restrict the fiberwise addition of the tangent structure. Higher vertical tangent bundles and vertical restrictions of tangent structures are studied thoroughly in Section~\ref{sec:HigherVertTanFun} in the context of right groupoid bundles.

The pullback in~\eqref{diag:DiffGroupoidG02} is needed for the definition of the abstract Lie algebroid of $G$, where we consider the pullback~\eqref{eq:LABundle} of the vertical tangent bundle along the identity bisection.

To understand the importance of the existence of the pullback and the isomorphism in~\eqref{eq:TFinPullG}, we consider the case for $n=k=1$, which states that the natural morphism
\begin{equation*}
  T(G_1 \times_{G_0}^{s,t} G_1) 
  \longrightarrow TG_1 \times_{TG_0}^{Ts,Tt} TG_1
\end{equation*}
is an isomorphism. This implies that for the simplicial object $TG:\sDelta^{\op} \to \catC$, given by $(TG)_k=TG_k$, the object of $(2,1)$-horns 
\begin{equation*}
(TG)(\Lambda^2_1) \cong TG_1 \times_{TG_0}^{Ts,Tt} TG_1    
\end{equation*}
exists in $\catC$. Moreover, the horn projection
\begin{equation*}
TG_2 \xrightarrow[~Tp_{2,1}~]{~\cong~} T(G_1 \times_{G_0}^{s,t} G_1) \xrightarrow{~\cong~} TG_1 \times_{TG_0}^{Ts,Tt} TG_1 \,,
\end{equation*}
being the composition of isomorphisms, is an isomorphism. Using the isomorphism $Ti:TG_1 \to TG_1$ and the existence of $G_1 \times_{G_0}^{s,s} G_1$ and $G_1 \times_{G_0}^{t,t} G_1$ in $\catC$, we conclude that the objects of $(2,0)$-horns and $(2,2)$-horns of $TG$, given respectively by
\begin{equation*}
\begin{split}
(TG)(\Lambda^2_0) &\cong TG_1 \times_{TG_0}^{Ts,Ts} TG_1
\\
(TG)(\Lambda^2_2) &\cong TG_1 \times_{TG_0}^{Tt,Tt} TG_1 \,,
\end{split}
\end{equation*}
exist in $\catC$ too. Analogously, the respective horn projections are isomorphisms. Proceeding similarly, we conclude that $TG$ is a groupoid object in $\catC$.

The structure maps of $TG$ are inherited from those on the groupoid $G$. Explicitly, its source, target, unit and inverse morphisms are the tangent morphisms of those of $G$. The multiplication of $TG$ is given by
\begin{equation*}
TG_1 \times_{TG_0}^{Ts,Tt} TG_1 \xleftarrow{~\cong~} T(G_1 \times_{G_0}^{s,t} G_1) \xrightarrow{~Tm~} TG_1 \,.
\end{equation*}
If $k=2$ and by applying the isomorphism~\eqref{eq:TFinPullG} twice we get that the natural morphism
\begin{equation*}
TG_3 \longrightarrow TG_1 \times_{TG_0}^{Ts,Tt} TG_1 \times_{TG_0}^{Ts,Tt} TG_1
\end{equation*}
is an isomorphism. This allows us to talk about associativity of the multiplication of $TG$.

For general $n \geq 1$ and $k \geq 0$, it follows by induction from the isomorphism~\eqref{eq:TFinPullG}, that the natural morphism
\begin{equation}
\label{eq:TnGkIso}
  T^n G_k 
  \longrightarrow
  \underbrace{T^n G_1 \times_{T^n G_0}^{T^n s,T^n t} \ldots 
    \times_{T^n G_0}^{T^n s,T^n t} T^n G_1}_{\text{$k$ factors}}
\end{equation}
is an isomorphism. By similar arguments as for the case when $n=1$, this implies that $T^n G: \sDelta^\op \to \calC$ given by $(T^n G)_k = T^n G_k$ is a groupoid object in $\catC$. Analogously, we get that $T_m G: \sDelta^\op \to \calC$, given by $(T_m G)_k = T_m G_k$ is a groupoid object in $\catC$ for all $m \geq 2$.

\begin{Remark}
For the construction of the abstract Lie algebroid of a differentiable groupoid $G$, we only need the existence of the pullbacks in Definition~\ref{def:DiffGroupoid} for $n=1,2$, $m \geq 2$ and $k \geq 0$. The reason that we ask for the existence of these pullbacks for all $n \geq 1$ can be summarized as follows:
\begin{itemize}
\item[(1)]
The concepts of the higher vertical tangent bundle and the vertical restriction of the tangent structure that we develop in Section~\ref{sec:HigherVertTanFun} make sense for all $n \geq 0$ (with $n=0$ being the trivial case). This enables us to talk about the functor $\VTan{n}$, which behaves like the powers of the vertical restriction functor $V$ (see Remarks~\ref{rem:VEnotDiff0} and~\ref{rem:VEnotDiff}).
\item[(2)]
It is the first step to getting a vertical restriction endofunctor on differentiable groupoids, and more generally an endofunctor on differentiable right groupoid bundles (Definition~\ref{def:DiffBunG}). This makes it possible to obtain an honest tangent structure on differentiable right groupoid bundles (and not just a vertical restriction of the tangent structure). 
\item[(3)]
It seems unnatural to only consider the cases for $n=1$ and $n=2$.
\end{itemize}
\end{Remark}
\section{Differentiable groupoid bundles}
\label{sec:Gbun}

Given a groupoid object $G$, its source morphism $s:G_1 \to G_0$ is a $G$-bundle together with the right groupoid multiplication. If $G$ is differentiable, the tangent morphism $Ts:TG_1 \to TG_0$ is a $TG$-bundle. Using the axioms of differentiability, the pasting lemma and universal constructions, we will show that the restriction of the tangent bundle \textit{vertically} with respect to the source fibers yields a $G$-bundle
\begin{equation*}
VG_1 := TG_1 \times_{TG_0}^{Ts, \, 0_{G_0}} G_0 \longrightarrow G_0 \,.
\end{equation*}
In this section, we develop the language of $G$-bundles, which will be the cornerstone of proving that the Lie bracket of (right) invariant vector fields is invariant in the setting of tangent categories (Section~\ref{sec:LieBracketInv}). This will be crucial for the definition of the Lie bracket on the sections of the abstract Lie algebroid of $G$ via its identification to the invariant vector fields (Theorem~\ref{thm:SecAInvVec}).

In Section~\ref{sec:GbunEqMor}, we define the category of right $G$-bundles with $G$-equivariant bundle morphisms. We then discuss the diagonal action of a groupoid object on pullbacks of groupoid bundles in Section~\ref{sec:diagonal}. Lastly, we generalize the notion of differentiability of groupoids to groupoid bundles in a tangent category in Section~\ref{sec:DifferentiabilityGrpdBun}. This generalization allows us to make sense of (higher) vertical tangent bundles and their fiber products. For a gentle introduction to Lie groupoids and their Lie algebroids we refer the reader to \cite{CannasdaSilvaWeinstein:Models}.

\subsection{Groupoid bundles and equivariant morphisms}
\label{sec:GbunEqMor}

\begin{Definition}
\label{def:rightGbun}
Let $G$ be a groupoid object in a category $\calC$. A \textbf{right groupoid bundle} or \textbf{right $G$-bundle} is a morphism $r:E \to G_0$ in $\catC$ such that the pullbacks
\begin{equation}
\label{eq:EGkNerve}
E \times^{r, t_k}_{G_0} G_k \cong E \times_{G_0}^{r,t} \underbrace{G_1 \times_{G_0}^{s,t} \ldots \times_{G_0}^{s,t} G_1}_{\text{$k$ factors}}
\end{equation}
exist for all $k \geq 1$, together with a morphism  
\begin{equation*}
\beta: E \times^{r,t}_{G_0} G_1 \longrightarrow E \,,
\end{equation*}
called the \textbf{right action}, such that the following diagrams commute:
\begin{itemize}
\item[(i)] Condition on fibers:
\begin{equation*}
\begin{tikzcd}
E \times^{r,t}_{G_0} G_1 \ar[r, "\beta"] \ar[d, "\pr_2"'] 
& 
E \ar[d, "r"]
\\
G_1 \ar[r, "s"'] 
& 
G_0
\end{tikzcd}
\end{equation*}

\item[(ii)] Unitality:
\begin{equation*}
\begin{tikzcd}[column sep=large]
E \ar[dr, "\id_E"'] \ar[r, "{(\id_E, 1 \circ r)}"] 
& 
E \times^{r,t}_{G_0} G_1 \ar[d, "\beta"]
\\
& 
E
\end{tikzcd}
\end{equation*}

\item[(iii)] Associativity:
\begin{equation*}
\begin{tikzcd}[column sep=4em, row sep=large]
E \times^{r,t_2}_{G_0} G_2 
\ar[r, "\id_E \times_{G_0} m"] 
\ar[d, "\beta \times_{G_0} \id_{G_1}"'] 
& 
E \times^{r,t}_{G_0} G_1 \ar[d, "\beta"]
\\
E \times^{r,t}_{G_0} G_1 \ar[r, "\beta"'] 
& 
E
\end{tikzcd}
\end{equation*}
\end{itemize}
\end{Definition}

\begin{Definition}
\label{def:GBunMor}
Let $r_E: E \to G_0$ and $r_F: F \to G_0$ be right $G$-bundles. A morphism $\Phi: E \to F$ in $\catC$ is called a \textbf{morphism of bundles} if the diagram
\begin{equation*}
\begin{tikzcd}[column sep=1em]
E \ar[dr, "r_E"'] \ar[rr, "{\Phi}"] 
& & 
F \ar[dl, "r_F"]
\\
& G_0 &
\end{tikzcd}
\end{equation*}
commutes. It is called \textbf{$G$-equivariant} if the diagram
\begin{equation*}
\begin{tikzcd}[column sep=4em]
E \times^{r_E,t}_{G_0} G_1 \ar[r, "\Phi \times_{G_0} \id_{G_1}"] \ar[d, "\beta_E"'] 
& F \times^{r_F,t}_{G_0} G_1 \ar[d, "\beta_F"]
\\
E \ar[r, "\Phi"'] & F
\end{tikzcd}
\end{equation*}
commutes, where the vertical arrows are the $G$-actions.
\end{Definition}

\begin{Terminology}
\label{term:right}
In this paper we will only consider right groupoid bundles. Hence, we will often refer to right groupoid bundles by groupoid bundles or $G$-bundles, dropping the adjective \textit{right}. 
\end{Terminology}

Given a groupoid object $G$ in $\catC$, $G$-bundles together with $G$-equivariant bundle morphisms form a category, which will be denoted by $\BunG$.

\begin{Remark}
\label{rmk:ActGrpdNerve}
The pullbacks~\eqref{eq:EGkNerve} required to exist are the levels of the action groupoid $E \rtimes G \rightrightarrows E$. Such a condition is generally needed if the ambient category does not have finite limits. There is a fully simplicial version of Definition~\ref{def:rightGbun}, which uses bisimplicial objects or higher correspondences \cite{BlohmannKrishnaZhu}.
\end{Remark}


\begin{Example}
\label{ex:G0}
The identity map $E:=G_0 \xrightarrow{~\id~} G_0$ with the right $G$-action $\beta := s \circ \pr_2: G_0 \times_{G_0}^{\id,t} G_1 \to G_0$ is the terminal object in $\BunG$.
\end{Example}

\begin{Example}
\label{ex:G1}
The source morphism $G_1 \xrightarrow{~s~} G_0$ and the right groupoid multiplication $\beta:=m: G_1 \times_{G_0}^{s,t} G_1 \to G_1$ equip $E := G_1$ with the structure of a $G$-bundle.
\end{Example}

\subsubsection{Restriction of a groupoid bundle}

The following lemma shows that if we restrict a groupoid bundle $F$ to a subbundle $E \to F$ and pull back the action along a morphism of groupoids which is an inclusion at the level of 0-simplices, the restricted action on $E$ takes the target fibers to the source fibers, and inherits unitality and associativity from the ambient bundle $F$.

\begin{Lemma}
\label{lem:ActionRestrict}
Let $G$ and $H$ be groupoids; let $r_E: E \to G_0$ be a bundle together with a morphism $\beta_E: E \times_{G_0}^{r_E,t_G} G_1 \to E$; let $r_F: F \to H_0$ be an $H$-bundle with action $\beta_F$; let $\phi: G \to H$ be a morphism of groupoids with simplicial components $\phi_k: G_k \to H_k$ for all $k \geq 0$; let $i: E \to F$ be a morphism in $\catC$ such that the diagrams
\begin{equation}
\label{eq:ActionRestrict01}
\begin{tikzcd}
E
\ar[r, "i"]
\ar[d, "r_E"']
&
F
\ar[d, "r_F"]
\\
G_0
\ar[r, "\phi_0"']
&
H_0
\end{tikzcd}
\qquad
\begin{tikzcd}[column sep=3em]
E \times_{G_0} G_1
\ar[r, "i \times_{\phi_0} \phi_1"]
\ar[d, "\beta_E"']
&
F \times_{H_0} H_1
\ar[d, "\beta_F"]
\\
E
\ar[r, "i"']
&
F
\end{tikzcd}
\end{equation}
commute. If $i$ and $\phi_0$ are monomorphisms, then $\beta_E$ is a right $G$-action on $E$.
\end{Lemma}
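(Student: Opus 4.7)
The plan is to verify the three axioms of a right $G$-action on $E$, namely the fiber condition, unitality, and associativity, by transferring each axiom from the known right $H$-action $\beta_F$ on $F$ back to $\beta_E$ via the morphisms $i$ and $\phi_0$. The monomorphism hypotheses on $i$ and $\phi_0$ are exactly what allow this transfer: in each case I will show that two candidate morphisms agree after postcomposition with $i$ (respectively $\phi_0$), and then cancel.

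For the fiber condition $r_E \circ \beta_E = s_G \circ \pr_2$, I would postcompose with $\phi_0$. Using the first square of~\eqref{eq:ActionRestrict01}, the second square of~\eqref{eq:ActionRestrict01}, the fiber condition for $\beta_F$, the naturality of $\pr_2$ on $i \times_{\phi_0} \phi_1$, and the fact that $\phi$ is a morphism of groupoids (so $s_H \circ \phi_1 = \phi_0 \circ s_G$), I chain the equalities
\begin{equation*}
\phi_0 \circ r_E \circ \beta_E
= r_F \circ i \circ \beta_E
= r_F \circ \beta_F \circ (i \times_{\phi_0} \phi_1)
= s_H \circ \phi_1 \circ \pr_2
= \phi_0 \circ s_G \circ \pr_2,
\end{equation*}
and then cancel $\phi_0$ on the left using that it is a monomorphism.

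For unitality, I would instead postcompose with $i$. Writing $u_E := \beta_E \circ (\id_E, 1_G \circ r_E)$, the intertwining square of~\eqref{eq:ActionRestrict01} together with $\phi_1 \circ 1_G = 1_H \circ \phi_0$ (which holds since $\phi$ is a groupoid morphism) and $\phi_0 \circ r_E = r_F \circ i$ gives
\begin{equation*}
i \circ u_E
= \beta_F \circ (i \times_{\phi_0} \phi_1) \circ (\id_E, 1_G \circ r_E)
= \beta_F \circ (i, 1_H \circ r_F \circ i)
= \beta_F \circ (\id_F, 1_H \circ r_F) \circ i
= i,
\end{equation*}
where the last equality is the unitality of $\beta_F$. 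Since $i$ is a monomorphism, $u_E = \id_E$. Associativity is handled analogously: again postcomposing with $i$ and using the intertwining square twice (once for each application of $\beta_E$), the associativity of $\beta_F$, and the compatibility $\phi_1 \circ m_G = m_H \circ (\phi_1 \times_{\phi_0} \phi_1)$, both sides of the associativity diagram for $\beta_E$ become $\beta_F \circ (i(e), \phi_1(m_G(g_1,g_2)))$ after composing with $i$, and cancelling $i$ finishes.

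The only subtle point is bookkeeping: one must check that each composite does land in the required fiber product so the pair-notation morphisms into $E \times_{G_0} G_1$ and $F \times_{H_0} H_1$ are well-defined, which amounts to the same two commutative squares of~\eqref{eq:ActionRestrict01} used above. I expect this paste-and-cancel pattern to be the only real content, and no further structural input is needed; in particular, the groupoid morphism properties of $\phi$ (intertwining with unit and multiplication) enter exactly where expected, in the unitality and associativity steps respectively.
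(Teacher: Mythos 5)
Your proof is correct and is essentially the paper's argument: the paper transfers each axiom (fiber condition, unitality, associativity) along $i$ respectively $\phi_0$ using the two squares of~\eqref{eq:ActionRestrict01} and the groupoid-morphism identities, then cancels the monomorphism — it merely packages your post-compose-and-cancel chains as instances of Lemma~\ref{lem:InnerOuterSquares}, with the relevant axiom of $\beta_F$ as the inner square. The bookkeeping point you flag (well-definedness of the morphisms into the fiber products) is handled the same way in the paper, so nothing is missing.
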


\begin{proof}
For the unitality, we consider the following diagram:
\begin{equation*}
\begin{tikzcd}
E
\ar[rrr, "{(\id_E, \, 1_G \circ r_E)}"] 
\ar[ddd, "\id_E"'] 
\ar[dr, "i"]
& &[2em] &[-2.5em]
E \times_{G_0} G_1
\ar[ddd, "\beta_E"] 
\ar[dl, "i \times_{\phi_0} \phi_1"']
\\
&
F
\ar[d, "\id_F"'] 
\ar[r, "{(\id_F, \, 1_H \circ r_F)}"] 
&
F \times_{H_0} H_1 \ar[d, "\beta_F"]
&
\\
&
F\ar[r, "{\id_F}"'] &
F
&
\\
E \ar[rrr, "{\id_E}"']
\ar[ur, "i"'] 
& & &
E
\ar[ul, "i"]
\end{tikzcd}    
\end{equation*}
The inner square commutes since by assumption $F$ is a groupoid bundle, so that the $H$-action on $F$ is unital. The right trapezoid is the commutative diagram on the right of~\eqref{eq:ActionRestrict01}. The upper trapezoid commutes since $\phi$ is a morphism of groupoids and by using the commutativity of the left diagram of \eqref{eq:ActionRestrict01}. Spelled out,
\begin{equation*}
\begin{split}
(i \times_{\phi_0} \phi_1) \circ (\id_E, 1_G \circ r_E) 
&= (i \circ \id_E, \phi_1 \circ 1_G \circ r_E) 
\\
&= (\id_F \circ i, 1_H \circ \phi_0 \circ r_E)
\\
&= (\id_F \circ i, 1_H \circ r_F \circ i)
\\
&= (\id_F, 1_H \circ r_F) \circ i
\,.
\end{split}
\end{equation*}
The left and bottom trapezoids commute trivially. Since $i$ is by assumption a monomorphism, it follows from Lemma~\ref{lem:InnerOuterSquares} that the outer square commutes. This shows that $\beta_E$ is unital.

For the associativity, we consider the following diagram:
\begin{equation*}
\begin{tikzcd}
E \times_{G_0} G_2
\ar[rrr, "\id_{E} \times_{G_0} m_G"] 
\ar[ddd, "\beta_E \times_{G_0} \id_{G_1}"'] 
\ar[dr, "i \times_{\phi_0} \phi_2"]
&[-2em] &[1.8em] &[-2em]
E \times_{G_0} G_1
\ar[ddd, "\beta_E"] 
\ar[dl, "i \times_{\phi_0} \phi_1"']
\\
&
F \times_{H_0} H_2
\ar[d, "\beta_F \times_{H_0} \id_{H_1}"'] 
\ar[r, "{\id_F \times_{H_0} m_H}"] 
&
F \times_{H_0} H_1 \ar[d, "\beta_F"]
&
\\
&
F \times_{H_0} H_1 \ar[r, "{\beta_F}"'] &
F
&
\\
E \times_{G_0} G_1 \ar[rrr, "{\beta_E}"']
\ar[ur, "i \times_{\phi_0} \phi_1"'] 
& & &
E
\ar[ul, "i"]
\end{tikzcd}    
\end{equation*}
The inner square commutes since $F$ is by assumption a groupoid bundle, so that the $H$-action on $F$ is associative. The right and bottom trapezoids are both the commutative diagram on the right of~\eqref{eq:ActionRestrict01}. The upper trapezoid commutes since $\phi$ is a morphism of groupoids and by functoriality. Spelled out,
\begin{equation*}
\begin{split}
(i \times_{\phi_0} \phi_1) \circ (\id_E \times_{G_0} m_G) 
&= (i \circ \id_E) \times_{\phi_0} (\phi_1 \circ m_G) 
\\
&= (\id_F \circ i) \times_{\phi_0} (m_H \circ \phi_2)
\\
&= (\id_F \times_{H_0} m_H) \circ (i \times_{\phi_0} \phi_2)
\,.
\end{split}
\end{equation*}
The commutativity of the left trapezoid follows from the commutativity of the right diagram of~\eqref{eq:ActionRestrict01}, observing that $G_2 \cong G_1 \times_{G_0} G_1$ and $\phi_2 = \phi_1 \times_{\phi_0} \phi_1$. Since $i$ is by assumption a monomorphism, it follows from Lemma~\ref{lem:InnerOuterSquares} that the outer square commutes. This shows that $\beta_E$ is associative.

To show that $\beta_E$ takes the target fibers to the source fibers of $G$, we consider the following diagram:
\begin{equation*}
\begin{tikzcd}
E \times_{G_0} G_1
\ar[rrr, "{\beta_E}"] 
\ar[ddd, "\pr_2"'] 
\ar[dr, "i \times_{\phi_0} \phi_1"]
&[-2.5em] & &
E
\ar[ddd, "r_E"] 
\ar[dl, "i"']
\\
&
F \times_{H_0} H_1
\ar[d, "\pr_2"'] 
\ar[r, "{\beta_F}"] 
&
F \ar[d, "r_F"]
&
\\
&
H_1 \ar[r, "{s_H}"'] &
H_0
&
\\
G_1 \ar[rrr, "{s_G}"']
\ar[ur, "\phi_1"'] 
& & &
G_0
\ar[ul, "\phi_0"]
\end{tikzcd}    
\end{equation*}
The inner square commutes since by assumption $F$ is a groupoid bundle, so that the $H$-action on $F$ takes the target fibers to the source fibers of $H$. The right and upper trapezoids are the commutative diagrams in~\eqref{eq:ActionRestrict01}. The lower trapezoid commutes since $\phi$ is a morphism of groupoids. The left trapezoid commutes trivially. Since $\phi_0$ is by assumption a monomorphism, it follows from Lemma~\ref{lem:InnerOuterSquares} that the outer square commutes. Hence, $\beta_E$ takes the target fibers to the source fibers of $G$.

We conclude that $\beta_E$ satisfies the commutative diagrams in Definition~\ref{def:rightGbun} and thus is a right $G$-action on $E$.
\end{proof}

\subsection{Pullbacks and the diagonal action}
\label{sec:diagonal}

Let $r_A: A \to G_0$, $r_B: B \to G_0$ and $r_C: C \to G_0$ be $G$-bundles with right actions $\beta_A$, $\beta_B$, and $\beta_C$. Let $\phi: A \to B$ and $\psi: C \to B$ be $G$-equivariant bundle morphisms. Assume that the pullbacks
\begin{equation}
\label{eq:DiagActPullback}
  (A \times_B C) \times_{G_0}^{r_C \circ \pr_2, \, t_k} G_k
\end{equation}
exist in $\calC$ for all $k \geq 0$. The pullback $A \times_B C$ is equipped with the natural morphism
\begin{equation*}
  r_{A \times_B C}:
  A \times_B C \longrightarrow G_0
  \,,
\end{equation*}
which is given explicitly by $r_{A \times_B C} = r_A \circ \pr_1 = r_C \circ \pr_2$. The condition that $\phi$ and $\psi$ are $G$-equivariant is equivalent to the commutativity of the following diagram:
\begin{equation*}
\begin{tikzcd}[column sep=4em, row sep=2em]
A \times_{G_0} G_1
\ar[r, "\phi \times_{G_0} \id_{G_1}"] 
\ar[d, "\beta_A"']
&
B \times_{G_0}  G_1 
\ar[d, "\beta_B"]
&
C \times_{G_0} G_1
\ar[l, "\psi \times_{G_0} \id_{G_1}"']
\ar[d, "\beta_C"]
\\
A \ar[r,"\phi"']
&
B
&
C \ar[l, "\psi"]
\end{tikzcd}
\end{equation*}
This induces a unique morphism from the limit of the top row to the limit of the bottom row,
\begin{equation}
\label{eq:FiberProdActions1}
  \beta_A \times_{\beta_B} \beta_C: (A \times_{G_0} G_1) \times_{B \times_{G_0} G_1} (C \times_{G_0} G_1) \longrightarrow A \times_B C
  \,.
\end{equation}
The domain is isomorphic to
\begin{equation*}
\begin{split}
  (A \times_{G_0} G_1) \times_{B \times_{G_0} G_1} (C \times_{G_0} G_1)
  &\cong (A \times_B C) \times_{G_0 \times_{G_0} G_0}
  (G_1 \times_{G_1} G_1)
  \\
  &\cong
  (A \times_B C) \times_{G_0} G_1
  \,,
\end{split}
\end{equation*}
where we have used that pullbacks commute with pullbacks and the cancellation of pullbacks over the identity. By composing the isomorphism with~\eqref{eq:FiberProdActions1}, we obtain the \textbf{diagonal action}
\begin{equation}
\label{eq:DiagAct}
  \beta_{A \times_B C}: A \times_B C \times_{G_0} G_1
  \longrightarrow
  A \times_B C
  \,.
\end{equation}
Explicitly, we can express the diagonal action as
\begin{equation}
\label{eq:DiagActExp}
  \beta_{A \times_B C}
  = \bigl( \beta_A \circ (\pr_1,\pr_3), \beta_C \circ (\pr_2,\pr_3) \bigr)
  \,.
\end{equation}

\begin{Lemma}
The morphism~\eqref{eq:DiagAct} is a right $G$-action on $A \times_B C$. 
\end{Lemma}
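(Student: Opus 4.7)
The plan is to verify directly the three axioms of a right $G$-action from Definition~\ref{def:rightGbun} for the morphism $\beta_{A \times_B C}$, using the explicit componentwise description in Equation~\eqref{eq:DiagActExp} together with the fact that $\beta_A$ and $\beta_C$ are already $G$-actions. The key observation is that a morphism into the pullback $A \times_B C$ is uniquely determined by its two components (into $A$ and into $C$) provided they agree after composition with $\phi$ and $\psi$; this reduces each axiom for $\beta_{A \times_B C}$ to the corresponding pair of axioms for $\beta_A$ and $\beta_C$.

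First I would check the fiber condition. The composition $r_{A \times_B C} \circ \beta_{A \times_B C}$ projects onto the two components $r_A \circ \beta_A \circ (\pr_1, \pr_3)$ and $r_C \circ \beta_C \circ (\pr_2, \pr_3)$, which by the fiber condition for $\beta_A$ and $\beta_C$ both equal $s \circ \pr_3$. This gives the required $r_{A \times_B C} \circ \beta_{A \times_B C} = s \circ \pr_2$. Unitality follows in the same way: precomposing $\beta_{A \times_B C}$ with $(\id_{A \times_B C}, 1 \circ r_{A \times_B C})$ gives componentwise $\beta_A \circ (\pr_1, 1 \circ r_A \circ \pr_1)$ and $\beta_C \circ (\pr_2, 1 \circ r_C \circ \pr_2)$, which by unitality of $\beta_A$ and $\beta_C$ reduce to $\pr_1$ and $\pr_2$, giving $\id_{A \times_B C}$ by the universal property of the pullback.

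For associativity, I would compare the two composites $(A \times_B C) \times_{G_0} G_2 \to A \times_B C$ obtained from the two sides of the associativity diagram. Both can be projected via $\pr_1$ and $\pr_2$ into $A$ and $C$ respectively, where they agree by the associativity of $\beta_A$ and $\beta_C$ individually; since the target is the pullback $A \times_B C$, a map into it is determined by these two components (once they are seen to agree after further composition with $\phi$ and $\psi$ into $B$, which also follows from the associativity of $\beta_B$). So again the universal property of the pullback reduces the identity to the corresponding ones for $A$ and $C$.

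The only mildly delicate point is bookkeeping: one must justify that the two components of the various morphisms into $A \times_B C$ are genuinely equalized in $B$, which uses the $G$-equivariance of $\phi$ and $\psi$ at each step. This is a routine but slightly tedious diagram chase, made cleaner by first establishing the componentwise formulation $\beta_{A \times_B C} = (\beta_A \circ (\pr_1, \pr_3), \beta_C \circ (\pr_2, \pr_3))$ and then invoking the universal property of $A \times_B C$ to collapse each axiom to a pair of axioms for the already-verified actions $\beta_A$ and $\beta_C$. No new categorical machinery beyond that already developed in Section~\ref{sec:GbunEqMor} is needed.
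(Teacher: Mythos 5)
Your proposal is correct and follows essentially the same route as the paper: the paper also verifies the fiber, unitality, and associativity axioms by direct computation with the componentwise formula $\beta_{A \times_B C} = \bigl(\beta_A \circ (\pr_1,\pr_3), \beta_C \circ (\pr_2,\pr_3)\bigr)$, reducing each axiom to the corresponding axioms for $\beta_A$ and $\beta_C$ (with the equalization in $B$ already handled by the equivariance of $\phi$ and $\psi$ in the construction of the diagonal action). The only difference is presentational: you invoke the universal property of the pullback to compare components, while the paper writes out the equational chains explicitly in the parenthesis notation.
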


\begin{proof}
To prove unitality, we write $\id_{A \times_B C} = (\pr_1, \pr_2)$ and calculate
\begin{equation*}
\begin{split}
&\beta_{A \times_B C} \circ (\id_{A \times_B C}, 1 \circ r_{A \times_B C})
\\
= \
&\beta_{A \times_B C} \circ (\pr_1, \pr_2, 1 \circ r_{A \times_B C})
\\
= \
&\bigl( \beta_A \circ (\pr_1,\pr_3), \beta_C \circ (\pr_2,\pr_3) \bigr) \circ (\pr_1, \pr_2, 1 \circ r_{A \times_B C})
\\
= \
&\bigl( \beta_A \circ (\pr_1,1 \circ r_{A \times_B C}), \beta_C \circ (\pr_2,1 \circ r_{A \times_B C}) \bigr)
\\
= \
&\bigl( \beta_A \circ (\pr_1,1 \circ r_{A} \circ \pr_1), \beta_C \circ (\pr_2,1 \circ r_{C} \circ \pr_2) \bigr)
\\
= \
&\bigl( \beta_A \circ (\id_A,1 \circ r_{A}) \circ \pr_1, \beta_C \circ (\id_C,1 \circ r_{C}) \circ \pr_2 \bigr)
\\
= \
&(\id_A \circ \pr_1, \id_C \circ \pr_2)
\\
= \
&\id_{A \times_B C} \,,
\end{split}
\end{equation*}
where we have used the explicit formulation \eqref{eq:DiagActExp} of the diagonal action, and the unitality of the actions $\beta_A$ and $\beta_C$.

For the associativity, we have
\begin{align}
\beta_{A \times_B C} \circ (\id_{A \times_B C} \times_{G_0} m)
&=
\beta_{A \times_B C} \circ (\pr_1, \pr_2, m \circ \pr_3)
\nonumber \\
&=
\bigl( \beta_A \circ (\pr_1,\pr_3), \beta_C \circ (\pr_2,\pr_3) \bigr) \circ (\pr_1, \pr_2, m \circ \pr_3)
 \nonumber \\
&=
\bigl( \beta_A \circ (\pr_1, m \circ \pr_3), \beta_C \circ (\pr_2, m \circ \pr_3) \bigr) 
\nonumber \\
&=
\bigl( \beta_A \circ (\beta_A \times_{G_0} \id_{G_1}), \beta_C \circ (\beta_C \times_{G_0} \id_{G_1}) \bigr) \,,
\label{eq:diag1}
\end{align}
where we have used the diagonal action explicitly, and the associativity of $\beta_A$ and $\beta_C$. On the other hand, writing $G_2 \cong G_1 \times_{G_0} G_1$, we calculate
\begin{align}
&\beta_{A \times_B C} \circ (\beta_{A \times_B C} \times_{G_0} \id_{G_1})
\nonumber \\
= \
&\bigl( \beta_A \circ (\pr_1,\pr_3), \beta_C \circ (\pr_2,\pr_3) \bigr) \circ \bigl( \beta_A \circ (\pr_1,\pr_3), \beta_C \circ (\pr_2,\pr_3), \pr_4 \bigr)
\nonumber \\
= \
&\bigl( \beta_A \circ (\beta_A \circ (\pr_1,\pr_3),\pr_4), \beta_C \circ (\beta_C \circ (\pr_2,\pr_3),\pr_4) \bigr)
\nonumber \\
= \
&\bigl( \beta_A \circ (\beta_A \times_{G_0} \id_{G_1}), \beta_C \circ (\beta_C \times_{G_0} \id_{G_1}) \bigr) \,.
\label{eq:diag2}
\end{align}
Comparing Equations \eqref{eq:diag1} and \eqref{eq:diag2}, we get that
\begin{equation*}
\beta_{A \times_B C} \circ (\id_{A \times_B C} \times_{G_0} m) = \beta_{A \times_B C} \circ (\beta_{A \times_B C} \times_{G_0} \id_{G_1}) \,.
\end{equation*}
This shows that $\beta_{A \times_B C}$ is associative.

To show that the target fibers are taken to the source fibers, we have that
\begin{equation*}
\begin{split}
r_{A \times_B C} \circ \beta_{A \times_B C}
&=
r_A \circ \pr_1 \circ \bigl( \beta_A \circ (\pr_1,\pr_3), \beta_C \circ (\pr_2,\pr_3) \bigr)
\\
&=
r_A \circ \beta_A \circ (\pr_1,\pr_3)
\\
&=
s \circ \pr_2 \circ (\pr_1,\pr_3)
\\
&=
s \circ \pr_3 \,,
\end{split}
\end{equation*}
where we use that $\beta_A$ is a groupoid action and so takes the target fibers to the source fibers of $G$. We conclude that $\beta_{A \times_B C}$ is a right $G$-action.
%
\end{proof}

\begin{Proposition}
\label{prop:PullGBun}
Let $A \rightarrow B \leftarrow C$ be $G$-equivariant bundle morphisms such that the pullbacks~\eqref{eq:DiagActPullback} exist for all $k \geq 0$. Then the pullback $A \times_B C$ in $\calC$ with the diagonal $G$-action is the pullback in the category $\BunG$ of $G$-bundles and equivariant bundle morphisms.
\end{Proposition}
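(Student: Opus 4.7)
The plan is to verify that the object $A \times_B C$, equipped with the diagonal $G$-action constructed just before the proposition, satisfies the universal property of the pullback in $\BunG$ via the two projections $\pr_1 : A \times_B C \to A$ and $\pr_2 : A \times_B C \to C$. Since the projections are already morphisms in $\catC$ and satisfy $r_A \circ \pr_1 = r_{A \times_B C} = r_C \circ \pr_2$, they are automatically morphisms over $G_0$. Their $G$-equivariance follows immediately from the explicit formula~\eqref{eq:DiagActExp}: composing with $\pr_1$ gives $\pr_1 \circ \beta_{A \times_B C} = \beta_A \circ (\pr_1, \pr_3) = \beta_A \circ (\pr_1 \times_{G_0} \id_{G_1})$, and symmetrically for $\pr_2$. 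The commutativity $\phi \circ \pr_1 = \psi \circ \pr_2$ holds already at the level of $\catC$.

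For the universal property, suppose $r_D : D \to G_0$ is a $G$-bundle with $G$-equivariant bundle morphisms $f : D \to A$ and $g : D \to C$ satisfying $\phi \circ f = \psi \circ g$. The universal property of the pullback $A \times_B C$ in $\catC$ yields a unique morphism $\theta = (f,g) : D \to A \times_B C$ such that $\pr_1 \circ \theta = f$ and $\pr_2 \circ \theta = g$. First I would check that $\theta$ is a bundle morphism over $G_0$: this is automatic, since $r_{A \times_B C} \circ \theta = r_A \circ \pr_1 \circ \theta = r_A \circ f = r_D$.

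The core step is verifying that $\theta$ is $G$-equivariant, that is, $\beta_{A \times_B C} \circ (\theta \times_{G_0} \id_{G_1}) = \theta \circ \beta_D$. Using the explicit formula~\eqref{eq:DiagActExp} together with the identities $\pr_1 \circ \theta = f$ and $\pr_2 \circ \theta = g$, the left-hand side unfolds to
\begin{equation*}
\bigl( \beta_A \circ (f \circ \pr_1, \, \pr_2), \, \beta_C \circ (g \circ \pr_1, \, \pr_2) \bigr)
\end{equation*}
as a morphism $D \times_{G_0} G_1 \to A \times_B C$, while the right-hand side equals $\bigl( f \circ \beta_D, \, g \circ \beta_D \bigr)$. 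The $G$-equivariance of $f$ and $g$ makes these two expressions coincide component by component. Uniqueness of $\theta$ as an equivariant bundle morphism follows immediately from uniqueness of $\theta$ in $\catC$, since any competing $\tilde{\theta}$ must in particular satisfy $\pr_1 \circ \tilde{\theta} = f$ and $\pr_2 \circ \tilde{\theta} = g$.

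I expect the only mild subtlety to be bookkeeping: one must use that the hypothesized existence of the pullbacks~\eqref{eq:DiagActPullback} guarantees $(A \times_B C) \times_{G_0} G_1$ exists so that $\beta_{A \times_B C}$ is defined, and also that the pullback $D \times_{G_0} G_1$ exists since $D$ is itself a $G$-bundle, so that both sides of the equivariance equation make sense. Beyond this, the proof is a straightforward, if bureaucratic, unfolding of the explicit formula~\eqref{eq:DiagActExp} combined with the universal property of pullbacks in $\catC$.
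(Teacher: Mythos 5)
Your proposal is correct and follows essentially the same route as the paper's proof: check that the two projections are $G$-equivariant bundle morphisms using the explicit formula~\eqref{eq:DiagActExp}, then verify that the induced morphism $(f,g)$ from the universal property in $\calC$ is a bundle morphism and $G$-equivariant by the same unfolding, with uniqueness inherited from $\calC$. The only cosmetic difference is that you spell out the uniqueness step and the existence bookkeeping explicitly, which the paper leaves implicit.
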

\begin{proof}
Let the $G$-equivariant bundle morphisms be denoted by $\phi: A \to B$, and $\psi: C \to B$. Consider the pullback diagram
\begin{equation}
\label{diag:PullSqC}
\begin{tikzcd}
A \times_B C \ar[r, "\pr_2"]
\ar[d, "\pr_1"']
&
C \ar[d, "\psi"]
\\
A \ar[r, "\phi"']
&
B
\end{tikzcd}
\end{equation}
in $\catC$. The diagonal action~\eqref{eq:DiagActExp} satisfies
\begin{equation*}
\begin{split}
\pr_1 \circ \beta_{A \times_B C} 
&=
\pr_1 \circ \bigl( \beta_A \circ (\pr_1,\pr_3), 
\beta_C \circ (\pr_2,\pr_3) \bigr)
\\
&= \beta_A \circ (\pr_1,\pr_3)
\\
&= \beta_A \circ (\pr_1 \times_{G_0} \id_{G_1}) 
\,,
\end{split}
\end{equation*}
which shows that $\pr_1$ is $G$-equivariant. In an analogous way, we can show that $\pr_2$ is $G$-equivariant. This shows that the pullback square \eqref{diag:PullSqC} is a commutative square in $\BunG$.

Let $r_E: E \to G_0$ be another $G$-bundle with right groupoid action $\beta_E$; let $f: E \to A$ and $g: E \to C$ be $G$-equivariant bundle morphisms such that $\phi \circ f = \psi \circ g$; let $(f,g): E \to A \times_B C$ be the unique morphism in $\catC$ given by the universal property of the pullback in $\catC$. It remains to show that $(f,g)$ is a $G$-equivariant bundle morphism. The bundle projections satisfy
\begin{equation*}
\begin{split}
  r_{A \times_B C} \circ (f,g) 
  &= r_A \circ \pr_1 \circ (f,g) 
   = r_A \circ f
  \\
  &= r_E 
  \,,    
\end{split}
\end{equation*}
where in the last step we have used that $f$ is a morphism of bundles. This shows that $(f,g)$ is a morphism of bundles. The actions satisfy
\begin{equation*}
\begin{split}
  \beta_{A \times_B C} \circ 
  \bigl( (f,g) \times_{G_0} \id_{G_1} \bigr)
  &=
  \bigl( \beta_A \circ (\pr_1,\pr_3), 
         \beta_C \circ (\pr_2,\pr_3) \bigr) \circ 
  (f \circ \pr_1, g \circ \pr_1, \pr_2)
  \\
  &=
  \bigl( \beta_A \circ (f \circ \pr_1, \pr_2), 
         \beta_C \circ (g \circ \pr_1, \pr_2) 
  \bigr)
\\
&=
(f \circ \beta_E, g \circ \beta_E)
\\
&=
(f,g) \circ \beta_E \,,
\end{split}
\end{equation*}
where we have used the explicit form \eqref{eq:DiagActExp} of the diagonal action and that $f$ and $g$ are $G$-equivariant. This shows that $(f,g)$ is $G$-equivariant.
\end{proof}

\begin{Example}
Let $A$ and $C$ be $G$-bundles and $B$ the terminal $G$-bundle $G_0 \stackrel{\id}{\to} G_0$ from Example~\ref{ex:G0}. Since $G_0$ is the terminal object in $\BunG$ we have unique morphisms of $G$-bundles $A \to G_0 \leftarrow C$. The pullback is the fiber product $A \times_{G_0} C$ equipped with the diagonal action.
\end{Example}

\subsection{Differentiability}
\label{sec:DifferentiabilityGrpdBun}

\begin{Definition}
\label{def:DiffBunG}
Let $G$ be a groupoid in a tangent category $\calC$. A $G$-bundle $r:E \to G_0$ in $\calC$ will be called \textbf{differentiable} if $G$ is differentiable, if the pullbacks in the diagrams
\begin{equation}
\label{eq:DiffBunG01}
\begin{tikzcd}[column sep=small]
T^n\!E \times_{T^n G_0} G_k
\ar[d] \ar[r]
\arrow[dr, phantom, "\lrcorner", very near start] 
&
G_k
\ar[d, "\Zero{n}_{G_k}"]
\\
T^n\!E \times_{T^n G_0} T^n G_k
\ar[d] \ar[r]
\arrow[dr, phantom, "\lrcorner", very near start] 
&
T^n G_k
\ar[d, "T^n t_k"]
\\
T^n\!E
\ar[r, "T^n r"']
&
T^n G_0    
\end{tikzcd}
\quad
\begin{tikzcd}[column sep=small]
T_m E \times_{T_m G_0} G_k
\ar[d] \ar[r]
\arrow[dr, phantom, "\lrcorner", very near start] 
&
G_k
\ar[d, "{\ZeroDiag{m}{G_0} \, \circ \, t_k}"]
\\
T_m E
\ar[r, "T_m r"']
&
T_m G_0    
\end{tikzcd}
\end{equation}
exist, and if the natural morphism
\begin{equation}
\label{eq:TFinPullE}
  \nu_{n,k}:
  T^n(E \times_{G_0} G_k) 
  \longrightarrow T^n\!E \times_{T^n G_0} T^n G_k
\end{equation}
is an isomorphism for all $n \geq 1$, $m \geq 2$ and $k \geq 0$.
\end{Definition}

\begin{Example}
\label{ex:G1diff}
For a differentiable groupoid $G$, the $G$-bundle $E=G_1 \xrightarrow{s} G_0$ of Example~\ref{ex:G1} is differentiable.
\end{Example}

The rest of this section is devoted to motivating the axioms in Definition~\ref{def:DiffBunG} and stating their consequences.

\subsubsection{The higher vertical tangent bundle}

The condition that~\eqref{eq:TFinPullE} is defined and is an isomorphism means that $T^n$ applied to the nerve of the action groupoid $E \rtimes G \rightrightarrows E$ (Remark~\ref{rmk:ActGrpdNerve}) is the nerve of an action groupoid $T^n\!E \rtimes T^n G \rightrightarrows T^n\!E$. More explicitly, $T^n r: T^n\!E \to T^n G_0$ with the action
\begin{equation}
\label{eq:betaTE}
  \beta_{T^n\!E}: 
  T^n\!E \times_{T^n G_0} T^n G_1 \xrightarrow[\cong]{~\nu_{n,1}^{-1}~} T^n(E \times_{G_0} G_1) 
  \xrightarrow{~T^n \beta_E~} T^n\!E
\end{equation}
is a $T^n G$-bundle. This action can be restricted along the zero section $\Zero{n}_{G_1}: G_1 \to T^n G_1$ to a $G$-action as follows.

Applying the pasting lemma to the left diagram of~\eqref{eq:DiffBunG01}, using Equation~\eqref{eq:lkzeroRel1}, and using the pasting lemma again, we obtain the natural isomorphisms
\begin{equation}
\label{eq:DiffBunG02}
\begin{split}
  (T^n\!E \times_{T^n G_0} T^n G_k) 
  \times_{T^n G_k}^{\pr_2, \, \Zero{n}_{G_k}} G_k
  &\cong
  T^n\!E \times_{T^n G_0}^{T^n r, \, T^n t_k \, \circ \, \Zero{n}_{G_k}} G_k
  \\
  &=
  T^n\!E \times_{T^n G_0}^{T^n r, \, \Zero{n}_{G_0} \circ \, t_k} G_k
  \\
  &\cong
  (T^n\!E \times_{T^n G_0}^{T^n r, \, \Zero{n}_{G_0}} 
  G_0 ) \times_{G_0}^{\pr_2, \, t_k} G_k
  \\
  &=
  \VTan{n}E \times_{G_0}^{\pr_2, \, t_k} G_k
  \,,
\end{split}
\end{equation}
where
\begin{equation}
\label{eq:DefVTanE}
  \VTan{n}E 
  := T^n\!E \times_{T^n G_0}^{T^n r, \, \Zero{n}_{G_0}} G_0
  = \ker T^n r
\end{equation}
is the higher \textbf{vertical tangent bundle} for $0 \leq n \leq 2$. For $n=0$, $\VTan{0}E \cong E$. For $n = 1$, we will write $V\! E \equiv \VTan{1}E$. Consider the following diagram:
\begin{equation}
\label{eq:RightActVE}
\begin{tikzcd}
T^n\!E \times^{T^n r,T^n t}_{T^n G_0} T^n G_1
\ar[r, "\pr_2"] \ar[d, "\beta_{T^n\!E}"']
&
T^n G_1 
\ar[d, "T^n s"]
&
G_1 
\ar[l, "\Zero{n}_{G_1}"']
\ar[d, "s"]
\\
T^n\!E
\ar[r, "T^n r"']
&
T^n G_0
&
G_0
\ar[l, "\Zero{n}_{G_0}"]
\end{tikzcd}
\end{equation}
The left square commutes because $\beta_{T^n\!E}$ is a groupoid action. The right square commutes due to the naturality of the iterated zero section. The diagram induces a morphism from the limit of the top row to the limit of the bottom row. By precomposing it with the isomorphism~\eqref{eq:DiffBunG02}, we obtain a morphism 
\begin{equation}
\label{eq:betaVTanE}
  \beta_{\VTan{n}E}:
  \VTan{n}E \times_{G_0} G_1
  \longrightarrow
  \VTan{n}E  
\end{equation}
that equips $\VTan{n}E \to G_0$ with a right $G$-action (Proposition \ref{prop:VE}).

\subsubsection{Fiber products of the vertical tangent bundle}

Since, by assumption, the tangent functor commutes with the nerve of the action groupoid and since limits commute with limits, so do its fiber products,
\begin{equation*}
  T_m (E \times_{G_0} G_k)
  \cong
  T_m E \times_{T_m G_0} T_m G_k
  \,.
\end{equation*}
This implies that $T_m r: T_m E \to T_m G_0$ is a $T_m G$-bundle with the action
\begin{equation}
\label{eq:betaTmE}
  \beta_{T_m E}: 
  T_m E \times_{T_m G_0} T_m G_1 \xrightarrow{~\cong~} T_m (E \times_{G_0} G_1) 
  \xrightarrow{~T_m \beta_E~} T_m E
  \,.
\end{equation}
The $T_m G$-action can be restricted along the diagonal zero section
\begin{equation*}
  \ZeroDiag{m}{G_1}: G_1 
  \longrightarrow T_m G_1   
\end{equation*}
to a $G$-action as follows.

The pullback on the right side of~\eqref{eq:DiffBunG01} for $m=2$ is naturally isomorphic to
\begin{equation}
\label{eq:T2EV2E}
\begin{split}
T_2 E 
\times_{T_2 G_0}^{T_2 r, \, 0_{2,G_0} \, \circ \, t_k} G_k
&\cong 
T_2 E \times_{T_2 G_0}^{T_2 r, \, 0_{2,G_0}} G_0 \times_{G_0}^{\id, \, t_k} G_k 
\\
&\cong
(TE \times_E TE) \times_{TG_0 \times_{G_0} TG_0} (G_0 \times_{G_0} G_0) \times_{G_0} G_k 
\\
&\cong
(TE \times_{TG_0} G_0) \times_{E \times_{G_0} G_0}
(TE \times_{TG_0} G_0)
\times_{G_0} G_k 
\\
&\cong
(V\!E \times_E V\!E) \times_{G_0} G_k 
\\
&\cong
V_2 E \times_{G_0} G_k 
\,,
\end{split}
\end{equation}
where we have used 
the pasting lemma, the definition of $T_2$, that pullbacks commute with pullbacks, the definition of the vertical tangent bundle, and finally the notation
\begin{equation}
\label{eq:DefVmE}
 V_m E := \underbrace{V\!E \times_E \ldots \times_E V\!E}_{\text{$m$ factors}} 
\end{equation}
for the fiber products of the vertical tangent bundle. Here, the morphism $V\!E \to E$ is given by the composition
\begin{equation}
\label{eq:NewPi}
V\!E = TE \times_{TG_0} G_0 \xrightarrow{~\pr_1~} TE \xrightarrow{~\pi_E~} E \,.
\end{equation}
It is straightforward to generalize the isomorphism \eqref{eq:T2EV2E} to an isomorphism
\begin{equation}
\label{eq:DiffBunG03}
  T_m E \times_{T_m G_0} G_k
  \cong
  V_m E \times_{G_0} G_k
\end{equation}
for all $m > 2$ and $k \geq 0$.
Consider the diagram:
\begin{equation}
\label{eq:DiffBunG04}
\begin{tikzcd}[column sep=3em]
T_m E \times^{T_m r,T_m t}_{T_m G_0} T_m G_1
\ar[r, "\pr_2"] \ar[d, "\beta_{T_m E}"']
&
T_m G_1 
\ar[d, "T_m s"]
&
G_1 
\ar[l, "\ZeroDiag{m}{G_1}"']
\ar[d, "s"]
\\
T_m E
\ar[r, "T_m r"']
&
T_m G_0
&
G_0
\ar[l, "\ZeroDiag{m}{G_0}"]
\end{tikzcd}
\end{equation}
where the left square commutes because $\beta_{T_mE}$ is a groupoid action and the right square commutes due to the naturality of the diagonal zero section. The diagram
induces a morphism from the limit of the top row to the limit of the bottom row. By the pasting lemma and the isomorphism \eqref{eq:DiffBunG03} for $k=1$, the limit of the top row is isomorphic to
\begin{equation*}
T_mE \times_{T_mG_0} T_m G_1 \times_{T_mG_1} G_1 \cong V_mE \times_{G_0} G_1 \,. 
\end{equation*}
On the other hand, by the isomorphism~\eqref{eq:DiffBunG03} for $k=0$, the limit of the bottom row is isomorphic to
\begin{equation*}
T_mE \times_{T_mG_0} G_0 \cong V_m E \times_{G_0} G_0 \cong V_mE \,.
\end{equation*}
Thus, we obtain a morphism 
\begin{equation}
\label{eq:betaVmE}
  \beta_{V_m E}: 
  V_m E \times_{G_0} G_1 
  \longrightarrow
  V_m E
\end{equation}
that equips the bundle $V_m E \to G_0$ with a right $G$-action (Proposition \ref{prop:FiberProdVE}).

\subsubsection{Summary of the purpose of the axioms of \texorpdfstring{Definition~\ref{def:DiffBunG}}{the Definition}}

\begin{enumerate}

\item The isomorphism~\eqref{eq:TFinPullE} is needed so that by differentiating the action of a $G$-bundle $E \to G_0$ we obtain a $T^n G$-action on $T^n\!E \to T^n G_0$ and a $T_m G$-action on $T_m E \to T_m G_0$.

\item The existence of the pullback $T^n\!E \times_{T^n G_0} G_k$ is needed so that the $T^n G$-action on $T^n\!E$ restricts to a $G$-action on the higher vertical tangent bundle $\VTan{n}E \to G_0$.

\item The existence of the pullback $T_m E \times_{T_m G_0} G_k$ is needed so that the $T_m G$-action on $T_m E$ restricts to a $G$-action on the fiber product $V_m E \to G_0$ of the vertical tangent bundle.

\end{enumerate}
\section{Higher vertical tangent functors}
\label{sec:HigherVertTanFun}

In order to achieve our ultimate goal of proving that the Lie bracket of invariant vector fields is invariant (Section~\ref{sec:LieBracketInv}), we will prove that each step of the construction of the Lie bracket of vector fields is invariant. As explained in Section~\ref{sec:LieBracket}, this involves all the natural transformations of the tangent category. 

We will start by showing that the assignment to a differentiable $G$-bundle its higher vertical tangent bundle (and the fiber products of its vertical tangent bundle) is functorial. This is explained in Section~\ref{sec:FunctorialityVertFUn}. Then, we prove a fundamental technical lemma, which states that natural transformations between different powers and fiber products of $T$ can be vertically restricted, under the assumption that the zero section is preserved. This is accomplished in Section~\ref{sec:VertRestNatTransf}. 

Using this powerful tool, we prove that the tangent structure and its $R$-module structure admit vertical restrictions in Sections~\ref{sec:VertRestTan} and~\ref{sec:VertRestModStr} respectively. Moreover, Section~\ref{sec:VertProlong} provides a method of vertical prolongation of $G$-equivariant bundles morphisms. As a consequence, we show that the vertically restricted vertical lift is a pointwise pullback. These results will play a crucial role in showing that each step of the Lie bracket construction is invariant (Theorem~\ref{thm:InvariantBracket}).

\subsection{Functoriality}
\label{sec:FunctorialityVertFUn}

Let $G$ be a groupoid object in a category $\catC$. The goal of this section is to show that the assignments $E \mapsto \VTan{n}E$ and $E \mapsto V_mE$ define functors from differentiable $G$-bundles to $G$-bundles. 

\begin{Proposition}
\label{prop:VE}
If a $G$-bundle $E \to G_0$ is differentiable, then the higher vertical tangent bundle $\VTan{n}E \to G_0$ defined in~\eqref{eq:DefVTanE} together with the action~\eqref{eq:betaVTanE} is a $G$-bundle.
\end{Proposition}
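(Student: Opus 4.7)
The plan is to invoke Lemma~\ref{lem:ActionRestrict} to realize $\beta_{\VTan{n}E}$ as the restriction of the $T^n G$-action $\beta_{T^n E}$ along the iterated zero section. Take the role of $F \to H_0$ in the lemma to be $T^n r: T^n E \to T^n G_0$, that of $H$ to be the groupoid $T^n G$ (which is a groupoid object by the isomorphism~\eqref{eq:TFinPullG} together with the horn-filling discussion after Definition~\ref{def:DiffGroupoid}), and that of $\phi$ to be the iterated zero section $\Zero{n}: G \to T^n G$. The morphism $i: \VTan{n}E \to T^n E$ is the first projection of the defining pullback~\eqref{eq:DefVTanE}, and the ambient action $\beta_{T^n E}$ is given by~\eqref{eq:betaTE}.

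I first check that $\Zero{n}$ is a morphism of groupoids. Compatibility with source and target is the naturality of $\Zero{n}_{(-)}$, and compatibility with the unit is~\eqref{eq:lkzeroRel1} at $k = 0$. For compatibility with multiplication, naturality applied to $m: G_1 \times_{G_0} G_1 \to G_1$ gives $T^n m \circ \Zero{n}_{G_1 \times_{G_0} G_1} = \Zero{n}_{G_1} \circ m$, and under the iterated version of~\eqref{eq:TFinPullG}, the left-hand iterated zero section is identified componentwise with $\Zero{n}_{G_1} \times_{\Zero{n}_{G_0}} \Zero{n}_{G_1}$, again by naturality of $\Zero{n}$ applied to the two projections. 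The two compatibility squares~\eqref{eq:ActionRestrict01} of the lemma then hold: the first is the defining pullback~\eqref{eq:DefVTanE}, and the second encodes precisely the universal property used to define $\beta_{\VTan{n}E}$ as the morphism between the limits of the two rows of Diagram~\eqref{eq:RightActVE}.

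Finally, I verify the monomorphism hypotheses. Iterating the identity $\pi \circ 0 = \id$ from the tangent structure gives a retraction $\pi_{G_0} \circ T\pi_{G_0} \circ \cdots \circ T^{n-1}\pi_{G_0}$ of $\Zero{n}_{G_0}$, so $\Zero{n}_{G_0}$ is a split monomorphism; consequently $i$, being its pullback along $T^n r$, is a monomorphism as well. Lemma~\ref{lem:ActionRestrict} then yields that $\beta_{\VTan{n}E}$ is a right $G$-action on $\VTan{n}E \to G_0$. The only step demanding genuine input beyond naturality and the pasting lemma is the identification $\Zero{n}_{G_1 \times_{G_0} G_1} \cong \Zero{n}_{G_1} \times_{\Zero{n}_{G_0}} \Zero{n}_{G_1}$, which is precisely where the differentiability axiom~\eqref{eq:TFinPullG} of $G$ enters.
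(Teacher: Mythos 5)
Your proposal is correct and follows essentially the same route as the paper: both realize $\beta_{\VTan{n}E}$ as the restriction of $\beta_{T^n\!E}$ along the groupoid morphism $\Zero{n}: G \to T^n G$ via Lemma~\ref{lem:ActionRestrict}, checking that the two squares of~\eqref{eq:ActionRestrict01} are the defining pullback and the universal-property square from Diagram~\eqref{eq:RightActVE}, and that $i_{\VTan{n}E}$ and $\Zero{n}_{G_0}$ are monomorphisms. The only cosmetic slip is citing~\eqref{eq:lkzeroRel1} at $k=0$ for unit compatibility (that instance is trivial); the unit square is simply naturality of $\Zero{n}$ applied to $1: G_0 \to G_1$, which your general naturality argument already covers.
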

\begin{proof}
Consider the pullback diagram that defines the higher vertical tangent bundle,
\begin{equation}
\label{eq:VnE}
\begin{tikzcd}
\makebox[0pt][r]{${\VTan{n}E} = \ $}
T^n\!E \times_{T^n G_0} G_0
\ar[r] \ar[d]
&
T^n\!E
\ar[d, "T^nr"]
\\
G_0 \ar[r ,"\Zero{n}_{G_0}"']
&
T^n G_0
\end{tikzcd}
\end{equation}
Let us denote the top horizontal arrow, which is the projection onto the first factor, by
\begin{equation*}
  i_{\VTan{n}E}:  \VTan{n} E \longrightarrow 
  T^n\!E
  \,.
\end{equation*}
Since $\Zero{n}_{G_0}$ is a split monomorphism, it is a fortiori a strong monomorphism. Since strong monomorphisms are preserved by pullbacks, $i_{\VTan{n}E}$ is a strong monomorphism. Moreover, the iterated zero sections $\Zero{n}_{G_k}: G_k \to T^n G_k$ define a morphism $G \to T^n G$ of groupoids.

By definition of the groupoid actions on $T^n\!E$ and $\VTan{n}E$, the diagram
\begin{equation}
\label{eq:VE02}
\begin{tikzcd}[column sep=6em]
\VTan{n}E \times_{G_0} G_1
\ar[r, "i_{\VTan{n}E}  \, \times_{\Zero{n}_{G_0}} \, \Zero{n}_{G_1}"]
\ar[d, "\beta_{\VTan{n}E}"']
&
T^n\!E \times_{T^n G_0} T^n G_1
\ar[d, "\beta_{T^n\!E}"]
\\
\VTan{n}E
\ar[r, "i_{\VTan{n}E}"']
&
T^n\! E
\end{tikzcd}
\end{equation}
commutes. It now follows from Lemma~\ref{lem:ActionRestrict}, that $\beta_{\VTan{n}E}$ is a right $G$-action.
\end{proof}

\begin{Proposition}
\label{prop:FiberProdVE}
If a $G$-bundle $E \to G_0$ is differentiable, then the fiber products of the vertical tangent bundle $V_m E \to G_0$ defined in~\eqref{eq:DefVmE} together with the action~\eqref{eq:betaVmE} is a $G$-bundle.
\end{Proposition}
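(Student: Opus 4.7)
The plan is to mirror the proof of Proposition~\ref{prop:VE}, replacing the iterated zero section $\Zero{n}_{G_0}$ with the diagonal zero section $\ZeroDiag{m}{G_0}$ throughout. Using the isomorphism established in~\eqref{eq:T2EV2E} (and generalized in~\eqref{eq:DiffBunG03} for $k=0$), I would identify
\[
V_m E \cong T_m E \times_{T_m G_0}^{T_m r, \, \ZeroDiag{m}{G_0}} G_0,
\]
and let $i_{V_m E}: V_m E \to T_m E$ denote the projection onto the first factor of this pullback square. The diagonal zero section $\ZeroDiag{m}{G_0} = (0_{G_0}, \ldots, 0_{G_0})$ is split by $\pi_{G_0} \circ \pr_1: T_m G_0 \to G_0$, so it is a split monomorphism and \emph{a fortiori} a strong monomorphism. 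Since strong monomorphisms are preserved by pullbacks, $i_{V_m E}$ is also a strong monomorphism, in particular a monomorphism.

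Next I would observe that the diagonal zero sections $\ZeroDiag{m}{G_k}: G_k \to T_m G_k$ assemble into a morphism of simplicial objects $\ZeroDiag{m}{-}: G \to T_m G$, by componentwise naturality of $0$. Moreover, $T_m G$ is itself a groupoid object in $\calC$: this follows from the fact that $T$ preserves the nerve of $G$ (axiom~\eqref{eq:TFinPullG}) together with commutativity of fiber products with fiber products, giving by induction the isomorphisms $T_m G_k \cong T_m G_1 \times_{T_m G_0}^{T_m s, T_m t} \ldots \times_{T_m G_0}^{T_m s, T_m t} T_m G_1$ for all $k \geq 2$. Thus $\ZeroDiag{m}{-}$ is a morphism of groupoids whose $0$-component is a monomorphism.

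It then remains to verify that the diagram analogous to~\eqref{eq:VE02},
\[
\begin{tikzcd}[column sep=5em]
V_m E \times_{G_0} G_1 \ar[r, "i_{V_m E} \, \times_{\ZeroDiag{m}{G_0}} \, \ZeroDiag{m}{G_1}"] \ar[d, "\beta_{V_m E}"'] & T_m E \times_{T_m G_0} T_m G_1 \ar[d, "\beta_{T_m E}"] \\
V_m E \ar[r, "i_{V_m E}"'] & T_m E
\end{tikzcd}
\]
commutes. This is built into the construction of $\beta_{V_m E}$ in~\eqref{eq:betaVmE}: it was defined precisely as the morphism induced between the limits of the columns of diagram~\eqref{eq:DiffBunG04}, via the isomorphisms~\eqref{eq:DiffBunG03}. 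An application of Lemma~\ref{lem:ActionRestrict} with $\phi = \ZeroDiag{m}{-}$ and $i = i_{V_m E}$ then yields that $\beta_{V_m E}$ equips $V_m E \to G_0$ with the structure of a right $G$-bundle.

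The only mildly delicate point is establishing that $T_m G$ is a groupoid — the rest of the argument is a near-verbatim transcription of the proof of Proposition~\ref{prop:VE}. This verification is the main obstacle, but it reduces to a bookkeeping exercise on iterated pullbacks that uses only the isomorphism~\eqref{eq:TFinPullG} and the commutativity of limits with limits.
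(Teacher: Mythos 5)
Your proposal is correct and follows essentially the same route as the paper, whose proof of this proposition is literally stated as ``analogous to the proof of Proposition~\ref{prop:VE}'': you carry out exactly that analogy, replacing the iterated zero section by the diagonal zero section, noting that $\ZeroDiag{m}{G_0}$ is a split (hence strong) monomorphism preserved by pullback, and invoking Lemma~\ref{lem:ActionRestrict} for the morphism of groupoids $\ZeroDiag{m}{}\colon G \to T_m G$. The one ingredient you flag as delicate, that $T_m G$ is a groupoid object, is indeed needed, but it is already established in the paper (end of Section~\ref{sec:Differentiable}) by the same argument you sketch, namely the isomorphism~\eqref{eq:TFinPullG} together with commutation of limits with limits.
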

\begin{proof}
The proof is analogous to the proof of Proposition~\ref{prop:VE}.
\end{proof}

\begin{Remark}
The right $G$-action~\eqref{eq:betaVmE} is precisely the diagonal action~\eqref{eq:DiagAct}.
\end{Remark}

\begin{Proposition}
\label{prop:MorphismsV}
Let $E \to G_0$ and $F \to G_0$ be $G$-bundles and $\Phi:E \to F$ a $G$-equivariant bundle morphism. If $E$ and $F$ are differentiable, then the morphism
\begin{equation}
\label{eq:Vphi}
  \VTan{n}E 
  = T^n\! E \times_{T^n G_0} {G_0}
  \xrightarrow{~T^n \Phi \times_{T^n G_0} \id_{G_0}~}
  T^n\!F \times_{T^n G_0} G_0
  = \VTan{n}F
  \,,
\end{equation}
which we will denote by $\VTan{n}\,\Phi$, is a $G$-equivariant bundle morphism.
\end{Proposition}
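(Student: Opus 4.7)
The plan is to verify three things in sequence: that $\VTan{n}\Phi$ is well-defined, that it is a bundle morphism over $G_0$, and that it is $G$-equivariant. The first two parts are formal consequences of the definition~\eqref{eq:Vphi} as a fiber product. Well-definedness follows because $\Phi$ is a bundle morphism, so $r_F \circ \Phi = r_E$ and hence $T^n r_F \circ T^n \Phi = T^n r_E$, which is exactly the compatibility needed for the induced morphism into the pullback defining $\VTan{n}F$ to exist. The bundle morphism property over $G_0$ is immediate from $\pr_2 \circ (T^n \Phi \times_{T^n G_0} \id_{G_0}) = \id_{G_0} \circ \pr_2 = \pr_2$.

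The main content is $G$-equivariance. I would first establish an intermediate fact: the morphism $T^n \Phi: T^n E \to T^n F$ is $T^n G$-equivariant with respect to the actions~\eqref{eq:betaTE}. This follows by applying the functor $T^n$ to the defining equivariance square $\Phi \circ \beta_E = \beta_F \circ (\Phi \times_{G_0} \id_{G_1})$, using the naturality of the isomorphism $\nu_{n,1}$ in~\eqref{eq:TFinPullE}. Explicitly, $T^n \Phi \circ \beta_{T^n E} = T^n\Phi \circ T^n\beta_E \circ \nu_{n,1}^{-1}$, and combining this with functoriality and naturality yields $\beta_{T^n F} \circ (T^n \Phi \times_{T^n G_0} \id_{T^n G_1})$ on the other side.

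To descend to $\VTan{n}E$ and $\VTan{n}F$, I would appeal to the universal property of the pullback $\VTan{n}F = T^n F \times_{T^n G_0} G_0$. The action $\beta_{\VTan{n}F}$ is characterized by two conditions: its composition with the monomorphism $i_{\VTan{n}F}$ into $T^n F$ equals $\beta_{T^n F} \circ (i_{\VTan{n}F} \times_{\Zero{n}_{G_0}} \Zero{n}_{G_1})$, as in~\eqref{eq:VE02}, and its composition with $\pr_2$ equals $s \circ \pr_2$. Hence to show $\VTan{n}\Phi \circ \beta_{\VTan{n}E} = \beta_{\VTan{n}F} \circ (\VTan{n}\Phi \times_{G_0} \id_{G_1})$ it suffices to verify these two equalities after composition with $i_{\VTan{n}F}$ and with $\pr_2$ respectively. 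The $\pr_2$-component is trivially $s \circ \pr_2$ on both sides. For the $i_{\VTan{n}F}$-component, both sides reduce to $\beta_{T^n F} \circ (T^n \Phi \circ i_{\VTan{n}E} \times_{\Zero{n}_{G_0}} \Zero{n}_{G_1})$, using the $T^n G$-equivariance of $T^n \Phi$ established above together with the identity $i_{\VTan{n}F} \circ \VTan{n}\Phi = T^n \Phi \circ i_{\VTan{n}E}$ built into the definition of $\VTan{n}\Phi$.

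The only mildly subtle point is the naturality step: one must keep careful track of the fact that $\nu_{n,1}^{-1}$ for $E$ and for $F$ are intertwined by $T^n \Phi$, which uses the isomorphism in~\eqref{eq:TFinPullE} for both $E$ and $F$ from the hypothesis that they are differentiable. Once that bookkeeping is done, everything else is a diagram chase using the universal property of pullbacks and Lemma~\ref{lem:InnerOuterSquares}, so I do not expect any serious obstacle.
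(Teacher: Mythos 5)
Your proposal is correct and follows essentially the same route as the paper: both first establish the $T^nG$-equivariance of $T^n\Phi$ by applying $T^n$ to the equivariance square and intertwining the isomorphisms $\nu_{n,1}$ for $E$ and $F$, and then descend to the vertical bundles via the defining squares~\eqref{eq:VE02} together with the fact that $i_{\VTan{n}F}$ detects equality (the paper packages this last step as the nested-squares Lemma~\ref{lem:InnerOuterSquares}, while you phrase it as the universal property of the pullback $T^n\!F \times_{T^nG_0} G_0$; these amount to the same argument). No gaps.
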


\begin{proof}
Since by definition $\VTan{n}\,\Phi$ is the identity on $G_0$, it is a morphism of bundles over $G_0$. For the equivariance, we consider the following diagram:
\begin{equation}
\label{diag:VnGeq}
\begin{tikzcd}[row sep=3em]
{\VTan{n}E} \times_{G_0} G_1
\ar[rrr, "{\VTan{n}\,\Phi \times_{G_0} \id}"] 
\ar[ddd, "\beta_{{\VTan{n}E}}"'] 
\ar[dr, "{i_{{\VTan{n}E}} \, \times_{\Zero{n}_{G_0}} \, \Zero{n}_{G_1}}" near end]
&[-4em] &[+3em] &[-4em]
\VTan{n}F \times_{G_0} G_1
\ar[ddd, "\beta_{\VTan{n}F}"] 
\ar[dl, "{i_{\VTan{n}F} \, \times_{\Zero{n}_{G_0}} \, \Zero{n}_{G_1}}"' near end]
\\
&
T^n\!E \times_{T^n G_0} T^n G_1
\ar[d, "\beta_{T^n\!E}"']
\ar[r, "{T^n \Phi \times_{T^n G_0} \id}"] 
&
T^n\!F \times_{T^n G_0} T^n G_1 \ar[d, "\beta_{T^n\!F}"]
&
\\
&
T^n\!E \ar[r, "{T^n \Phi}"'] 
&
T^n\!F
&
\\
{\VTan{n}E} \ar[rrr, "{\VTan{n}\, \Phi}"']
\ar[ur, "i_{{\VTan{n}E}}"'] 
& & &
\VTan{n}F
\ar[ul, "i_{\VTan{n}F}"]
\end{tikzcd}    
\end{equation}

Spelling out the actions $\beta_{T^n\!E}$ and $\beta_{T^nF}$ as given in \eqref{eq:betaTE}, we get that the inner square is the outer rectangle of the following diagram 
\begin{equation*}
\begin{tikzcd}[column sep=5em, row sep=2em]
T^n\!E \times_{T^nG_0} T^nG_1
\ar[r, "T^n \Phi \times_{T^nG_0} \id"] 
\ar[d, "{\nu_{n,1}^{-1}}"', "\cong"]
&
T^n\!F \times_{T^nG_0} T^nG_1
\ar[d, "{\nu_{n,1}^{-1}}", "\cong"']
\\
T^n(E \times_{G_0} G_1) 
\ar[r, "T^n (\Phi \times_{G_0} \id)"] 
\ar[d, "T^n\beta_E"']
&
T^n(F \times_{G_0} G_1)
\ar[d, "T^n\beta_F"]
\\
T^n\!E \ar[r,"T^n\Phi"']
&
T^nF
\end{tikzcd}
\end{equation*}
which commutes since $\Phi$ is $G$-equivariant and $T^n$ is a functor.

The left and right trapezoids are both the commutative diagram~\eqref{eq:VE02}. The bottom trapezoid commutes since $i_{{\VTan{n}E}}$ and $i_{{\VTan{n}F}}$ are projections onto the first factor and since $\VTan{n}\,\Phi$ is, by definition, $T^n\Phi$ on the first factor. The commutativity of the top trapezoid follows from the commutativity of the bottom trapezoid and the  functoriality of the pullback along the zero sections. Explicitly,
\begin{equation}
\label{eq:ExplFunc}
\begin{split}
\bigl( T^n\Phi \times_{T^nG_0} \id \bigr) \circ \bigl( i_{{\VTan{n}E}} \times_{\Zero{n}_{G_0}} \Zero{n}_{G_1} \bigr)
&=
\bigl( T^n\Phi \circ i_{{\VTan{n}E}} \bigr) \times_{\Zero{n}_{G_0}} \bigl( \id \circ \Zero{n}_{G_1} \bigr)
\\
&=
\bigl( i_{{\VTan{n}F}} \circ \VTan{n}\,\Phi \bigr) \times_{\Zero{n}_{G_0}} \bigl( \Zero{n}_{G_1} \circ \id \bigr)
\\
&=
\bigl( i_{{\VTan{n}F}} \times_{\Zero{n}_{G_0}} \Zero{n}_{G_1} \bigr) \circ \bigl( \VTan{n}\,\Phi \times_{G_0} \id \bigr) \,.
\end{split}
\end{equation}
The morphism $i_{\VTan{n}F}$ is a monomorphism, as was shown in the proof of Proposition~\ref{prop:VE}. It follows from Lemma~\ref{lem:InnerOuterSquares} that the outer square commutes, which is the condition of equivariance.
\end{proof}

\begin{Proposition}
\label{prop:MorphismsVFiberprod}
Let $E \to G_0$ and $F \to G_0$ be $G$-bundles and $\Phi:E \to F$ a $G$-equivariant bundle morphism. If $E$ and $F$ are differentiable, then the morphism
\begin{equation}
\label{eq:Vphi2}
  V_m \Phi := \underbrace{V\Phi \times_\Phi \ldots \times_\Phi V\Phi}_{\text{$m$ factors}}
  : V_m E \longrightarrow V_m F
  \,,
\end{equation}
is a $G$-equivariant bundle morphism, where we write $V\Phi \equiv \VTan{1}\,\Phi$.
\end{Proposition}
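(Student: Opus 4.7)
The plan is to deduce the statement from Proposition~\ref{prop:MorphismsV} (for $n=1$) by realizing $V_m F$ as an $m$-fold pullback in the category $\BunG$ and invoking its universal property.

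First, I would verify that the map $\pi_E \circ \pr_1 : V\!E \to E$ of Equation~\eqref{eq:NewPi} is a $G$-equivariant bundle morphism. That it is a bundle morphism follows from the defining pullback square~\eqref{eq:VnE} with $n=1$ together with naturality of $\pi$: one has $r \circ \pi_E \circ \pr_1 = \pi_{G_0} \circ Tr \circ \pr_1 = \pi_{G_0} \circ 0_{G_0} \circ \pr_2 = \pr_2$. For the equivariance, I would unwind $\beta_{V\!E}$ via~\eqref{eq:betaTE} and~\eqref{eq:betaVTanE} and apply naturality of $\pi$ to $\beta_E$, obtaining $\pi_E \circ T\beta_E = \beta_E \circ \pi_{E \times_{G_0} G_1}$; under $\nu_{1,1}^{-1}$ the right-hand side reads $\beta_E \circ (\pi_E \times_{\pi_{G_0}} \pi_{G_1})$. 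Restricting along the zero section $0:G \to TG$ (which satisfies $\pi \circ 0 = \id_G$) then yields $\pi_E \circ \pr_1 \circ \beta_{V\!E} = \beta_E \circ \bigl((\pi_E \circ \pr_1) \times_{G_0} \id_{G_1}\bigr)$. The same argument applies to $\pi_F \circ \pr_1 : V\!F \to F$.

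Next, the differentiability of $E$ ensures that the pullbacks $V_k E \times_{G_0} G_\ell$ exist for all $1 \leq k \leq m$ and $\ell \geq 0$, so Proposition~\ref{prop:PullGBun} applies iteratively to the span $V\!E \to E \leftarrow V\!E$. This exhibits $V_m E$, equipped with the diagonal $G$-action, as the $m$-fold pullback of $V\!E \to E$ in $\BunG$; the Remark after Proposition~\ref{prop:FiberProdVE} identifies this diagonal action with $\beta_{V_m E}$ from~\eqref{eq:betaVmE}. The analogous conclusion holds for $V_m F$.

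Finally, by Proposition~\ref{prop:MorphismsV} for $n=1$, the morphism $V\Phi: V\!E \to V\!F$ is $G$-equivariant, and naturality of $\pi$ applied to $\Phi$ gives $\pi_F \circ \pr_1 \circ V\Phi = \Phi \circ \pi_E \circ \pr_1$. Hence the $m$ composites $V\Phi \circ \pr_i: V_m E \to V\!F$, for $i = 1, \dots, m$, are $G$-equivariant and all agree after projection to $F$, so the universal property of $V_m F$ as a pullback in $\BunG$ yields a unique $G$-equivariant bundle morphism $V_m E \to V_m F$ which, by construction, coincides with $V_m \Phi$. The main obstacle I anticipate is the careful verification of the equivariance of $\pi_E \circ \pr_1: V\!E \to E$, which requires tracing the definition of $\beta_{V\!E}$ through $\nu_{1,1}^{-1}$ and $T\beta_E$; once that step is in hand, the remainder is a formal consequence of the universal property.
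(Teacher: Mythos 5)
Your argument is correct, but it takes a different route from the paper. The paper's proof is short and direct: it first checks that $V\Phi$ is compatible with the projections $V\!E \to E$ and $V\!F \to F$ (naturality of $\pi$ plus the bottom trapezoid of Diagram~\eqref{diag:VnGeq}), so that the fiber-product morphism $V_m\Phi$ is well-defined, and then proves equivariance by the same nested-squares/monomorphism argument (Lemma~\ref{lem:InnerOuterSquares} applied with $i_{V_mF}$ a monomorphism) as in Proposition~\ref{prop:MorphismsV}. You instead establish that the vertical projection $\pi'_E = \pi_E \circ i_{V\!E}$ is itself $G$-equivariant, realize $V_mE$ and $V_mF$ as iterated pullbacks in $\BunG$ via Proposition~\ref{prop:PullGBun} (using the Remark identifying $\beta_{V_mE}$ with the diagonal action, and differentiability to guarantee the pullbacks $V_kE \times_{G_0} G_\ell$ exist through \eqref{eq:DiffBunG03}), and then obtain $V_m\Phi$ and its equivariance from the universal property. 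This buys you a purely formal conclusion once the equivariance of $\pi'_E$ is in hand, reusing categorical machinery already proved, whereas the paper defers that equivariance statement to Proposition~\ref{prop:TangStrRest} (proved later via Lemma~\ref{lem:BigTechLemma}) and therefore redoes the trapezoid argument here; note that your direct verification of the equivariance of $\pi'_E$ is essentially the inner square of \eqref{diag:alphaGEquiv} for $\alpha=\pi$, so you are not avoiding that computation, only repackaging where it is used. Two small points to tighten: your identity $\pi_F \circ i_{V\!F} \circ V\Phi = \Phi \circ \pi_E \circ i_{V\!E}$ needs the square $i_{V\!F} \circ V\Phi = T\Phi \circ i_{V\!E}$ (immediate from the definition \eqref{eq:Vphi} of $V\Phi$) in addition to naturality of $\pi$; and for $m>2$ you should note that the iterated binary diagonal actions agree with the $m$-fold diagonal action $\beta_{V_mE}$ of \eqref{eq:betaVmE}, which is routine but worth a sentence.
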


\begin{proof}
Firstly, consider the following diagram:
\begin{equation*}
\begin{tikzcd}
V\!E
\ar[r, "i_{V\!E}"]
\ar[d, "V\Phi"']
&
TE
\ar[r, "\pi_E"]
\ar[d, "T\Phi"]
&
E
\ar[d, "\Phi"]
\\
V\!F
\ar[r, "i_{V\!F}"']
&
TF
\ar[r, "\pi_F"']
&
F
\end{tikzcd}
\end{equation*}
The right square commutes by the naturality of $\pi$. The left square is the commutative bottom trapezoid of Diagram \eqref{diag:VnGeq} with $i_{V\!E} \equiv i_{\VTan{1}E}$. Using \eqref{eq:NewPi}, we conclude that the map \eqref{eq:Vphi2} is well-defined. The rest of the proof is analogous to the proof of Proposition~\ref{prop:MorphismsV}.
\end{proof}

\begin{Remark}
\label{rem:VEnotDiff0}
Let $\BunGdiff$ denote the full subcategory of differentiable $G$-bundles in $\BunG$. Propositions~\ref{prop:MorphismsV} and~\ref{prop:MorphismsVFiberprod} show that the higher vertical tangent bundle and the fiber product of the vertical tangent bundle are functors 
\begin{equation*}
  \VTan{n}, V_m: \BunGdiff \longrightarrow \BunG
  \,.
\end{equation*}
For $n=0$, the functor $\VTan{0}$, which forgets the differentiability axioms, will be denoted by $1$.
\end{Remark}

\begin{Remark}
\label{rem:VEnotDiff}    
If we apply the vertical tangent functor twice and if the category $\calC$ has all pullbacks, we obtain an object 
\begin{equation*}
  V^2 E 
  \cong V (TE \times_{TG_0} G_0)
  \cong T(TE \times_{TG_0} G_0) \times_{TG_0} G_0
  \,.
\end{equation*}
In general, the limit on the right hand side does not exist. This means that $V\!E$ is generally not differentiable. Only if we assume the existence of this limit and that the tangent functor commutes with the pullback defining $V\!E$, we obtain the isomorphism
\begin{equation*}
  V^2 E 
  \cong T^2 E \times_{T^2 G_0} \times TG_0 \times_{T G_0} G_0
  \cong V^{[2]} E
  \,.
\end{equation*}
However, we will \emph{not} make the assumptions for this isomorphism to exist.
As it turns out, it is not needed for the main results of this paper. In all relevant aspects, the operators $\VTan{n}$ behave like the powers of the vertical tangent functor, as we will show in the following sections.
\end{Remark}

\subsection{Vertical restriction of natural transformations}
\label{sec:VertRestNatTransf}

In this section, we present a powerful technical tool which will be essential in showing that the tangent structure can be vertically restricted.

\begin{Lemma}
\label{lem:BigTechLemma}
Let $\alpha: T^n \to T^m$ be a natural transformation for some $n,m \geq 0$; let $G$ be a differentiable groupoid. If
\begin{equation*}
    \alpha_{G_0} \circ \Zero{n}_{G_0} = \Zero{m}_{G_0} 
    \quad \text{and} \quad
    \alpha_{G_1} \circ \Zero{n}_{G_1} = \Zero{m}_{G_1} \,,
\end{equation*}
then there is a unique natural transformation $\alpha':\VTan{n} \to \VTan{m}$ of functors $\BunGdiff \to \BunG$ such that
\begin{equation}
\label{diag:newalpha0}
\begin{tikzcd}
\VTan{n}E
\ar[r, "\alpha'_{E}"]
\ar[d, "i_{\VTan{n}E}"']
&
\VTan{m}E
\ar[d, "i_{\VTan{m}E}"]
\\
T^n\!E
\ar[r, "\alpha_{E}"']
&
T^m\! E
\end{tikzcd}
\end{equation}
commutes for all differentiable $G$-bundles $E$.
\end{Lemma}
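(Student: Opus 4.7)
The construction of $\alpha'_E$ proceeds via the universal property of the pullback
\begin{equation*}
\VTan{m}E = T^m E \times_{T^m G_0}^{T^m r,\, \Zero{m}_{G_0}} G_0.
\end{equation*}
Given $\alpha_E \circ i_{\VTan{n}E}: \VTan{n}E \to T^m E$ and the projection $\pr_2: \VTan{n}E \to G_0$, I need to verify that $T^m r \circ \alpha_E \circ i_{\VTan{n}E} = \Zero{m}_{G_0} \circ \pr_2$. This follows from three ingredients combined in sequence: naturality of $\alpha$ on $r$ gives $T^m r \circ \alpha_E = \alpha_{G_0} \circ T^n r$; the defining property of the pullback $\VTan{n}E$ gives $T^n r \circ i_{\VTan{n}E} = \Zero{n}_{G_0} \circ \pr_2$; and the hypothesis $\alpha_{G_0} \circ \Zero{n}_{G_0} = \Zero{m}_{G_0}$ closes the chain. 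The universal property then yields a unique $\alpha'_E: \VTan{n}E \to \VTan{m}E$ making \eqref{diag:newalpha0} commute, and this uniqueness also establishes the uniqueness clause of the lemma.

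The next step is to show $\alpha'_E$ is $G$-equivariant, i.e.\ $\alpha'_E \circ \beta_{\VTan{n}E} = \beta_{\VTan{m}E} \circ (\alpha'_E \times_{G_0} \id_{G_1})$. Since $i_{\VTan{m}E}$ is a (strong) monomorphism, as noted in the proof of Proposition~\ref{prop:VE}, it suffices to verify this after postcomposition with $i_{\VTan{m}E}$. Using \eqref{eq:VE02} to replace $i \circ \beta_V$ by $\beta_{T \text{-level}} \circ (i \times \Zero{}{})$ on both sides, and then the defining property of $\alpha'_E$, the identity to be shown reduces to
\begin{equation*}
\alpha_E \circ \beta_{T^n E} \circ (i_{\VTan{n}E} \times_{\Zero{n}_{G_0}} \Zero{n}_{G_1})
= \beta_{T^m E} \circ (\alpha_E \times_{\alpha_{G_0}} \alpha_{G_1}) \circ (i_{\VTan{n}E} \times_{\Zero{n}_{G_0}} \Zero{n}_{G_1}),
\end{equation*}
where I have used the hypothesis $\alpha_{G_1} \circ \Zero{n}_{G_1} = \Zero{m}_{G_1}$ to rewrite the right-hand side. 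It therefore suffices to prove $\alpha_E \circ \beta_{T^n E} = \beta_{T^m E} \circ (\alpha_E \times_{\alpha_{G_0}} \alpha_{G_1})$. Unfolding the definition $\beta_{T^n E} = T^n \beta_E \circ \nu_{n,1}^{-1}$ from \eqref{eq:betaTE}, this reduces to two compatibilities: naturality of $\alpha$ applied to $\beta_E$, giving $\alpha_E \circ T^n \beta_E = T^m \beta_E \circ \alpha_{E \times_{G_0} G_1}$, and the compatibility of $\alpha$ with the natural map $\nu$, giving $\nu_{m,1} \circ \alpha_{E \times_{G_0} G_1} = (\alpha_E \times \alpha_{G_1}) \circ \nu_{n,1}$. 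The latter in turn follows from naturality of $\alpha$ applied to the two projections from $E \times_{G_0} G_1$ and the universal property of the pullback $T^m E \times_{T^m G_0} T^m G_1$.

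Finally, for naturality of $\alpha'$ in $E$, let $\Phi: E \to F$ be a $G$-equivariant bundle morphism between differentiable $G$-bundles. The required identity $\alpha'_F \circ \VTan{n}\,\Phi = \VTan{m}\,\Phi \circ \alpha'_E$ can again be checked after postcomposing with the monomorphism $i_{\VTan{m}F}$. On one side one uses the defining square \eqref{diag:newalpha0} for $\alpha'_F$ together with the definition \eqref{eq:Vphi} of $\VTan{n}\,\Phi$ as a pullback of $T^n\Phi$; on the other side one uses the defining square for $\alpha'_E$ and the definition of $\VTan{m}\,\Phi$. The two composites then become $\alpha_F \circ T^n \Phi \circ i_{\VTan{n}E}$ and $T^m \Phi \circ \alpha_E \circ i_{\VTan{n}E}$, which agree by naturality of $\alpha$ on $\Phi$. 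The main subtlety throughout is bookkeeping of the pullback data (making sure the second components over $G_0$ and $G_1$ match); the hypotheses on $\alpha_{G_0}$ and $\alpha_{G_1}$ are precisely what is needed to control these components in the existence and equivariance steps, respectively.
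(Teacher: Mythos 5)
Your proposal is correct and follows essentially the same route as the paper: the paper likewise constructs $\alpha'_E = \alpha_E \times_{\alpha_{G_0}} \id_{G_0}$ as the induced map of pullbacks, reduces $G$-equivariance along the monomorphism $i_{\VTan{m}E}$ (formalized there via Lemma~\ref{lem:InnerOuterSquares}) to the identity $\alpha_E \circ \beta_{T^n\!E} = \beta_{T^m\!E} \circ (\alpha_E \times_{\alpha_{G_0}} \alpha_{G_1})$, which it proves by naturality of $\alpha$ on $\beta_E$ together with compatibility with $\nu$ (Lemma~\ref{lem:FiberProdMaps}, your projection/universal-property argument), and obtains naturality in $E$ from naturality of $\alpha$ and of pullbacks. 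One cosmetic remark: the uniqueness clause of the lemma is most directly justified by $i_{\VTan{m}E}$ being a monomorphism (so any morphism making \eqref{diag:newalpha0} commute is already determined), rather than by the pullback's universal property, which only gives uniqueness among maps commuting with both projections.
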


\begin{proof}
Let $r:E \to G_0$ be a differentiable $G$-bundle. Consider the following diagram:
\begin{equation*}
\begin{tikzcd}
T^n\!E
\ar[r, "T^nr"] 
\ar[d, "\alpha_E"']
&
T^nG_0 
\ar[d, "\alpha_{G_0}"]
&
G_0
\ar[l, "\Zero{n}_{G_0}"']
\ar[d, "\id_{G_0}"]
\\
T^m\!E \ar[r, "T^mr"']
&
T^mG_0
&
G_0 
\ar[l, "\Zero{m}_{G_0}"]
\end{tikzcd}
\end{equation*}
The left square commutes by the naturality of $\alpha$. The right square commutes by assumption. Hence, the diagram induces a unique map from the limit of the top row to the limit of the bottom row
\begin{equation*}
  \alpha'_E 
  := \alpha_E \times_{\alpha_{G_0}} \id_{G_0}: 
  \VTan{n}E \longrightarrow \VTan{m}E \,,
\end{equation*}
such that Diagram~\eqref{diag:newalpha0} and the diagram
\begin{equation*}
\begin{tikzcd}[column sep=1em]
\VTan{n}E \ar[dr] \ar[rr, "{\alpha'_E}"] 
& & 
\VTan{m}E \ar[dl]
\\
& G_0 &
\end{tikzcd}
\end{equation*}
commute. This shows that $\alpha'_E$ is a bundle morphism. To show that $\alpha'_E$ is $G$-equivariant, we consider the following diagram:
\begin{equation}
\label{diag:alphaGEquiv}
\begin{tikzcd}[row sep=3em]
\VTan{n}E \times_{G_0} G_1
\ar[rrr, "{\alpha'_E \times_{G_0} \id}"] 
\ar[ddd, "\beta_{\VTan{n}E}"'] 
\ar[dr, "{i_{\VTan{n}E} \, \times_{\Zero{n}_{G_0}} \, \Zero{n}_{G_1}}" near end]
&[-4em] &[+2em] &[-4em]
\VTan{m}E \times_{G_0} G_1
\ar[ddd, "\beta_{\VTan{m}E}"] 
\ar[dl, "{i_{\VTan{m}E} \, \times_{\Zero{m}_{G_0}} \, \Zero{m}_{G_1}}"' near end]
\\
&
T^n\!E \times_{T^nG_0} T^nG_1
\ar[d, "\beta_{T^n\!E}"']
\ar[r, "{\alpha_E \times_{\alpha_{G_0}} \alpha_{G_1}}"] 
&
T^m\!E \times_{T^mG_0} T^mG_1 \ar[d, "\beta_{T^m\!E}"]
&
\\
&
T^n\!E \ar[r, "{\alpha_E}"'] 
&
T^m\!E
&
\\
\VTan{n}E \ar[rrr, "{\alpha'_E}"']
\ar[ur, "{i_{\VTan{n}E}}"'] 
& & &
\VTan{m}E
\ar[ul, "{i_{\VTan{m}E}}"]
\end{tikzcd}    
\end{equation}
The inner square is the outer rectangle of the following diagram:
\begin{equation*}
\begin{tikzcd}[column sep=5em]
T^n\!E \times_{T^nG_0} T^nG_1
\ar[r, "\alpha_E \times_{\alpha_{G_0}} \alpha_{G_1}"]
\ar[d, "{\nu_{n,1}^{-1}}"', "\cong"]
&
T^m\!E \times_{T^mG_0} T^mG_1
\ar[d, "{\nu_{m,1}^{-1}}", "\cong"']
\\
T^n(E \times_{G_0} G_1)
\ar[r, "\alpha_{E \times_{G_0} G_1}"']
\ar[d, "T^n\beta_E"']
&
T^m(E \times_{G_0} G_1)
\ar[d, "T^m\beta_E"]
\\
T^n\!E \ar[r, "\alpha_E"']
&
T^m\!E
\end{tikzcd}
\end{equation*}
The bottom square commutes by the naturality of $\alpha$, the upper square by Lem\-ma~\ref{lem:FiberProdMaps}. We conclude that the outer rectangle, and thus, the inner square of Diagram~\eqref{diag:alphaGEquiv} is commutative.

The right and left trapezoids of~\eqref{diag:alphaGEquiv} are both the commutative diagram \eqref{eq:VE02}. The lower trapezoid is Diagram~\eqref{diag:newalpha0}, which we have already shown to commute. The commutativity of the top trapezoid follows from the commutativity of the bottom trapezoid, the assumption that $\alpha_{X} \circ \Zero{n}_{X} = \Zero{m}_{X}$ for both $X = G_0$ and $X = G_1$, and from the functoriality of the pullback along the zero sections. The explicit calculation is analogous to that in \eqref{eq:ExplFunc}.

The morphism $i_{\VTan{m}E}$ is a monomorphism, as was shown in the proof of Proposition~\ref{prop:VE}. It follows from Lemma~\ref{lem:InnerOuterSquares} that the outer square commutes, which is the condition of equivariance.

Since $\alpha$ is a natural transformation and pullbacks are natural, $\alpha'_E := \alpha_E \times_{\alpha_{G_0}} \id_{G_0}$ is natural in $E \in \BunGdiff$.
\end{proof}

\begin{Lemma}
\label{lem:BigTechLemma2}
Let $\alpha: T_n \to T_m$, $\beta: T_n \to T^m$, $\gamma: T^n \to T_m$ be natural transformations for some $n,m \geq 0$; let $G$ be a differentiable groupoid. If
\begin{align*}
  \alpha_{G_0} \circ \ZeroDiag{n}{G_0}
  &= \ZeroDiag{m}{G_0}
  \,, &
  \alpha_{G_1} \circ \ZeroDiag{n}{G_1}
  &= \ZeroDiag{m}{G_1}
  \,,
  \\
  \beta_{G_0} \circ \ZeroDiag{n}{G_0}
  &= \Zero{m}_{G_0} 
  \,, &
  \beta_{G_1} \circ \ZeroDiag{n}{G_1}
  &= \Zero{m}_{G_1} 
  \,,
  \\
  \gamma_{G_0} \circ \Zero{n}_{G_0}
  &= \ZeroDiag{m}{G_0}
  \,, &
  \gamma_{G_1} \circ \Zero{n}_{G_1}
  &= \ZeroDiag{m}{G_1}
  \,,
\end{align*}
then there are unique natural transformations $\alpha':V_n \to V_m$, $\beta': V_n \to \VTan{m}$, $\gamma: \VTan{n} \to V_m$ of functors $\BunGdiff \to \BunG$ such that
\begin{equation}
\label{diag:newAlphaBetaGamma}
\begin{tikzcd}
V_n E
\ar[r, "\alpha'_{E}"]
\ar[d, "i_{V_nE}"']
&
V_m E
\ar[d, "i_{V_mE}"]
\\
T_nE
\ar[r, "\alpha_{E}"']
&
T_m E
\end{tikzcd}
\quad
\begin{tikzcd}
V_n E
\ar[r, "\beta'_{E}"]
\ar[d, "i_{V_n E}"']
&
\VTan{m}E
\ar[d, "i_{\VTan{m}E}"]
\\
T_n E
\ar[r, "\beta_{E}"']
&
T^m\!E
\end{tikzcd}
\quad
\begin{tikzcd}
\VTan{n} E
\ar[r, "\gamma'_{E}"]
\ar[d, "i_{\VTan{n}E}"']
&
V_m E
\ar[d, "i_{V_m E}"]
\\
T^n\!E
\ar[r, "\gamma_{E}"']
&
T_mE
\end{tikzcd}
\end{equation}
commute for all differentiable $G$-bundles $E$.
\end{Lemma}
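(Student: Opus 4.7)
The proof will parallel that of Lemma~\ref{lem:BigTechLemma} almost verbatim, with the appropriate replacements of iterated zero sections by diagonal zero sections (or the mixture thereof) on either side. The plan is to handle the three natural transformations $\alpha$, $\beta$, $\gamma$ in parallel since the argument is formally the same in each case.

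For the construction, take the case of $\alpha: T_n \to T_m$. The pullback diagram defining $V_n E$ and the analogous one for $V_m E$ sit in the outer columns of the diagram
\begin{equation*}
\begin{tikzcd}
T_n E \ar[r, "T_n r"] \ar[d, "\alpha_E"'] & T_n G_0 \ar[d, "\alpha_{G_0}"] & G_0 \ar[l, "\ZeroDiag{n}{G_0}"'] \ar[d, "\id_{G_0}"] \\
T_m E \ar[r, "T_m r"'] & T_m G_0 & G_0 \ar[l, "\ZeroDiag{m}{G_0}"]
\end{tikzcd}
\end{equation*}
whose left square commutes by naturality of $\alpha$ and whose right square commutes by the hypothesis $\alpha_{G_0} \circ \ZeroDiag{n}{G_0} = \ZeroDiag{m}{G_0}$. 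By the universal property of the pullback, we obtain a unique morphism $\alpha'_E := \alpha_E \times_{\alpha_{G_0}} \id_{G_0}: V_n E \to V_m E$ making Diagram~\eqref{diag:newAlphaBetaGamma} commute, and it is automatically a morphism of bundles over $G_0$. The construction for $\beta'$ replaces the right-hand column with $\ZeroDiag{n}{G_0}$ on top and $\Zero{m}_{G_0}$ on bottom; the construction for $\gamma'$ does the symmetric swap. The respective hypotheses on $\beta_{G_0}$ and $\gamma_{G_0}$ give commutativity of the right square in each case.

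For $G$-equivariance, the exact cube diagram~\eqref{diag:alphaGEquiv} applies, with the outer faces replaced by the appropriate $V_n E$ or $\VTan{n} E$ and their actions $\beta_{V_n E}$, $\beta_{\VTan{n}E}$. The inner square (the equivariance of $\alpha_E$ for the $T_n G$-actions) reduces, exactly as in the proof of Lemma~\ref{lem:BigTechLemma}, to the composition of two commuting squares: the naturality of $\alpha$ on the morphism $\beta_E: E \times_{G_0} G_1 \to E$, and the square expressing that the natural isomorphisms $\nu_{n,1}$ and $\nu_{m,1}$ intertwine $\alpha_{E \times_{G_0} G_1}$ with $\alpha_E \times_{\alpha_{G_0}} \alpha_{G_1}$, which is precisely the content of Lemma~\ref{lem:FiberProdMaps} applied to the two types of fiber products involved. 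The two side trapezoids are instances of Diagram~\eqref{eq:VE02} (appropriately restated for $V_n E$ and $V_m E$), the bottom trapezoid is the defining Diagram~\eqref{diag:newAlphaBetaGamma}, and the top trapezoid follows from the bottom one together with the hypotheses on $\alpha_{G_1}$ via the functoriality of the pullback construction, as in the explicit computation~\eqref{eq:ExplFunc}. Since $i_{V_m E}$ (respectively $i_{\VTan{m}E}$) is a monomorphism (being the pullback of a split mono), Lemma~\ref{lem:InnerOuterSquares} yields commutativity of the outer square, which is equivariance.

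Naturality of $\alpha'$, $\beta'$, $\gamma'$ in $E \in \BunGdiff$ is a formal consequence of the naturality of $\alpha$, $\beta$, $\gamma$ together with the naturality of pullbacks. The main technical obstacle is purely bookkeeping: one must confirm that Lemma~\ref{lem:FiberProdMaps} applies equally well to natural transformations between functors of the mixed forms $T^n$ and $T_n$, which it does because the relevant fiber products $T^n G_0 \times_{T^n G_0} \cdots$ and $T_n G_0 \times_{T_n G_0} \cdots$ used in forming $\nu_{n,1}$ are the same kind of limit and interact naturally with $\alpha, \beta, \gamma$ via the hypothesized compatibility with the appropriate zero section. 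No new geometric input is required beyond that of Lemma~\ref{lem:BigTechLemma}.
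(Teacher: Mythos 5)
Your proposal is correct and follows exactly the route the paper intends: the paper's own proof of this lemma is simply the remark that it is analogous to Lemma~\ref{lem:BigTechLemma}, and your write-up spells out that analogy (pullback construction of $\alpha'_E$, $\beta'_E$, $\gamma'_E$, equivariance via the cube diagram with Lemma~\ref{lem:FiberProdMaps} and Lemma~\ref{lem:InnerOuterSquares}, naturality by naturality of pullbacks) with the correct substitutions of diagonal zero sections for iterated ones.
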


\begin{proof}
The proof is analogous to the proof of Lemma~\ref{lem:BigTechLemma}.
\end{proof}



\begin{Lemma}
\label{lem:BigTechLemma03}
Let $\alpha : T^n \to T^m$ and $\beta: T^m \to T^l$ be natural transformations for some $n, m, l \geq 0$ and $\gamma := \beta \circ \alpha: T^n \to T^l$ their composition; let $G$ be a differentiable groupoid. If
\begin{align*}
  \alpha_{G_0} \circ \Zero{n}_{G_0}
  &= \Zero{m}_{G_0}
  \,, &
  \alpha_{G_1} \circ \Zero{n}_{G_1}
  &= \Zero{m}_{G_1}
  \,,
  \\
  \beta_{G_0} \circ \Zero{m}_{G_0}
  &= \Zero{l}_{G_0}
  \,, &
  \beta_{G_1} \circ \Zero{m}_{G_1}
  &= \Zero{l}_{G_1}
  \,,
\end{align*}
then 
\begin{equation*}
  \gamma_{G_0} \circ \Zero{n}_{G_0} = \Zero{l}_{G_0}
  \quad \text{and} \quad
  \gamma_{G_1} \circ \Zero{n}_{G_1} = \Zero{l}_{G_1}
  \,,
\end{equation*}
and the vertical restrictions $\alpha': \VTan{n} \to \VTan{m}$, $\beta': \VTan{m} \to \VTan{l}$, $\gamma': \VTan{n} \to \VTan{l}$ from Lemma~\ref{lem:BigTechLemma} satisfy $\gamma' = \beta' \circ \alpha'$. The statement also holds, mutatis mutandis, for natural transformations of type $T_n \to T_m$, $T_n \to T^m$, and $T^n \to T_m$.
\end{Lemma}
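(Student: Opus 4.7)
The first assertion is a direct computation using only the hypotheses and the fact that composition of natural transformations is strictly associative. Evaluating $\gamma$ at $G_0$ and precomposing with the iterated zero section gives
\begin{equation*}
  \gamma_{G_0} \circ \Zero{n}_{G_0}
  = \beta_{G_0} \circ \alpha_{G_0} \circ \Zero{n}_{G_0}
  = \beta_{G_0} \circ \Zero{m}_{G_0}
  = \Zero{l}_{G_0}
  \,,
\end{equation*}
and the same computation at $G_1$ works verbatim. Hence the hypotheses of Lemma~\ref{lem:BigTechLemma} are satisfied for $\gamma$, so the vertical restriction $\gamma': \VTan{n} \to \VTan{l}$ exists and is characterized uniquely by the commutativity of the defining square~\eqref{diag:newalpha0} for $\gamma$.

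For the second assertion, I will verify that the candidate $\beta' \circ \alpha'$ satisfies the same defining square as $\gamma'$, and then invoke the uniqueness clause of Lemma~\ref{lem:BigTechLemma}. Given a differentiable $G$-bundle $E$, stacking the two instances of Diagram~\eqref{diag:newalpha0} for $\alpha$ and $\beta$ gives
\begin{equation*}
\begin{tikzcd}
\VTan{n}E \ar[r, "\alpha'_E"] \ar[d, "i_{\VTan{n}E}"']
& \VTan{m}E \ar[r, "\beta'_E"] \ar[d, "i_{\VTan{m}E}"]
& \VTan{l}E \ar[d, "i_{\VTan{l}E}"]
\\
T^n\!E \ar[r, "\alpha_E"']
& T^m\!E \ar[r, "\beta_E"']
& T^l\!E
\end{tikzcd}
\end{equation*}
Both inner squares commute by Lemma~\ref{lem:BigTechLemma}, so the outer rectangle commutes as well. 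That is, $i_{\VTan{l}E} \circ (\beta' \circ \alpha')_E = \gamma_E \circ i_{\VTan{n}E}$, which is exactly the commutativity of Diagram~\eqref{diag:newalpha0} with $\alpha$ replaced by $\gamma$. Moreover, $\beta'_E \circ \alpha'_E$ is a composition of $G$-equivariant bundle morphisms (Lemma~\ref{lem:BigTechLemma}), hence is itself a $G$-equivariant bundle morphism, and it is natural in $E \in \BunGdiff$ as a composition of natural transformations. By the uniqueness clause in Lemma~\ref{lem:BigTechLemma}, we conclude $\gamma' = \beta' \circ \alpha'$.

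For the mutatis-mutandis statement, exactly the same argument goes through, replacing $\Zero{\bullet}_{G_i}$ by $\ZeroDiag{\bullet}{G_i}$ at the appropriate slots and invoking Lemma~\ref{lem:BigTechLemma2} instead of Lemma~\ref{lem:BigTechLemma}. The only case-checking required is that for each admissible pair of source/target types of $\alpha$ and $\beta$, the composed zero-section identity in the first assertion lands in the hypothesis of one of Lemmas~\ref{lem:BigTechLemma} or~\ref{lem:BigTechLemma2} (e.g.~$\alpha: T_n \to T^m$ composed with $\beta: T^m \to T_l$ uses $\alpha_{G_i} \circ \ZeroDiag{n}{G_i} = \Zero{m}_{G_i}$ and $\beta_{G_i} \circ \Zero{m}_{G_i} = \ZeroDiag{l}{G_i}$, giving $\gamma_{G_i} \circ \ZeroDiag{n}{G_i} = \ZeroDiag{l}{G_i}$). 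No step is expected to be hard; the main point is that the proof is really about the functoriality of pullback along compatible morphisms, repackaged through the uniqueness in the previous two lemmas.
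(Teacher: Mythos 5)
Your proof is correct and follows essentially the same route as the paper: verify the zero-section compatibility for $\gamma$ by the three-step composition, stack the two defining squares from Lemma~\ref{lem:BigTechLemma} to see that $\beta'_E \circ \alpha'_E$ satisfies the defining square for $\gamma$, and conclude by uniqueness; the mutatis-mutandis cases are handled the same way via Lemma~\ref{lem:BigTechLemma2}. Your explicit remarks on $G$-equivariance and naturality of the composite are a slightly more careful justification of the appeal to uniqueness than the paper spells out, but the argument is the same.
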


\begin{proof}
For $X=G_0$ and $X=G_1$, we have
\begin{equation*}
  \gamma_X \circ \Zero{n}_X 
  = \beta_X \circ \alpha_X \circ \Zero{n}_X
  = \beta_X \circ \Zero{m}_X
  = \Zero{l}_X
  \,,
\end{equation*}
which implies that $\gamma$ has a vertical restriction. By Lemma~\ref{lem:BigTechLemma}, we have the commutative diagram
\begin{equation*}
\begin{tikzcd}
\VTan{n}E
\ar[r, "\alpha'_{E}"]
\ar[d, "i_{\VTan{n}E}"']
&
\VTan{m}E
\ar[r, "\beta'_{E}"]
\ar[d, "i_{\VTan{m}E}"]
&
\VTan{l}E
\ar[d, "i_{\VTan{l}E}"]
\\
T^n\!E
\ar[r, "\alpha_{E}"']
\ar[rr, "\gamma_E"', bend right]
&
T^m\!E
\ar[r, "\beta_{E}"']
&
T^l\! E
\end{tikzcd}
\end{equation*}
This shows that $\beta'_E \circ \alpha'_E$ is the vertical restriction of $\gamma_E$. Since the vertical restriction is unique, it follows that $\beta'_E \circ \alpha'_E = \gamma'_E$ for all $E \in \BunGdiff$. The proof of the statements for natural transformations of type $T_n \to T_m$, $T_n \to T^m$, and $T^n \to T_m$ is analogous.
\end{proof}

\subsection{Vertical prolongation of equivariant bundle morphisms}
\label{sec:VertProlong}

Recall that $\VTan{k}:\BunGdiff \to \BunG$ is a functor (Remark~\ref{rem:VEnotDiff0}). Given differentiable $G$-bundles $E$ and $F$, a $G$-equivariant bundle morphism $\Phi:E \to \VTan{n}F$ can in general not be extended to $\VTan{k} \, \Phi:\VTan{k}E \to \VTan{n+k}F$. The reason is that $\VTan{n}F$ might not be differentiable (Remark~\ref{rem:VEnotDiff}), and so we cannot simply apply the functor $\VTan{k}$ to the morphism $\Phi$. The following proposition solves this subtlety by introducing the notion of vertical prolongation using a universal construction.

\begin{Proposition}
\label{prop:Prolongation}
Let $E$ and $F$ be differentiable $G$-bundles and $\Phi: E \to \VTan{n} F$ a $G$-equivariant bundle morphism. Then for every $k \geq 1$ there is a unique $G$-equivariant bundle morphism $\Phi^{[k]}: \VTan{k} E \to \VTan{n+k} F$, called the $k$-th \textbf{vertical prolongation} of $\Phi$, such that
\begin{equation}
\label{diag:Prolongation}
\begin{tikzcd}[column sep=3em]
\VTan{k}E
\ar[rr, "\Phi^{[k]}"]
\ar[d, "i_{\VTan{k}E}"']
&&
\VTan{n+k}F
\ar[d, "i_{\VTan{n+k}F}"]
\\
T^k\! E
\ar[r, "T^k \Phi"']
&
T^k \VTan{n} F
\ar[r, "T^k i_{\VTan{n}F}"']
&
T^{n+k} F
\end{tikzcd}
\end{equation}
commutes.
\end{Proposition}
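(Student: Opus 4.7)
The plan is to construct $\Phi^{[k]}$ via the universal property of the pullback $\VTan{n+k}F = T^{n+k}F \times_{T^{n+k}G_0} G_0$. I will consider the morphism $\psi := T^k i_{\VTan{n}F} \circ T^k \Phi : T^kE \to T^{n+k}F$ and its restriction $\psi \circ i_{\VTan{k}E} : \VTan{k}E \to T^{n+k}F$, then show that this composition factors uniquely through $i_{\VTan{n+k}F}$. This single factorization will simultaneously supply $\Phi^{[k]}$, the bundle-morphism property, and the commutativity of Diagram~\eqref{diag:Prolongation}.

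To verify the factorization, I would chain the defining pullback identities
\[
T^n r_F \circ i_{\VTan{n}F} = \Zero{n}_{G_0} \circ r_{\VTan{n}F}
\quad \text{and} \quad
T^k r_E \circ i_{\VTan{k}E} = \Zero{k}_{G_0} \circ r_{\VTan{k}E}
\]
with the bundle-morphism property $r_{\VTan{n}F} \circ \Phi = r_E$. Applying $T^k$ to the first identity, composing with $T^k\Phi$ and $i_{\VTan{k}E}$, and using that iterated zero sections compose to iterated zero sections (via naturality of $0$ and~\eqref{eq:T00T}), one obtains
\begin{align*}
T^{n+k} r_F \circ \psi \circ i_{\VTan{k}E}
&= T^k \Zero{n}_{G_0} \circ T^k r_{\VTan{n}F} \circ T^k\Phi \circ i_{\VTan{k}E} \\
&= T^k \Zero{n}_{G_0} \circ T^k r_E \circ i_{\VTan{k}E} \\
&= T^k \Zero{n}_{G_0} \circ \Zero{k}_{G_0} \circ r_{\VTan{k}E}
= \Zero{n+k}_{G_0} \circ r_{\VTan{k}E}.
\end{align*}
The universal property of $\VTan{n+k}F$ then produces a unique $\Phi^{[k]}: \VTan{k}E \to \VTan{n+k}F$ with $i_{\VTan{n+k}F} \circ \Phi^{[k]} = \psi \circ i_{\VTan{k}E}$ and $r_{\VTan{n+k}F} \circ \Phi^{[k]} = r_{\VTan{k}E}$, giving the commutativity of~\eqref{diag:Prolongation} together with the bundle-morphism property.

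For $G$-equivariance, I would exploit that $i_{\VTan{n+k}F}$ is a (strong) monomorphism, as shown in the proof of Proposition~\ref{prop:VE}, so it suffices to verify the equivariance identity after composing with $i_{\VTan{n+k}F}$. On the left, the defining relation of $\Phi^{[k]}$ and the equivariance square~\eqref{eq:VE02} for $i_{\VTan{k}E}$ along $\Zero{k}: G \to T^kG$ apply; on the right, the analogous equivariance of $i_{\VTan{n+k}F}$ along $\Zero{n+k}: G \to T^{n+k}G$ together with the definition~\eqref{eq:betaTE} of $\beta_{T^{n+k}F}$ apply. Applying $T^k$ to the $G$-equivariance of $\Phi$ and invoking the naturality of $\nu_{k,1}$ (Lemma~\ref{lem:FiberProdMaps}) will reduce the identity to a diagram chase of the same shape as in Proposition~\ref{prop:MorphismsV}. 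Uniqueness of $\Phi^{[k]}$ is immediate from the pullback universal property. The main obstacle I anticipate is bookkeeping in the equivariance chase: unlike in Proposition~\ref{prop:MorphismsV}, the intermediate object $\VTan{n}F$ is not assumed differentiable, so $T^k\VTan{n}F$ need not carry a $T^kG$-action; I plan to circumvent this by threading the argument through $T^{n+k}F$ and the morphism $T^k\Zero{n}: T^kG \to T^{n+k}G$, restricting along the zero sections only at the end.
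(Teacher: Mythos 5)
Your proposal is correct and follows essentially the same route as the paper: the same factorization computation $T^{n+k}r_F \circ T^k i_{\VTan{n}F} \circ T^k\Phi \circ i_{\VTan{k}E} = \Zero{n+k}_{G_0} \circ r_{\VTan{k}E}$ feeding the universal property of the pullback defining $\VTan{n+k}F$, and the same equivariance argument via the monomorphism $i_{\VTan{n+k}F}$ and Lemma~\ref{lem:InnerOuterSquares}, composing the equivariance of $\Phi$ with Diagram~\eqref{eq:VE02} before applying $T^k$ and $\nu_{k,1}$ so as to work in $T^{n+k}F$ rather than in $T^k\VTan{n}F$. The obstacle you flag (non-differentiability of $\VTan{n}F$) and your workaround are exactly how the paper handles it.
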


\begin{proof}
Let
\begin{equation*}
  \Psi := 
  T^k i_{\VTan{n}F} \circ T^k \Phi \circ i_{\VTan{k}E}
\end{equation*}
denote the composition of the counterclockwise arrows of Diagram~\eqref{diag:Prolongation}. Let $r_E: E \to G_0$, $r_F: F \to G_0$, $r_{\VTan{k}E}: \VTan{k}E \to G_0$ and $r_{\VTan{k}F}: \VTan{k}F \to G_0$ denote the bundle projections. Then
\begin{equation*}
\begin{split}
  T^{n+k}r_F \circ \Psi
  &= T^{n+k}r_F \circ T^k i_{\VTan{n}F} 
  \circ T^k \Phi \circ i_{\VTan{k}E}
  \\
  &= T^k ( T^n r_F \circ i_{\VTan{n}F} 
  \circ \Phi) \circ i_{\VTan{k}E}
  \\
  &= T^k ( \Zero{n}_{G_0} \circ r_{\VTan{n}F}  
  \circ \Phi) \circ i_{\VTan{k}E}
  \\
  &= T^k ( \Zero{n}_{G_0} \circ r_E ) \circ i_{\VTan{k}E}
  \\
  &= T^k \Zero{n}_{G_0} \circ T^k r_E  \circ i_{\VTan{k}E}
  \\
  &= T^k \Zero{n}_{G_0} \circ \Zero{k}_{G_0} 
     \circ r_{\VTan{k}E}
  \\
  &= \Zero{n+k}_{G_0} \circ r_{\VTan{k}E}
  \,,
\end{split}
\end{equation*}
where we have used the pullback squares defining $\VTan{n}F$ and $\VTan{k}E$ and that $\Phi$ is a bundle morphism. It follows from the universal property of the pullback defining $\VTan{n+k}F$, that there is a unique morphism $\Phi^{[k]}$, such that
\begin{equation*}
\begin{tikzcd}
\VTan{k}E 
\ar[ddr, "\Psi"', bend right] 
\ar[rrd, "r_{\VTan{k}E}", bend left] 
\ar[dr, "\exists! \, \Phi^{[k]}", dashed]
&&
\\
&
\VTan{n+k} F
\ar[r]
\ar[d, "{i_{\VTan{n+k}F}}"]
&
G_0
\ar[d, "\Zero{n+k}_{G_0}"]
\\
&
T^{n+k}\!F
\ar[r, "T^{n+k} r_F"']
&
T^{n+k}G_0
\end{tikzcd}
\end{equation*}
commutes. In particular, we have that $\Phi^{[k]}$ is a bundle morphism. It remains to show that it is $G$-equivariant.

Let $\psi := i_{\VTan{n} F} \circ \Phi$, so that $i_{\VTan{n+k} F} \circ \Phi^{[k]} = T^k\psi \circ i_{\VTan{k} E}$. Consider the following diagram:
\begin{equation}
\label{diag:psiActions}
\begin{tikzcd}[column sep=6em]
E \times_{G_0} G_1
\ar[r, "\Phi \times_{G_0} \id_{G_1}"]
\ar[d, "\beta_E"']
&
\VTan{n}F \times_{G_0} G_1
\ar[r, "{i_{\VTan{n}F} \times_{\Zero{n}_{G_0}} \Zero{n}_{G_1}}"]
\ar[d, "\beta_{\VTan{n}F}"]
&
T^n\!F \times_{T^nG_0} T^nG_1
\ar[d, "\beta_{T^nF}"]
\\
E
\ar[r, "\Phi"']
&
\VTan{n}F
\ar[r, "i_{\VTan{n}F}"']
&
T^nF
\end{tikzcd}
\end{equation}
The left square commutes since $\Phi$ is, by assumption, $G$-equivariant. The right square is the commutative diagram \eqref{eq:VE02}. The composition of the lower horizontal arrows is $\psi$ and that of the upper horizontal arrows is
\begin{equation*}
\begin{split}
\bigl(i_{\VTan{n}F} \times_{\Zero{n}_{G_0}} \Zero{n}_{G_1}\bigr)
\circ
\bigl(\Phi \times_{G_0} \id_{G_1}\bigr)
&=
\bigl(i_{\VTan{n}F} \circ \Phi\bigr) \times_{\Zero{n}_{G_0}} \Zero{n}_{G_1}
\\
&=
\psi \times_{\Zero{n}_{G_0}} \Zero{n}_{G_1} \,,
\end{split}
\end{equation*}
by functoriality.

Applying the functor $T^k$ to Diagram \eqref{diag:psiActions} and stacking it with the commutative square obtained via the natural universal morphisms to the pullbacks, we get the following commutative diagram:

\begin{equation}
\label{diag:psiActions2}
\begin{tikzcd}[column sep=6em]
T^k \!E \times_{T^k G_0} T^k G_1
\ar[d, "\cong", "\nu_{k,1}^{-1}"']
\ar[r, "{T^k\psi \times_{T^k \Zero{n}_{G_0}} T^k \Zero{n}_{G_1}}"] 
&
T^{n+k} F \times_{T^{n+k} G_0} T^{n+k} G_1 
\ar[d, "\cong"', "\nu_{k,1}^{-1}"]
\\
T^k( E \times_{G_0} G_1 )
\ar[d, "T^k \beta_E"']
\ar[r, "{T^k\bigl( \psi \times_{\Zero{n}_{G_0}} \Zero{n}_{G_1} \bigr)}"] 
&
T^k( T^n\!F \times_{T^n G_0} T^n G_1 )
\ar[d, "T^k \beta_{T^n\!F}"]
\\
T^k \!E 
\ar[r, "T^k \psi"'] 
&
T^{n+k} F
\end{tikzcd}    
\end{equation}
By definition, the composition of the left vertical arrows is the $T^kG$-action $\beta_{T^k\!E}$ and that of the right vertical arrows is the $T^{n+k}G$-action $\beta_{T^{n+k}E}$. Thus, the outer square of Diagram \eqref{diag:psiActions2} is the inner square in the following diagram:
\begin{equation*}
\begin{tikzcd}[row sep=3em]
\VTan{k} E \times_{G_0} G_1
\ar[rrr, "{\Phi^{[k]} \times_{G_0} \id}"] 
\ar[ddd, "\beta_{\VTan{k} E}"'] 
\ar[dr, "{i_{\VTan{k} E} \, \times_{\Zero{k}_{G_0}} \, \Zero{k}_{G_1}}" near end]
&[-4em] &[3em] &[-4em]
\VTan{n+k} F \times_{G_0} G_1
\ar[ddd, "\beta_{\VTan{n+k} F}"] 
\ar[dl, "{i_{\VTan{n+k} F} \, \times_{\Zero{n+k}_{G_0}} \, \Zero{n+k}_{G_1}}"' near end]
\\
&
T^k \!E \times_{T^k G_0} T^k G_1
\ar[d, "\beta_{T^k\! E}"']
\ar[r, "{T^k\psi \times_{T^k \Zero{n}_{G_0}} T^k \Zero{n}_{G_1}}"] 
&
T^{n+k} F \times_{T^{n+k} G_0} T^{n+k} G_1 
\ar[d, "\beta_{T^{n+k} F}"]
&
\\
&
T^k \!E
\ar[r, "T^k \psi"']
&
T^{n+k} F
&
\\
\VTan{k} E \ar[rrr, "\Phi^{[k]}"']
\ar[ur, "{i_{\VTan{k} E}}"'] 
& & &
\VTan{n+k} F
\ar[ul, "{i_{\VTan{n+k} F}}"]
\end{tikzcd}
\end{equation*}
The bottom and top trapezoids commute by the definition of $\Phi^{[k]}$ and the functoriality of pullbacks. The left and right trapezoids commute by the definition of the $G$-actions on the vertical bundles (Diagram \eqref{eq:VE02}). Since $i_{\VTan{n+k}F}$ is a monomorphism, it follows from Lemma~\ref{lem:InnerOuterSquares} that the outer square commutes, which shows that $\Phi^{[k]}$ is $G$-equivariant.
\end{proof}

It is a consequence of Lemmas~\ref{lem:BigTechLemma} and \ref{lem:BigTechLemma2} that vertical prolongations also exist for vertically restricted natural transformations. 

\begin{Lemma}
Let $\alpha: T^n \to T^m$ be a natural transformation for some $n,m \geq 0$; let $G$ be a differentiable groupoid. Assume that
\begin{equation*}
  \alpha_X \circ \Zero{n}_X = \Zero{m}_X
\end{equation*}
for $X =  G_0$ and $X =  G_1$, and let $\alpha':\VTan{n} \to \VTan{m}$ be the unique natural transformation from Lemma~\ref{lem:BigTechLemma}. Then, there exists a unique natural transformation
\begin{equation*}
\alpha'^{[k]}: \VTan{n+k} \longrightarrow \VTan{m+k} \,,
\end{equation*}
called the $k$-th \textbf{vertical prolongation} of $\alpha'$, such that
\begin{equation}
\label{diag:newalphaProlong}
\begin{tikzcd}
\VTan{n+k}E
\ar[r, "\alpha_{E}'^{[k]}"]
\ar[d, "i_{\VTan{n+k}E}"']
&
\VTan{m+k}E
\ar[d, "i_{\VTan{m+k}E}"]
\\
T^{n+k}\!E
\ar[r, "T^k \alpha_{E}"']
&
T^{m+k}\! E
\end{tikzcd}
\end{equation}
commutes for all differentiable $G$-bundles $E$.
\end{Lemma}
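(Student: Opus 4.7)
The plan is to reduce the statement directly to Lemma~\ref{lem:BigTechLemma} applied to the left-whiskered natural transformation $T^k \alpha : T^{n+k} \to T^{m+k}$, whose components are $(T^k\alpha)_X = T^k(\alpha_X)$. The bottom row of the target diagram~\eqref{diag:newalphaProlong} matches precisely the bottom row that Lemma~\ref{lem:BigTechLemma} would produce for $T^k\alpha$ (with $n$ replaced by $n+k$ and $m$ by $m+k$), so all that is needed is to verify the zero-preservation hypothesis of Lemma~\ref{lem:BigTechLemma} for $T^k\alpha$ at the objects $G_0$ and $G_1$.

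To this end, I would first record the decomposition
\begin{equation*}
\Zero{n+k}_X = T^k \Zero{n}_X \circ \Zero{k}_X,
\end{equation*}
which is immediate from the iterated definition~\eqref{eq:HigherZeroSection} of the higher zero section. Then, using functoriality of $T^k$ together with the standing hypothesis $\alpha_X \circ \Zero{n}_X = \Zero{m}_X$ for both $X = G_0$ and $X = G_1$, one computes
\begin{equation*}
T^k(\alpha_X) \circ \Zero{n+k}_X
= T^k(\alpha_X \circ \Zero{n}_X) \circ \Zero{k}_X
= T^k \Zero{m}_X \circ \Zero{k}_X
= \Zero{m+k}_X.
\end{equation*}

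Lemma~\ref{lem:BigTechLemma} then delivers a unique natural transformation of functors $\BunGdiff \to \BunG$, which we denote by $\alpha'^{[k]}$, making diagram~\eqref{diag:newalphaProlong} commute. Its uniqueness is inherited directly from Lemma~\ref{lem:BigTechLemma}, which ultimately rests on $i_{\VTan{m+k}E}$ being a strong monomorphism. I do not anticipate any real obstacle; the argument is essentially a one-line invocation of the preceding lemma after the routine decomposition of $\Zero{n+k}_X$, and the consistency with calling $\alpha'^{[k]}$ a \emph{prolongation of $\alpha'$} (rather than the restriction of $T^k\alpha$) is implicit in the uniqueness clause of Lemma~\ref{lem:BigTechLemma}.
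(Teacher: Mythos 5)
Your proposal is correct and takes essentially the same route as the paper: verify the zero-preservation hypothesis for the whiskered transformation $T^k\alpha$ and then invoke Lemma~\ref{lem:BigTechLemma}, setting $\alpha'^{[k]} := (T^k\alpha)'$. Your explicit use of the factorization $\Zero{n+k}_X = T^k\Zero{n}_X \circ \Zero{k}_X$ is in fact slightly more careful than the paper's own shorthand computation, which suppresses the $\Zero{k}_X$ factor.
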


\begin{proof}
The natural transformation $T^k \alpha: T^{n+k} \to T^{m+k}$ is given componentwise by $(T^k \alpha)_X = T^k \alpha_X$ for all $X \in \catC$. It satisfies
\begin{equation*}
\begin{split}
(T^k \alpha)_X \circ \Zero{n+k}_X 
&=
T^k \alpha_X \circ \Zero{n+k}_X
\\
&=
T^k \alpha_X \circ T^k \Zero{n}_X
\\
&=
T^k (\alpha_X \circ \Zero{n}_X)
\\
&=
T^k \Zero{m}_X
\\
&=
\Zero{m+k}_X
\end{split}
\end{equation*}
for $X=G_0$ and $X=G_1$. It then follows from Lemma~\ref{lem:BigTechLemma} that there is a unique natural transformation $\alpha'^{[k]}:= (T^k \alpha)': \VTan{n+k} \longrightarrow \VTan{m+k}$ such that Diagram~\eqref{diag:newalphaProlong} commutes.
\end{proof}

\subsection{Vertical restriction of the tangent structure}
\label{sec:VertRestTan}

The next proposition is a crucial technical result: We show that the natural transformations of the tangent structure on $\catC$ can be restricted to the (higher) vertical tangent bundles and their fiber products. It is a consequence of Lemma~\ref{lem:BigTechLemma}.

\begin{Proposition}
\label{prop:TangStrRest}
Let $G$ be a differentiable groupoid in a tangent category $\catC$. Let $\Id: \BunGdiff \to \BunG$ denote the inclusion. There are natural transformations $\pi':V \to \Id$, $0':\Id \to V$, $+':V_2 \to V$, $\tau':\VTan{2} \to \VTan{2}$, $\lambda':V \to \VTan{2}$ and $\lambda'_2:V_2 \to \VTan{2}$ of functors $\BunGdiff \to \BunG$ such that the following diagrams
\begin{equation}
\begin{gathered}
\label{diag:newNatTransf}
\begin{tikzcd}[column sep=0.7em]
V\!E \ar[dr, "i_{V\!E}"'] \ar[rr, "{\pi'_E}"] 
& & 
E
\\
& TE \ar[ur, "\pi_E"'] &
\end{tikzcd}
\quad
\begin{tikzcd}[column sep=0.7em]
E \ar[dr, "0_E"'] \ar[rr, "{0'_E}"] 
& & 
V\!E \ar[dl, "i_{V\!E}"]
\\
& TE &
\end{tikzcd}
\quad
\begin{tikzcd}
V_2E
\ar[r, "+'_{E}"]
\ar[d, "i_{V_2E}"']
&
V\!E
\ar[d, "i_{V\!E}"]
\\
T_2E
\ar[r, "+_{E}"']
&
T\!E
\end{tikzcd}
\\
\begin{tikzcd}
\VTan{2}E
\ar[r, "\tau'_{E}"]
\ar[d, "i_{\VTan{2}E}"']
&
\VTan{2}E
\ar[d, "i_{\VTan{2}E}"]
\\
T^2E
\ar[r, "\tau_{E}"']
&
T^2E
\end{tikzcd}
\quad
\begin{tikzcd}
V\!E
\ar[r, "\lambda'_{E}"]
\ar[d, "i_{V\!E}"']
&
\VTan{2}E
\ar[d, "i_{\VTan{2}E}"]
\\
TE
\ar[r, "\lambda_{E}"']
&
T^2E
\end{tikzcd}
\quad
\begin{tikzcd}
V_2E
\ar[r, "\lambda'_{2,E}"]
\ar[d, "i_{V_2E}"']
&
\VTan{2}E
\ar[d, "i_{\VTan{2}E}"]
\\
T_2E
\ar[r, "\lambda_{2,E}"']
&
T^2E
\end{tikzcd}
\end{gathered}
\end{equation}
commute in $\calC$ for all differentiable $G$-bundles $E$.
\end{Proposition}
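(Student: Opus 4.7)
My plan is to treat each of the six natural transformations separately and, in each case, apply either Lemma~\ref{lem:BigTechLemma} or its mixed-type variant Lemma~\ref{lem:BigTechLemma2}. These lemmas already deliver the vertically restricted natural transformations together with the commutative squares in~\eqref{diag:newNatTransf}, once one verifies that the input transformation intertwines the relevant (iterated or diagonal) zero sections of $G_0$ and $G_1$. Since these hypotheses only concern behavior at $G_0$ and $G_1$, all verifications are carried out inside the tangent structure of $\catC$ and use only Rosick\'{y}'s axioms; the pairing of source and target functors ($\VTan{n}$ versus $V_n$) and the choice between Lemma~\ref{lem:BigTechLemma} and Lemma~\ref{lem:BigTechLemma2} are dictated by the $(n,m)$-type of each transformation.

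I would first dispatch the cases whose zero-compatibility is essentially immediate from the bundle-of-abelian-groups structure on $T \to \Id$. The identities $\pi_X \circ 0_X = \id_X$, $0_X \circ \id_X = 0_X$, and $+_X \circ \ZeroDiag{2}{X} = 0_X$ for $X \in \{G_0, G_1\}$ are built into Axiom~(T2). For $\lambda$, the second clause of Axiom~(T4) states that the left square in~\eqref{eq:TanFun3} is a morphism of bundles of abelian groups over the base map $0: \Id \to T$; the intertwining of zero sections reads $\lambda_X \circ 0_X = 0_{TX} \circ 0_X$, which by~\eqref{eq:T00T} equals $\Zero{2}_X$. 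For $\tau$, Diagram~\eqref{eq:TanFun2b} in Axiom~(T3) says $\tau$ is a morphism of bundles of abelian groups swapping the two bundle structures $T\pi, T+$ and $\pi T, +T$ on $T^2$; in particular it exchanges their zero sections, giving $\tau \circ T0 = 0T$, whence $\tau_X \circ \Zero{2}_X = \tau_X \circ T0_X \circ 0_X = 0_{TX} \circ 0_X = \Zero{2}_X$.

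The only case that genuinely combines the previous ones is $\lambda_2$. Starting from the defining formula~\eqref{eq:VertLiftExt}, $\lambda_{2,X} = \tau_X \circ +_{TX} \circ (T0_X \times_{0_X} \lambda_X)$, precomposition with $\ZeroDiag{2}{X} = (0_X, 0_X)$ reduces the rightmost map to $(T0_X \circ 0_X,\, \lambda_X \circ 0_X) = (\Zero{2}_X, \Zero{2}_X)$ by the verifications already done; the middle addition $+_{TX}$ applied to two copies of the zero of the abelian group bundle $T^2X \to TX$ returns that same zero $\Zero{2}_X$; and $\tau_X$ fixes $\Zero{2}_X$ by the previous step. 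This yields $\lambda_{2,X} \circ \ZeroDiag{2}{X} = \Zero{2}_X$, and Lemma~\ref{lem:BigTechLemma2} produces $\lambda_2': V_2 \to \VTan{2}$. Naturality in $E \in \BunGdiff$ is automatic from each of the two lemmas. The main obstacle worth flagging is not conceptual but organizational: one must correctly match each transformation to the right pair of vertical functors ($\VTan{n}$ or $V_n$) so that the corresponding variant of the lemma applies, with $\lambda_2$ being the only place where several verifications have to be combined.
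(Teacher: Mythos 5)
Your proposal is correct and follows essentially the same route as the paper's proof: verify the zero-section compatibilities for $\pi$, $0$, $+$, $\tau$, $\lambda$, $\lambda_2$ using Axioms (T2)--(T4), the intertwining of zeros by morphisms of bundles of abelian groups, Equation~\eqref{eq:T00T}, and the formula~\eqref{eq:VertLiftExt} for $\lambda_2$, and then invoke Lemma~\ref{lem:BigTechLemma} (for $\pi,0,\tau,\lambda$) and Lemma~\ref{lem:BigTechLemma2} (for $+$ and $\lambda_2$). The individual verifications, including the composite computation for $\lambda_{2,X}\circ(0_X,0_X)$, match the paper's argument step for step.
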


\begin{proof}
We will show that the assumption $\alpha_{X} \circ \Zero{n}_{X} = \Zero{m}_{X}$ 
of Lemma~\ref{lem:BigTechLemma} is satisfied for the natural transformations $\alpha=\pi, 0, \tau, \lambda$ (with the right choice of $n$ and $m$) of the tangent structure for all $X \in \calC$, so in particular for $X = G_0$ and $X = G_1$. Similarly, we will show that the assumptions $\alpha_X \circ 0_{2,X} = 0_X$ and $\beta_X \circ 0_{2,X} = \Zero{2}_{X}$ of Lemma~\ref{lem:BigTechLemma2} hold for $\alpha=+:T_2 \to T$ and $\beta=\lambda_2:T_2 \to T^2$.

Since $0$ is a section of $\pi$, we have that $\pi_X \circ 0_X = \id_X = 0_X^{[0]}$. For the zero section, we have
\begin{equation*}
  0_X \circ \Zero{0}_X = 0_X \circ \id_X = 0_{X} 
  \,.
\end{equation*}
By Definition~\ref{def:TangentStructure} of tangent structures, $\tau$ is a morphism of bundles of abelian groups, so it maps the zero section to the zero section,
\begin{equation}
\label{eq:TauZero}
\tau_{X} \circ \Zero{2}_{X} = \tau_{X} \circ T0_{X} \circ 0_{X} = 0_{TX} \circ 0_{X} = \Zero{2}_{X} \,.
\end{equation}
Similarly, $\lambda$ is a morphism of bundles of abelian groups over $0: 1 \to T$, so that it maps the zero section to the zero section,
\begin{equation}
\label{eq:LambdaZero}
  \lambda_{X} \circ 0_{X} = 0_{TX} \circ 0_{X} = \Zero{2}_{X} \,.
\end{equation}
The natural transformation $\alpha = +$ is an abelian group structure, so that 
\begin{equation*}
+_X \circ (0_X,0_X) = 0_X \,.    
\end{equation*}
By Definition~\eqref{eq:VertLiftExt} of the vertical lift, Equation~\eqref{eq:LambdaZero}, that the addition of zero and zero is zero, and Equation~\eqref{eq:TauZero}, we get that
\begin{equation*}
\begin{split}
\lambda_{2,X} \circ (0_X,0_X) 
&= \tau_X \circ +_{TX} \circ (T0_X \times_{0_X} \lambda_X) \circ (0_X,0_X)
\\
&= \tau_X \circ +_{TX} \circ (T0_X \circ 0_X, \lambda_X \circ 0_X)
\\
&= \tau_X \circ +_{TX} \circ (\Zero{2}_X, \Zero{2}_X)
\\
&= \tau_X \circ \Zero{2}_X
\\
&=
\Zero{2}_X \,.
\end{split}
\end{equation*}
The proof now follows from Lemma~\ref{lem:BigTechLemma} for $\alpha = \pi, 0, \tau, \lambda$ and from Lemma~\ref{lem:BigTechLemma2} for $\alpha = +$ and $\beta=\lambda_2$.
\end{proof}

\begin{Remark}
By Definition~\eqref{eq:VertLiftExt}, the extension $\lambda_2$ of the vertical lift is given by the composition $\lambda_2 := \tau \circ (+T) \circ (T0 \times_0 \lambda)$. It is natural to ask why its vertical restriction $\lambda_2'$ is not defined similarly by composing the corresponding vertical restrictions.
The reason is that $V\!E$ is generally not differentiable as described in Remark~\ref{rem:VEnotDiff}. Thus, the pullback $V_2 V\!E = V^2E \times_{V\!E} V^2E$ does not generally exist and so the addition $+'_{V\!E}:V_2 V\!E \to V^2E$ is not well-defined. Proposition~\ref{prop:TangStrRest} shows that $\lambda_2'$ is still well-defined as the vertical restriction of $\lambda_2$.
\end{Remark}

The next proposition shows that $\lambda_2'$ is still a vertical lift, where the notion of vertical prolongation from Proposition~\ref{prop:Prolongation} becomes crucial.

\begin{Proposition}
\label{prop:NewVertLiftKernel2}
The diagram
\begin{equation}
\label{diag:newVertLiftKernel2}
\begin{tikzcd}
V_2 \ar[r, "\lambda_{2}'"] \ar[d, "\pi' \circ \, \pr_1"'] & \VTan{2} \ar[d, "{\pi'^{[1]}}"]
\\
1 \ar[r, "{0'}"'] & V
\end{tikzcd}    
\end{equation}
is a pointwise pullback in $\BunG$, where $\pi'^{[1]}$ is the 1st vertical prolongation of $\pi'$. 
\end{Proposition}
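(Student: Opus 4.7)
The plan is to reduce the claim to the ambient pullback square~\eqref{diag:VertLift} applied pointwise to $E$, and then lift the result into $\BunG$ by invoking Proposition~\ref{prop:PullGBun}. Fix a differentiable $G$-bundle $E$ and recall the identities $i_{VE} \circ 0_E' = 0_E$, $i_{\VTan{2}E} \circ \lambda_{2,E}' = \lambda_{2,E} \circ i_{V_2 E}$, and $i_{VE} \circ \pi_E'^{[1]} = T\pi_E \circ i_{\VTan{2}E}$, coming from Proposition~\ref{prop:TangStrRest} and the defining diagram of vertical prolongation. Commutativity of the target square follows by composing both sides with the monomorphism $i_{VE}$ and invoking commutativity of~\eqref{diag:VertLift} for $E$.

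Next I would establish the universal property in $\catC$. Given $X \in \catC$ with $\phi: X \to \VTan{2}E$ and $\psi: X \to E$ such that $\pi_E'^{[1]} \circ \phi = 0_E' \circ \psi$, set $\tilde\phi := i_{\VTan{2}E} \circ \phi$. A short diagram chase yields $T\pi_E \circ \tilde\phi = 0_E \circ \psi$, so the pullback property of~\eqref{diag:VertLift} for $E$ produces a unique $h': X \to T_2 E$ with $\lambda_{2,E} \circ h' = \tilde\phi$ and $\pi_E \circ \pr_1 \circ h' = \psi$. The heart of the argument is to show that $h'$ factors through the monomorphism $i_{V_2 E}: V_2 E \to T_2 E$, which amounts to checking that $T_2 r_E \circ h'$ factors through $\ZeroDiag{2}{G_0}: G_0 \to T_2 G_0$. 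Using naturality of $\lambda_2$ along $r_E$, the pullback defining $\VTan{2}E$, and the identity $\lambda_{2,G_0} \circ \ZeroDiag{2}{G_0} = \Zero{2}_{G_0}$ established inside the proof of Proposition~\ref{prop:TangStrRest}, one obtains
\begin{equation*}
\lambda_{2,G_0} \circ T_2 r_E \circ h' = T^2 r_E \circ \lambda_{2,E} \circ h' = \Zero{2}_{G_0} \circ r_{\VTan{2}E} \circ \phi = \lambda_{2,G_0} \circ \ZeroDiag{2}{G_0} \circ r_{\VTan{2}E} \circ \phi.
\end{equation*}
Since $\lambda_{2,G_0}$ is a monomorphism (as the kernel of $T\pi_{G_0}$), cancellation provides the required factorization and thus a unique $h: X \to V_2 E$ with $i_{V_2 E} \circ h = h'$. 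The identities $\lambda_{2,E}' \circ h = \phi$ and $\pi_E' \circ \pr_1 \circ h = \psi$ then follow by composition with the monomorphisms $i_{\VTan{2}E}$ and $i_{VE}$ respectively, and uniqueness is inherited from that of $h'$.

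Finally, to promote the pullback from $\catC$ to $\BunG$, I invoke Proposition~\ref{prop:PullGBun}. All four morphisms in the square are $G$-equivariant: the vertical restrictions by Proposition~\ref{prop:TangStrRest}, and $\pi_E'^{[1]}$ by the vertical prolongation construction. The pullbacks $V_2 E \times_{G_0} G_k$ exist for every $k \geq 0$ because $V_2 E$ is itself a $G$-bundle by Proposition~\ref{prop:FiberProdVE}, and the remark following that proposition identifies the $G$-action on $V_2 E$ with the diagonal action on the pullback. Hence the pullback in $\catC$ constructed above is also the pullback in $\BunG$. The main obstacle is the factorization of $h'$ through $V_2 E$, which is precisely where both the naturality of $\lambda_2$ and the monomorphism property of $\lambda_{2,G_0}$ are crucially used.
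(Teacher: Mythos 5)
Your proof is correct, but the core of it runs along a genuinely different route than the paper's. For the commutativity of the square you do exactly what the paper does (compose with the monomorphism $i_{V\!E}$, i.e.\ the nested-squares trick of Lemma~\ref{lem:InnerOuterSquares}). For the pullback property, however, the paper never chases a universal cone: it assembles a grid of commutative squares built from the naturality of $0$ and $\pi$ and the identity $T\pi \circ T0 \circ 0 = 0$, interchanges the pullbacks, and uses that \eqref{diag:VertLift} is a pointwise pullback together with the isomorphism \eqref{eq:T2EV2E} (for $k=0$) to identify $V_2E \cong E \times_{V\!E} \VTan{2}E$ outright, before invoking Proposition~\ref{prop:PullGBun}. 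You instead verify the universal property directly: you produce the mediating morphism $h'$ into $T_2E$ from the ambient pullback \eqref{diag:VertLift}, and then factor it through $i_{V_2E}$ by showing $T_2 r_E \circ h' = \ZeroDiag{2}{G_0}\circ r_{\VTan{2}E}\circ\phi$, using naturality of $\lambda_2$, the pullback defining $\VTan{2}E$, the identity $\lambda_{2,G_0}\circ\ZeroDiag{2}{G_0} = \Zero{2}_{G_0}$ from the proof of Proposition~\ref{prop:TangStrRest}, and cancellation of the monomorphism $\lambda_{2,G_0}$ (which is monic because it is a pullback of the split mono $0$ in \eqref{diag:VertLift}, the same fact the paper uses later for $\lambda_{2,E}$ in Section~\ref{sec:ProofVerticalInvariant}). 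Your factorization step implicitly uses the identification $V_2E \cong T_2E \times_{T_2G_0}^{T_2 r_E,\,\ZeroDiag{2}{G_0}} G_0$ of \eqref{eq:T2EV2E}, whose existence is guaranteed by differentiability, so nothing is missing. What each approach buys: your argument makes the cone maps and the exact role of the monicity of the vertical lift explicit, and in particular verifies directly that the specific morphisms $\lambda'_{2,E}$ and $\pi'_E\circ\pr_1$ exhibit the pullback; the paper's interchange-of-limits computation is shorter and avoids any universal-property bookkeeping, at the cost of leaving the compatibility of the canonical isomorphism with those structure maps implicit. Both proofs conclude identically with Proposition~\ref{prop:PullGBun}, and your appeal to Proposition~\ref{prop:FiberProdVE} and the remark identifying $\beta_{V_2E}$ with the diagonal action is at the same level of detail as the paper's final sentence.
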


\begin{proof}
Let $r_E: E \to G_0$ be a differentiable $G$-bundle. Consider the following diagram:
\begin{equation}
\label{diag:NewVertLiftPullback0}
\begin{tikzcd}
V_2E
\ar[rrr, "{\lambda'_{2,E}}"] 
\ar[ddd, "\pi'_E \circ \, \pr_1"'] 
\ar[dr, "{i_{{V_2E}}}"]
& & &
\VTan{2}E
\ar[ddd, "\pi_E'^{[1]}"] 
\ar[dl, "{i_{\VTan{2}E}}"']
\\
&
T_2E
\ar[d, "\pi_E \circ \, \pr_1"']
\ar[r, "{\lambda_{2,E}}"] 
&
T^2E 
\ar[d, "T\pi_E"]
&
\\
&
E \ar[r, "{0_E}"'] 
&
TE
&
\\
E \ar[rrr, "{0'_E}"']
\ar[ur, "\id"'] 
& & &
V\!E
\ar[ul, "i_{V\!E}"]
\end{tikzcd}    
\end{equation}
The commutativity of the upper, lower and left trapezoids follows from the corresponding commutative diagrams in \eqref{diag:newNatTransf}. The right trapezoid is the commutative diagram \eqref{diag:newalphaProlong} with $\alpha=\pi$ and $k=1$. The inner square is the commutative square~\eqref{diag:VertLift} evaluated at $E \in \catC$. Since $i_{V\!E}$ is a monomorphism, we get, by Lemma~\ref{lem:InnerOuterSquares}, that the outer square is a commutative square in $\BunG$.

Now, consider the following diagram:
\begin{equation*}
\label{diag:IterPullVertLift2}
\begin{tikzcd}
E \ar[r, "{0_E}"] \ar[d, "r_E"'] 
& 
TE \ar[d, "{Tr_E}"] 
& 
T^2E \ar[d, "T^2r_E"] \ar[l, "{T \pi_E}"']
\\
G_0 \ar[r, "{0_{G_0}}"] 
& 
T{G_0} 
& 
T^2{G_0} \ar[l, "{T\pi_{G_0}}"'] 
\\
G_0 \ar[r, "\id"'] \ar[u, "\id"] 
& 
G_0 \ar[u, "{0_{G_0}}"'] 
& 
G_0 \ar[u, "\Zero{2}_{G_0}"'] \ar[l, "\id"]
\end{tikzcd}
\end{equation*}
The upper left square commutes by the naturality of $0$ and the upper right square commutes by the naturality of $\pi$ and the functoriality of $T$. The lower right square commutes since $T\pi \circ T0 \circ 0 = T(\pi \circ 0) \circ 0 = T1 \circ 0 = 0$. The commutativity of the lower left square is trivial. It follows from the commutativity of pullbacks that
\begin{align}
&(E \times_{TE} T^2E) \times_{G_0 \times_{TG_0} T^2G_0} (G_0 \times_{G_0} G_0) \label{eq:IsoVertLiftleft}
\\ 
\cong \
&(E \times_{G_0} G_0) \times_{TE \times_{TG_0} G_0} (T^2E \times_{T^2G_0} G_0) \,.
\label{eq:IsoVertLiftright}
\end{align}

Using the fact that Diagram \eqref{diag:VertLift} is a pointwise pullback, we know that 
\begin{equation*}
\begin{split}
E \times_{TE} T^2E &\cong T_2E
\\
G_0 \times_{TG_0} T^2G_0 &\cong T_2{G_0} \,.
\end{split}
\end{equation*}
We conclude that \eqref{eq:IsoVertLiftleft} is isomorphic to
\begin{equation*}
\begin{split}
T_2E \times_{T_2G_0} G_0
&\cong
V_2E \times_{G_0} G_0
\\
&\cong V_2E \,,
\end{split}
\end{equation*}
where we have used the isomorphism in \eqref{eq:T2EV2E} for $k=0$ and cancellation of pullbacks along the identity. On the other hand, \eqref{eq:IsoVertLiftright} is isomorphic to $E \times_{V\!E} \VTan{2}E$. The isomorphism of the pullbacks~\eqref{eq:IsoVertLiftleft} and~\eqref{eq:IsoVertLiftright} hence induces the isomorphism 
\begin{equation*}
V_2E \cong E \times_{V\!E} \VTan{2}E    
\end{equation*}
in $\catC$.
Lastly, it follows from Proposition \ref{prop:PullGBun} that this is the pullback in the category $\BunG$.
\end{proof}

\subsection{Vertical restriction of the module structure}

\label{sec:VertRestModStr}

Let $E$ be a differentiable $G$-bundle and $R$ a ring object of the tangent category $\catC$. Then,
\begin{equation*}
r_{V\!E} \circ \pr_2: R \times V\!E \to G_0
\end{equation*}
is a $G$-bundle with right $G$-action given by
\begin{equation*}
\id_R \times \beta_{V\!E}: R \times V\!E \times_{G_0} G_1 
\longrightarrow 
R \times V\!E \,.
\end{equation*}
Let $\kappa:R \times T \to T$ be an $R$-module structure on $\pi:T \to \Id$. It is a natural transformation with components $\kappa_X:R \times TX \to TX$ for all $X \in \catC$ (Section~\ref{sec:CatTanScal}). We will show that $\kappa$ restricts to a $G$-equivariant $R$-module structure on $\pi':V \to \Id$.

\begin{Proposition}
\label{prop:ModuleStrRest}
Let $G$ be a differentiable groupoid in a cartesian tangent category $\catC$ with an $R$-module structure $\kappa: R \times T \to T$. Then there is a natural transformation $\kappa': R \times V \to V$ of functors $\BunGdiff \to \BunG$, such that
\begin{equation}
\label{diag:NewOldKappa}
\begin{tikzcd}
R \times V\! E
\ar[r, "\kappa'_E"]
\ar[d, "\id_R \times i_{V\!E}"']
&
V\!E
\ar[d, "i_{V\!E}"]
\\
R \times TE
\ar[r, "\kappa_E"']
&
TE
\end{tikzcd}
\end{equation}
commutes in $\calC$ for all differentiable $G$-bundles $E$.
\end{Proposition}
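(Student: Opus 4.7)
The plan is to construct $\kappa'_E$ directly from the universal property of the pullback
\begin{equation*}
V\!E = TE \times_{TG_0}^{Tr,\,0_{G_0}} G_0,
\end{equation*}
and then verify its properties following the template of Lemma~\ref{lem:BigTechLemma}. Set $\psi_E := \kappa_E \circ (\id_R \times i_{V\!E}): R \times V\!E \to TE$, and check that $\psi_E$ is a cone over the pullback. By naturality of $\kappa$ applied to $r: E \to G_0$, by the pullback identity $Tr \circ i_{V\!E} = 0_{G_0} \circ r_{V\!E}$, and by the module-theoretic zero axiom $\kappa_{G_0} \circ (\id_R \times 0_{G_0}) = 0_{G_0} \circ \pr_2$ (the instance $r \cdot 0 = 0$ of the $R$-module structure on $TG_0 \to G_0$), one computes
\begin{equation*}
Tr \circ \psi_E = \kappa_{G_0} \circ (\id_R \times 0_{G_0}) \circ (\id_R \times r_{V\!E}) = 0_{G_0} \circ r_{V\!E} \circ \pr_2.
\end{equation*}
The universal property then yields a unique $\kappa'_E: R \times V\!E \to V\!E$ making~\eqref{diag:NewOldKappa} commute, and comparing the bundle projections gives $r_{V\!E} \circ \kappa'_E = r_{V\!E} \circ \pr_2$, so $\kappa'_E$ is a morphism of bundles over $G_0$.

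For $G$-equivariance of $\kappa'_E$, I would mimic the cube argument from the proof of Lemma~\ref{lem:BigTechLemma}: the left and right faces are the restriction squares~\eqref{eq:VE02} for $R \times V\!E$ (with action $\id_R \times \beta_{V\!E}$) and for $V\!E$, the top and bottom faces are the defining factorization of $\kappa'_E$, and the inner back face is the commutative square
\begin{equation*}
\begin{tikzcd}[column sep=3em]
R \times TE \times_{TG_0} TG_1 \ar[r, "\id_R \times \beta_{TE}"] \ar[d, "\kappa_E \times_{TG_0} \id"'] & R \times TE \ar[d, "\kappa_E"] \\
TE \times_{TG_0} TG_1 \ar[r, "\beta_{TE}"'] & TE
\end{tikzcd}
\end{equation*}
expressing compatibility of $\kappa_E$ with the tangent action $\beta_{TE} = T\beta_E \circ \nu_{1,1}^{-1}$. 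This square commutes by naturality of $\kappa$ applied to $\beta_E : E \times_{G_0} G_1 \to E$ together with naturality of $\nu_{1,1}$ (cf.\ Lemma~\ref{lem:FiberProdMaps}). Since $i_{V\!E}$ is a monomorphism, as the pullback of the split mono $0_{G_0}$, Lemma~\ref{lem:InnerOuterSquares} forces commutativity of the outer face, which is precisely the $G$-equivariance condition for $\kappa'_E$.

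Finally, naturality of $\kappa'$ in $E \in \BunGdiff$ is immediate from the uniqueness clause of the universal property: for any $\Phi : E \to F$ in $\BunGdiff$, both $V\Phi \circ \kappa'_E$ and $\kappa'_F \circ (\id_R \times V\Phi)$ compose with $i_{V\!F}$ to give $\kappa_F \circ (\id_R \times T\Phi \circ i_{V\!E})$ by naturality of $\kappa$ and Proposition~\ref{prop:MorphismsV}, hence they coincide. The main obstacle is the $G$-equivariance step, which requires carefully assembling the cube and tracking the compatibility of the scalar multiplication with the tangent-lifted $TG$-action through the isomorphism $\nu_{1,1}$; the construction of $\kappa'_E$ and its naturality in $E$ are then formal consequences of the universal property.
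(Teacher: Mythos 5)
The construction of $\kappa'_E$ via the universal property of the pullback defining $V\!E$, the verification that it is a bundle morphism, and the naturality argument (composing with the monomorphism $i_{V\!F}$) are all correct and essentially the paper's route, which builds $\kappa'_E$ as the induced map $\bigl(\kappa_E \times_{\kappa_{G_0}} \pr_2\bigr)\circ\phi^{-1}$ on limits — the same thing in different packaging.

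The $G$-equivariance step, however, has a genuine gap. The inner face you propose, with left vertical arrow $\kappa_E \times_{TG_0} \id$, does not work: that arrow is not even well defined as a map into $TE \times_{TG_0} TG_1$, since for $(r,v,w)$ with $Tr_E(v)=Tt(w)$ one has $Tr_E\bigl(\kappa_E(r,v)\bigr)=\kappa_{G_0}\bigl(r,Tt(w)\bigr)\neq Tt(w)$ in general; and even ignoring this, scaling only the $TE$-leg does not commute with $\beta_{TE}$ (already for a Lie group, $T m(rv,w)=r\,TR_h v+TL_g w\neq r\,Tm(v,w)$). What naturality of $\kappa$ at $\beta_E$ transported through $\nu_{1,1}$ actually gives is compatibility of $\beta_{TE}$ with the \emph{diagonal} scalar multiplication $\tilde\kappa=\bigl(\kappa_E\circ(\pr_1,\pr_2),\,\kappa_{G_1}\circ(\pr_1,\pr_3)\bigr)$ on $R\times TE\times_{TG_0}TG_1$, which is the inner square the paper uses. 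Once the inner square carries $\tilde\kappa$, the top trapezoid of the cube is no longer "the defining factorization of $\kappa'_E$": one must check
\begin{equation*}
\tilde\kappa \circ \bigl(\id_R \times i_{V\!E} \times_{0_{G_0}} 0_{G_1}\bigr)
= \bigl(i_{V\!E} \times_{0_{G_0}} 0_{G_1}\bigr)\circ\bigl(\kappa'_E \times_{G_0} \id_{G_1}\bigr)\,,
\end{equation*}
which uses in an essential way that $\kappa_{G_1}$ preserves the zero section, $\kappa_{G_1}\circ(\id_R\times 0_{G_1})=0_{G_1}\circ\pr_2$ — i.e.\ on the vertically restricted domain the $TG_1$-component is a zero vector, so the diagonal scaling collapses to scaling the $V\!E$-component only. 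You omit both the diagonal structure and this zero-section computation, so the cube as you assemble it does not close; with these two corrections the argument becomes the paper's proof.
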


\begin{proof}
Let $r:E \to G_0$ be a differentiable $G$-bundle. Consider the following diagram:
\begin{equation}
\label{diag:newkappa}
\begin{tikzcd}[column sep=3em]
R \times TE
\ar[r, "\id_R \times Tr"] 
\ar[d, "\kappa_E"']
&
R \times TG_0 
\ar[d, "\kappa_{G_0}"]
&
R \times G_0
\ar[l, "\id_R \times 0_{G_0}"']
\ar[d, "\pr_2"]
\\
TE \ar[r, "Tr"']
&
TG_0
&
G_0 
\ar[l, "0_{G_0}"]
\end{tikzcd}
\end{equation}
The left square commutes by the naturality of $\kappa$. The right square commutes since the scalar multiplication is linear in $T$, which implies that it maps the zero section to the zero section. By the commutativity of pullbacks and products, we have the following isomorphism for the limit of the top row,
\begin{equation*}
\begin{split}
  \phi: (R \times TE) \times_{R \times TG_0} (R \times G_0) &\xrightarrow{~\cong~} (R \times_R R) \times (TE \times_{TG_0} G_0)
  \\
  &\xrightarrow{~\cong~} R \times V\!E 
  \,.    
\end{split}
\end{equation*}
Diagram~\eqref{diag:newkappa} induces a unique map 
\begin{equation}
\label{eq:RModuleVE}
  \kappa'_E 
  := \big(\kappa_E \times_{\kappa_{G_0}} \pr_2\big) \circ \phi^{-1}: R \times V\!E \longrightarrow V\!E \,,
\end{equation}
such that Diagram \eqref{diag:NewOldKappa} and the diagram
\begin{equation*}
\begin{tikzcd}
R \times V\!E \ar[r, "{\kappa'_E}"] \ar[d, "{\id_R \times r_{V\!E}}"'] 
& 
V\!E \ar[d, "r_{V\!E}"]
\\
R \times G_0 \ar[r, "\pr_2"'] & G_0
\end{tikzcd}
\end{equation*}
commute. This shows that $\kappa'_E$ is a morphism of bundles over $G_0$. 

Next, we show that $\kappa$ is right $TG$-equivariant. From Lemma~\ref{lem:FiberProdMaps} with $F(\Empty) = R \times T(\Empty)$ and $G=T$ we obtain the following commutative square:
\begin{equation*}
\begin{tikzcd}[column sep=5em]
R \times T(E \times_{G_0} G_1)
\ar[r, "\kappa_{E \times_{G_0} G_1}"]
\ar[d]
&
T(E \times_{G_0} G_1)
\ar[d]
\\
(R \times TE) \times_{R \times TG_0} (R \times TG_1)
\ar[r, "\kappa_E \times_{\kappa_{G_0}} \kappa_{G_1}"']
&
TE \times_{TG_0} TG_1
\end{tikzcd}
\end{equation*}
For the domain of the bottom horizontal arrow, we have the isomorphism
\begin{equation*}
\begin{split}
  \psi: (R \times TE) \times_{R \times TG_0} (R \times TG_1) &\xrightarrow{~\cong~} (R \times_R R) \times (TE \times_{TG_0} TG_1)
  \\
  &\xrightarrow{~\cong~} R \times TE \times_{TG_0} TG_1 
  \,,
\end{split}
\end{equation*}
so that we obtain the commutative square
\begin{equation}
\begin{tikzcd}[column sep=7em]
\label{diag:ModuleStr02}
R \times T(E \times_{G_0} G_1)
\ar[r, "\kappa_{E \times_{G_0} G_1}"]
\ar[d, "\id_R \times \nu_{1,1}"']
&
T(E \times_{G_0} G_1)
\ar[d, "\nu_{1,1}"]
\\
R \times TE \times_{TG_0} TG_1
\ar[r, "(\kappa_E \times_{\kappa_{G_0}} \kappa_{G_1}) \, \circ \, \psi^{-1}"']
&
TE \times_{TG_0} TG_1
\end{tikzcd}
\end{equation}
Expressing $\psi$ explicitly in terms of projections, we can write the morphism represented by the bottom arrow as
\begin{equation*}
\begin{split}
 \tilde{\kappa} 
 &:= (\kappa_E \times_{\kappa_{G_0}} \kappa_{G_1}) \circ \psi^{-1}
 \\
 &=
 \bigl(\kappa_E \circ (\pr_1,\pr_2), 
 \kappa_{G_1} \circ (\pr_1,\pr_3)\bigr)   
 \,.
\end{split}
\end{equation*}
It can be viewed as the diagonal $R$-module structure or as the $R$-module structure of the pullback of $R$-modules. By the assumption that $E$ is differentiable, the vertical arrows of~\eqref{diag:ModuleStr02} are isomorphisms, so that we obtain the commutative diagram
\begin{equation*}
\begin{tikzcd}[column sep=3em]
R \times TE \times_{TG_0} TG_1
\ar[r, "\tilde{\kappa}"]
\ar[d, "\id_R \times \nu_{1,1}^{-1}"']
\ar[dd, "\id_R \times \beta_{TE}"', bend right=75]
&
TE \times_{TG_0} TG_1
\ar[d, "\nu_{1,1}^{-1}"]
\ar[dd, "\beta_{TE}", bend left=75]
\\
R \times T(E \times_{G_0} G_1)
\ar[r, "\kappa_{E \times_{G_0} G_1}"']
\ar[d, "\id_R \times T\beta_E"']
&
T(E \times_{G_0} G_1)
\ar[d, "T\beta_E"]
\\
R \times TE \ar[r, "\kappa_E"']
&
TE
\end{tikzcd}
\end{equation*}
which shows that the $R$-module structure and the $TG$-action on $TE$ commute. This diagram is the inner square of the following diagram:
\begin{equation}
\label{diag:kappaGEquiv}
\begin{tikzcd}
R \times V\!E \times_{G_0} G_1
\ar[rrr, "{\kappa'_E \times_{G_0} \id_{G_1}}"] 
\ar[ddd, "\id_R \times \beta_{V\!E}"'] 
\ar[dr, "{\id_R \times i_{V\!E} \, \times_{0_{G_0}} 0_{G_1}}" near end]
&[-5em] &[+0em] &[-4em]
V\!E \times_{G_0} G_1
\ar[ddd, "\beta_{V\!E}"] 
\ar[dl, "{i_{V\!E} \, \times_{0_{G_0}} 0_{G_1}}"' near end]
\\
&
R \times TE \times_{TG_0} TG_1
\ar[d, "\id_R \times \beta_{TE}"']
\ar[r, "\tilde{\kappa}"]
&
TE \times_{TG_0} TG_1 \ar[d, "\beta_{TE}"]
&
\\
&
R \times TE \ar[r, "{\kappa_E}"'] 
&
TE
&
\\
R \times V\!E \ar[rrr, "{\kappa'_E}"']
\ar[ur, "{\id_R \times i_{V\!E}}"'] 
& & &
V\!E
\ar[ul, "{i_{V\!E}}"]
\end{tikzcd}    
\end{equation}
The right and left trapezoids of~\eqref{diag:kappaGEquiv} commute by the commutativity of Diagram~\eqref{eq:VE02}. The lower trapezoid is Diagram~\eqref{diag:NewOldKappa}, which we have already shown to commute. The top trapezoid commutes since 
\begin{equation*}
\begin{split}
&\tilde{\kappa} \circ 
(\id_R \times i_{V\!E} \times_{0_{G_0}} 0_{G_1}) 
\\
={}
&\big(\kappa_E \circ (\pr_1,\pr_2), \, \kappa_{G_1} \circ (\pr_1,\pr_3)\big) \circ (\id_R \times i_{V\!E} \times_{0_{G_0}} 0_{G_1}) 
\\
={}
&\bigl(\kappa_E \circ (\id_R \times i_{V\!E}) \circ (\pr_1,\pr_2), \kappa_{G_1} \circ (\id_R \times 0_{G_1}) \circ (\pr_1,\pr_3) \bigr)
\\
={}
&\bigl((i_{V\!E} \circ \kappa'_E) \circ (\pr_1, \pr_2), (0_{G_1} \circ \pr_2) \circ (\pr_1, \pr_3) \bigr)
\\
={}
&\bigl((i_{V\!E} \circ \kappa'_E) \circ (\pr_1, \pr_2), (0_{G_1} \circ \pr_3) \bigr)
\\
={}
&(i_{V\!E} \circ \kappa'_E) \times_{0_{G_0}} 0_{G_1}
\\
={}
&\bigl(i_{V\!E} \times_{0_{G_0}} 0_{G_1} \bigr) \circ \bigl(\kappa'_E \times_{G_0} \id_{G_1} \bigr)
\,,
\end{split}
\end{equation*}
where we have used the commutativity of the bottom trapezoid, that $\kappa$ preserves the zero section, and the functoriality of pullbacks. Since the morphism $i_{V\!E}$ is a monomorphism, it follows from Lemma~\ref{lem:InnerOuterSquares} that the outer square commutes, which shows that $\kappa'$ is $G$-equivariant. Moreover, since $\kappa$ is a natural transformation and pullbacks are natural, $\kappa'_E := \big(\kappa_E \times_{\kappa_{G_0}} \pr_2\big) \circ \phi^{-1}$ is natural in $E \in \BunGdiff$.
\end{proof}

\begin{Remark}
The $R$-module structure $\kappa'_E$ of $V\!E \to E$ defined in~\eqref{eq:RModuleVE} is the module structure described in the proof of Proposition~\ref{prop:KerOfMorphR} for $\phi=Tr$ and $\ker \phi = V\!E$.
\end{Remark}

\begin{Corollary}
\label{cor:newproj}
The natural transformation $\pi':V \to \Id$ with neutral element $0':\Id \to V$ and addition $+':V_2 \to V$ is a bundle of abelian groups over $\Id$. If the tangent category $\catC$ has an $R$-module structure, then $\pi':V \to \Id$ is a bundle of $R$-modules.
\end{Corollary}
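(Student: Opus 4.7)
The plan is to verify each axiom of a bundle of abelian groups — and subsequently of a bundle of $R$-modules — pointwise at every differentiable $G$-bundle $E$, by reducing it to the corresponding axiom for $T$ via postcomposition with the morphism $i_{V\!E}\colon V\!E \to TE$. Recall from the proof of Proposition~\ref{prop:VE} that $i_{V\!E}$ is the pullback of the split monomorphism $0_{G_0}$ along $Tr_E$ and is therefore a strong monomorphism. Consequently, an equation between morphisms with codomain $V\!E$ holds in $\BunG$ if and only if it holds after postcomposition with $i_{V\!E}$ in $\catC$.

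First I would vertically restrict the negation $\iota\colon T \to T$ of the abelian group structure on $T \to \Id$, which is part of the data by Terminology~\ref{term:negatives}. Since zero is self-inverse, $\iota_X \circ 0_X = 0_X$ for every $X$, so Lemma~\ref{lem:BigTechLemma} supplies a unique natural transformation $\iota'\colon V \to V$ of functors $\BunGdiff \to \BunG$ with $i_{V\!E} \circ \iota'_E = \iota_E \circ i_{V\!E}$. Together with $\pi', 0', +'$ from Proposition~\ref{prop:TangStrRest}, this yields the candidate abelian group structure on $\pi'\colon V \to \Id$.

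Next, I would verify the four abelian group axioms (associativity, commutativity, two-sided unitality, invertibility) componentwise. Each axiom equates two morphisms whose common codomain is $V\!E$ and whose common domain is an iterated fiber product $V_n E$; the latter exists and carries a $G$-bundle structure via the isomorphism $V_n E \cong T_n E \times_{T_n G_0} G_0$ coming from the differentiability of $E$. Postcomposing each axiom equation with $i_{V\!E}$ and propagating the monomorphisms $i_{V_n E}$ through the squares of Proposition~\ref{prop:TangStrRest}, the equation reduces to the corresponding axiom for the abelian group object $T \to \Id$, which holds by hypothesis. The $R$-module axioms for $\kappa'$ — unit action, associativity of the action, and the two distributivity laws — are treated identically, now using Proposition~\ref{prop:ModuleStrRest} in place of Proposition~\ref{prop:TangStrRest}.

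I do not anticipate a genuine obstacle beyond careful bookkeeping. The one point that requires attention is ensuring that the natural structure morphisms between the various $V_n E$ — produced from $+'$, $\iota'$, $\kappa'$, and the projections — factor through the corresponding morphisms between the $T_n E$ via the monomorphisms $i_{V_n E}$. This factorization follows from the naturality of fiber products (in the form of Lemma~\ref{lem:FiberProdMaps}) and the universal property of the pullback defining each $V_n E$, so the diagram chase goes through uniformly for every axiom.
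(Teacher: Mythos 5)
Your proposal is correct, but it takes a more self-contained route than the paper. The paper's proof is essentially two lines: since $Tr\colon TE \to TG_0$ is a morphism of bundles of abelian groups (resp.\ of $R$-modules), its kernel $V\!E = TE \times_{TG_0}^{Tr,\,0_{G_0}} G_0$ inherits the structure of a bundle of abelian groups over $E$ by Proposition~\ref{prop:KerOfMorph}~(i) (resp.\ of $R$-modules, with $\kappa'_E$ as in Proposition~\ref{prop:ModuleStrRest}), and one only has to observe that the inherited structure maps coincide with the vertical restrictions $+'_E$, $0'_E$, $\kappa'_E$; the axioms themselves are delegated to the kernel propositions, whose proofs live in the companion references, with the module axioms handled by a Lemma~\ref{lem:InnerOuterSquares}-type argument in the subsequent remark. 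You instead re-derive everything inside this paper's toolkit: you restrict the negation $\iota$ via Lemma~\ref{lem:BigTechLemma} (using $\iota_X \circ 0_X = 0_X$, exactly the move made in the proof of Proposition~\ref{prop:SubmoduleStr}), and you check each group and module axiom by postcomposing with the strong monomorphism $i_{V\!E}$ and reducing to the corresponding axiom for $T$, which is the same mechanism as Lemma~\ref{lem:InnerOuterSquares} in a simpler guise. Your flagged bookkeeping point is the right one: the reduction needs $i_{V_nE}$ to intertwine the restricted structure maps with the original ones, e.g.\ $i_{V_2E}\circ(+'_E\times_E \id_{V\!E}) = (+_E\times_E\id_{TE})\circ i_{V_3E}$, which follows since $i_{V_nE}$ is componentwise $i_{V\!E}$ and $i_{V\!E}\circ +'_E = +_E\circ i_{V_2E}$; the existence of $V_3E$ (needed for associativity) is covered by differentiability for all $m\geq 2$. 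What the paper's route buys is brevity and reuse of general kernel lemmas; what yours buys is a proof that never leaves the present paper and makes the inverse $\iota'$ explicit rather than implicit in the kernel construction.
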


\begin{proof}
Let $r: E \to G_0$ be a differentiable $G$-bundle. Then, $Tr: TE \to TG_0$ is a morphism of bundles of abelian groups by the naturality of the tangent structure. It follows from Proposition~\ref{prop:KerOfMorph}~(i) that its kernel $V\!E = TE \times_{TG_0}^{Tr, \, 0_{G_0}} G_0$, together with the vertical restriction of the bundle projection
\begin{equation*}
\pi'_{E}: V\!E \xrightarrow{~i_{V\!E}~} TE \xrightarrow{~\pi_{E}~} E \,,
\end{equation*}
has the structure of a bundle of abelian groups. As explained in the proof of Proposition~\ref{prop:KerOfMorph}, the group structure is given by the vertical restrictions $+'_E$ of the addition $+_E$ and $0'_E$ of the zero section $0_E$.

The $R$-module structure is given by the unique map $\kappa'_E$, as defined in Equation~\eqref{eq:RModuleVE} in the proof of Proposition~\ref{prop:ModuleStrRest}.
\end{proof}

The commutative diagrams of associativity, unitality, and linearity of the $R$-module structure $\kappa$ restrict to commutative diagrams at the level of the vertical tangent bundle. This can be shown by similar arguments using Lemma~\ref{lem:InnerOuterSquares}. It follows that the natural morphism $\kappa'_E: R \times V\!E \to V\!E$ is an $R$-module structure on $\pi'_E: V\!E \to E$ for all differentiable $G$-bundles $E$.
\section{The Lie bracket of invariant vector fields}
\label{sec:LieBracketInv}

The goal of this section is to state and prove one of the main results of this paper that invariant vector fields are closed under the Lie bracket. We will first introduce vertical and invariant vector fields on differentiable $G$-bundles in the context of tangent categories in Section~\ref{sec:InvVecField}. After having all the ingredients ready, we state and prove the main theorem in Section~\ref{sec:ProofVerticalInvariant}.

\subsection{Invariant vector fields on groupoid bundles}
\label{sec:InvVecField}

\begin{Definition}
\label{def:InvVecField}
Let $E$ be a differentiable $G$-bundle in a tangent category $\catC$. A vector field $v: E \to TE$ is called \textbf{vertical} if it factors through $i_{V\!E}: V\!E \to TE$, that is, if there is a (necessarily unique) morphism $v': E \to V\!E$ such that 
\begin{equation}
\label{diag:vertical}
\begin{tikzcd}
&
V\!E \ar[d, "i_{V\!E}"]
\\
E \ar[ru, "v'"]
\ar[r, "v"']
&
TE
\end{tikzcd}
\end{equation}
commutes. The vector field $v$ is called \textbf{invariant} if it is vertical and if $v'$ is $G$-equivariant.
\end{Definition}


\begin{Lemma}
\label{lem:VertLiftBunMap}
The vertical lift $v'$ of a vertical vector field $v$ is a section of $\pi'_E:V\!E \to E$.
\end{Lemma}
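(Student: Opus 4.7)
The plan is to chase the definitions: unpack what it means for $v'$ to be a vertical lift of $v$, and combine this with the defining relation of the restricted projection $\pi'_E$.

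First I would recall that by Definition~\ref{def:InvVecField}, the vertical lift $v': E \to V\!E$ of the vertical vector field $v$ is the unique morphism satisfying $i_{V\!E} \circ v' = v$. Next, I would invoke the leftmost commutative triangle of~\eqref{diag:newNatTransf} in Proposition~\ref{prop:TangStrRest}, which provides the defining identity $\pi_E \circ i_{V\!E} = \pi'_E$ of the vertically restricted bundle projection.

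Combining these two identities with the fact that $v$ is a vector field on $E$, that is, a section of $\pi_E: TE \to E$, the computation is then a one-liner:
\begin{equation*}
  \pi'_E \circ v'
  = \pi_E \circ i_{V\!E} \circ v'
  = \pi_E \circ v
  = \id_E
  \,.
\end{equation*}
This exhibits $v'$ as a section of $\pi'_E: V\!E \to E$, as desired.

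There is no real obstacle here; the lemma is essentially a direct bookkeeping consequence of Proposition~\ref{prop:TangStrRest} combined with Definition~\ref{def:InvVecField}. The only thing to be careful about is that all maps involved live in $\calC$ (not yet in $\BunG$), and the equality $\pi_E \circ i_{V\!E} = \pi'_E$ is indeed stated at the underlying level of $\calC$ in Proposition~\ref{prop:TangStrRest}, so the diagram chase requires no further promotion.
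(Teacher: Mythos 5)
Your computation is correct and coincides with the first half of the paper's own proof: unpack $v = i_{V\!E}\circ v'$, use $\pi_E\circ i_{V\!E}=\pi'_E$ from the first triangle of~\eqref{diag:newNatTransf}, and conclude $\pi'_E\circ v'=\id_E$. So for the lemma as literally stated there is no gap.

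Be aware, however, that the paper's proof establishes a second fact under the same lemma, namely that $v'$ is a morphism of bundles over $G_0$, i.e.\ $r_{V\!E}\circ v' = r_E$. This follows from the chain $r_E = r_E\circ \pi_E\circ v = r_E\circ\pi_E\circ i_{V\!E}\circ v' = \pi_{G_0}\circ Tr_E\circ i_{V\!E}\circ v' = \pi_{G_0}\circ 0_{G_0}\circ r_{V\!E}\circ v' = r_{V\!E}\circ v'$, using the naturality of $\pi$, the pullback square~\eqref{eq:VnE} defining $V\!E$ (which gives $Tr_E\circ i_{V\!E} = 0_{G_0}\circ r_{V\!E}$), and $\pi\circ 0 = \id$. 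This bundle-morphism property is not part of the statement's wording, but it is exactly what later arguments cite this lemma for (e.g.\ Equation~\eqref{eq:SmallFact} in the proof of Theorem~\ref{thm:InvariantBracket} invokes ``$v'$ is a bundle map (Lemma~\ref{lem:VertLiftBunMap})''), so if your proof were substituted for the paper's, that later step would lose its justification; it is worth recording the short extra computation alongside your one-liner.
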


\begin{proof}
Using the first commutative triangle in \eqref{diag:newNatTransf}, we have that 
\begin{equation*}
\pi'_E \circ v' = \pi_E \circ i_{V\!E} \circ v' = \pi_E \circ v = \id_E \,.
\end{equation*}
Moreover, $v'$ is a bundle morphism since
\begin{equation*}
\begin{split}
r_E = r_E \circ \id_E &= r_E \circ \pi_E \circ v
\\
&=
r_E \circ \pi_E \circ i_{V\!E} \circ v'
\\
&=
\pi_{G_0} \circ Tr_E \circ i_{V\!E} \circ v'
\\
&=
\pi_{G_0} \circ 0_{G_0} \circ r_{V\!E} \circ v'
\\
&=
\id_{G_0} \circ r_{V\!E} \circ v'
\\
&=
r_{V\!E} \circ v' \,,
\end{split}
\end{equation*}
where $r_E:E \to G_0$ and $r_{V\!E}:V\!E \to G_0$ are the bundle projections. In the calculation, we have used the naturality of $\pi$, the commutative square defining the pullback $V\!E$ and that $\pi \circ 0 = 1$. 
\end{proof}

Recall from Definition~\ref{def:GBunMor} that  the bundle morphism $v'$ is $G$-equivariant if the diagram
\begin{equation*}
\begin{tikzcd}[column sep=4em]
E \times_{G_0} G_1 \ar[r, "v' \times_{G_0} \id_{G_1}"] \ar[d] 
& V\!E \times_{G_0} G_1 \ar[d]
\\
E \ar[r, "v'"'] & V\!E
\end{tikzcd}
\end{equation*}
commutes, where the vertical arrows are the $G$-actions.

\begin{Remark}
Since the groupoid bundles that we consider in this paper are all right groupoid bundles (Terminology~\ref{term:right}), we do not include the adjective \textit{right} in the definition of invariant vector fields. In $\Mfld$, these are precisely the usual right invariant vector fields (vector fields that are invariant under all right translations). As a standard reference, see for instance \cite[Equation~(8.12)]{lee} for the identity satisfied by left invariant vector fields.
\end{Remark}

\begin{Definition}
Let $E$ be a $G$-bundle with right $G$-action $\beta_E$ in a category $\catC$; let $R$ be an object of $\catC$. A morphism $f:E \to R$ in $\catC$ is called \textbf{invariant} if the diagram
\begin{equation*}
\begin{tikzcd}
E \times_{G_0} G_1 \ar[r, "\beta_E"] \ar[d, "\pr_1"'] & E \ar[d, "f"]
\\
E \ar[r, "f"'] & R
\end{tikzcd}
\end{equation*}
commutes.
\end{Definition}

We will denote by
\begin{equation}
\label{eq:InvVecModule}
  \calX(E)^G
  := \{ v \in \Gamma(E, TE) ~|~ 
    \text{$v$ is invariant}\}
\end{equation}
the set of invariant vector fields on $E$ and by
\begin{equation*}
  \catC(E,R)^G
  := \{ f \in \catC(E, R) ~|~ 
    \text{$f$ is invariant}\}
\end{equation*}
the set of invariant morphisms from $E$ to $R$. If $R$ has a ring structure, then $\catC(E,R)^G \subset \catC(E,R)$ is a subring.

\begin{Proposition}
\label{prop:SubmoduleStr}
Let $E$ be a differentiable $G$-bundle in a tangent category $\catC$ with an $R$-module structure. Then, the set $\calX(E)^G$ of invariant vector fields on $E$ is a $\catC(E,R)^G$-submodule of $\Gamma(E,TE)$.
\end{Proposition}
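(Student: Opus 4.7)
The plan is to verify the three defining properties of a $\catC(E,R)^G$-submodule: that $\calX(E)^G$ contains the zero vector field, is closed under addition, and is closed under scalar multiplication by invariant $R$-valued morphisms. In each case, one constructs a candidate vertical lift using the vertically restricted natural transformations from Propositions~\ref{prop:TangStrRest} and~\ref{prop:ModuleStrRest}, verifies that it is indeed a lift by postcomposing with the monomorphism $i_{V\!E}$, and checks $G$-equivariance by exhibiting the lift as a composition of $G$-equivariant bundle morphisms.

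For the zero vector field $0_E$, the lift is $0'_E: E \to V\!E$ provided by Proposition~\ref{prop:TangStrRest}, which is a component of a natural transformation of functors $\BunGdiff \to \BunG$ and hence $G$-equivariant. For invariant vector fields $v,w$ with lifts $v', w': E \to V\!E$, set $(v+w)' := +'_E \circ (v', w')$. Postcomposing with $i_{V\!E}$ and using $i_{V_2 E} = i_{V\!E} \times_E i_{V\!E}$ together with the third commutative square in~\eqref{diag:newNatTransf}, one gets $+_E \circ (v,w) = v+w$, so this is a vertical lift. For $G$-equivariance, the map $(v',w'): E \to V_2 E$ is $G$-equivariant with respect to the diagonal action by the explicit formula~\eqref{eq:DiagActExp} and the $G$-equivariance of $v'$ and $w'$; then $+'_E$ is $G$-equivariant as a component of $+': V_2 \to V$.

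For scalar multiplication by an invariant $f \in \catC(E,R)^G$, set $(fv)' := \kappa'_E \circ (f, v')$, where $\kappa'_E : R \times V\!E \to V\!E$ is the vertically restricted module structure from Proposition~\ref{prop:ModuleStrRest}. Postcomposing with $i_{V\!E}$ and using the commutative square~\eqref{diag:NewOldKappa} yields $\kappa_E \circ (f, i_{V\!E} \circ v') = \kappa_E \circ (f, v) = fv$, so this is the required lift. For $G$-equivariance, one checks that $(f, v'): E \to R \times V\!E$ is $G$-equivariant with respect to the $G$-bundle structure $\id_R \times \beta_{V\!E}$ on $R \times V\!E$: a direct computation shows this requires exactly the invariance of $f$ and the equivariance of $v'$. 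Finally $\kappa'_E$ is $G$-equivariant by Proposition~\ref{prop:ModuleStrRest}, so the composition is equivariant.

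The main obstacle, which is really more of a bookkeeping point than a genuine difficulty, is identifying the correct $G$-bundle structures on the intermediate objects $V_2 E$ and $R \times V\!E$ and verifying that the pairing maps $(v',w')$ and $(f,v')$ are equivariant with respect to them; once these identifications are made, the heavy lifting has already been done in Propositions~\ref{prop:TangStrRest} and~\ref{prop:ModuleStrRest}. Closure under additive inverses follows because $-1 \in \catC(E,R)^G$ (the constant morphism factoring through $*$ is invariant), so negation is a special case of scalar multiplication.
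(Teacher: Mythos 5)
Your proposal is correct and follows essentially the same route as the paper: the lift of $v+w$ is $+'_E \circ (v',w')$, the lift of $fv$ is $\kappa'_E \circ (f,v')$, both identified as lifts via $i_{V\!E}$ and the squares in~\eqref{diag:newNatTransf} and~\eqref{diag:NewOldKappa}, and equivariance is obtained by composing equivariant morphisms, with the pairings $(v',w')$ and $(f,v')$ checked against the diagonal actions exactly as you describe. The one point where you deviate is the treatment of additive inverses: the paper does not invoke $-1$, but instead verticalizes the fiberwise negation $\iota_E$ of the tangent bundle via Lemma~\ref{lem:BigTechLemma} (using $\iota_X \circ 0_X = 0_X$) to get a $G$-equivariant $\iota'_E$, so that $\iota'_E \circ v'$ lifts $\iota_E \circ v$. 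Your shortcut via scalar multiplication by the invariant constant $-1$ is valid for the proposition as stated, since constant morphisms are invariant and the ring object has additive inverses, and module linearity in $R$ gives $(-1)v = \iota_E \circ v$ on sections. What the paper's choice buys is that the entire abelian-subgroup part of the argument (zero, sums, negatives) uses only the tangent structure and no $R$-module structure at all; this matters because Theorem~\ref{thm:InvariantBracket} assumes only a tangent category and cites exactly that part of the proof. If you adopt your version, the abelian-group closure under negation would depend on the $R$-module hypothesis, so you would need to reinstate the $\iota'_E$ argument (or note the dependence) when the module structure is absent.
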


\begin{proof}
Let $v, w: E \to TE$ be invariant vector fields with vertical lifts $v',w': E \to V\!E$, that is, $v = i_{V\!E} \circ v'$ and $w = i_{V\!E} \circ w'$. The addition of the vector fields and the addition of their lifts are defined by
\begin{equation*}
\begin{aligned}
  v+w &= +_E \circ (v,w)
  \\
  v'+w' &= +'_E \circ (v',w')
  \,.
\end{aligned}
\end{equation*}
Consider the following diagram:
\begin{equation*}
\begin{tikzcd}[column sep=large, row sep=large]
&
V_2E
\ar[d, "i_{V_2E}"]
\ar[r, "+'_{E}"] 
&
V\!E
\ar[d, "i_{V\!E}"]
\\
E
\ar[ru, "{(v',w')}"]
\ar[r, "{(v,w)}"']
&
T_2E
\ar[r, "+_{E}"']
&
TE
\end{tikzcd}    
\end{equation*}
The left triangle commutes by Diagram~\eqref{diag:vertical}. The right square is the third commutative diagram of~\eqref{diag:newNatTransf}. The commutativity of the outer pentagon shows that $v'+w'$ is the vertical lift of $v+w$. The morphisms $v'$ and $w'$ are $G$-equivariant by assumption. The sum $+'_{E}$ is $G$-equivariant by Proposition~\ref{prop:TangStrRest}. We conclude that the composition $v'+w' = +'_E \circ (v',w')$ is $G$-equivariant, which shows that $v+w$ is invariant. 

Composing a vector field with the inverse $\iota_E: TE \to TE$ of the addition $+_E$ yields the inverse with respect to the addition of vector fields, that is, 
\begin{equation*}
\begin{split}
  v + (\iota_E \circ v) 
  &=  +_E \circ (v, \iota_E \circ v) 
  \\
  &= +_E \circ (\id_{TE} \times_E \iota_E) \circ \Delta_{TE} \circ v
  \\
  &= 0_E \circ \pi_E \circ v
  \\
  &= 0_E
  \,,
\end{split}
\end{equation*}
where $\Delta_{TE} : TE \to TE \times_E TE$ is the diagonal morphism in $\calC \Comma E$. Since $\iota_X \circ 0_X = 0_X$ for all objects $X$ in $\catC$, so in particular for $X=G_0$ and $X=G_1$, it follows from Lemma~\ref{lem:BigTechLemma} that the inverse has a $G$-equivariant vertical restriction $\iota'_E: V\!E \to V\!E$ such that the diagram
\begin{equation*}
\begin{tikzcd}
V\!E
\ar[r, "\iota'_{E}"]
\ar[d, "i_{V\!E}"']
&
V\!E
\ar[d, "i_{V\!E}"]
\\
TE
\ar[r, "\iota_{E}"']
&
TE
\end{tikzcd}
\end{equation*}
commutes. This implies that $\iota'_E \circ v'$ is a $G$-equivariant vertical lift of $\iota_E \circ v$, so that $\iota_E \circ v$ is invariant. By Proposition~\ref{prop:TangStrRest}, the zero vector field $0_{E}: E \to TE$ is invariant with vertical lift $0'_{E}$. We conclude that $\calX(E)^{G}$ is an abelian subgroup of $\Gamma(E,TE)$.

Now, let $f:E \to R$ be an invariant morphism in $\catC$. The $\catC(E,R)^G$-module structure on $\Gamma(E,TE)$ is given by
\begin{equation*}
  fv = \kappa_E \circ (f,v) \,.
\end{equation*}
Consider the following diagram:
\begin{equation*}
\begin{tikzcd}[column sep=large, row sep=large]
&
R \times V\!E
\ar[d, "\id_R \times i_{V\!E}"]
\ar[r, "\kappa'_{E}"] 
&
V\!E
\ar[d, "i_{V\!E}"]
\\
E
\ar[ru, "{(f,v')}"]
\ar[r, "{(f,v)}"']
&
R \times TE
\ar[r, "\kappa_{E}"']
&
TE
\end{tikzcd}    
\end{equation*}
where $\kappa'_{E}$ is the natural map from Proposition~\ref{prop:ModuleStrRest}. The left triangle commutes since $v$ is, by assumption, vertical. The right square is the commutative diagram~\eqref{diag:NewOldKappa}. The commutativity of the outer pentagon shows that 
\begin{equation*}
fv' := \kappa'_E \circ (f,v')    
\end{equation*}
is the vertical lift of $fv$. The morphism $\kappa'_{E}$ is $G$-equivariant by Proposition~\ref{prop:ModuleStrRest}. By assumption, the morphism $f$ is invariant and  the morphism $v'$ is $G$-equivariant. This implies that $(f,v')$ is $G$-equivariant. We conclude that $fv' = \kappa'_E \circ (f,v')$ is $G$-equivariant, which shows that $fv$ is invariant. 

We have shown that $\calX(E)^G \subset \Gamma(E, TE)$ is an abelian subgroup that is closed under the $\catC(E,R)^G$-module structure.
\end{proof}


\subsection{Invariant vector fields are closed under the Lie bracket}
\label{sec:ProofVerticalInvariant}

\begin{Theorem}
\label{thm:InvariantBracket}
Let $E$ be a differentiable $G$-bundle in a tangent category $\catC$. Then, the set $\calX(E)^G$ of invariant vector fields on $E$ is a Lie subalgebra of $\Gamma(E,TE)$.
\end{Theorem}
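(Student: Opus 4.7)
Since $\Gamma(E,TE)$ is already a Lie algebra and Proposition~\ref{prop:SubmoduleStr} makes $\calX(E)^G$ an abelian subgroup closed under scalar multiplication by invariant morphisms, the only remaining task is to show that $[v,w]$ is invariant whenever $v, w$ are. Write $v = i_{V\!E} \circ v'$ and $w = i_{V\!E} \circ w'$ with $v', w': E \to V\!E$ $G$-equivariant. The plan is to mirror the construction of $[v,w]$ from Diagram~\eqref{eq:BracketDef} step by step in the category $\BunG$, using the restricted tangent structure of Proposition~\ref{prop:TangStrRest}, the vertical prolongation of Proposition~\ref{prop:Prolongation}, and the pullback diagram of Proposition~\ref{prop:NewVertLiftKernel2}.

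First, I would apply Proposition~\ref{prop:Prolongation} to produce the $G$-equivariant prolongations $v'^{[1]}, w'^{[1]}: V\!E \to \VTan{2}E$. The defining identity of the prolongation then yields
\[
Tw \circ v \,=\, T i_{V\!E} \circ Tw' \circ i_{V\!E} \circ v' \,=\, i_{\VTan{2}E} \circ (w'^{[1]} \circ v'),
\]
so $Tw \circ v$ factors $G$-equivariantly through $\VTan{2}E$, and symmetrically $Tv \circ w = i_{\VTan{2}E} \circ (v'^{[1]} \circ w')$. Composing with the vertical restriction $\tau'_E$ from Proposition~\ref{prop:TangStrRest} produces $\tau_E \circ Tv \circ w = i_{\VTan{2}E} \circ (\tau'_E \circ v'^{[1]} \circ w')$ as a $G$-equivariant factorization through $\VTan{2}E$.

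The central step will be to lift the fiberwise subtraction. I would first show that $\delta(v,w) = -_{TE} \circ (Tw \circ v,\, \tau_E \circ Tv \circ w)$ factors through $\VTan{2}E$: since $T^2 r$ is a morphism of bundles of abelian groups from $T^2E \to TE$ to $T^2G_0 \to TG_0$ and both inputs compose with $T^2 r$ to $\Zero{2}_{G_0} \circ r_E$, so does their fiberwise difference, yielding $\delta'(v,w): E \to \VTan{2}E$ with $i_{\VTan{2}E} \circ \delta'(v,w) = \delta(v,w)$. To establish the $G$-equivariance of $\delta'(v,w)$, I would argue that the restricted fiberwise subtraction on $\VTan{2}E$ is itself $G$-equivariant: Propositions~\ref{prop:KerOfMorph} and~\ref{prop:PullbackBundle}, applied to the $T^2G$-equivariant morphism of bundles of abelian groups $T^2 r$ (obtained by functoriality from the $G$-equivariance of $r$), endow $\VTan{2}E$ with a bundle of abelian groups structure in $\BunG$; the two $G$-equivariant factorizations from the previous paragraph agree on their common $\pi_{TE}$-projection $w$ by Equations~\eqref{eq:piTwv} and~\eqref{eq:piTauTvw}, so their difference in this restricted bundle of abelian groups is $G$-equivariant.

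Finally, $T\pi_E \circ \delta(v,w) = 0_E$ from Equation~\eqref{eq:DeltaKerOrig}, together with the prolongation identity $i_{V\!E} \circ \pi'^{[1]}_E = T\pi_E \circ i_{\VTan{2}E}$ and monomorphy of $i_{V\!E}$, forces $\pi'^{[1]}_E \circ \delta'(v,w) = 0'_E$. The pointwise pullback of Proposition~\ref{prop:NewVertLiftKernel2} in $\BunG$ will then yield a unique $G$-equivariant $(\alpha_1, \alpha_2): E \to V_2 E \cong V\!E \times_E V\!E$ with $\lambda'_{2,E} \circ (\alpha_1, \alpha_2) = \delta'(v,w)$ and $\pi'_E \circ \alpha_1 = \id_E$. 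Post-composing with $i_{\VTan{2}E}$ and using the commutative square for $\lambda'_2$ in~\eqref{diag:newNatTransf} gives $\lambda_{2,E} \circ (i_{V\!E} \circ \alpha_1, i_{V\!E} \circ \alpha_2) = \delta(v,w) = \lambda_{2,E} \circ (w, [v,w])$, and injectivity of $\lambda_{2,E}$ forces $\alpha_1 = w'$ and $i_{V\!E} \circ \alpha_2 = [v,w]$, so $[v,w]$ is vertical with $G$-equivariant lift $\alpha_2$, as required. The principal obstacle will be the $G$-equivariance of $\delta'(v,w)$ in the third paragraph: it requires upgrading the second-level fiberwise subtraction on $T^2E \to TE$ to a $G$-equivariant operation on the vertical restriction $\VTan{2}E$, which ultimately rests on the $T^2G$-equivariance of $T^2 r$ and the naturality of the tangent structure.
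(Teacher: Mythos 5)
Your overall architecture coincides with the paper's proof: factor $Tw \circ v$ and $\tau_E \circ Tv \circ w$ through $\VTan{2}E$ via the vertical prolongations of Proposition~\ref{prop:Prolongation}, observe that $\delta(v,w)$ composes with $T^2r_E$ to $\Zero{2}_{G_0} \circ r_E$ so that it lifts to $\delta'\colon E \to \VTan{2}E$, check $\pi'^{[1]}_E \circ \delta' = 0'_E$, and then use the pullback of Proposition~\ref{prop:NewVertLiftKernel2} in $\BunG$ together with the monomorphy of $\lambda_{2,E}$ to identify the second component as a $G$-equivariant vertical lift of $[v,w]$. All of these steps are correct and are exactly the paper's.

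The gap is in your third paragraph, the $G$-equivariance of $\delta'$. Propositions~\ref{prop:KerOfMorph} and~\ref{prop:PullbackBundle} only produce a bundle of abelian groups in the overcategory; they say nothing about $G$-equivariance of the resulting subtraction, which is precisely the point at issue. Moreover, a genuinely ``restricted subtraction on $\VTan{2}E$'' would have as its domain a fiber product of the form $\VTan{2}E \times_{V\!E} \VTan{2}E$, and the differentiability axioms of Definition~\ref{def:DiffBunG} do not guarantee that this pullback exists in $\catC$ --- this is the same obstruction as for $V_2V\!E$ discussed in the remark following Proposition~\ref{prop:TangStrRest}. For this reason the paper never restricts the subtraction: it keeps $-_{TE}\colon T_2TE \to T^2E$ ambient, proves that it intertwines the $T^2G$-actions (Diagram~\eqref{diag:betadiag}, a consequence of the naturality of $-$ and Lemma~\ref{lem:FiberProdMaps}, i.e., of the fact that $T^2$ and $T_2T$ commute with the nerve of the action groupoid), deduces $\delta(v,w)\circ\beta_E = \beta_{T^2E}\circ\bigl(\delta(v,w)\times_{\Zero{2}_{G_0}}\Zero{2}_{G_1}\bigr)$, and only then descends to $\delta'$ along the monomorphism $i_{\VTan{2}E}$ via Lemma~\ref{lem:InnerOuterSquares}. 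You correctly flag this as the principal obstacle and correctly point at naturality as its ultimate source, but the mechanism you cite does not deliver the equivariance; replacing it with the equivariance of the ambient subtraction followed by descent closes the argument.
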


In the proof of Proposition~\ref{prop:SubmoduleStr} we have shown that $\calX(E)^G \subset \Gamma(E, TE)$ is an abelian subgroup. The rest of this section is devoted to the proof of Theorem~\ref{thm:InvariantBracket}, where we show that (vertical) invariant vector fields are closed under the Lie bracket.

\subsubsection{Proof that the Lie bracket of vertical vector fields is vertical}

Let $v$ and $w$ be vertical, but not necessarily invariant vector fields with vertical lifts $v', w': E \to V\!E$. Let $\delta(v,w):  E \to T^2 E$ be the morphism defined in Equation~\eqref{eq:DeltaOrig}. In a first step, we will show that $\delta(v,w)$ factors through $i_{\VTan{2}E}: \VTan{2}E \to T^2 E$. That is, we want to show that there is a morphism $\delta'(v',w')$, such that
\begin{equation}
\label{diag:VertDelta}
\begin{tikzcd}
&
\VTan{2}E \ar[d, "i_{\VTan{2}E}"]
\\
E \ar[ru, "{\delta'(v',w')}"]
\ar[r, "{\delta(v,w)}"']
&
T^2 E
\end{tikzcd}
\end{equation}
commutes.

Let $r_E: E \to G_0$ and $r_{V\!E}: V\!E \to G_0$ denote the bundle projections. Firstly, we have that
\begin{equation}
\label{eq:SmallFact}
\begin{split}
Tr_E \circ v
&=
Tr_E \circ i_{V\!E} \circ v'
\\
&=
0_{G_0} \circ r_{V\!E} \circ v'
\\
&=
0_{G_0} \circ r_E
\end{split}
\end{equation}
by the commutative square defining the pullback $V\!E$ and since $v'$ is a bundle map (Lemma~\ref{lem:VertLiftBunMap}). For $\delta(v,w)$ we have the relation
\begin{equation*}
\begin{split}
&T^2 r_E \circ \delta(v, w)
\\
= \ 
&T^2 r_E \circ -_{TE} \circ (Tw \circ v, \tau_E \circ Tv \circ w) \quad \text{by Equation~\eqref{eq:DeltaOrig}}
\\
= \ 
&-_{TG_0} \circ \, (T^2 r_E \times_{Tr_E} T^2 r_E) \circ (Tw \circ v, \tau_E \circ Tv \circ w) \quad \text{by the naturality of $-$}
\\
= \ 
&-_{TG_0} \circ \, (T^2 r_E \circ Tw \circ v, T^2 r_E \circ \tau_E \circ Tv \circ w) \quad \text{by Lemma~\ref{lem:MapsinPar}~(ii)}
\\
= \ 
&-_{TG_0} \circ \, (T^2 r_E \circ Tw \circ v, \tau_{G_0} \circ T^2 r_E \circ Tv \circ w) \quad \text{by the naturality of $\tau$}
\\
= \ 
&-_{TG_0} \circ \, (T0_{G_0} \circ Tr_E \circ v, \tau_{G_0} \circ T0_{G_0} \circ Tr_E \circ w) \quad \text{by Equation~\eqref{eq:SmallFact}}
\\
= \ 
&-_{TG_0} \circ \, (T0_{G_0} \circ 0_{G_0} \circ r_E, \tau_{G_0} \circ T0_{G_0} \circ 0_{G_0} \circ r_E) \quad \text{by Equation~\eqref{eq:SmallFact}}
\\
= \ 
&-_{TG_0} \circ \, (\Zero{2}_{G_0} \circ r_E, \Zero{2}_{G_0} \circ r_E) \quad \text{by Equation~\eqref{eq:TauZero}}
\\
= \ 
&\Zero{2}_{G_0} \circ r_E \,,
\end{split}
\end{equation*}
since the subtraction of zero by zero is zero, as is the case for any group object. Using this relation, we conclude from the universal property of the pullback defining $\VTan{2}E$, that there is a unique morphism $\delta'(v',w')$, such that
\begin{equation*}
\begin{tikzcd}
E 
\ar[ddr, "{\delta(v,w)}"', bend right] 
\ar[rrd, "r_E", bend left] 
\ar[dr, "{\exists! \, \delta'(v',w')}", dashed]
&&
\\
&
\VTan{2} E
\ar[r, "{r_{\VTan{2}E}}"]
\ar[d, "{i_{\VTan{2}E}}"]
&
G_0
\ar[d, "\Zero{2}_{G_0}"]
\\
&
T^{2}E
\ar[r, "T^{2} r_E"']
&
T^{2}G_0
\end{tikzcd}
\end{equation*}
commutes. Consider the relation
\begin{equation*}
\begin{split}
  i_{V\!E} \circ \pi'^{[1]}_E \circ \delta'(v',w')
  &=
  T\pi_E \circ i_{\VTan{2}E} \circ \delta'(v',w')
  \\
  &=
  T\pi_E \circ \delta(v,w)
  \\
  &=
  0_E
  \\
  &= i_{V\!E} \circ 0'_E
  \,,
\end{split}
\end{equation*}
where we have used the relation defining the vertical prolongation of $\pi'_E$, the Relation~\eqref{diag:VertDelta} defining $\delta'(v',w')$, and Equation~\eqref{eq:DeltaKerOrig}. Since $i_{V\!E}$ is a monomorphism, it follows that
\begin{equation*}
  \pi'^{[1]}_E \circ \delta'(v',w')
  = 0'_E
  \,.
\end{equation*}
We can summarize the situation in the following commutative diagram:
\begin{equation}
\label{eq:VertBracketDef}
\begin{tikzcd}[column sep=3em]
E 
\ar[r, "{[v',w']}"]
\ar[drr, "{\delta'(v',w')}", bend left=55]
\ar[dr, "{\exists! \, \phi}"', dashed]
\ar[ddr, "{\id_E}"', bend right=55]
& V\!E 
&
\\
&
V_2 E
\ar[u, "\pr_2"']
\ar[dr, phantom, "\lrcorner", very near start]
\ar[r, "\lambda'_{2,E}"] 
\ar[d, "\pi'_E \, \circ \, \pr_1"']
&
\VTan{2}E 
\ar[d, "{\pi'^{[1]}_E}"]
\\
&
E 
\ar[r, "{0'_E}"']
&
V\!E
\end{tikzcd}
\end{equation}
We have shown in Proposition~\ref{prop:NewVertLiftKernel2} that the bottom right square is a pullback. The unique dashed arrow exists due to the universal property of the pullback. It remains to show that its projection onto the second factor of $V_2 E$, which we will denote by $[v',w']$, is the vertical lift of $[v,w]$. For this, we consider the following diagram:
\begin{equation}
\label{diag:vert4}
\begin{tikzcd}[row sep=3ex, column sep=2em]
E
\ar[rrrr, "{[v',w']}", bend left=25]
\ar[rrr, "{\exists!}", dashed] 
\ar[ddd, "\id"'] 
\ar[dr, "\id"']
& &[0.7em] &
V_2 E
\ar[ddd, "i_{V_2E}"] 
\ar[dl, "\lambda'_{2,E}"]
\ar[r, "\pr_2"']
&
V\!E
\ar[ddd, "i_{V\!E}"]
\\
&
E 
\ar[d, "\id"'] 
\ar[r, "{\delta'(v',w')}"] 
&
\VTan{2}E \ar[d, "i_{\VTan{2}E}"]
&&
\\
&
E \ar[r, "{\delta(v,w)}"'] &
T^2E
&&
\\
E 
\ar[rrrr, "{[v,w]}"', bend right=25]
\ar[rrr, "{\exists!}"', dashed]
\ar[ur, "\id"] 
& & &
T_2E
\ar[ul, "\lambda_{2,E}"']
\ar[r, "\pr_2"]
&
TE
\end{tikzcd}
\end{equation}
The inner square is the commutative triangle~\eqref{diag:VertDelta}. The right trapezoid is the last commutative square in \eqref{diag:newNatTransf}. The commutativity of the upper and lower trapezoids is explained in the construction of the bracket as summarized in Diagrams~\eqref{eq:BracketDef} and \eqref{eq:VertBracketDef}. The left trapezoid commutes trivially. Since monomorphisms are stable under pullbacks and the zero section is a monomorphism, it follows from Diagram~\eqref{diag:VertLift} that the map $\lambda_{2,E}$ is a monomorphism. Thus, the left outer square commutes by Lemma~\ref{lem:InnerOuterSquares}. The right square commutes by the naturality of the projection. From the commutativity of the outer rectangle it follows that 
\begin{equation}
\label{eq:BrackVert}
i_{V\!E} \circ [v',w'] = [v,w] = [i_{V\!E} \circ v', i_{V\!E} \circ w'] \,.
\end{equation}
This shows that $[v,w]$ is vertical.

\subsubsection{Proof that the Lie bracket of invariant vector fields is invariant}

Assume now that the vertical lifts $v', w': E \to V\!E$ are $G$-equivariant. Consider the following diagram:
\begin{equation}
\label{diag:InnerSqDeltaGequiv1}
\begin{tikzcd}[column sep=4.8em]
E \times_{G_0} G_1
\ar[r, "{v' \times_{G_0} \id_{G_1}}"] 
\ar[d, "\beta_E"']
&
V\!E \times_{G_0} G_1 
\ar[r, "{w'^{[1]} \times_{G_0} \id_{G_1}}"]
\ar[d, "{\beta_{V\!E}}"]
&
\VTan{2}E \times_{G_0} G_1 
\ar[d, "{\beta_{\VTan{2}E}}"]
\\
E 
\ar[r, "v'"']
\ar[d, "\id"']
&
V\!E 
\ar[r, "{w'^{[1]}}"']
\ar[d, "{i_{V\!E}}"]
&
\VTan{2}E
\ar[d, "{i_{\VTan{2}E}}"]
\\
E
\ar[r, "v"']
&
TE
\ar[r, "Tw"']
&
T^2E
\end{tikzcd}
\end{equation}
The upper left square commutes since $v'$ is $G$-equivariant. The lower left square commutes because $v'$ is the vertical lift of $v$. The upper and lower right squares commute by Proposition~\ref{prop:Prolongation}, where $w'^{[1]}:V\!E \to \VTan{2}E$ is the first vertical prolongation of $w'$. We conclude that the outer square commutes, so that we obtain
\begin{equation}
\label{eq:InnerSqDeltaGequiv1}
\begin{split}
Tw \circ v \circ \beta_E
&=
i_{\VTan{2}E} \circ \beta_{\VTan{2}E} \circ \bigl(
(w'^{[1]} \circ v') \times_{G_0} \id_{G_1} \bigr)
\\
&=
\beta_{T^2 E} \circ \bigl( i_{\VTan{2}E} \times_{\Zero{2}_{G_0}} \Zero{2}_{G_1} \bigr) \circ 
\bigl( (w'^{[1]} \circ v') \times_{G_0} \id_{G_1} \bigr)
\\
&=
\beta_{T^2 E} \circ \bigl( (i_{\VTan{2}E} \circ w'^{[1]} \circ v') \times_{\Zero{2}_{G_0}} \Zero{2}_{G_1} \bigr)
\\
&=
\beta_{T^2 E} \circ \bigl(
(Tw \circ v) \times_{\Zero{2}_{G_0}} \Zero{2}_{G_1} \bigr)
\,,
\end{split}
\end{equation}
where we have used the Diagram~\eqref{eq:VE02} defining the vertical $G$-action, the functoriality of pullbacks, and the bottom rectangle of Diagram~\eqref{diag:InnerSqDeltaGequiv1}. The diagram 
\begin{equation}
\label{diag:InnerSqDeltaGequiv2}
\begin{tikzcd}[column sep=4.5em]
T^2E \times_{T^2G_0} T^2G_1
\ar[d, "\beta_{T^2E}"']
\ar[r, "{\tau_E \times_{\tau_{G_0}} \tau_{G_1}}"] 
&
T^2E \times_{T^2G_0} T^2G_1 \ar[d, "\beta_{T^2E}"]
\\
T^2E \ar[r, "{\tau_E}"'] 
&
T^2E
\end{tikzcd}    
\end{equation}
is the inner commutative square of~\eqref{diag:alphaGEquiv} for $\alpha = \tau$. Similarly, for the subtraction $\alpha = - :T_2 \to T$, we obtain the commutative diagram
\begin{equation}
\label{diag:betadiag}
\begin{tikzcd}[column sep=1.5em]
(T^2\!E \times_{T^2G_0} T^2G_1) \times_{TE \times_{TG_0} TG_1} (T^2\!E \times_{T^2G_0} T^2G_1)
\ar[dd, "{\beta_{T^2\!E} \, \times_{\beta_{T\!E}} \, \beta_{T^2\!E}}"']
\ar[r, "\cong"]
&
T_2T\!E \times_{T_2TG_0} T_2TG_1
\ar[d, "{-_{T\!E} \times_{-_{TG_0}} -_{TG_1}}"] 
\\
&
T^2\!E \times_{T^2G_0} T^2G_1
\ar[d, "\beta_{T^2\!E}"]
\\
T_2T\!E \ar[r, "{-_{T\!E}}"'] 
&
T^2\!E
\end{tikzcd}    
\end{equation}
where the isomorphism is a simple reordering of the factors since pullbacks commute with pullbacks. Using~\eqref{eq:InnerSqDeltaGequiv1} with $v$ and $w$ swapped and~\eqref{diag:InnerSqDeltaGequiv2}, we get
\begin{equation}
\label{eq:InnerSqDeltaGequiv2}
\begin{split}
\tau_E \circ Tv \circ w \circ \beta_E 
&=
\tau_E \circ \beta_{T^2 E} \circ \bigl( (Tv \circ w) \times_{\Zero{2}_{G_0}} \Zero{2}_{G_1} \bigr)
\\
&=
\beta_{T^2E} \circ 
\bigl( \tau_E \times_{\tau_{G_0}} \tau_{G_1} \bigl) \circ \bigl( (Tv \circ w) \times_{\Zero{2}_{G_0}} \Zero{2}_{G_1} \bigr)
\\
&=
\beta_{T^2E} \circ \bigl( (\tau_E \circ Tv \circ w) \times_{\Zero{2}_{G_0}} \Zero{2}_{G_1} \bigr) 
\,,
\end{split} 
\end{equation}
where in the last step we have used that $\tau$ takes the zero section to the zero section as expressed by Equation~\eqref{eq:TauZero}.

Now we can subtract~\eqref{eq:InnerSqDeltaGequiv1} and \eqref{eq:InnerSqDeltaGequiv2}, which yields
\begin{equation*}
\begin{split}
&\delta(v,w) \circ \beta_E
\\
={}
& -_{TE} \circ \, 
  (Tw \circ v, \, \tau_E \circ Tv \circ w) \circ \beta_E \quad \text{by Equation~\eqref{eq:DeltaOrig}}
\\
={}
& -_{TE} \circ \, (Tw \circ v \circ \beta_E, \, 
      \tau_E \circ Tv \circ w \circ \beta_E) \quad \text{by Lemma~\ref{lem:MapsinPar}~(i)}
\\
={}
& -_{TE} \circ\, \bigl\{
  \beta_{T^2E} \circ \bigl((Tw \circ v) \times_{\Zero{2}_{G_0}} \Zero{2}_{G_1} \bigr), \, \beta_{T^2E} \circ \bigl((\tau_E \circ Tv \circ w) \times_{\Zero{2}_{G_0}} \Zero{2}_{G_1} \bigr) \bigr\} 
\\ & \text{by Equations~\eqref{eq:InnerSqDeltaGequiv1} and~\eqref{eq:InnerSqDeltaGequiv2}}
\\
={}
& -_{TE} \circ\, 
  (\beta_{T^2E} \times_{\beta_{TE}} \beta_{T^2E} ) \circ 
        \bigl( (Tw \circ v) \times_{\Zero{2}_{G_0}} \Zero{2}_{G_1}, \, 
  (\tau_E \circ Tv \circ w) \times_{\Zero{2}_{G_0}} \Zero{2}_{G_1} \bigr)
\\
& \text{by Lemma~\ref{lem:MapsinPar}~(ii)}
\\
={}
& \beta_{T^2E} \, \circ (-_{TE} \times_{-_{TG_0}} -_{TG_1}) \, \circ 
  \bigl( (Tw \circ v, \, \tau_E \circ Tv \circ w) \times_{\big(\Zero{2}_{G_0}, \, \Zero{2}_{G_0}\big)} (\Zero{2}_{G_1}, \Zero{2}_{G_1}) \bigr)
\\
& \text{by Diagram~\eqref{diag:betadiag}}
\\
={}
& \, \beta_{T^2E} \, \circ \, \bigl(\delta(v,w) \times_{\Zero{2}_{G_0}} \Zero{2}_{G_1} \bigr) \,,
\end{split}
\end{equation*}
where in the last step we have used Equation~\eqref{eq:DeltaOrig} once more and that the subtraction of zero by zero is zero.
This relation means that the inner square of the following diagram commutes:
\begin{equation*}
\begin{tikzcd}[row sep=3em]
E \times_{G_0} G_1
\ar[rrr, "{\delta'(v',w') \times_{G_0} \id}"] 
\ar[ddd, "\beta_E"'] 
\ar[dr, "{\id}"]
&[-2em] &[3em] &[-4em]
\VTan{2} E \times_{G_0} G_1
\ar[ddd, "\beta_{\VTan{2} E}"] 
\ar[dl, "{i_{\VTan{2} E} \times_{\Zero{2}_{G_0}} \Zero{2}_{G_1}}"', near end]
\\
&
E \times_{G_0} G_1
\ar[d, "\beta_E"']
\ar[r, "{\delta(v,w) \, \times_{\Zero{2}_{G_0}} \Zero{2}_{G_1}}"] 
&
T^{2} E \times_{T^{2} G_0} T^{2} G_1 
\ar[d, "\beta_{T^{2} E}"]
&
\\
&
E
\ar[r, "{\delta(v,w)}"']
&
T^{2} E
&
\\
E \ar[rrr, "{\delta'(v',w')}"']
\ar[ur, "{\id}"'] 
& & &
\VTan{2} E
\ar[ul, "{i_{\VTan{2} E}}"]
\end{tikzcd}
\end{equation*}
The left trapezoid commutes trivially. The bottom trapezoid commutes by~\eqref{diag:VertDelta}. The top trapezoid commutes by~\eqref{diag:VertDelta} and the functoriality of pullbacks. The right trapezoid commutes by~\eqref{eq:VE02}. Since $i_{\VTan{2}E}$ is a monomorphism, it follows from Lemma~\ref{lem:InnerOuterSquares} that the outer square commutes, which shows that $\delta'(v',w')$ is $G$-equivariant.

By Proposition \ref{prop:NewVertLiftKernel2}, the pullback square in Diagram~\eqref{eq:VertBracketDef} is a pullback in the category $\BunG$ of $G$-bundles and $G$-equivariant bundle morphisms. Since $\delta'(v',w')$ is a $G$-equivariant bundle morphism, it follows by the universal property of the pullback that the morphism $\phi: E \to V_2 E$ depicted by the dashed arrow in~\eqref{eq:VertBracketDef} is $G$-equivariant. Since $\pr_2: V_2 E \to V\!E$ is also $G$-equivariant, so is the composition $[v',w'] = \pr_2 \circ \phi$. We conclude that $[v',w']$ is $G$-equivariant, which shows that $[v,w]$ is invariant.

We have shown that the abelian subgroup $\calX(E)^{G} \subset \Gamma(E, TE)$ is closed under the Lie bracket, which finishes the proof of Theorem~\ref{thm:InvariantBracket}.
\qed
\section{The abstract Lie algebroid of a differentiable groupoid}

\label{sec:AbsractLieAlgd0}

This section is the core of this paper, where we describe the construction of the infinitesimal counterpart of a differentiable groupoid object. The first step is to generalize the notion of Lie algebroids in the category of smooth manifolds to the setting of tangent categories. This is the goal of Section~\ref{sec:AbsractLieAlgd}. In Section~\ref{sec:DiffProcGpdObj}, we set up the differentiation procedure, and finally state and prove the main theorems of this section.

\subsection{Abstract Lie algebroids in a tangent category}
\label{sec:AbsractLieAlgd}

\begin{Definition}
\label{def:AbstractLieAlgd}
Let $\calC$ be a cartesian tangent category with scalar $R$-mul\-ti\-pli\-ca\-tion. An \textbf{abstract Lie algebroid} in $\calC$ consists of a bundle of $R$-modules $A \to X$, a morphism $\rho: A \to TX$ of bundles of $R$-modules, called the \textbf{anchor}, and a Lie bracket $[\Empty , \Empty]$ on the abelian group $\Gamma(X,A)$, such that
\begin{align}
\label{eq:LieAlgdLeibniz}
  [a, fb] &= f[a,b] + \bigl((\rho \circ a) \cdot f \bigr) b
  \\
\label{eq:anchorLieAlgMap}
  \rho \circ [a,b] &= [\rho \circ a , \rho \circ b]
\end{align}
for all sections $a$, $b$ of $A$ and all morphisms $f: X \to R$ in $\catC$.
\end{Definition}

Recall that the $\calC(X,R)$-module structure $fa$ is defined by~\eqref{eq:SectionsModuleStr}. Moreover, the action of vector fields on $R$-valued morphisms is defined by~\eqref{eq:VecFieldFunc}. Equation~\eqref{eq:LieAlgdLeibniz} is the Leibniz rule for Lie algebroids. By an analogous argument as in Corollary~\ref{cor:LieAlgebraBilinear}, we have that the Lie bracket on $\Gamma(X,A)$ is $\catC(*,R)$-linear. Observe that $\catC(*,R)$ is a ring in sets that can generally not be identified with the ring object $R$ in $\calC$. In $\Mfld$, where $R = \bbR$, we retrieve the usual $\bbR$-linearity of the Lie bracket.

In the definition of a usual Lie algebroid, the condition \eqref{eq:anchorLieAlgMap} that $\rho$ is a morphism of Lie algebras is redundant, since it can be proved using the Jacobi identity, bilinearity and antisymmetry of the Lie bracket, and the Leibniz rule. However, this proof relies on the identification of vector fields as derivations of the ring of functions on $X$ and the definition of the Lie bracket as the commutator. In our case, vector fields are sections of the tangent bundle and the Lie bracket is defined by Diagram~\eqref{eq:BracketDef}. In the smooth manifold setting, these coincide.

\begin{Example}
Lie algebroids are abstract Lie algebroids in the tangent category $\Mfld$.
\end{Example}

\subsection{The differentiation procedure}
\label{sec:DiffProcGpdObj}

Let $G$ be a groupoid in a tangent category $\catC$. The source bundle $s: G_1 \to G_0$ is equipped with the right $G$-action given by groupoid multiplication. If $G$ is differentiable in the sense of Definition~\ref{def:DiffGroupoid}, then this bundle is differentiable in the sense of Definition~\ref{def:DiffBunG}. Its vertical tangent bundle will be denoted by
\begin{equation*}
  VG_1 = TG_1 \times^{Ts, \, 0_{G_0}}_{TG_0} G_0
  \xrightarrow{~r_{VG_1}~}
  G_0
  \,.
\end{equation*}
It follows from Proposition~\ref{prop:VE} that $VG_1$, equipped with the right $G$-action defined in~\eqref{eq:betaVTanE} for $n=1$, is a $G$-bundle. Recall from Definition~\ref{def:InvVecField} that a vector field $v: G_1 \to TG_1$ is called invariant if it factors through a morphism $v': G_1 \to VG_1$ that is $G$-equivariant with respect to the right $G$-actions.

\begin{Proposition}
\label{prop:SubmoduleStrG0}
Let $G$ be a differentiable groupoid in a cartesian tangent category $\catC$ with scalar $R$-multiplication $\kappa:R \times T \to T$. Then, the set $\calX(G_1)^G$ of invariant vector fields is naturally a $\calC(G_0, R)$-module with the addition of vector fields and the module structure
\begin{equation}
\label{eq:ModStrTargetPullback}
  fv := (t^*\!f)v = \kappa_{G_1} \circ (t^*\! f,v)
\end{equation}
for all $v \in \calX(G_1)^G$ and $f \in \calC(G_0, R)$.
\end{Proposition}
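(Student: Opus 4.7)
The plan is to reduce this statement to the already-established Proposition~\ref{prop:SubmoduleStr} via restriction of scalars along the ring homomorphism $t^*: \calC(G_0,R) \to \calC(G_1,R)$. By Proposition~\ref{prop:SubmoduleStr} applied to the differentiable $G$-bundle $s: G_1 \to G_0$ (Example~\ref{ex:G1diff}), the set $\calX(G_1)^G$ is a $\calC(G_1,R)^G$-submodule of $\Gamma(G_1, TG_1)$ under the module action $hv = \kappa_{G_1} \circ (h,v)$ for any invariant morphism $h:G_1 \to R$. So it suffices to exhibit $t^*\!f := f \circ t$ as an invariant morphism for every $f \in \calC(G_0,R)$ and show that $t^*$ is a ring homomorphism into $\calC(G_1,R)^G$.

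The first step is the observation that $t^*\!f$ is always $G$-invariant. Recall that the right $G$-action on $G_1$ is the groupoid multiplication $m: G_1 \times^{s,t}_{G_0} G_1 \to G_1$ (Example~\ref{ex:G1}), so invariance of $h:G_1\to R$ amounts to $h \circ m = h \circ \pr_1$. The second diagram of~\eqref{diag:SourceTargetCond} states precisely $t \circ m = t \circ \pr_1$, which gives
\begin{equation*}
  (t^*\!f) \circ m = f \circ t \circ m = f \circ t \circ \pr_1 = (t^*\!f) \circ \pr_1
  \,,
\end{equation*}
so $t^*\!f \in \calC(G_1, R)^G$ as desired.

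Next, I would verify that $t^*: \calC(G_0,R) \to \calC(G_1,R)^G$ is a ring homomorphism. Since the ring structure on $\calC(X,R)$ is induced by precomposition through the functor $\calC(\Empty, R)$ (Remark~\ref{rmk:RingFunctions}), precomposition with any morphism $t: G_1 \to G_0$ yields a ring homomorphism $\calC(G_0,R) \to \calC(G_1,R)$; this is the content of Lemma~\ref{lem:PullbackRingHom} cited in Section~\ref{sec:Leibrule}. By the previous step this map factors through the subring $\calC(G_1,R)^G$.

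Finally, restriction of scalars along $t^*$ turns the $\calC(G_1,R)^G$-module $\calX(G_1)^G$ into a $\calC(G_0,R)$-module with action
\begin{equation*}
  f \cdot v = (t^*\!f) v = \kappa_{G_1} \circ (t^*\!f, v)
  \,,
\end{equation*}
which is exactly the formula~\eqref{eq:ModStrTargetPullback}. Naturality in $G$ follows because every ingredient — the target morphism, the scalar multiplication $\kappa$, and the module structure of Proposition~\ref{prop:SubmoduleStr} — is natural. There is no genuine obstacle here; the only subtlety worth spelling out is the verification that $t^*\!f$ is invariant, and this is essentially immediate from the groupoid axiom $t \circ m = t \circ \pr_1$.
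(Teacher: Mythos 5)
Your proposal is correct and follows essentially the same route as the paper: apply Proposition~\ref{prop:SubmoduleStr} to the bundle $s: G_1 \to G_0$, note that $t^*$ is a ring homomorphism (Lemma~\ref{lem:PullbackRingHom}) landing in the invariant morphisms by the groupoid axiom $t \circ m = t \circ \pr_1$, and obtain the $\calC(G_0,R)$-module structure by restriction of scalars (the paper cites Lemma~\ref{lem:PullbackRingHom0} for this last step). Your explicit verification that $t^*\!f$ is invariant just spells out what the paper attributes to Axiom~\eqref{diag:SourceTargetCond}.
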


\begin{proof}
\sloppy
It follows from Proposition~\ref{prop:SubmoduleStr} that $\calX(G_1)^G$ is a $\catC(G_1,R)^G$-submodule of $\Gamma(G_1, TG_1)$. The pullback
\begin{equation*}
\begin{aligned}
t^*: \catC(G_0,R) &\longrightarrow \catC(G_1,R)
\\
f &\longmapsto f \circ t \,,
\end{aligned}
\end{equation*}
\fussy
is a ring homomorphism by Lemma~\ref{lem:PullbackRingHom}. Its image is the ring $\catC(G_1,R)^G$ of invariant morphisms by Axiom~\eqref{diag:SourceTargetCond} of groupoids. Then, the assignment $fv = (t^*\!f)v$ equips $\calX(G_1)^G$ with a $\catC(G_0,R)$-module structure by Lemma~\ref{lem:PullbackRingHom0}.
\end{proof}

In the Definition~\ref{def:DiffGroupoid} of differentiable groupoids, we have assumed that the pullback~\eqref{diag:DiffGroupoidG02} exists, which can be written as the restriction of the vertical bundle to the identity bisection:
\begin{equation}
\label{eq:LABundle}
\begin{tikzcd}
\makebox[0pt][r]{$A \ := \ $}
G_0 \times_{G_1} VG_1
\ar[r, "i_A"] \ar[d, "p_A"']
\arrow[dr, phantom, "\lrcorner", very near start] 
&
VG_1
\ar[d, "\pi'_{G_1}"]
\\
G_0 \ar[r ,"1"']
&
G_1
\end{tikzcd}
\end{equation}
The vertical tangent bundle $\pi'_{G_1}: VG_1 \to G_1$ is a bundle of $R$-modules by Corollary~\ref{cor:newproj}. It follows from Proposition~\ref{prop:PullbackBundleR} that $p_A: A \to G_0$ is a bundle of $R$-modules. The composition
\begin{equation}
\label{eq:AnchorDef}
  \rho:
  A \xrightarrow{~i_A~} VG_1 
  \xrightarrow{~i_{VG_1}~} TG_1
  \xrightarrow{~Tt~} TG_0
  \,,
\end{equation}
is a morphism of bundles of $R$-modules, since all three arrows are. Indeed, $i_A$ and $i_{VG_1}$ are morphisms of bundles of $R$-modules by Propositions~\ref{prop:PullbackBundleR} and~\ref{prop:KerOfMorphR}~(ii) respectively. So is the map $Tt$ by the naturality of the tangent structure. 

\begin{Theorem}
\label{thm:SecAInvVec}
Let $G$ be a differentiable groupoid in a cartesian tangent category $\calC$ with scalar $R$-multiplication. Let $A \to G_0$ be as defined in~\eqref{eq:LABundle}. Then there is a natural isomorphism of $\calC(G_0, R)$-modules
\begin{equation*}
  \phi: \Gamma(G_0, A) \xrightarrow{~\cong~} \calX(G_1)^G
  \,.
\end{equation*}
\end{Theorem}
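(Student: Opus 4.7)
The plan is to construct $\phi$ by right-translating sections of $A$ along the groupoid, and to exhibit the inverse $\psi$ by restriction to the identity bisection. Given $a \in \Gamma(G_0, A)$, set $\tilde{a} := i_A \circ a : G_0 \to VG_1$; since $r_{VG_1} \circ i_A = p_A$ (from $\pi'_{G_1} \circ i_A = 1 \circ p_A$ together with $s \circ 1 = \id_{G_0}$) and $p_A \circ a = \id_{G_0}$, the pair $(\tilde{a} \circ t, \id_{G_1})$ factors through $VG_1 \times_{G_0} G_1$, and I set
\begin{equation*}
  \phi(a)' := \beta_{VG_1} \circ (\tilde{a} \circ t, \id_{G_1}) : G_1 \longrightarrow VG_1
  \,,\qquad
  \phi(a) := i_{VG_1} \circ \phi(a)' \,.
\end{equation*}
Conversely, given $v \in \calX(G_1)^G$ with equivariant lift $v': G_1 \to VG_1$, the composite $v' \circ 1 : G_0 \to VG_1$ satisfies $\pi'_{G_1} \circ v' \circ 1 = 1$ by Lemma~\ref{lem:VertLiftBunMap}, so the universal property of the pullback~\eqref{eq:LABundle} furnishes a unique $\psi(v) : G_0 \to A$ with $i_A \circ \psi(v) = v' \circ 1$ and $p_A \circ \psi(v) = \id_{G_0}$.

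To show $\phi(a) \in \calX(G_1)^G$, I verify separately that $\phi(a)'$ is a section of $\pi'_{G_1}$ and that it is $G$-equivariant. The section property follows from $\pi'_{G_1} \circ \beta_{VG_1} = m \circ (\pi'_{G_1} \times_{G_0} \id_{G_1})$, which is built into the construction of $\beta_{VG_1}$ as the vertical restriction of the $TG$-action, combined with $\pi'_{G_1} \circ \tilde{a} = 1$ and the unit axiom~\eqref{eq:Unit1}. Equivariance reduces, after expanding Definition~\ref{def:GBunMor} along the defining diagram of $\beta_{VG_1}$, to the associativity of $\beta_{VG_1}$ together with the identity $t \circ m = t \circ \pr_1$ from~\eqref{diag:SourceTargetCond}.

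Bijectivity is then direct. For $\psi \circ \phi = \id$, unitality of $\beta_{VG_1}$ gives $\phi(a)' \circ 1 = \beta_{VG_1} \circ (\tilde{a}, 1) = \tilde{a} = i_A \circ a$, so $\psi(\phi(a)) = a$ by uniqueness in the pullback. For $\phi \circ \psi = \id$, I apply the equivariance of $v'$ in the form $v' \circ m = \beta_{VG_1} \circ (v' \times_{G_0} \id_{G_1})$, precomposed with $(1 \circ t, \id_{G_1}) : G_1 \to G_1 \times_{G_0} G_1$, which collapses $\phi(\psi(v))'$ to $v' \circ m \circ (1 \circ t, \id_{G_1}) = v'$ by the left unit axiom.

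The main obstacle I anticipate is $\calC(G_0, R)$-linearity. Additivity is routine because $\beta_{VG_1}$ is additive in its first argument, being the vertical restriction of $Tm$, itself a morphism of bundles of abelian groups by the naturality of $+$. For scalar multiplication, $i_A$ is a morphism of bundles of $R$-modules by Proposition~\ref{prop:PullbackBundleR}, so $i_A \circ (fa) = \kappa'_{G_1} \circ (f, \tilde{a})$, with $\kappa'_{G_1}$ the vertical restriction from Proposition~\ref{prop:ModuleStrRest}. On the other hand, the module structure on $\calX(G_1)^G$ from Proposition~\ref{prop:SubmoduleStrG0} lifts on $VG_1$ to $\kappa'_{G_1} \circ (f \circ t, \phi(a)')$. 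Matching the two reduces to the $R$-linearity of $\beta_{VG_1}$ in its first argument, which follows from the naturality of $\kappa$ applied to $m$ together with the identity $\lambda \cdot 0_h = 0_h$: on $TG_1 \times_{TG_0} TG_1$ the diagonal $R$-action restricted to pairs $(v, 0_h)$ coincides with acting on the first factor alone. Naturality of $\phi$ is then built in, since every ingredient of its construction is given by a universal or functorial prescription.
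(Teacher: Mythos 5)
Your construction is correct and essentially identical to the paper's: the same translation map $\phi'(a) = \beta_{VG_1} \circ (i_A \circ a \circ t, \id_{G_1})$, the same inverse obtained by restricting an equivariant vertical lift along the unit bisection via the universal property of the pullback~\eqref{eq:LABundle}, and the same verifications through unitality, associativity of the action, and the target condition~\eqref{diag:SourceTargetCond}. The only divergence is in the linearity step: the paper checks $\calC(G_0,R)$-linearity of the inverse $\psi'$ and transfers it to $\phi'$ via Lemma~\ref{lem:RModMap}, whereas you check linearity of $\phi$ directly using the $R$-linearity of $\beta_{VG_1}$ in its first argument, which is precisely the $G$-equivariance of $\kappa'_{G_1}$ already established in Proposition~\ref{prop:ModuleStrRest}; both routes are equally valid.
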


\begin{Theorem}
\label{thm:LieAlgdOfGroupoid}
Let $G$ be a differentiable groupoid in a cartesian tangent category $\catC$ with scalar $R$-multiplication. Then the bundle $A \to G_0$ defined in~\eqref{eq:LABundle}, with the anchor $\rho: A \to TG_0$ defined in~\eqref{eq:AnchorDef} and the Lie bracket of invariant vector fields on $\calX(G_1)^G \cong \Gamma(G_0, A)$ is an abstract Lie algebroid.
\end{Theorem}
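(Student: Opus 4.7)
The setup already endows $A \to G_0$ with the structure of a bundle of $R$-modules (via the pullback~\eqref{eq:LABundle} together with Propositions~\ref{prop:KerOfMorphR} and~\ref{prop:PullbackBundleR}), and makes $\rho$ a morphism of such bundles as a composition of three morphisms of bundles of $R$-modules. The plan is to transport the Lie bracket of invariant vector fields from $\calX(G_1)^G$ (Theorem~\ref{thm:InvariantBracket}) to $\Gamma(G_0, A)$ through the isomorphism $\phi$ of Theorem~\ref{thm:SecAInvVec}. The antisymmetry and the Jacobi identity are then automatic. It remains to verify the Leibniz rule~\eqref{eq:LieAlgdLeibniz} and the anchor identity~\eqref{eq:anchorLieAlgMap}.

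The main obstacle is a naturality lemma: for every $a \in \Gamma(G_0, A)$, the invariant vector field $\phi(a)$ is $t$-related to $\rho \circ a \in \Gamma(G_0, TG_0)$, i.e.,
\[
  Tt \circ \phi(a) = (\rho \circ a) \circ t.
\]
To prove it I will use the construction from Theorem~\ref{thm:SecAInvVec}, according to which $\phi(a) = i_{VG_1} \circ v'$ where $v' = \beta_{VG_1} \circ (i_A \circ a \circ t, \, \id_{G_1})$ is the right-translate of the initial section $i_A \circ a$. Invoking the compatibility~\eqref{eq:VE02} of $\beta_{VG_1}$ with the tangent multiplication $\beta_{TG_1} = Tm \circ \nu_{1,1}^{-1}$, composing with $Tt$, and using the groupoid axiom $t \circ m = t \circ \pr_1$ in its differentiated form $Tt \circ Tm = Tt \circ T\pr_1$ together with $T\pr_1 \circ \nu_{1,1}^{-1} = \pr_1$, the expression collapses to $Tt \circ i_{VG_1} \circ i_A \circ a \circ t = \rho \circ a \circ t$. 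The subtlety lies in carefully threading the vertical restriction $i_{VG_1}$ through the tangent prolongation of the multiplication.

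Given this lemma, the Leibniz rule follows by a direct transport argument. By Proposition~\ref{prop:SubmoduleStrG0}, the $\catC(G_0,R)$-module structure on $\calX(G_1)^G$ is $f \cdot v = (t^*\! f)\,v$, and $\phi$ is a module isomorphism. The $t$-relatedness together with the definition~\eqref{eq:VecFieldFunc} of the action of vector fields on morphisms yields
\[
  \phi(a) \cdot (t^*\! f) \;=\; \Val_R \circ Tf \circ Tt \circ \phi(a) \;=\; \Val_R \circ Tf \circ (\rho \circ a) \circ t \;=\; t^*\!\bigl((\rho \circ a) \cdot f\bigr).
\]
Applying Proposition~\ref{prop:LeibnizRule} to $[\phi(a), (t^*\! f)\,\phi(b)]$ in $\catC$ and pulling back through $\phi^{-1}$ then produces precisely~\eqref{eq:LieAlgdLeibniz}, using that $\phi$ intertwines the module structures and the brackets.

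For the anchor identity, both $\phi(a)$ and $\phi(b)$ are $t$-related to $\rho \circ a$ and $\rho \circ b$ by the key lemma. Proposition~\ref{prop:ProjectVectorFields}~(ii) then yields that $\phi([a,b]) = [\phi(a), \phi(b)]$ is $t$-related to $[\rho \circ a, \rho \circ b]$, while the key lemma applied to $[a,b]$ itself shows that $\phi([a,b])$ is $t$-related to $\rho \circ [a,b]$. Thus $(\rho \circ [a,b]) \circ t = [\rho \circ a, \rho \circ b] \circ t$. Since $t$ admits the splitting $1: G_0 \to G_1$ (Remark~\ref{rem:HornProjEpi}) and hence is a (split) epimorphism, it is right-cancellable, so $\rho \circ [a,b] = [\rho \circ a, \rho \circ b]$, completing the verification.
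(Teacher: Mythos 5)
Your proposal is correct and follows essentially the same route as the paper: transport the bracket of invariant vector fields through the isomorphism $\phi$ of Theorem~\ref{thm:SecAInvVec}, establish the key $t$-relatedness $Tt \circ \phi(a) = (\rho \circ a) \circ t$ via the differentiated groupoid axiom $t \circ m = t \circ \pr_1$ threaded through the vertical inclusion, and then deduce the anchor identity from Proposition~\ref{prop:ProjectVectorFields} plus cancellation of the split epimorphism $t$, and the Leibniz rule from Proposition~\ref{prop:LeibnizRule} and the $\calC(G_0,R)$-linearity of $\phi$. The only cosmetic difference is that you unfold the $t$-relatedness of $\phi(a)\cdot(t^*\!f)$ by hand where the paper cites Proposition~\ref{prop:ProjectFunctions}~(ii).
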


The rest of this section is devoted to the proofs of Theorems~\ref{thm:SecAInvVec} and~\ref{thm:LieAlgdOfGroupoid}.

\subsubsection{Proof of \texorpdfstring{Theorem~\ref{thm:SecAInvVec}}{the Theorem}}

Let $\Gamma(G_1, VG_1)^G$ denote the set of $G$-equivariant sections of $\pi'_{G_1}:VG_1 \to G_1$. By the definition of invariant vector fields, the map
\begin{equation}
\begin{split}
\label{eq:GammaXGiso}
  (i_{VG_1})_*: \Gamma(G_1, VG_1)^G
  &\longrightarrow \calX(G_1)^G
  \\
  v' &\longmapsto i_{VG_1} \circ v'
\end{split}
\end{equation}
is surjective. Since $i_{VG_1}$ is a monomorphism, the map~\eqref{eq:GammaXGiso} is also injective. By Proposition~\ref{prop:SubmoduleStrG0}, $\calX(G_1)^G$ has a $\catC(G_0,R)$-module structure given by~\eqref{eq:ModStrTargetPullback}. Similarly, $\Gamma(G_1, VG_1)^G$ has a $\catC(G_0,R)$-module structure as follows. Let $f:G_0 \to R$ be a morphism in $\catC$ and let $v':G_1 \to VG_1$ be a $G$-equivariant section. Consider the composition
\begin{equation*}
fv': G_1 \xrightarrow{~(t^*\!f, \, v')~} R \times VG_1 \xrightarrow{~\kappa'_{G_1}~} VG_1 
\,.
\end{equation*}
The morphism $\kappa'_{G_1}$ is $G$-equivariant by Proposition~\ref{prop:ModuleStrRest}. By assumption, the section $v'$ is also $G$-equivariant. Moreover, the morphism $t^*\!f$ is invariant by the commutative diagram on the right of~\eqref{diag:SourceTargetCond}. This implies that $fv' = \kappa'_{G_1} \circ (t^*\!f, v')$ is $G$-equivariant. It follows from the $R$-linearity of $i_{VG_1}$, as expressed by Diagram~\eqref{diag:NewOldKappa}, that
\begin{equation*}
\begin{split}
(i_{VG_1})_* (fv')
&=
i_{VG_1} \circ \kappa'_{G_1} \circ (t^*\!f, v')
\\
&=
\kappa_{G_1} \circ (\id_R \times i_{VG_1}) \circ (t^*\!f, v')
\\
&=
\kappa_{G_1} \circ (t^*\!f, i_{VG_1} \circ v')
\\
&=
f\big((i_{VG_1})_* (v')\big) \,.
\end{split}
\end{equation*}
We conclude that~\eqref{eq:GammaXGiso} is an isomorphism of $\calC(G_0, R)$-modules. It remains to show that there is an isomorphism of $\catC(G_0,R)$-modules between $\Gamma(G_0, A)$ and $\Gamma(G_1, VG_1)^G$.

Let $a$ be a section of $A$. Consider the following morphism:
\begin{equation*}
  \phi'(a):
  G_1 
  \xrightarrow{~(a \circ t, \, \id_{G_1})~}
  A \times^{p_A,t}_{G_0} G_1
  \xrightarrow{~i_A \times_{G_0} \id_{G_1}~}
  VG_1 \times^{r_{VG_1},t}_{G_0} G_1 
  \xrightarrow{~\beta_{VG_1}~}
  VG_1
  \,.
\end{equation*}
Equationally,
\begin{equation}
\label{eq:SecARightInv01}
  \phi'(a) 
  = \beta_{VG_1} \circ (i_A \circ a \circ t, \id_{G_1})
  \,.    
\end{equation}
First, we show that $\phi'(a)$ is a section of the vertical tangent bundle by a straightforward calculation,
\begin{equation*}
\begin{split}
  &\pi'_{G_1} \circ \phi'(a) 
  \\
  = \
  &\pi'_{G_1} \circ \beta_{VG_1} \circ (i_A \circ a \circ t, \id_{G_1}) \quad \text{by Equation~\eqref{eq:SecARightInv01}}
  \\
  = \
  &\pi_{G_1} \circ i_{VG_1} \circ \beta_{VG_1} \circ (i_A \circ a \circ t, \id_{G_1}) \quad \text{by the first diagram in~\eqref{diag:newNatTransf}}
  \\
  = \
  &\pi_{G_1} \circ \beta_{TG_1} \circ (i_{VG_1} \times_{0_{G_0}} 0_{G_1}) \circ (i_A \circ a \circ t, \id_{G_1}) \quad \text{by Diagram~\eqref{eq:VE02}}
  \\
  = \ 
  &m \circ (\pi_{G_1} \times_{\pi_{G_0}} \pi_{G_1}) \circ (i_{VG_1} \times_{0_{G_0}} 0_{G_1}) \circ (i_A \circ a \circ t, \id_{G_1})
  \\
  &\text{by the inner square of~\eqref{diag:alphaGEquiv} for $\beta_{G_1}=m$ and $\alpha=\pi$}
  \\
  = \
  &m \circ \big((\pi_{G_1} \circ i_{VG_1}) \times_{G_0} \id_{G_1}\big) \circ (i_A \circ a \circ t, \id_{G_1})
  \\
  &\text{by the functoriality of pullbacks and that $\pi \circ 0 = \Id$}
  \\
  = \
  &m \circ (\pi'_{G_1} \circ i_A \circ a \circ t, \id_{G_1}) \quad \text{by Lemma~\ref{lem:MapsinPar}~(ii)}
  \\
  = \ 
  &m \circ (1 \circ p_A \circ a \circ t, \id_{G_1}) \quad \text{by Diagram~\eqref{eq:LABundle}}
  \\
  = \
  &m \circ (1 \circ t, \id_{G_1}) \quad \text{since $a$ is a section of $p_A:A \to G_0$}
  \\
  = \
  &\id_{G_1} \quad \text{by the unitality axiom~\eqref{eq:Unit1}}
\,.
\end{split}
\end{equation*}
To show the $G$-equivariance of $\phi'(a)$, we consider the following diagram:
\begin{equation}
\label{eq:RightInv}
\begin{tikzcd}[column sep=5em, row sep=3em] 
G_1 \times_{G_0} G_1 
\ar[d, "{(t,\id_{G_1}) \times_{G_0} {\id_{G_1}}}"', "\cong"] \ar[r, "m"]
&
G_1 
\ar[d, "{(t,\id_{G_1})}", "\cong"'] 
\\
G_0 \times_{G_0}^{\id, t}
G_1 \times_{G_0} G_1
\ar[d, "{a \times_{G_0} \id_{G_2}}"'] 
\ar[r, "\id_{G_0} \times_{G_0} m"]
&
G_0 \times_{G_0}^{\id, t} G_1 
\ar[d, "{a \times_{G_0} \id_{G_1}}"]
\\
A \times_{G_0} G_1 \times_{G_0} G_1 
\ar[d, "{i_A \times_{G_0} \id_{G_2}}"'] 
\ar[r, "\id_A \times_{G_0} m"]
&
A \times_{G_0} G_1 
\ar[d, "i_A \times_{G_0} \id_{G_1}"]  
\\
VG_1 \times_{G_0} G_1 \times_{G_0} G_1 
\ar[d, "\beta_{VG_1} \times_{G_0} \id_{G_1}"'] 
\ar[r, "\id_{VG_1} \times_{G_0} m"]
&
VG_1 \times_{G_0} G_1
\ar[d, "\beta_{VG_1}"]
\\
VG_1 \times_{G_0} G_1 
\ar[r, "\beta_{VG_1}"']
&
VG_1 
\end{tikzcd}
\end{equation}
The commutativity of the top square follows from the pasting law of pullbacks and by the right diagram in~\eqref{diag:SourceTargetCond}. The second square commutes due to the functoriality of the fiber product,
\begin{equation*}
\begin{split}
  (a \times_{G_0} \id_{G_1}) \circ (\id_{G_0} \times_{G_0} m)
  &= (a \circ \id_{G_0}) \times_{G_0} (\id_{G_1} \circ m)
  \\
  &= (\id_{A} \circ a) \times_{G_0} (m \circ \id_{G_2})
  \\
  &=
  (\id_{A} \times_{G_0} m) \circ (a \times_{G_0} \id_{G_2})
  \,.
\end{split}
\end{equation*}
The commutativity of the third square follows from an analogous argument, where we replace $a$ with $i_A$. The commutativity of the bottom square expresses the associativity of the right action $\beta_{VG_1}$. It follows that the outer rectangle of Diagram~\eqref{eq:RightInv} commutes. The composition of the right vertical arrows is $\phi'(a)$. The composition of the left vertical arrows is $\phi'(a) \times_{G_0} \id_{G_1}$. This shows that $\phi'(a)$ is $G$-equivariant. 

So far, we have constructed a map
\begin{equation*}
  \phi': \Gamma(G_0, A) \longrightarrow
  \Gamma(G_1, VG_1)^G
  \,.
\end{equation*}
Next, we construct a map in the opposite direction. Given a section $v' \in \Gamma(G_1,VG_1)$, we get a section of $A$ by restriction to the identity bisection. More precisely, there is a unique morphism $\psi'(v'): G_0 \to A$, such that
\begin{equation}
\label{eq:psiprime01}
  i_A \circ \psi'(v')
  = v' \circ 1  
  \,.
\end{equation}
It follows from this equation that
\begin{equation*}
\begin{split}
  p_A \circ \psi'(v') 
  &=
  s \circ 1 \circ p_A \circ \psi'(v')
  \\
  &= 
  s \circ \pi'_{G_1} \circ i_A \circ \psi'(v')
  \\
  &= s \circ \pi'_{G_1} \circ v' \circ 1
  \\
  &= s \circ 1
  \\
  &= \id_{G_0}
  \,,
\end{split}
\end{equation*}
where we have used that $s \circ 1 = \id_{G_0}$, Diagram~\eqref{eq:LABundle}, and that $v'$ is a section of $\pi'_{G_1}:VG_1 \to G_1$. It follows that $\psi'(v')$ is a section of $p_A: A \to G_0$. By restricting to the $G$-equivariant sections, we obtain a map
\begin{equation*}
  \psi': \Gamma(G_1, VG_1)^G \longrightarrow 
  \Gamma(G_0, A)
  \,.
\end{equation*}

In the next step, we will show that $\phi'$ and $\psi'$ are mutually inverse. We have
\begin{equation*}
\begin{split}
  \phi'(a) \circ 1
  &= 
  \beta_{VG_1} \circ 
  (i_A \circ a \circ t, \id_{G_1}) \circ 1
  \\
  &=
  \beta_{VG_1} \circ (i_A \circ a, 1)
  \\
  &=
  i_A \circ a 
  \,,
\end{split}
\end{equation*}
where we have used the defining Equation~\eqref{eq:SecARightInv01} for $\phi'(a)$, that $t \circ 1 = \id_{G_0}$, and that the right action is unital. Comparing this equation with~\eqref{eq:psiprime01} for $v'=\phi'(a)$, we conclude that $a = \psi'\bigl(\phi'(a)\bigr)$. Conversely,
\begin{equation*}
\begin{split}
  \phi'\bigl(\psi'(v') \bigr)
  &=
  \beta_{VG_1} \circ 
  \bigl(i_A \circ \psi'(v') \circ t, \id_{G_1} \bigr)
  \\
  &=
  \beta_{VG_1} \circ 
  \bigl(v' \circ 1 \circ t, \id_{G_1} \bigr)  
  \\
  &=
  \beta_{VG_1} \circ 
  (v' \times_{G_0} \id_{G_1}) \circ (1 \circ t, \id_{G_1})
  \\
  &=
  v' \circ m \circ (1 \circ t, \id_{G_1})
  \\
  &=
  v'
  \,,
\end{split}
\end{equation*}
where we have used~\eqref{eq:SecARightInv01}, \eqref{eq:psiprime01}, that $v'$ is $G$-equivariant and Diagram~\eqref{eq:Unit1}. 
We conclude that $\phi'$ and $\psi'$ are inverse to each other. 

Finally, we show that the map $\psi'$ is $\calC(G_0, R)$-linear. Denote by $\kappa_A:R \times A \to A$ the $R$-module structure on $A$. For $f \in \calC(G_0, R)$ and $v' \in \Gamma(G_1, VG_1)^G$ we have that
\begin{equation*}
\begin{split}
  i_A \circ f\psi'(v')
  &=
  i_A \circ \kappa_A \circ \bigl(f, \psi'(v')\bigr)
  \\
  &=
  \kappa'_{G_1} \circ (\id_R \times i_A) \circ
  \bigl(f, \psi'(v')\bigr) \quad \text{by the $R$-linearity of $i_A$}
  \\
  &=
  \kappa'_{G_1} \circ \bigl(f, i_A \circ \psi'(v')\bigr) \quad \text{by Lemma~\ref{lem:MapsinPar}~(ii)}
  \\
  &=
  \kappa'_{G_1} \circ \bigl(f, v' \circ 1) \quad \text{by Equation~\eqref{eq:psiprime01}}
  \\
  &=
  \kappa'_{G_1} \circ \bigl(f \circ t, v') \circ 1 \quad \text{since $t \circ 1 = \id_{G_0}$}
  \\
  &=
  \kappa'_{G_1} \circ \bigl(t^*\!f, v') \circ 1
  \\
  &=
  (t^*\!f) v' \circ 1
  \,,
\end{split}
\end{equation*}
where we have used the module structure~\eqref{eq:ModStrTargetPullback} on $\Gamma(G_1,VG_1)^G$. 
Comparing this equation with~\eqref{eq:psiprime01}, we conclude that 
\begin{equation*}
\psi'\big((t^*\!f)v'\big) = f\psi'(v')
\,,
\end{equation*}
which shows that $\psi'$ is $\catC(G_0,R)$-linear. It follows from Lemma~\ref{lem:RModMap} that its inverse $\phi'$ is also $\catC(G_0,R)$-linear. 

Composing $\phi'$ with Isomorphism~\eqref{eq:GammaXGiso}, we obtain the isomorphism of $\catC(G_0,R)$-modules
\begin{equation}
\label{eq:ModIsoAlgd}
  \phi = (i_{VG_1})_* \circ \phi':
  \Gamma(G_0, A) \xrightarrow{~\cong~}
  \calX(G_1)^G
  \,,
\end{equation}
which concludes the proof.
\qed

\subsubsection{Proof of \texorpdfstring{Theorem~\ref{thm:LieAlgdOfGroupoid}}{the Theorem}}

It follows from Theorem~\ref{thm:InvariantBracket} that the Lie bracket of vector fields on $G_1$ restricts to a Lie bracket on the abelian group $\calX(G_1)^G$ of invariant vector fields. Let $a$ and $b$ be sections of $A \to G_0$; let $\phi(a)$ and $\phi(b)$ be the corresponding invariant vector fields given by Isomorphism~\eqref{eq:ModIsoAlgd}. The Lie bracket $[a,b]$ is defined by
\begin{equation}
\label{eq:LieAlgdBracketDef}
  \phi\bigl( [a, b] \bigr)
  = [\phi(a), \phi(b)]
  \,.
\end{equation}
The target morphism $t$ of the groupoid is invariant under the right $G$-multiplication, as given in the right of Diagram~\eqref{diag:SourceTargetCond}. This implies that $Tt$ is invariant under the right $TG$-action, so that the bottom square of the following diagram
\begin{equation*}
\begin{tikzcd}[column sep= large, row sep=large]
VG_1 \times_{G_0} G_1
\ar[r, "\beta_{VG_1}"] 
\ar[d, "{i_{VG_1} \times_{0_{G_0}} 0_{G_1}}"]
\ar[dd, "i_{VG_1} \circ \, \pr_1"', bend right=75]
&
VG_1
\ar[d, "i_{VG_1}"] 
\\
TG_1 \times_{TG_0} TG_1
\ar[r, "\beta_{TG_1}"']
\ar[d, "\pr_1"]
&
TG_1
\ar[d, "Tt"]
\\
TG_1
\ar[r, "Tt"']
&
TG_0
\end{tikzcd}
\end{equation*}
commutes. The top square commutes by the definition of the $G$-action on the vertical tangent bundle (Diagram~\ref{eq:VE02} for $n=1$ and $E=G_1$). It follows that the outer square is equivalent, so that we obtain the relation
\begin{equation}
\label{eq:TtInv01}
  Tt \circ i_{VG_1} \circ \beta_{VG_1} 
  =
  Tt \circ i_{VG_1} \circ \pr_1
  \,.
\end{equation}

To prove Equation~\eqref{eq:anchorLieAlgMap}, we first calculate that
\begin{equation}
\label{eq:Trel1}
\begin{split}
  Tt \circ \phi(a)
  &=
  Tt \circ i_{VG_1} \circ \phi'(a) \quad \text{by Isomorphism~\eqref{eq:ModIsoAlgd}}
  \\
  &=
  Tt \circ i_{VG_1} \circ 
  \beta_{VG_1} \circ (i_A \circ a \circ t, \id_{G_1}) \quad \text{by Equation~\eqref{eq:SecARightInv01}}
  \\
  &=
  Tt \circ i_{VG_1} \circ 
  \pr_1 \circ (i_A \circ a \circ t, \id_{G_1}) \quad \text{by Equation~\eqref{eq:TtInv01}}
  \\
  &=
  Tt \circ i_{VG_1} \circ i_A \circ a \circ t
  \\
  &=
  (\rho \circ a) \circ t
  \,,
\end{split}
\end{equation}
where we have used the defining Equation~\eqref{eq:AnchorDef} of the anchor. This shows that the vector field $\phi(a)$ on $G_1$ is $t$-related to the vector field $\rho \circ a$ on $G_0$. It follows from Proposition~\ref{prop:ProjectVectorFields} that $[\phi(a), \phi(b)]$ is $t$-related to $[\rho \circ a, \rho \circ b]$, so that
\begin{equation*}
\begin{split}
  \rho \circ [a,b] \circ t
  &=
  Tt \circ \phi([a, b])
  =
  Tt \circ [\phi(a), \phi(b)]
  \\
  &=
  [\rho \circ a, \rho \circ b]
  \circ t
  \,,
\end{split}
\end{equation*}
where we have used~\eqref{eq:Trel1} for the section $[a,b]$ and Equation~\eqref{eq:LieAlgdBracketDef}. Since $t$ is an epimorphism, we can cancel $t$ on both sides of this equation, which yields Equation~\eqref{eq:anchorLieAlgMap}.

Let $f \in \calC(G_0, R)$. By definition, the function $t^*\!f:G_1 \to R$ is $t$-related to $f:G_0 \to R$. From~\eqref{eq:Trel1}, we know that the vector field $\phi(a)$ on $G_1$ is $t$-related to the vector field $\rho \circ a$ on $G_0$. It follows from Proposition~\ref{prop:ProjectFunctions}~(ii) that the function $\phi(a) \cdot (t^*\!f)$ is $t$-related to $(\rho \circ a) \cdot f$, that is,
\begin{equation}
\label{eq:MidStep}
\begin{split}
  \phi(a) \cdot (t^*\!f)
  &=
  \big((\rho \circ a) \cdot f \big) \circ t
  \\
  &=
  t^*\!\bigl( (\rho \circ a) \cdot f \bigr)
  \,.
\end{split}
\end{equation}
To prove the Leibniz rule, we calculate 
\begin{equation*}
\begin{split}
  \phi\bigl( [a, fb] \bigr)
  &=
  [\phi(a), \phi(fb)] \quad \text{by Equation~\eqref{eq:LieAlgdBracketDef}}
  \\
  &=
  [\phi(a), (t^*\!f)\phi(b)] \quad \text{by the $\calC(G_0, R)$-linearity of $\phi$}
  \\  
  &= \bigl(\phi(a) \cdot (t^*\!f)\bigr) \phi(b) + (t^*\!f)[\phi(a),\phi(b)] \quad \text{by Proposition~\ref{prop:LeibnizRule}}
  \\
  &= \bigl(t^*( (\rho\circ a) \cdot f) \bigr) \phi(b) 
  + (t^*\!f)[\phi(a),\phi(b)] \quad \text{by Equation~\eqref{eq:MidStep}}
  \\
  &= \phi\bigl( ( (\rho\circ a) \cdot f)b \bigr) 
  + (t^*\!f)\phi([a,b])
  \\
  &=
  \phi\bigl( ( (\rho\circ a) \cdot f)b + f[a,b] \bigr)
  \,,
\end{split}
\end{equation*}
where we have once more used the $\calC(G_0, R)$-linearity of $\phi$ and Equation~\eqref{eq:LieAlgdBracketDef}. By Theorem~\ref{thm:SecAInvVec}, $\phi$ is an isomorphism, so that we can cancel $\phi$ on both sides of the equation, which yields Equation~\eqref{eq:LieAlgdLeibniz}.
\qed
\section{Examples and applications}
\label{sec:Examples}

This is the concluding section of the article. The goal is to provide classes of examples and possible applications of the differentiation procedure developed in this paper. The computation of explicit examples consists of three steps:
\begin{itemize}
\item[(1)] Identify a cartesian tangent category $\catC$ with scalar $R$-multiplication.
\item[(2)] Identify a differentiable groupoid object in $\catC$.
\item[(3)] Apply the differentiation procedure of Section~\ref{sec:DiffProcGpdObj} step by step.
\end{itemize}
By Theorem~\ref{thm:LieAlgdOfGroupoid}, this yields an abstract Lie algebroid in the tangent category.

\subsection{Lie groupoids and Lie algebroids}
\label{sec:ExampleLieGrpd}

The prototypical cartesian tangent category is the category $\Mfld$ of finite-dimensional smooth manifolds with the usual scalar $\bbR$-multiplication of tangent vectors.


\begin{Proposition}
\label{prop:LieGrpdDiff}
A groupoid in $\Mfld$ is differentiable if and only if it is a Lie groupoid.
\end{Proposition}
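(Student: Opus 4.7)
The plan is to prove both implications of the equivalence directly, using the characterizing Properties~(iii)--(v) of submersions recalled in the introduction.

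For the ``if'' direction, suppose $s \colon G_1 \to G_0$ is a surjective submersion, so that $t = s \circ i$ is one as well. Iterating Property~(v), both $T^n s$ and $T^n t$ are submersions for all $n \geq 1$, and consequently so is $T^n t_k = T^n t \circ T^n \pr_1$. Property~(iii) then supplies each pullback in the first diagram of~\eqref{diag:DiffGroupoidG01}, and Property~(iv) yields the isomorphism~\eqref{eq:TFinPullG}; the second diagram of~\eqref{diag:DiffGroupoidG01} is handled in the same way, with the diagonal zero section and the iterated fiber product $T_m s$ (itself a submersion). The pullback~\eqref{diag:DiffGroupoidG02} exists because, once $s$ is a submersion, $V\!G_1 = TG_1 \times_{TG_0} G_0$ is a vector bundle over $G_1$, so $\pi_{G_1} \circ \pr_1$ is a submersion.

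For the converse, assume $G$ is differentiable; it suffices to prove that $s$ is a submersion, since then $t = s \circ i$ is too. The key step is to specialize the first diagram of~\eqref{diag:DiffGroupoidG01} to $n=1$, $k=0$: differentiability forces the categorical pullback
\[
V\!G_1 \ = \ TG_1 \times_{TG_0}^{Ts,\, 0_{G_0}} G_0
\]
to exist in $\Mfld$, which is equivalent to transversality of $Ts$ with the zero section $0_{G_0}$ at every point of their intersection. For each $g \in G_1$, the zero vector $0_g \in TG_1$ always lies in this intersection. A short computation in natural tangent-bundle coordinates shows that $T(Ts)|_{0_g}$ equals $Ds|_g \oplus Ds|_g$, since the second-order $D^2 s$-correction vanishes on the zero fiber, while $T(0_{G_0})$ at $0_{s(g)}$ has image the horizontal copy of $T_{s(g)} G_0$. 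Transversality of the two images in $T_{0_{s(g)}}TG_0$ is therefore equivalent to surjectivity of $Ds|_g$ onto $T_{s(g)} G_0$, and since $g \in G_1$ was arbitrary, $s$ is a submersion.

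The main obstacle is the coordinate identification in the converse direction: showing that transversality of $Ts$ with the zero section at $0_g$ is equivalent to the submersivity of $s$ at $g$. Once this local analysis is in place, both implications reduce to routine bookkeeping with Properties~(iii)--(v) of submersions.
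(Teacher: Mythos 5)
Your forward direction is fine: once $s$ (and hence $t=s\circ i$) is a submersion, Properties~(iii)--(v) deliver all the pullbacks and the isomorphism~\eqref{eq:TFinPullG}, which is exactly the one-line observation the paper makes.

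The converse, however, has a genuine gap. You assert that the existence of the categorical pullback $TG_1 \times_{TG_0}^{Ts,\,0_{G_0}} G_0$ in $\Mfld$ is \emph{equivalent} to transversality of $Ts$ with the zero section. Only one implication holds: transversality is sufficient for the fiber product to exist as a pullback in $\Mfld$, but it is far from necessary. For example, for the constant map $s\colon \bbR \to \bbR$, $x \mapsto 0$, which is not a submersion, the fiber product $T\bbR \times_{T\bbR}^{Ts,\,0}\bbR \cong \bbR^2 \times \{0\}$ exists and satisfies the universal property of the pullback in $\Mfld$, even though $Ts$ is nowhere transverse to the zero section; likewise two points mapping to the same point of a positive-dimensional manifold admit a (non-transverse) pullback. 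So the mere existence of $V G_1$ gives you no transversality, and your coordinate computation at $0_g$ (which is correct in itself) never gets off the ground.

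The paper's converse uses a different axiom and a different mechanism. It exploits the requirement that the natural morphism~\eqref{eq:TFinPullG} be an isomorphism: if $X \times_Y X$ exists and $T(X \times_Y X) \cong TX \times_{TY} TX$, then the tangent fiber of the manifold $X\times_Y X$ at $(x,x')$ is $T_xX \times_{T_yY} T_{x'}X$, which has dimension $2\dim X - \dim Y$ precisely where $T_xf$ and $T_{x'}f$ are surjective and strictly larger dimension elsewhere. Sard's theorem produces at least one regular value, hence one point where the smaller dimension is attained, and constancy of the tangent-fiber dimension of a manifold then forces surjectivity of $T_xf$ everywhere. Applying this to $G_1 \times_{G_0}^{s,s}G_1 \cong G_1 \times_{G_0}^{s,t}G_1$ shows that $s$ is a submersion. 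To repair your argument you would have to derive transversality (or submersivity) from the isomorphism~\eqref{eq:TFinPullG} or some other differentiability axiom, not from the bare existence of the pullback~\eqref{diag:DiffGroupoidG01} for $n=1$, $k=0$.
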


\begin{proof}
A Lie groupoid satisfies all conditions of a differentiable groupoid. For the converse direction, we consider a smooth map of manifolds $f: X \to Y$. Assume that the pullback $X \times_Y X$ exists and that the tangent functor commutes with the pullback, $T(X \times_Y X) \cong TX \times_{TY} TX$. This implies that the tangent fiber at $(x,x') \in X \times_Y X$ is given by
\begin{equation*}
  T_{(x,x')} (X \times_Y X)
  \cong
  T_x X \times_{T_y Y} T_{x'} X
  \,,
\end{equation*}
where $y = f(x) = f(x')$. Sard's theorem implies that there is at least one regular value $y \in Y$ of $f$. Let $x$ and $x'$ be elements of the fiber over $y$. Then the dimension of the tangent fiber over $(x,x')$ is $2\dim X - \dim Y$. At all other points, where either $T_x f$ or $T_{x'}f$ is not surjective, the dimension of the tangent fiber is strictly larger. Since the dimension of the tangent fibers of the manifold $X \times_Y X$ must be constant, it follows that both $T_x f$ and $T_{x'}f$ are surjective at all points $(x,x') \in X \times_Y X$. In particular, for every $x \in X$, the pair $(x,x)$ is an element of $X \times_Y X$, so that $T_x f$ is surjective. We conclude that $f$ is a submersion.

For every groupoid object $G$, we have an isomorphism
\begin{equation*}
  G_1 \times_{G_0}^{s,t} G_1 
  \cong G_1 \times_{G_0}^{s,s} G_1
  \,,
\end{equation*}
obtained by applying the inverse to the second factor of the pullback. If $G$ is differentiable, the tangent functor commutes with the pullback on the right hand side. As we have just proved, this implies that the source map $s$ is a submersion. We conclude that a differentiable groupoid in manifolds is a Lie groupoid.
\end{proof}

Our generalized differentiation procedure recovers the usual construction of the Lie algebroid of a Lie groupoid.

\begin{Proposition}
\label{prop:LieAlgdDiff}
Abstract algebroids in $\Mfld$ are the same as Lie algebroids.
\end{Proposition}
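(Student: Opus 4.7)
The plan is to establish the equivalence in both directions by leveraging the identification of bundles of $\bbR$-modules in $\Mfld$ with smooth vector bundles established in Example~\ref{ex:VecBund}, together with the fact that the tangent structure on $\Mfld$ coming from Section~\ref{sec:TangentEuclidean} recovers, under the usual local coordinate calculations, the classical differential geometric structures (e.g. $v \cdot f$ from~\eqref{eq:VecFieldFunc} coincides with the classical derivation action, since $\Val_{\bbR}: T\bbR \to \bbR$ is the standard projection onto the fiber of $T\bbR \cong \bbR \times \bbR$).

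First I would argue the forward direction. Given an abstract Lie algebroid $(A \to M, \rho, [\cdot,\cdot])$ in $\Mfld$, Example~\ref{ex:VecBund} shows that $A \to M$ is automatically a vector bundle. The anchor $\rho: A \to TM$ is a morphism of bundles of $\bbR$-modules, which by local trivialization and $\bbR$-linearity on fibers is exactly a smooth vector bundle homomorphism. The Leibniz rule~\eqref{eq:LieAlgdLeibniz} of the abstract definition translates verbatim into the classical one once $v \cdot f$ is identified with $\rho(a)(f)$, and the Lie bracket is $\bbR$-bilinear by the remark following Definition~\ref{def:AbstractLieAlgd} since $\catC(*,\bbR) \cong \bbR$ in $\Mfld$. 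Thus every axiom of a classical Lie algebroid is satisfied.

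Conversely, given a classical Lie algebroid $(A \to M, \rho, [\cdot,\cdot])$, the vector bundle structure on $A$ yields a bundle of $\bbR$-modules (again by Example~\ref{ex:VecBund}, noting that local trivializations are morphisms of bundles of $\bbR$-modules). The anchor is a morphism of bundles of $\bbR$-modules since vector bundle maps are fiberwise $\bbR$-linear, and the Leibniz rule transfers directly. The only additional condition required by Definition~\ref{def:AbstractLieAlgd} that is not explicit in the classical definition is $\rho \circ [a,b] = [\rho \circ a, \rho \circ b]$; as noted in the discussion following Definition~\ref{def:AbstractLieAlgd}, this compatibility is automatic in the classical setting as a consequence of the Jacobi identity, antisymmetry, $\bbR$-bilinearity, and the Leibniz rule, and indeed this standard proof uses precisely the identification of vector fields with derivations of $C^\infty(M)$ which holds in $\Mfld$.

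The main (mild) obstacle I anticipate is bookkeeping: verifying that the abstract action~\eqref{eq:VecFieldFunc} $v \cdot f = \Val_{\bbR} \circ Tf \circ v$ reproduces the classical directional derivative $\rho(a)(f)$ in local coordinates, and that the abstract Lie bracket construction from Section~\ref{sec:LieBracket} applied to sections of $TM$ reproduces the usual Lie bracket of vector fields. Both are direct computations using the explicit formulas~\eqref{eq:T1f}--\eqref{eq:T2f} for $\eFun{T}f$ and $\eFun{T}^2 f$ on Euclidean patches, and these computations are essentially the classical derivation of~\eqref{eq:LieBrackLoc} that motivated Rosick\'{y}'s axioms in the first place, so no new ideas are needed.
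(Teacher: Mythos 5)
Your proposal is correct and follows essentially the same route as the paper: the only substantive input is Example~\ref{ex:VecBund} identifying bundles of $\bbR$-modules in $\Mfld$ with vector bundles, after which the remaining axioms (anchor, Leibniz rule, $\bbR$-bilinearity via $\catC(*,\bbR)\cong\bbR$, and the redundancy of $\rho\circ[a,b]=[\rho\circ a,\rho\circ b]$ in the classical setting) match up directly, exactly as the paper asserts. Your extra bookkeeping about $v\cdot f$ and the bracket on $\Gamma(M,TM)$ recovering the classical notions is just a more explicit spelling-out of what the paper takes for granted from Sections~\ref{sec:IntroBracket} and~\ref{sec:TangentEuclidean}.
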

\begin{proof}
In Example~\ref{ex:VecBund}, we have seen that bundles of $\bbR$-modules are vector bundles. All other axioms are the same for abstract and non-abstract Lie algebroids in $\Mfld$.
\end{proof}

\subsection{Elastic diffeological groups}

Diffeological spaces are concrete sheaves on the site of Euclidean spaces with the usual open covers \cite[Definition~3.4]{blohmann2024}. Elastic diffeological spaces are diffeological spaces with certain \textit{elasticity} axioms \cite[Definition~4.1]{blohmann2024}. These axioms ensure that the left Kan extension of the tangent structure on Euclidean spaces defines a tangent structure on elastic diffeological spaces. Moreover, the left Kan extension of the scalar $\bbR$-multiplication on Euclidean spaces is a scalar $\bbR$-multiplication on elastic diffeological spaces in the sense of Definition~\ref{def:RScalar}. The upshot is that the full subcategory of elastic diffeological spaces is a cartesian tangent structure with scalar $\bbR$-multiplication \cite[Theorem~4.2]{blohmann2024}.

\subsubsection{Diffeomorphism groups}

Given a smooth (not necessarily compact) manifold, its group $\Diff(M)$ of diffeomorphisms is not a finite-dimensional Lie group. It is commonly viewed as an infinite-dimensional Lie group, in the sense that it is locally modeled on compactly supported vector fields \cite{KrieglMichor:1997, Neeb:2006}. $\Diff(M)$ can alternatively be viewed as a diffeological space equipped with the functional diffeology \cite[Example~3.8~(e)]{blohmann2024}. Since $\Diff(M)$ is an elastic diffeological group \cite[Example~5.4]{blohmann2024}, we can differentiate it using our method.

It is well-known that the Lie algebra of the diffeomorphism group of a compact smooth manifold viewed as a Fr{\'e}chet Lie group is given by its space of vector fields with the (opposite) Lie bracket. While our construction recovers this result, it has the following advantages:
\begin{enumerate}

\item
Our approach is based entirely on universal categorical constructions, sidestepping all technicalities of functional analysis and Fr{\'e}chet manifolds.

\item
As a consequence, our construction applies without modification to the diffeomorphism group of a non-compact manifold. We posit that its Lie algebra is the Lie algebra of \textit{all} vector fields, without any conditions on their support or behavior at infinity \cite{BlohmannMiyamoto}.

\end{enumerate}

\subsubsection{Gauge transformations}
Given a smooth manifold $M$ and a Lie group $G$ with Lie algebra $\mathfrak{g}$, the diffeological mapping space ${\intDflg}(M,G)$ is elastic with tangent space
\begin{equation*}
\begin{split}
T {\intDflg}(M,G) &\cong {\intDflg} (M, TG)
\\
&\cong {\intDflg} (M, G \times \mathfrak{g})
\,,
\end{split} 
\end{equation*}
where we have used \cite[Corollary~5.12]{blohmann2024}. For a recap on diffeological mapping spaces, see \cite[Example~3.8~(e)]{blohmann2024}. This has numerous applications in the setting of gauge theory \cite[Section~9.3]{blohmannLFT}, where the fields are connections on a principal $G$-bundle $P \to M$. Here, $\intAut(P) \cong \Gamma(M, P \times_{\Ad}G)$ is the diffeological gauge group of a principal $G$-bundle $P \to M$. Its Lie algebra is $\Gamma(M, P \times_{\Ad}\frakg)$.

\subsubsection{Bisection groups of Lie groups}
The last two classes of examples belong to a larger class: bisection groups of Lie groupoids. A \textbf{bisection} of a Lie groupoid $G_1 \rightrightarrows G_0$ is a section $\sigma:G_0 \to G_1$ of the source map $s$ such that $t \circ \sigma:G_0 \to G_0$ is a diffeomorphism. The set of bisections on a Lie groupoid has a natural group structure, and is denoted by $\Bis(G)$ (e.g. \cite[Proposition~1.4.2]{mackenzie2005}).

Given a smooth manifold $M$, its diffeomorphism group is the bisection group of the pair groupoid $M \times M \rightrightarrows M$. For a Lie group $G$, the symmetry group ${\intDflg}(M,G)$ of global sections of the trivial principal $G$-bundle over $M$ is the bisection group of the trivial Lie group bundle\footnote{A \textbf{Lie group bundle} is a Lie groupoid where the source and target maps coincide. In this case, the source (or target) fibers have the structure of a Lie group.} 
$G \times M \to M$.

It is known that the Lie algebra of the bisection group $\Bis(G)$ of a Lie groupoid $G$ with compact base is isomorphic to the Lie algebra of sections of the Lie algebroid of $G$.
In \cite{SchmedingWockel:2015}, the authors compute the Lie algebra of $\Bis(G)$ by using sophisticated machinery from functional analysis \cite{Milnor:1984, Neeb:2006}, where $\Bis(G)$ has a natural locally convex Lie group structure. Another approach is given in \cite[Section~15]{models}, where the authors differentiate $\Bis(G)$ using paths, choosing a splitting of the tangent bundle over $G_0$. This can, for instance, be done with a Riemannian metric. The heuristic approach of \cite{models} is in essence diffeological.

As explained for the case of diffeomorphism groups, the advantage of our approach is that it does not involve choices of norms or metrics, but only universal constructions, so that it encompasses the case when $M$ is not compact. We conjecture that bisection groups of Lie groupoids are elastic and that their Lie algebras are the Lie algebras of sections of the Lie algebroid.

\subsection{Elastic diffeological groupoids}

We now present classes of possible applications of elastic diffeological groupoids. We start with action groupoids induced by actions of elastic diffeological groups on elastic diffeological spaces. Concrete examples of such action groupoids appear naturally in classical field theory.

\subsubsection{Actions of diffeomorphism groups}
Consider the natural action of the diffeomorphism group $\Diff(M)$ of a smooth manifold $M$ on the sections of a natural fiber bundle
$p:F \to M$. As we have seen above, $\Diff(M)$ is elastic. Moreover, $\Gamma(M,F)$ is elastic \cite[Corollary~5.12]{blohmann2024} with tangent space
\begin{equation*}
T \Gamma(M,F) \cong \Gamma(M, V\!F)
\,,
\end{equation*}
where $V\!F = \ker Tp = TF \times_{TM}^{Tp, 0_M} M$ is the vertical tangent bundle. The induced action diffeological groupoid
\begin{equation*}
\Diff(M) \times \Gamma(M,F) \rightrightarrows \Gamma(M,F)
\end{equation*}
is elastic, which follows from the fact that the finite product of elastic spaces is elastic \cite[Proposition~4.9]{blohmann2024}. This observation has applications in Lagrangian field theory \cite{blohmannLFT}.

\subsubsection{Actions of gauge transformations}
Similarly, given a principal $G$-bundle $P \to M$ and an open subset $U \subset M$, the diffeological mapping space ${\intDflg}(U,G)$ acts on the diffeological space of local sections $\Gamma(U,P)$. This yields a sheaf of elastic diffeological groupoids
\begin{equation*}
{\intDflg}(U,G) \times \Gamma(U,P) \rightrightarrows \Gamma(U,P) \,.
\end{equation*}
By differentiation, we obtain a sheaf of abstract Lie algebroids in diffeological spaces.

\subsubsection{Elastic diffeological action groupoids}
The last two classes of examples belong to a larger class: diffeological action groupoids. An action of an elastic diffeological group $G$ on an elastic diffeological space $M$ induces an action diffeological groupoid $G \times M \rightrightarrows M$. As mentioned above, since the finite product of elastic spaces is elastic \cite[Proposition~4.9]{blohmann2024}, this action groupoid is elastic.

\subsubsection{Groupoid symmetry of general relativity}

In \cite{BlohmannFernandesWeinstein:2013}, the authors embarked on a project to understand the symmetry structure of the initial value problem of general relativity \cite{BlohmannWeinstein:2024, BlohmannSchiavinaWeinstein}. The main result was the construction of a diffeological groupoid whose diffeological Lie algebroid has the same bracket as the somewhat mysterious Poisson bracket of the Gau\ss-Codazzi constraints for Ricci flat metrics. Without a theory of diffeological groupoids available, the differentiation of the diffeological groupoid of \cite{BlohmannFernandesWeinstein:2013} was carried out in an ad-hoc manner. The reason why this worked is that the authors had constructed a differentiable groupoid object in the tangent category of elastic diffeological spaces.

\subsubsection{Reduction of action Lie groupoids by subgroups}

The construction of the groupoid symmetry of general relativity from \cite{BlohmannFernandesWeinstein:2013} is a particular case of a more general construction: reductions of action Lie groupoids by subgroups.

Let $\varphi: G \to \Diff(M)$ be a smooth Lie group action on a smooth manifold. Let $H \subseteq G$ be a closed (not necessarily normal) Lie subgroup, such that $\varphi|_{H}$ is free and proper. There is a unique Lie groupoid structure on $H \setminus G \times_H M$ over $M/H$, where $H \setminus G \times_H M = (G \times M) / (H \times H)$ with respect to the action given by $(h_1,h_2) \cdot (g,m) = \big(h_1gh_2^{-1}, \varphi_{h_2}(m)\big)$. For the notation and statement of the main proposition, see \cite[Section~8.1]{BlohmannWeinstein:2024}. In particular, when $H$ is not normal, this reduction is not an action Lie groupoid. The computation of the Lie algebroid of this quotient groupoid in the setting of smooth finite-dimensional manifolds is given in \cite[Section~8.5]{BlohmannWeinstein:2024}. To see how this construction applies to general relativity, we refer to \cite[Example~8.13]{BlohmannWeinstein:2024}.

\subsection{Diffeological integration of Lie algebroids} 

It is known that Lie's third theorem fails in the setting of Lie groupoids and Lie algebroids \cite{CrainicFernandes:2003}. In \cite{Villatoro}, Villatoro addresses the question of integrability of Lie algebroids via diffeological groupoids. In his construction, he introduces a class of diffeological spaces, called \textit{quasi-{\'e}tale} diffeological spaces (QUED), which are locally quotients of smooth manifolds by a well-behaved equivalence relation. He then defines a Lie functor from \textit{singular} Lie groupoids (groupoid objects in QUED with source and target QUED-\textit{submersions}, and the space of objects given by a smooth manifold) to Lie algebroids. 

The crucial part of the integration procedure in \cite{Villatoro} is to show that the Weinstein groupoid $\Pi_1(A) \rightrightarrows M$ of a Lie algebroid $A \to M$ (e.g. \cite[Section~2]{CrainicFernandes:2003}) is a singular Lie groupoid. In an attempt to compare Villatoro's construction to ours, there is one subtlety. The Weinstein groupoid (at least of a non-transitive Lie algebroid) does not seem to be elastic and so its differentiation by our methods seems to be more involved.

\subsection{Holonomy groupoids of singular foliations}

The question of diffeological integrability has also been considered in the context of \textit{holonomy groupoids}. Associated to a singular foliation, there is a diffeological groupoid, called its holonomy groupoid \cite[Definition~3.5]{AndroulidakisSkandalis:2009}. More generally, associated to a singular subalgebroid of an integrable Lie algebroid, there is a holonomy groupoid \cite{Zambon:2022}. (The relation between the two is that the singular subalgebroids of $A=TM$ are precisely the singular foliations on $M$ \cite[Example~1.2]{Zambon:2022}).

In \cite{AndroulidakisZambon:2023}, Androulidakis and Zambon propose an integration method for singular subalgebroids via diffeological groupoids. In the first step of establishing the differentiation procedure, the authors consider a certain class of diffeological groupoids, which behave like the holonomy groupoid of a singular subalgebroid. These diffeological spaces have a smooth manifold as their space of objects, and come together with a morphism to a Lie groupoid, satisfying additional properties. Using families of global bisections, the authors differentiate such a holonomy-like diffeological groupoid to a singular subalgebroid \cite[Theorem~3.2]{AndroulidakisZambon:2023}.

Under the assumptions on the diffeological groupoids in \cite{AndroulidakisZambon:2023}, the tangent structure on smooth manifolds is sufficient and the question of a rigorous definition of tangent spaces to diffeological spaces can be circumvented. In an attempt to compare their construction to our differentiation procedure in the tangent category of elastic diffeological spaces, the following questions arise: Are holonomy groupoids of singular foliations (or more generally of singular subalgebroids) elastic? What are the minimal conditions that make them elastic? These are interesting questions for future work.

\subsection{Differentiable stacks}

A stack is a category fibered in groupoids that satisfies a descent condition, which is analogous to the $2$-cocycle condition of the transition functions of the local trivializations of a principal fiber bundle. A stack is differentiable if it is equipped with an atlas, that is, with a representable surjective submersion with domain a smooth manifold. The twofold fiber product of this morphism is a Lie groupoid, called a presentation of the stack. A morphism of differentiable stacks presented by Lie groupoids $G$ and $H$ gives rise to a generalized morphism from $G$ to $H$ given by a $G$-$H$ groupoid bibundle that is principal on the codomain \cite[Section~5.4]{moerdijk2003}. 

The composition of such bibundles is not associative on the nose, but only up to biequivariant isomorphism, so that we obtain a bicategory. As explained in \cite{Blohmann2008}, the $(2,1)$-category of Lie groupoids, principal bibundles, and biequivariant isomorphisms is equivalent to the bicategory of differentiable stacks \cite{BehrendXu:2011, Carchedi:2011}.

The advantage of working in the category of presentations of differentiable stacks is that the usual methods of differentiation and integration on smooth manifolds are at our disposal. In fact, constructing the convolution algebra of a Lie groupoid, which can be viewed as the noncommutative geometry of the stack, uses the Haar measures on the groupoid fibers. To our best knowledge, there is no known method that would construct the convolution directly on the stack. For the differentiation, the situation is similar. 

By applying the tangent functor to a Lie groupoid $G$, we obtain the tangent groupoid $TG$. By applying the tangent functor to a $G$-$H$ bibundle $E$, we obtain a $TG$-$TH$ bibundle $TE$. The bibundle $E$ is $H$-principal if and only if the endpoint map of the $H$-action, $E \times_{H_0} H_0 \to E \times_{G_0} E$ is an isomorphism. Since $T$ preserves isomorphisms by virtue of being a functor, $T$ preserves principality. A similar argument applies to the composition of one-sided principal bibundle and the associator of the bicategory. The upshot is that $T$ gives rise to an endofunctor of the presentation category of differentiable stacks. The same is true for powers and fiber products of $T$. Moreover, the natural transformations of the tangent structure induce natural transformations of the endofunctors of the bicategory involved in the tangent structure. This leads us to the conclusion that the homotopy category of this bicategory, that is, the category with Lie groupoids as objects and isomorphism classes of bibundles as morphisms is naturally equipped with a cartesian tangent structure with scalar $\bbR$-multiplication.

For a more refined approach, we have to consider the higher categorical aspects of the tangent structure, that is, the question of coherence relations between the associator and unit constraints of the bicategory on one side and the tangent structure on the other side. Fortunately, the composition of natural transformations of a bicategory is associative on the nose, so that the axioms of a tangent category expressed by the commutative diagrams of natural transformations should remain the same. The notion of tangent $\infty$-categories is being developed by Kristine Bauer, Matthew Burke, and Michael Ching; the category of differentiable stacks appears to be an example (private communication by Michael Ching).

\subsection{Geometric deformation theory}

Deformation theory is typically concerned with the study of moduli spaces of structures of a certain type, such as the multiplication map of an algebra, a complex structure, or a Riemannian metric, modulo isomorphisms. Deformations of a structure $X$ are \textit{smooth} paths $\varepsilon \mapsto X_{\varepsilon}$ in the corresponding moduli space $\CMcal{M}$ through $X = X_0$. Differentiating it at $\varepsilon=0$ yields an infinitesimal deformation
\begin{equation*}
\frac{d}{d\varepsilon} X_{\varepsilon}
\Bigr|_{\varepsilon=0}
\in  T_X{\CMcal{M}} 
\,.
\end{equation*}
Since moduli spaces typically have singularities, the differentiation often requires a derived approach in which the tangent space is replaced with a differential complex.

It has long been conjectured that deformation problems are governed by differential graded Lie algebras (dgLa) or $L_{\infty}$-algebras via Maurer-Cartan elements. Prototypical examples are the deformation theory of associative algebras (Hochschild cohomology) \cite{Gerstenhaber:1964}, Lie algebras (Chevalley-Eilenberg cohomology) \cite{Nijenhuis:1967}, and complex structures (Kodaira-Spencer cohomology) \cite{kodaira1986}. This conjecture is rigorously proved by Lurie \cite{lurie} and independently Pridham \cite{pridham} using the language of higher category theory. However, as promising as this sounds, constructing a computable model of the dgLa controlling a specific deformation problem turns out to be very difficult. As Kontsevich states: ``It is an `art' to discover these objects for a general deformation theory'' \cite[p.~6]{kontsevich}. 

For instance, it is surprising that while there is consensus that the deformation complex of a Lie group is given by its group cohomology with values in the adjoint representation on its Lie algebra, the Lie brackets of a dgLa or $L_{\infty}$-structure are not known. Crainic, Mestre, and Struchiner have generalized this to the deformation complex of Lie group\emph{oids} \cite{CrainicMestreStruchiner:2020}. As for Lie groups, the compatible Lie bracket on this complex has not yet been found.

Our approach to tackle the issue of singularities of $\CMcal{M}$ is the observation that many moduli spaces are equipped with a natural notion of smooth families parametrized by real parameters, which constitute a diffeology. For the moduli spaces mentioned above, it is the functional diffeology. Geometrically, $\CMcal{M}$ can then be viewed as a stack presented by a diffeological groupoid $\mathcal{G}_{\CMcal{M}}$. The infinitesimal first order deformations should then be given by the abstract Lie algebroid of $\mathcal{G}_{\CMcal{M}}$. To make these heuristic ideas rigorous, we have to address the following questions:
\begin{enumerate}

\item
How do tangent structures generalize to the higher categorical setting of the $(2,1)$-category of diffeological stacks?

\item
Is the diffeological groupoid $\mathcal{G}_{\CMcal{M}}$ presenting the stack $\CMcal{M}$ elastic?

\end{enumerate}
Our ultimate goal is to describe the deformation theory of geometric structures in terms of the jets of diffeological groupoids.

\subsection{Equivariant Lie groupoids}

Let $G$ be a Lie groupoid. A smooth right $G$-bundle is a submersion $r_E: E \to G_0$ with a right $G$-action $E \times_{G_0}^{r_E, t} G_1 \to E$, as studied in detail in Section~\ref{sec:Gbun}. The map that sends $E$ to the vertical tangent bundle $V\!E$ with its right $G$-action is a tangent functor.

In this tangent category, a vector field is a $G$-equivariant section $v: E \to V\!E$ of the vertical tangent bundle $V\!E \to E$, that is, an invariant vector field on $E$. The Lie bracket is the bracket of invariant vector fields (Theorem~\ref{thm:InvariantBracket}). The product of two $G$-bundles is given by the fiber product $E \times_{G_0} E'$ with the diagonal right $G$-action. The vertical tangent bundle satisfies $V(E \times_{G_0} E') \cong V\!E \times_{G_0} V\!E'$, which shows that the tangent category is cartesian. 

Every smooth manifold $M$ can be viewed as $G$-bundle $\pr_2: M \times G_0 \to G_0$ with right $G$-action $\big(m,t(g)\big) \cdot g = \big(m, s(g)\big)$. A smooth map $f: M \to N$ gives rise to the morphism $f \times \id_{G_0}: M \times G_0 \to N \times G_0$ of $G$-bundles. This defines a functor from the category of smooth manifolds to the category of $G$-bundles. The functor preserves finite products, which implies that the ring object $\bbR$ in $\Mfld$ is mapped to a ring object $R = (\bbR \times G_0 \to G_0)$ in the category of $G$-bundles. By identifying $(\bbR \times G_0) \times_{G_0} E \cong \bbR \times E$, the $\bbR$-multiplication of vertical tangent vectors can be viewed as a scalar multiplication by the ring object $R$.

Differentiable groupoid objects $E$ and their abstract Lie algebroids in the tangent category of $G$-bundles look like a promising subject for future work. For now, we can identify a few relevant special cases: If $G$ is a group and $E = P \times_X P \to X$ is the fiber groupoid of a principal $G$-bundle $P \to X$ with the diagonal $G$-action, then we obtain an equivariant description of the gauge groupoid. If $G$ is a group and the bundle is $E = G$ with the regular right action, then we get the notion of a Lie group with a compatible groupoid structure. This should be closely related to strict Lie 2-groups or, equivalently, smooth crossed modules. If $E = G$ is a groupoid with the regular right action, we have a notion of a groupoid with a compatible groupoid structure. This might be related to the notion of strict Lie 2-groupoids. If $E = G_0$ with the usual right action of the groupoid on its base, we obtain a groupoid with a singular multiplicative foliation. It would be very interesting to carry out the differentiation procedure in all these cases.

\subsection{Affine schemes}
\label{sec:AffSchemes}

The category $\Aff_k \cong \CAlg_k^\op$ of affine schemes over a field $k$ is equipped with a tangent functor given by the K\"ahler differentials on the algebras \cite[Section~4]{CruttwellLemay:2023}. More precisely, for any $k$-algebra $A$, let $\Omega^1_A$ denote the $A$-module of K\"ahler differentials, that is, the $A$-module generated by the symbols $da$ for all $a \in A$ subject to the relations $d 1 = 0$, $d(a+b) = da + db$, and $d(ab) = da\,b + a\, db$. The map $d: A \to \Omega^1_A$, $a \mapsto da$ is the universal derivation of $A$. Any other derivation of $A$ with values in some $A$-module $M$ is given by composing $d$ with a unique $A$-linear map $\Omega^1_A \to M$. In other words, the map $d^*: \Mod_A(\Omega^1_A, M) \to \Der_k(A, M)$, $\phi \mapsto \phi \circ d$ is a natural isomorphism of $A$-modules.

The tangent object of $A \in \CAlg_k^\op$ is $\Sym_A( \Omega^1_A)$, the free commutative unital $A$-algebra generated by the $A$-module $\Omega^1_A$. Since $\Sym_A$ is the left adjoint to the forgetful functor $\CAlg_A \to \Mod_A$, we have
\begin{equation}
\label{eq:AffDerRepr}
  \CAlg_A\bigl( \Sym_A(\Omega^1_A), B \bigr)
  \cong
  \Der_k(A, B)
\end{equation}
for any $A$-algebra $B$. The tangent functor on an affine scheme is then defined by
\begin{equation*}
  T\Spec(A)
  := \Spec\bigl( \Sym_A(\Omega^1_A) \bigr)
  \,.
\end{equation*}
The natural transformations of the tangent structure are rather obvious. The bundle projection $T\Spec A \to \Spec A$ is given by the unit $\pi_A: A \hookrightarrow \Sym_A(\Omega^1_A)$; the zero section is given by the augmentation map $0_A:\Sym_A(\Omega^1_A) \to A$; the fiberwise addition $+_A: \Sym_A(\Omega^1_A) \to \Sym_A(\Omega^1_A) \otimes_A \Sym_A(\Omega^1_A)$ is the $A$-linear map defined by $da \mapsto da \otimes 1 + 1 \otimes da$. For the symmetric structure and the vertical lift, we refer to \cite[Section~4]{CruttwellLemay:2023}.

In $\Aff_k$, the ring of scalars is the affine line $\bbA_k = \Spec( k[x])$. The scalar multiplication $\bbA_k \times T\Spec(A)  \to T\Spec(A)$ is the $A$-linear $k[x]$-coaction
\begin{align*}
  \kappa_A: \Sym_A(\Omega^1_A) 
  &\longrightarrow k[x] \otimes \Sym_A(\Omega^1_A)
  \\
  a 
  &\longmapsto 1 \otimes a 
  \\
  da 
  &\longmapsto x \otimes a
  \,.
\end{align*}
It is straightforward to check that this defines a scalar $\bbA_k$-multiplication in the sense of Definition~\ref{def:RScalar}. The ring of $\bbA_k$-valued functions on $\Spec(A)$ is the ring itself,
\begin{equation*}
  \Aff_k\bigl( \Spec(A), \bbA_k \bigr) \cong A
  \,.
\end{equation*}
The Lie algebra of vector fields on $\Spec A$ are the derivations of $A$,
\begin{equation*}
  \calX\bigl( \Spec(A) \bigr) \cong \Der_k(A)
  \,,
\end{equation*}
with the usual commutator bracket.

\begin{Proposition}
An abstract Lie algebroid in the tangent category $\Aff_k$ with scalar $\bbA_k$-multiplication is given as follows:
\begin{itemize}

\item[(i)] The bundle of $\bbA_k$-modules $\Spec(B) \to \Spec(A)$ is given by a commutative $k$-algebra $A$ and a commutative $A$-algebra $B \in \CAlg_A$ with an $A$-linear coaction by $k[x]$.

\item[(ii)] The space of sections of the bundle is given by $\CAlg_A(B, A)$. The action by $\Aff_k(\Spec(A), \bbA_k) \cong A$ is given by pointwise $A$-multiplication, $(a \phi)(b) = a\, \phi(b)$ for all $\phi \in \CAlg_A(B, A)$.

\item[(iii)] The anchor is given by a derivation $\rho \in \Der_k(A,B)$. The pullback
\begin{align*}
  \rho^*\colon \CAlg_A(B, A) 
  &\longrightarrow \Der_k(A)
  \\
  \phi &\longmapsto \phi \circ \rho
\end{align*}
is the induced map from sections of the bundle to vector fields on its base.

\end{itemize}
The Lie algebroid bracket is a $k$-linear Lie bracket on $\CAlg_A(B, A)$, satisfying
\begin{align*}
  [\phi,a\psi]
  &= a[\phi, \psi] + \bigl( (\rho^*\phi)(a)\bigr) \psi
  \\
  \rho^*[\phi, \psi]
  &= [\rho^*\phi, \rho^*\psi]
  \,,
\end{align*}
for all $\phi,\psi \in \CAlg_A(B, A)$ and $a \in A$.
\end{Proposition}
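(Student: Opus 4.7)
The plan is to unfold Definition~\ref{def:AbstractLieAlgd} piece by piece in the tangent category $(\Aff_k, T)$ with scalar $\bbA_k$-multiplication from Section~\ref{sec:AffSchemes}, dualizing every construction through the anti-equivalence $\Aff_k \cong \CAlg_k^{\op}$. The only nontrivial input beyond this dualization is the representability of derivations by K\"ahler differentials recorded in \eqref{eq:AffDerRepr}; everything else will be a matter of translating the defining diagrams across the anti-equivalence.

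For item~(i), I would apply Terminology~\ref{term:WibbleBundle}: a bundle of $\bbA_k$-modules over $\Spec(A)$ is an $\bbA_k$-module object in the slice $\Aff_k \Comma \Spec(A)$, and under $\Aff_k \Comma \Spec(A) \cong (\CAlg_A)^{\op}$ the underlying object is a commutative $A$-algebra $B$. The $\bbA_k$-action $\bbA_k \times \Spec(B) \to \Spec(B)$ over $\Spec(A)$ dualizes to an $A$-algebra map $B \to k[x] \otimes_k B$, i.e.\ an $A$-linear $k[x]$-coaction; the comodule axioms dual to the ring-object axioms of $\bbA_k$ supply both the fiberwise addition and the scalar multiplication simultaneously. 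For (ii), a section $\Spec(A) \to \Spec(B)$ over $\Spec(A)$ dualizes to an $A$-algebra map $B \to A$, giving $\Gamma(\Spec(A), \Spec(B)) \cong \CAlg_A(B,A)$; the $A$-module structure of Equation~\eqref{eq:SectionsModuleStr} becomes the composition $B \to k[x] \otimes_k B \to A$ evaluating $x$ at $a \in A \cong \Aff_k(\Spec(A), \bbA_k)$ and then applying $\phi$, which (using the explicit form of $\kappa_A$) equals pointwise multiplication $(a\phi)(b) = a \cdot \phi(b)$. For (iii), the anchor $\rho \colon \Spec(B) \to T\Spec(A) = \Spec(\Sym_A(\Omega^1_A))$ dualizes to an $A$-algebra morphism $\Sym_A(\Omega^1_A) \to B$ intertwining the $k[x]$-coactions, which by the adjunction~\eqref{eq:AffDerRepr} is exactly a $k$-linear derivation $\rho \colon A \to B$. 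The induced map on sections is $\phi \mapsto \phi \circ \rho$, which is a derivation $A \to A$ since $\rho$ is, giving the stated formula for $\rho^{*}$.

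For the Lie bracket I would invoke Corollary~\ref{cor:LieAlgebraBilinear} and the identification $\Aff_k(\ast, \bbA_k) = \CAlg_k(k[x], k) \cong k$ to conclude $k$-linearity. The action of a vector field $\rho^{*} \phi \in \Der_k(A)$ on $a \in A$ from Equation~\eqref{eq:VecFieldFunc} specializes to $(\rho^{*}\phi)(a) = \phi(\rho(a))$, and once this is noted the Leibniz rule~\eqref{eq:LieAlgdLeibniz} and the anchor-compatibility~\eqref{eq:anchorLieAlgMap} transcribe verbatim to the two displayed identities. The principal obstacle is in (i): one must verify carefully that a plain $A$-linear $k[x]$-coaction on a commutative $A$-algebra already encodes the entire $\bbA_k$-module structure (including the underlying abelian group), and that the anchor's compatibility with $\bbA_k$-module structure matches precisely the $k[x]$-coaction condition $\rho(a) \mapsto x \otimes \rho(a)$ that pins down derivations among all $A$-algebra maps $\Sym_A(\Omega^1_A) \to B$. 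Once these diagrammatic checks are carried out in $\CAlg_k$, the remaining translations are direct.
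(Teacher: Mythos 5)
Your proposal is correct and follows essentially the same route as the paper's own (very terse) sketch: dualize everything through $\Aff_k \cong \CAlg_k^{\op}$, identify the anchor with a derivation via the adjunction~\eqref{eq:AffDerRepr}, and transcribe the axioms of Definition~\ref{def:AbstractLieAlgd}. In fact you supply more detail than the paper, which dismisses items (i), (ii) and the bracket identities with ``the other structure of a Lie algebroid is straightforward to identify,'' and you correctly flag the one genuinely delicate point (that the $k[x]$-coaction must be checked against the full $\bbA_k$-module-object axioms, including the underlying additive structure) that the paper's sketch glosses over.
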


\begin{proof}[Sketch of proof]
A morphism $\Spec(B) \to \Spec(A)$ in $\Aff_k$ is a homomorphism $i: A \to B$ in $\CAlg_k$, which is the same as equipping $B$ with the structure of an commutative $A$-algebra given by $a\cdot b = i(a)\,b$. The anchor is a bundle morphism $\Spec(B) \to T\Spec(A)$, which is an $A$-linear homomorphism $\Sym(\Omega^1_A) \to B$, which in turn can be identified with a derivation $\rho \in \Der_k(A,B)$. The precomposition of $\phi \in \CAlg_A(B, A)$ with $\rho$ is a $k$-linear map
\begin{equation*}
  \rho^*: 
  \CAlg_A(B, A)
  \longrightarrow \Der_k(A)
  \,.
\end{equation*}
The other structure of a Lie algebroid is straightforward to identify.
\end{proof}

\begin{Example}
For a Lie algebroid $\pi: E \to M$ in smooth manifolds, we set $A := C^\infty(M)$ and $B := C^\infty(E)$. The homomorphism $i := \pi^*: A \to B$ equips $B$ with the structure of a $A$-algebra. The space of sections is isomorphic to $\Gamma(M,E) = \CAlg_A(B,A)$. Vector fields on $M$ are derivations of $C^\infty(M)$. In this way, we recover the usual notion of a Lie algebroid.
\end{Example}

When we apply our Lie functor to an affine group scheme, we recover the usual Lie algebra of invariant derivations of the commutative Hopf algebra \cite[Chapter~12]{Waterhouse:1971}. An affine groupoid scheme is given by a commutative Hopf algebroid. We posit that if the source (and therefore the target) map is a smooth morphism of affine schemes, then the groupoid is differentiable in the sense of Definition~\ref{def:DiffGroupoid}. We further conjecture that this statement generalizes to arbitrary groupoid schemes. We do not know whether having smooth source and target morphisms is necessary for a groupoid scheme to be differentiable. Computing and studying the abstract Lie algebroids of groupoid schemes is a promising topic for future research, with potential applications to arithmetic geometry.

\subsection{Non-examples}

\subsubsection{Abelian groups}

The category of abelian groups has an abstract tangent functor given by
\begin{equation*}
  TA := A \times A
\end{equation*}
on objects and $Tf := f \times f$ on morphisms $f: A \to B$. The fiberwise addition, zero section, symmetric structure, and vertical lift are defined exactly as for Euclidean spaces (Section~\ref{sec:TangentEuclidean}). A vector field is given by a morphism of abelian groups $v: A \to A$. The bracket is given by the commutator $[v, w] = w \circ v - v \circ w$. In other words the Lie algebra of vector fields on $A$ is given by $\Gamma(A, TA) = \mathrm{gl}(A)$, the Lie algebra of $\bbZ$-linear maps.

By construction, the tangent category of abelian groups is cartesian. One might guess that there is a scalar $\bbZ$-multiplication. However, $\bbZ$ is not a ring object in the category of abelian groups with the categorical product. (The multiplication $\bbZ \times \bbZ \to \bbZ$ is bilinear, not linear.) To fix this, we have to use the tensor product of $\bbZ$-modules, which would require substantial generalization of the concepts used in this paper.

\subsubsection{\texorpdfstring{$R$}{R}-modules}

The same observations as for abelian groups apply, analogously, to the category of $R$-modules by replacing $\bbZ$ with $R$. The upshot is that in the tangent category of $R$-modules the Lie algebra of vector fields is given by the commutator Lie algebra of the ring of endomorphisms. However, we do not have a natural scalar multiplication. Therefore, the general results of this paper do not apply.

\subsubsection{Categories with biproducts}
\label{sec:ExBiprod}

The last two examples generalize to any category $\calC$ with finite biproducts. Since the terminal object is the initial object, $* = \emptyset = 0$, the unit of addition and the unit of multiplication of any ring object in $\calC$ are the same. This shows that $\calC$ only contains the zero ring. As a consequence, the only $R$-module that exists in $\calC$ is the zero module, which implies that $TX = X \oplus 0 \cong X$. We conclude that the only possible tangent structure with scalar multiplication on $\calC$ is the trivial tangent structure.\footnote{We thank the anonymous referee for this observation.}

\subsubsection{Commutative rings}
\label{ex:CRing}

One of the original examples of Rosick\'y was the category of commutative rings, where the tangent functor maps a ring $A$ to its square zero extension $TA := A[\epsilon]/(\epsilon^2)$. However, there are no interesting differential objects in this tangent category \cite[Corollary~3.12]{CruttwellLemay:2023}, so that it cannot be equipped with a scalar multiplication.
\appendix
\section{Useful categorical lemmas}
\label{app:Cat}

For the convenience of the reader, we fix some notation and gather some categorical facts that we use extensively throughout the paper. For detailed proofs of the well-known results in this appendix, we refer the reader to \cite[Appendix~A.4]{Aintablian:2024}. Whenever we talk about certain limits in a category, we assume their existence. Let $\catC$ and $\catD$ be categories.

\subsection{Pullbacks and the paranthesis notation}
\label{sec:Pasting}

Let $A: \catI \to \catC$, $i \mapsto A_i$ and $B: \catI \to \catC$, $i \mapsto B_i$ be two diagrams indexed by a small category $\catI$. Let $f:A \to B$ be a natural transformation with components $\{f_i:A_i \to B_i\}_{i \in \catI}$. Then, by the universal property of limits, we obtain a unique morphism in $\catC$ denoted by
\begin{equation*}
\Lim_{i \in \catI} {f_i}: \Lim_{i \in \catI} A_i \longrightarrow \Lim_{i \in \catI} B_i \,,
\end{equation*} 
making all diagrams induced by the limiting cones commute. 

If $\catI = \{0 \xrightarrow{p} 2 \xleftarrow{q} 1\}$ so that diagrams indexed by $\catI$ are pullback diagrams, $f$ is given by a commutative diagram
\begin{equation*}
\begin{tikzcd}
A_0 \ar[r, "A(p)"] \ar[d, "f_0"'] &[1em] A_2 \ar[d, "f_2"'] &[1em] A_1 \ar[l, "A(q)"'] \ar[d, "f_1"]
\\
B_0 \ar[r, "B(p)"'] & B_2 & B_1 \ar[l, "B(q)"]
\end{tikzcd}
\end{equation*}
The induced unique morphism between the pullbacks will be denoted by 
\begin{equation}
\label{eq:PullbackMap}
f_0 \times_{f_2} f_1: A_0 \times_{A_2}^{A(p), \, A(q)} A_1 \longrightarrow B_0 \times_{B_2}^{B(p), \, B(q)} B_1 \,.
\end{equation}
It makes the following squares 
\begin{equation*}
\begin{tikzcd}
A_0 \ar[d, "f_0"'] &[1em] A_0 \times_{A_2} A_1 \ar[d, "f_0 \times_{f_2} f_1"'] 
\ar[r, "\pr_2"] \ar[l, "\pr_1"'] &[1em] A_1 \ar[d, "f_1"]
\\
B_0 & B_0 \times_{B_2} B_1 \ar[r, "\pr_2"'] \ar[l, "\pr_1"] & B_1
\end{tikzcd}
\end{equation*}
commute. If $B_2=A_2$ and $f_2 = \id_{A_2}: A_2 \to A_2$, the morphism~\eqref{eq:PullbackMap} will be often written as 
\begin{equation*}
f_0 \times_{A_2} f_1: A_0 \times_{A_2} A_1 \longrightarrow B_0 \times_{A_2} B_1 \,.
\end{equation*}

Now, let $A$, $B$ and $X$ be objects of $\catC$ and let $f:X \to A$ and $g:X \to B$ be morphisms in $\catC$. Then, the unique morphism induced by the universal property of the product $A \times B$ will be denoted by $(f,g): X \to A \times B$. It makes the following diagram 
\begin{equation*}
\begin{tikzcd}[row sep=2em]
& X \ar[d, dashed, "{\exists !}"', "{(f,g)}"] \ar[dr, "g"] \ar[dl, "f"'] &
\\
A & A \times B \ar[r, "\pr_2"'] \ar[l, "\pr_1"] & B
\end{tikzcd}
\end{equation*}
commutative.

A special case of this is when we consider two morphisms in the overcategory $\catC \Comma C$, given by the commutative diagram:
\begin{equation*}
\begin{tikzcd}
& X \ar[d, "h"] \ar[dr, "g"] \ar[dl, "f"'] &
\\
A \ar[r, "\alpha"'] & C & B \ar[l, "\beta"]
\end{tikzcd}
\end{equation*}
Then, the unique morphism $(f,g)$ induced by the universal property of the pullback $A \times^{\alpha,\beta}_C B$ makes the following diagram 
\begin{equation}
\label{diag:(fg)}
\begin{tikzcd}
X 
\ar[ddr, "f"', bend right] 
\ar[rrd, "g", bend left] 
\ar[dr, "{\exists! \, (f,g)}", dashed]
&&
\\
&
A \times_C B
\ar[r, "\pr_2"]
\ar[d, "\pr_1"']
&
B
\ar[d, "\beta"]
\\
&
A
\ar[r, "\alpha"']
&
C
\end{tikzcd}
\end{equation}
commutative. Note that products in the overcategory $\catC \Comma C$ are pullbacks over $C$.

\begin{Caution}
We use the same notation $(f,g)$ for the unique morphism from $X$ to the product $A \times B$ or to the pullback $A \times_C B$. It will be clear from the context whether we are in the category $\catC$ or $\catC \Comma C$.
\end{Caution}

\begin{Lemma}
\label{lem:IncPullback}
Let $f,g:A \to B$ be morphisms in $\catC$. Then, the natural morphism $A \times_B^{f,g} A \to A \times A$ is a monomorphism.
\end{Lemma}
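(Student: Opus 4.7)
The plan is to unpack the universal property of the pullback directly. Write $i := (\pr_1, \pr_2) : A \times_B^{f,g} A \to A \times A$ for the natural morphism induced by the two projections of the pullback. To show $i$ is a monomorphism, I would take two morphisms $u, v : X \to A \times_B^{f,g} A$ with $i \circ u = i \circ v$ and conclude $u = v$.

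Composing with the two product projections $\mathrm{pr}_1^{A \times A}, \mathrm{pr}_2^{A \times A} : A \times A \to A$ gives
\begin{equation*}
  \mathrm{pr}_1 \circ u = \mathrm{pr}_1^{A \times A} \circ i \circ u
  = \mathrm{pr}_1^{A \times A} \circ i \circ v
  = \mathrm{pr}_1 \circ v,
\end{equation*}
and analogously $\mathrm{pr}_2 \circ u = \mathrm{pr}_2 \circ v$. Both $u$ and $v$ are therefore morphisms into the pullback $A \times_B^{f,g} A$ whose compositions with the pullback projections coincide; by the uniqueness clause of the universal property of the pullback (applied to the cone $(\mathrm{pr}_1 \circ u, \mathrm{pr}_2 \circ u) = (\mathrm{pr}_1 \circ v, \mathrm{pr}_2 \circ v)$ over $f, g$), we obtain $u = v$.

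There is essentially no obstacle here; the content is just that the pullback projections are jointly monic, which is a formal consequence of the universal property. The only care needed is to keep track of which projections are meant (those of the product $A \times A$ versus those of the pullback $A \times_B^{f,g} A$), and to note that $i$ is by construction the unique morphism satisfying $\mathrm{pr}_k^{A \times A} \circ i = \mathrm{pr}_k$ for $k = 1, 2$, as illustrated in Diagram~\eqref{diag:(fg)}.
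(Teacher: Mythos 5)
Your argument is correct and is the standard one: the morphism $i=(\pr_1,\pr_2)$ satisfies $\pr_k^{A\times A}\circ i=\pr_k$, so $i\circ u=i\circ v$ forces $\pr_k\circ u=\pr_k\circ v$ for $k=1,2$, and the uniqueness clause of the pullback's universal property (the legs of a limiting cone are jointly monic; the compatibility $f\circ\pr_1\circ u=g\circ\pr_2\circ u$ is automatic from the pullback square) gives $u=v$. The paper defers the proof of this lemma to the cited reference, and your proof is essentially the expected one, so there is nothing to add.
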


\begin{Lemma}
\label{lem:MapsinPar}
Consider the following commutative diagram:
\begin{equation*}
\begin{tikzcd}
& X \ar[d, "h"] \ar[dr, "g"] \ar[dl, "f"'] &
\\
A \ar[r, "\alpha"'] \ar[d, "\varphi"'] & C \ar[d, "\id_C"'] & B \ar[l, "\beta"] \ar[d, "\psi"]
\\
D \ar[r, "r"'] & C & E \ar[l, "s"]
\end{tikzcd}
\end{equation*}
Let $k:Y \to X$ be a morphism in $\catC$ and let $(f,g):X \to A \times_C B$ be the unique morphism from Diagram~\eqref{diag:(fg)}. Then, the following equations hold:
\begin{itemize}
    \item[(i)] $(f,g) \circ k = (f \circ k, g \circ k)$,
    \item[(ii)] $(\varphi \times_C \psi) \circ (f,g) = (\varphi \circ f, \psi \circ g)$.
\end{itemize}
\end{Lemma}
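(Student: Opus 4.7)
The plan is to prove both identities by appealing to the uniqueness part of the universal property of the pullback $D \times_C E$ (for part (ii)) and $A \times_C B$ (for part (i)). In each case, the strategy is to show that both sides of the asserted equation are morphisms into a common pullback object, and that they induce the same morphisms to the two factors; by the uniqueness clause of Diagram~\eqref{diag:(fg)}, they must then coincide.

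For part (i), I would observe that $(f,g) \circ k : Y \to A \times_C B$ satisfies $\pr_1 \circ (f,g) \circ k = f \circ k$ and $\pr_2 \circ (f,g) \circ k = g \circ k$, using only the defining equations $\pr_1 \circ (f,g) = f$ and $\pr_2 \circ (f,g) = g$ from Diagram~\eqref{diag:(fg)}. Since $(f \circ k, g \circ k)$ is by definition the unique morphism $Y \to A \times_C B$ with these properties (once one checks that $\alpha \circ f \circ k = \beta \circ g \circ k$, which follows from $\alpha \circ f = h = \beta \circ g$), uniqueness yields $(f,g) \circ k = (f \circ k, g \circ k)$.

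For part (ii), I would first recall that the induced morphism $\varphi \times_C \psi : A \times_C B \to D \times_C E$ from Section~\ref{sec:Pasting} is characterized by $\pr_1 \circ (\varphi \times_C \psi) = \varphi \circ \pr_1$ and $\pr_2 \circ (\varphi \times_C \psi) = \psi \circ \pr_2$. Composing with $(f,g)$ on the right and applying $\pr_1 \circ (f,g) = f$ and $\pr_2 \circ (f,g) = g$ gives
\begin{equation*}
  \pr_1 \circ (\varphi \times_C \psi) \circ (f,g) = \varphi \circ f,
  \qquad
  \pr_2 \circ (\varphi \times_C \psi) \circ (f,g) = \psi \circ g.
\end{equation*}
On the other hand, $(\varphi \circ f, \psi \circ g)$ is by definition the unique morphism $X \to D \times_C E$ satisfying exactly these two equations (well-defined because $r \circ \varphi \circ f = \alpha \circ f = \beta \circ g = s \circ \psi \circ g$). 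Uniqueness then forces the two morphisms to agree.

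Neither step presents a genuine obstacle; the only mild care required is checking the compatibility conditions needed for the target morphisms $(f \circ k, g \circ k)$ and $(\varphi \circ f, \psi \circ g)$ to factor through the respective pullbacks, which in both cases is immediate from the commutativity of the given diagram. The entire argument is essentially a formal unwinding of the universal property, so the proof will be short.
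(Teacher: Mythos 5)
Your proposal is correct and is exactly the routine universal-property argument intended here: the paper itself omits the proof, deferring it to \cite[Appendix~A.4]{Aintablian:2024}, and your verification of the two compatibility conditions ($\alpha \circ f \circ k = \beta \circ g \circ k$ via $\alpha \circ f = h = \beta \circ g$, and $r \circ \varphi = \alpha$, $s \circ \psi = \beta$ from the commuting squares) together with the uniqueness clause of the pullback is precisely what is needed. Nothing is missing.
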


\begin{Lemma}
\label{lem:FiberProdMaps}
Let $F,G: \catC \to \catD$ be functors and $\alpha: F \to G$ a natural transformation. Let $A \rightarrow C \leftarrow B$ be a pullback diagram in $\catC$ such that $FA \rightarrow FC \leftarrow FB$ and $GA \rightarrow GC \leftarrow GB$ are pullback diagrams in $\catD$. Then, the following diagram
\begin{equation*}
\begin{tikzcd}[column sep=large, row sep=large]
F(A \times_C B) \ar[r, "\alpha_{A \times_C B}"] \ar[d, "{(F\pr_1, \, F\pr_2)}"'] & G(A \times_C B) \ar[d, "{(G\pr_1, \, G\pr_2)}"]
\\
FA \times_{FC} FB \ar[r, "\alpha_A \times_{\alpha_C} \alpha_B"'] & GA \times_{GC} GB
\end{tikzcd}
\end{equation*}
commutes, where $\pr_1:A \times_C B \to A$ and $\pr_2:A \times_C B \to B$ are the projections onto the first and second factor respectively.
\end{Lemma}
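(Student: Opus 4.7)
The plan is to invoke the universal property of the pullback $GA \times_{GC} GB$ in $\catD$: two morphisms with codomain this pullback are equal if and only if they become equal after post-composition with each of the two projections $\pr_1 \colon GA \times_{GC} GB \to GA$ and $\pr_2 \colon GA \times_{GC} GB \to GB$. So I will compute both compositions of the square — the ``right-then-down'' path $(G\pr_1, G\pr_2) \circ \alpha_{A \times_C B}$ and the ``down-then-right'' path $(\alpha_A \times_{\alpha_C} \alpha_B) \circ (F\pr_1, F\pr_2)$ — post-composed with $\pr_1$ and with $\pr_2$, and check that the two results agree in each case.

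For the post-composition with $\pr_1$, the right-then-down path gives $\pr_1 \circ (G\pr_1, G\pr_2) \circ \alpha_{A \times_C B} = G\pr_1 \circ \alpha_{A \times_C B}$, which by naturality of $\alpha$ applied to the morphism $\pr_1 \colon A \times_C B \to A$ in $\catC$ equals $\alpha_A \circ F\pr_1$. The down-then-right path gives $\pr_1 \circ (\alpha_A \times_{\alpha_C} \alpha_B) \circ (F\pr_1, F\pr_2)$; by the defining relation of the induced map on pullbacks (recalled in Section~\ref{sec:Pasting}), $\pr_1 \circ (\alpha_A \times_{\alpha_C} \alpha_B) = \alpha_A \circ \pr_1$, and then Lemma~\ref{lem:MapsinPar}~(i) applied to the pair $(F\pr_1, F\pr_2)$ yields $\pr_1 \circ (F\pr_1, F\pr_2) = F\pr_1$. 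Composing, the down-then-right path also equals $\alpha_A \circ F\pr_1$. The computation with $\pr_2$ is completely symmetric, using naturality of $\alpha$ on $\pr_2 \colon A \times_C B \to B$ together with $\pr_2 \circ (\alpha_A \times_{\alpha_C} \alpha_B) = \alpha_B \circ \pr_2$.

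Since the two compositions agree after post-composition with both $\pr_1$ and $\pr_2$, the universal property of the pullback $GA \times_{GC} GB$ (which exists in $\catD$ by hypothesis) forces them to be equal as morphisms into $GA \times_{GC} GB$, giving commutativity of the square. There is no serious obstacle here; the only thing to be slightly careful about is that the pair $(F\pr_1, F\pr_2)$ and the morphism $\alpha_A \times_{\alpha_C} \alpha_B$ are \emph{defined} via universal properties, so one must consistently replace them by their defining relations with the projections rather than try to manipulate them directly.
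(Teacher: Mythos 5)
Your proof is correct and is essentially the standard argument intended here (the paper itself defers the proofs of these appendix lemmas to the cited reference): the two projections out of $GA \times_{GC} GB$ are jointly monic, and both composites of the square agree after composing with them, by naturality of $\alpha$ at $\pr_1$ and $\pr_2$ together with the defining relations of $(F\pr_1, F\pr_2)$, $(G\pr_1, G\pr_2)$ and $\alpha_A \times_{\alpha_C} \alpha_B$ with the pullback projections. One small slip: the identity $\pr_1 \circ (F\pr_1, F\pr_2) = F\pr_1$ is just the defining property of the parenthesis notation from Section~\ref{sec:Pasting}, not Lemma~\ref{lem:MapsinPar}~(i) (which concerns precomposition, $(f,g)\circ k = (f\circ k, g\circ k)$); this mis-citation does not affect the validity of the argument.
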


\subsection{Nested commutative squares}
\label{sec:OuterInner}

The following lemma provides a simple, yet powerful technique to infer the commutativity of a square that is a ``restriction'' of another commutative square. The proof of the commutativity of numerous diagrams in this paper is based on this lemma.

\begin{Lemma}
\label{lem:InnerOuterSquares}
Consider the following diagram:
\begin{equation*}
\begin{tikzcd}[column sep=1.5em, row sep=1.5em]
A 
\ar[rrr, "f"] 
\ar[ddd, "g"'] 
\ar[dr, "\alpha"]
& & &
B \ar[ddd, "h"] 
\ar[dl, "\beta"']
\\
&
A' 
\ar[d, "g'"'] 
\ar[r, "f'"] 
&
B' \ar[d, "h'"]
&
\\
&
C' \ar[r, "k'"'] &
D'
&
\\
C \ar[rrr, "k"']
\ar[ur, "\gamma"'] 
& & &
D 
\ar[ul, "\delta"]
\end{tikzcd}
\end{equation*}
Assume that the four outer trapezoids commute. If the inner square commutes and $\delta$ is a monomorphism, then the outer square commutes.
\end{Lemma}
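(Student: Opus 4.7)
The plan is to prove the commutativity of the outer square by a direct diagram chase, exploiting the assumption that $\delta$ is a monomorphism to cancel it at the end. Specifically, I would compute both $\delta \circ h \circ f$ and $\delta \circ k \circ g$ and show they coincide, then cancel $\delta$.

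Concretely, starting from $\delta \circ h \circ f$, I would first use the commutativity of the right trapezoid ($\delta \circ h = h' \circ \beta$) to rewrite this as $h' \circ \beta \circ f$. Then the top trapezoid ($\beta \circ f = f' \circ \alpha$) turns it into $h' \circ f' \circ \alpha$. The commutativity of the inner square ($h' \circ f' = k' \circ g'$) allows me to continue as $k' \circ g' \circ \alpha$. Next, the left trapezoid ($g' \circ \alpha = \gamma \circ g$) gives $k' \circ \gamma \circ g$, and finally the bottom trapezoid ($k' \circ \gamma = \delta \circ k$) converts this to $\delta \circ k \circ g$. Thus $\delta \circ h \circ f = \delta \circ k \circ g$.

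Because $\delta$ is a monomorphism by hypothesis, this equality can be cancelled on the left, yielding $h \circ f = k \circ g$, which is the commutativity of the outer square. There is no real obstacle here: the argument is a routine diagram chase, and the only delicate point is to keep track of which trapezoid is being used at each step, which is straightforward once the orientation conventions of the trapezoids (top, bottom, left, right) are fixed as in the statement.
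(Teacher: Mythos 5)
Your proposal is correct and follows essentially the same argument as the paper: it uses exactly the same trapezoid identities and the inner square to show $\delta \circ h \circ f = \delta \circ k \circ g$, then cancels the monomorphism $\delta$; the only cosmetic difference is that you linearize the computation into one chain while the paper computes the two sides separately and compares them.
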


\begin{proof}
From the commutativity of the right and the top trapezoids it follows that
\begin{equation}
\label{eq:innerouter1}
\begin{split}
\delta \circ h \circ f
&=
h' \circ \beta \circ f
\\
&= h' \circ f' \circ \alpha
\,.
\end{split}
\end{equation}
Similarly, from the commutativity of the left and the bottom trapezoids it follows  that
\begin{equation}
\label{eq:innerouter2}
\begin{split}
\delta \circ k \circ g
&=
k' \circ \gamma \circ g
\\
&= k' \circ g' \circ \alpha
\,.
\end{split}
\end{equation}
By assumption, the inner square commutes, that is, $h' \circ f' = k' \circ g'$. Comparing Equations~\eqref{eq:innerouter1} and~\eqref{eq:innerouter2}, we get that
\begin{equation*}
\delta \circ h \circ f
=
\delta \circ k \circ g
\,.
\end{equation*}
Since, by assumption, $\delta$ is a monomorphism, it follows that $h \circ f = k \circ g$. In other words, the outer square commutes. 
\end{proof}

\subsection{Module objects}
\label{sec:GrpRngMod}

Assume that the category $\catC$ has finite products. Denote its terminal object by $*$.

\begin{Definition}
\label{def:RModDef}
Let $(R, \Radd, \Rzero, \Rmult, \Runit)$ be a ring object in $\catC$. An \textbf{$R$-module object} in $\catC$ is an abelian group object $(M,+,0)$ in $\catC$ together with a morphism $\kappa: R \times M \to M$, called the \textbf{action}, such that the following diagrams commute:
\begin{itemize}
\item[(i)] Associativity:
\begin{equation*}
\begin{tikzcd}[column sep={large}]
R \times R \times M \ar[r, "\id_R \times \kappa"] 
\ar[d, "\Rmult \times \id_{M}"'] &
R \times M \ar[d, "\kappa"]
\\
R \times M \ar[r, "\kappa"'] & M
\end{tikzcd}
\end{equation*}

\item[(ii)] Unitality:
\begin{equation*}
\begin{tikzcd}[column sep=3em]
* \times M \ar[r, "\Runit \times \id_{M}"] \ar[dr, "\cong", "\pr_2"'] & 
R \times M \ar[d, "\kappa"]
\\
& M
\end{tikzcd}    
\end{equation*}

\item[(iii)] Linearity in $R$:
\begin{equation*}
\begin{tikzcd}[column sep={large}, row sep=3em]
R \times R \times M
\ar[r, "\Radd \, \times \, \id_{M}"] 
\ar[d, "{\big( \kappa \circ (\pr_1, \pr_3), \, \kappa \circ (\pr_2, \pr_3) \big)}"'] &
R \times M 
\ar[d, "\kappa"]
\\
M \times M 
\ar[r, "+"'] 
&
M
\end{tikzcd}
\end{equation*}

\item[(iv)] Linearity in $M$:
\begin{equation*}
\begin{tikzcd}[column sep={large}, row sep=3em]
R \times M \times M
\ar[r, "\id_R \, \times \, +"] 
\ar[d, "{\bigl( \kappa \circ (\pr_1, \pr_2), \, \kappa \circ (\pr_1, \pr_3) \bigr)}"'] &
R \times M 
\ar[d, "\kappa"]
\\
M \times M 
\ar[r, "+"'] 
&
M
\end{tikzcd}
\end{equation*}
\end{itemize}
\end{Definition}

Linearity in $R$, expressed by~(iii), implies that the action by $\Rzero: * \to R$ is zero, that is, the diagram
\begin{equation*}
\begin{tikzcd}[column sep=4em,row sep=2em]
* \times M \ar[r, "{\Rzero \times \id_{M}}"] \ar[d, "{\pr_1}"'] & R \times M \ar[d, "{\kappa}"] \\
* \ar[r, "0"'] & M
\end{tikzcd}   
\end{equation*}        
is commutative. Similarly, linearity in $M$, expressed by~(iv), implies that the action sends zero to zero, that is, the diagram
\begin{equation*}
\begin{tikzcd}[column sep=4em,row sep=2em]
R \times * \ar[r, "{\id_R \times 0}"] \ar[d, "{\pr_2}"'] & R \times M \ar[d, "{\kappa}"] 
\\
* \ar[r, "0"'] & M
\end{tikzcd}   
\end{equation*}
commutes.

\begin{Definition}
A \textbf{morphism between two $R$-module objects} $M$ and $M'$ is a morphism $\varphi: M \to M'$ of abelian group objects in $\catC$ such that the following diagram
\begin{equation*}
\begin{tikzcd}
R \times M 
\ar[d, "\kappa"']
\ar[r, "\id_R \times \varphi"]
&[1em] 
R \times M'
\ar[d, "\kappa'"]
\\
M 
\ar[r, "\varphi"']
&
M'
\end{tikzcd}
\end{equation*}
commutes, where $\kappa$ and $\kappa'$ are the actions of $R$ on $M$ and $M'$ respectively.
\end{Definition}

\begin{Lemma}
\label{lem:PullbackRingHom0}
Let $\varphi:R' \to R$ be a morphism of ring objects in $\catC$. Let $M$ be an $R$-module object in $\catC$ with action $\kappa:R \times M \to M$. Then, the morphism $\kappa' := \kappa \circ (\varphi \times \id_M): R' \times M \to M$ equips $M$ with the structure of an $R'$-module.
\end{Lemma}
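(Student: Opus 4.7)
The plan is to verify the four axioms of Definition~\ref{def:RModDef} (associativity, unitality, linearity in the ring, linearity in the module) for the proposed action $\kappa' = \kappa \circ (\varphi \times \id_M)$. The underlying abelian group structure on $M$ is unchanged, so we only need to check the four commutative diagrams involving $\kappa'$. Throughout, we use that $\varphi\colon R' \to R$ is a morphism of ring objects, which means by definition that the four diagrams
\begin{align*}
  \varphi \circ \Rmult' &= \Rmult \circ (\varphi \times \varphi), &
  \varphi \circ \Runit' &= \Runit, \\
  \varphi \circ \Radd' &= \Radd \circ (\varphi \times \varphi), &
  \varphi \circ \Rzero' &= \Rzero
\end{align*}
commute (where primed operations denote the ring structure on $R'$).

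First I would verify associativity and unitality. For associativity, the plan is to paste the square expressing compatibility of $\varphi$ with multiplication (in the form $(\varphi \times \id_M) \circ (\Rmult' \times \id_M) = (\Rmult \times \id_M) \circ (\varphi \times \varphi \times \id_M)$) onto the associativity square of $\kappa$ from Definition~\ref{def:RModDef}~(i). Writing out both paths in the candidate square for $\kappa'$ and inserting the intermediate object $R \times R \times M$ recovers the two sides. For unitality, the diagram collapses via $\varphi \circ \Runit' = \Runit$ and the unitality of $\kappa$ (Definition~\ref{def:RModDef}~(ii)).

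Next I would check linearity in $R'$. Here the key step is to paste the compatibility of $\varphi$ with $\Radd$ onto the linearity-in-$R$ square for $\kappa$ (Definition~\ref{def:RModDef}~(iii)); the ``left'' side $\kappa' \circ (\Radd' \times \id_M) = \kappa \circ (\varphi \times \id_M) \circ (\Radd' \times \id_M)$ is rewritten using the ring morphism condition as $\kappa \circ (\Radd \times \id_M) \circ (\varphi \times \varphi \times \id_M)$, and then one applies the linearity-in-$R$ diagram for $\kappa$, with a final functoriality step $(\kappa \circ (\pr_i, \pr_3)) \circ (\varphi \times \varphi \times \id_M) = \kappa' \circ (\pr_i, \pr_3)$ for $i = 1, 2$. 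Finally, linearity in $M$ requires no new input from $\varphi$: composing the linearity-in-$M$ diagram for $\kappa$ on the left with $(\varphi \times \id_{M \times M})$ and using naturality of the projections yields the required square for $\kappa'$.

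There is no genuine obstacle here; the lemma is pure diagram chasing. The only thing to be careful about is keeping track of which factors are permuted and reassociating the iterated products $R' \times R' \times M$ and $R \times R \times M$ consistently. In particular, the commutativity of the relevant ``wiring diagrams'' such as $(\varphi \times \varphi \times \id_M) \circ (\pr_1, \pr_3) = (\pr_1, \pr_3) \circ (\varphi \times \varphi \times \id_M)$ (with the evident reindexing) is immediate from Lemma~\ref{lem:MapsinPar}, which is the only formal input needed beyond the ring-morphism axioms and the $R$-module axioms for $\kappa$.
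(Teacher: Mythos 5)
Your proof is correct: verifying the four diagrams of Definition~\ref{def:RModDef} for $\kappa' = \kappa \circ (\varphi \times \id_M)$ by pasting the ring-morphism squares for $\varphi$ onto the corresponding module axioms for $\kappa$, with Lemma~\ref{lem:MapsinPar} handling the reindexing of projections, is exactly the intended argument (the paper defers this routine diagram chase to the cited reference rather than printing it). No gaps.
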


\begin{Remark}
Given a ring object $(R,\Radd, \Rzero, \Rmult, \Runit)$ in $\catC$ and an object $A$ of $\catC$, the set $\catC(A,R)$ of $R$-valued morphisms has the structure of a ring with addition and multiplication
\begin{equation*}
  f + g := \Radd \circ (f,g)
  \,,\quad
  fg := \Rmult \circ (f,g)
  \,,
\end{equation*}
for all $f, g \in \calC(A,R)$, with zero $A \xrightarrow{!_A} * \xrightarrow{\Rzero} R$, and unit $A \xrightarrow{!_A} * \xrightarrow{\Runit} R$. Here,  $!_A:A \to *$ denotes the unique morphism to the terminal object.
\end{Remark}

\begin{Lemma}
\label{lem:PullbackRingHom}
Let $(R,\Radd, \Rzero, \Rmult, \Runit)$ be a ring object in $\catC$ and $\alpha:B \to A$ a morphism in $\catC$. Then, the pullback
\begin{equation*}
\begin{aligned}
\alpha^*: \catC(A,R) 
&\longrightarrow
\catC(B,R)
\\
f
&\longmapsto
f \circ \alpha
\end{aligned}
\end{equation*}
is a morphism of rings.
\end{Lemma}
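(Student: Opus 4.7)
The plan is to verify directly that $\alpha^*$ preserves each piece of the ring structure on $\catC(\Empty,R)$, using only the definitions of addition, multiplication, zero, and unit on $\catC(A,R)$ and $\catC(B,R)$, together with the bookkeeping lemma on parenthesis notation (Lemma~\ref{lem:MapsinPar}).

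First I would verify additivity. Given $f,g \in \catC(A,R)$, I would compute
\begin{equation*}
  \alpha^*(f+g)
  = \Radd \circ (f,g) \circ \alpha
  = \Radd \circ (f \circ \alpha,\, g \circ \alpha)
  = \Radd \circ (\alpha^*f,\, \alpha^*g)
  = \alpha^*f + \alpha^*g,
\end{equation*}
where the second equality is Lemma~\ref{lem:MapsinPar}(i). The identical argument with $\Rmult$ in place of $\Radd$ yields $\alpha^*(fg) = (\alpha^*f)(\alpha^*g)$, so $\alpha^*$ is multiplicative.

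Next I would verify that $\alpha^*$ preserves zero and unit. The zero of $\catC(A,R)$ is $\Rzero \circ \,!_A$, so
\begin{equation*}
  \alpha^*(\Rzero \circ \,!_A)
  = \Rzero \circ \,!_A \circ \alpha
  = \Rzero \circ \,!_B,
\end{equation*}
where the last equality uses the uniqueness of the morphism $B \to *$ into the terminal object, which forces $!_A \circ \alpha = \,!_B$. The same argument with $\Runit$ in place of $\Rzero$ shows $\alpha^*$ preserves the multiplicative unit.

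There is no serious obstacle here: everything reduces to Lemma~\ref{lem:MapsinPar}(i) and the universal property of the terminal object. The only minor point to be careful about is not to conflate $(f,g): A \to R \times R$ (a product, not a pullback) with the pullback version of the same notation, but this is harmless because the ring object structure on $R$ uses the categorical product $R \times R$ throughout.
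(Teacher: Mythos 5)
Your proof is correct, and it is the standard direct verification the paper intends (the paper itself defers the proof of this appendix lemma to the cited reference rather than spelling it out): additivity and multiplicativity via $(f,g)\circ\alpha = (f\circ\alpha, g\circ\alpha)$, and preservation of zero and unit via $!_A \circ \alpha = \,!_B$. Your remark about the product-versus-pullback reading of $(f,g)$ is also handled correctly, since the product is the pullback over the terminal object, so Lemma~\ref{lem:MapsinPar}~(i) applies verbatim.
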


\begin{Lemma}
\label{lem:RModMap}
Let $R \in \catC$ be a ring object and let $M,N \in \catC$ be $R$-module objects. Let $f:M \to N$ and $g:N \to M$ be inverse morphisms in $\catC$, i.e.~$f \circ g = \id_N$ and $g \circ f = \id_M$. If $f$ is a morphism of $R$-modules, then so is $g$.
\end{Lemma}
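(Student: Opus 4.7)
The plan is straightforward and follows the standard categorical argument that the inverse of a homomorphism of algebraic structures is automatically a homomorphism, provided the underlying morphism is invertible in the ambient category. The first step is to verify that $g$ is a morphism of abelian group objects. Since $f$ is assumed to be a morphism of $R$-modules, in particular it intertwines the additions and the zeros of $M$ and $N$. Postcomposing and precomposing the relevant commutative squares with $g$ on both sides, using $g \circ f = \id_M$, $f \circ g = \id_N$, and the functoriality of the product $(\id \times g) \circ (\id \times f) = \id \times (g \circ f)$, yields the analogous squares for $g$. This is routine and requires no assumptions beyond the existence of finite products.

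The only slightly substantive step is the linearity with respect to the $R$-action. Let $\kappa_M \colon R \times M \to M$ and $\kappa_N \colon R \times N \to N$ denote the respective actions. The hypothesis that $f$ is $R$-linear is the commutative square
\begin{equation*}
\kappa_N \circ (\id_R \times f) = f \circ \kappa_M.
\end{equation*}
Precomposing on the right with $\id_R \times g$ and postcomposing on the left with $g$, I obtain
\begin{equation*}
g \circ \kappa_N \circ (\id_R \times f) \circ (\id_R \times g)
= g \circ f \circ \kappa_M \circ (\id_R \times g).
\end{equation*}
On the left, functoriality of the product gives $(\id_R \times f) \circ (\id_R \times g) = \id_R \times (f \circ g) = \id_{R \times N}$. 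On the right, $g \circ f = \id_M$. Hence
\begin{equation*}
g \circ \kappa_N = \kappa_M \circ (\id_R \times g),
\end{equation*}
which is precisely the condition that $g$ is a morphism of $R$-modules.

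There is no real obstacle here: the whole argument is a two-line diagram chase exploiting the bifunctoriality of $\id_R \times (\Empty)$ and the invertibility of $f$. The same pattern handles the compatibility with addition and zero needed to conclude that $g$ is a morphism of abelian group objects, so I would present these three verifications (addition, zero, scalar action) in parallel rather than writing out each in full detail.
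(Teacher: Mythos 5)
Your argument is correct and is exactly the standard conjugation argument expected here: conjugating the commutative squares for $f$ by $g$ and using $(\id_R \times f)\circ(\id_R \times g) = \id_R \times (f\circ g)$ gives the corresponding squares for $g$, for the addition as well as for the scalar action. The paper itself only cites this routine verification (deferring the details to the referenced appendix), and your proof matches that intended argument.
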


\bibliographystyle{alpha}
\bibliography{Groupoids}

\end{document}